\newtheorem{lemm}{Lemma}[section]
\newtheorem{theorem}{Theorem}[section]
\newtheorem{proposition}{Proposition}[section]
\newtheorem{lemma}{Lemma}[section]
\newtheorem{cor}{Corollary}[section]
\newtheorem{propos}[lemm]{Proposition}
\newenvironment{defi}{\medskip\noindent{\sc
Definition}. }{\goodbreak\medskip}
\newenvironment{nota}{\medskip\noindent{\sc
Notation}.}{\goodbreak\medskip}
\newenvironment{remk}{\noindent{\sc
Remark}. }{\goodbreak\vskip10pt}
\newenvironment{remks}{\noindent{\sc
Remarks}. }{\goodbreak\vskip10pt}
\newenvironment{notas}{\medskip\noindent{\sc
Notations}. }{\goodbreak\medskip}
\newenvironment{exa}{\noindent{\sc
Example}. }{\goodbreak\vskip10pt}
\newenvironment{ques}{\noindent{\sc
Question}. }{\goodbreak\vskip10pt}
\def\cb{{\mathcal B}}
\def\cF{{\mathcal F}}
\def\cs{{\mathcal S}}
\def\ct{{\mathcal T}}
\def\cC{{\mathcal C}}
\def\ci{{\mathcal I}}
\def\cg{{\mathcal G}}
\def\ca{{\mathcal A}}
\def\ch{{\mathcal H}}
\def\ck{{\mathcal K}}
\def\cm{{\mathcal M}}
\def\cN{{\mathcal N}}
\def\cu{{\mathcal U}}
\def\cv{{\mathcal V}}
\def\R{\mathbb{R}}
\def\A{\mathbb{A}}
\def\Z{\mathbb{Z}}
\def\N{\mathbb{N}}
\def\C{\mathbb{C}}
\def\T{\mathbb{T}}
\def\Q{\mathbb{Q}}
\def\smallskip{\par\vspace{1mm}}
\def\medskip{\par\vspace{2mm}}
\def\bigskip{\par\vspace{3mm}}
\def\thenumber{0}
\def\eq#1{\global\advance\equationcount by 1
   \def\thenumber{\number\equationcount}
                        {$$#1\eqno(\thenumber)$$}}
\tikzset{
xmin/.store in=\xmin, xmin/.default=-1.5, xmin=-1.5,
xmax/.store in=\xmax, xmax/.default=7.5, xmax=7.55,
ymin/.store in=\ymin, ymin/.default=-0.75, ymin=-0.75,
ymax/.store in=\ymax, ymax/.default=3.25, ymax=3.25,
}
\begin{document}

\title[transversal weak K.A.M.]{On the transversal dependence of weak K.A.M. solutions for symplectic twist maps}

\author{Marie-Claude Arnaud$^{\dag,\ddag}$, Maxime Zavidovique$^{*,**}$}

\email{Marie-Claude.Arnaud@univ-avignon.fr \\ maxime.zavidovique@upmc.fr}

\date{}

\keywords{ Weak K.A.M. Theory, Aubry-Mather theory,
generating  functions, integrability.}

\subjclass[2010]{37E40, 37J50, 37J30, 37J35}

\thanks{$\dag$ Avignon Universit\'e, Laboratoire de Math\'ematiques d'Avignon 
(EA 2151)\\ F-84018  Avignon, FRANCE } 
\thanks{$\ddag$ member of the {\sl Institut universitaire de France.}}
\thanks{ $*$ IMJ-PRG, UPMC
4 place Jussieu,
Case 247
75252 Paris Cedex 5}
\thanks{ $**$ financ\' e par une bourse PEPS du CNRS}

\begin{abstract}  For a symplectic twist map, we prove that there is a choice of weak K.A.M. solutions that depend in a continuous way on the cohomology class. We thus obtain a continuous function $u(\theta, c)$ in two variables: the angle $\theta$ and the cohomology class $c$.  As a result, we prove that the Aubry-Mather sets are contained in pseudographs that are vertically ordered by their rotation numbers. Then we characterize the $C^0$ integrable twist maps in terms of regularity of $u$ that allows to see $u$ as a generating function. We also obtain some results for the Lipschitz integrable twist maps. With an example, we show that our choice is not the so-called discounted one (see   \cite{DFIZ2}), that is sometimes discontinuous.  We also provide examples of `strange' continuous foliations that cannot be straightened by a symplectic homeomorphism.
\end{abstract}

\maketitle
\section{Introduction and Main Results.}\label{SecIntro}
 
Weak K.A.M. theory was developed by A.~Fathi (see \cite{Fa2} and the still unpublished book \cite{Fa3})  at the end of the 90s in the setting of autonomous Hamiltonian systems. It was quickly extended to the discrete time or periodic  time dependent setting (see for example  \cite{Be1}, \cite{Be2}, \cite{CISM} and \cite{GT1}). In the case of symplectic twist maps,  this enhances the famous Aubry-Mather theory that was created independently by S.~Aubry \& A.~Le Daeron and J.~Mather in the 80s in the case of twist maps (see \cite{ALD}, \cite{Mat1}  and also  the book \cite{Gol1}).

Weak K.A.M.  theory provides some weak solutions of variational problems (in the Lagrangian setting) or Hamilton-Jacobi equations (in the Hamiltonian setting). This also gives some special negatively or positively invariant sets called pseudographs (see \cite{Be2} or \cite{Arna2}) and some special (minimizing) invariant Borel probability measures (see \cite{Mat2}).

To be more precise, the theory gives a family of solutions i.e. functions in $C^0(M, \R)$ that are Lipschitz continuous and even semi-concave\footnote{The definition of a semi-concave function is given in subsection \ref{ssweakk}.}, at least one for each cohomology class of $H^1(M, \R)$ if we work on the manifold $M$. Then the two following (related)  questions are natural:
\begin{enumerate}
\item\label{Pt1} is there some natural choice of a weak K.A.M. solution in any cohomology class?
\item\label{Pt2}  does there exist a choice that is transversely regular ($C^0$, $C^1$, smooth...) with respect to the cohomology class (with some hypothesis  on the considered twist map)?

\end{enumerate}
Of course, point (\ref{Pt2}) is related to point (\ref{Pt1}) because if we have not a unique choice of a weak K.A.M. solution for  every cohomology class $c\in H^1(M, \R)$, we cannot speak of $C^1$ regularity with respect to $c$ for the map $c\mapsto \{u_c\}$ that sends $c$ to the whole set of weak K.A.M. solutions of cohomology class $c$. Observe nevertheless that a kind of local Lipschitz regularity was studied in \cite{LYY} (for weak K.A.M. solutions for Tonelli Hamiltonians) with no uniqueness.
\\
An answer to question (\ref{Pt1}) (called discounted solution) was recently given in \cite{DFIZ1} in the autonomous case and in  \cite{DFIZ2} and \cite{SuThi} in the discrete case. Here we will focus on question (\ref{Pt2}) in the case of symplectic twist maps.

The first statement is that we can always choose the weak K.A.M. solutions in a transversely continuous way. We begin by recalling the definition of twist map.
\begin{defi}
A {\em symplectic  twist map} or {\em symplectic twist diffeomorphism} $f:\A\rightarrow \A$ is a $C^1$ diffeomorphism such that
\begin{itemize}
\item $f$ is isotopic to identity;
\item $f$ is exact symplectic, i.e. if $f(\theta, r)=(\Theta, R)$, then the 1-form $Rd\Theta-rd\theta$ is exact;
\item $f$ has the {\em twist property} i.e. if $F=(F_1, F_2):\R^2\rightarrow \R^2$ is any lift of $F$, for any $\tilde \theta\in \R$, the map $r\in \R\mapsto F_1(\tilde \theta, r)\in \R$ is an increasing $C^1$ diffeomorphism from $\R$ onto $\R$.
\end{itemize}
\end{defi}

\begin{theorem}\label{Tgenecont}
Let $f$ be a $C^1$ symplectic twist diffeomorphism of $\T\times \R$. Then there exists a continuous map $u:\T\times \R\rightarrow \R$ such that
\begin{itemize}
\item $u(0,c)=0$;
\item each $u_c= u(\cdot ,c)$ is a weak K.A.M. solution for the cohomology class $c$, this implies that:
\begin{itemize}
\item each $u_c= u(\cdot ,c)$ is semi-concave (hence  derivable almost everywhere)\footnote{The definition of a semi-concave function is given in subsection \ref{ssweakk}.};
 \item each   partial graph of $c+\frac{\partial u_c}{\partial \theta}$ is backward invariant by $f$.
 \end{itemize}
\end{itemize}
\end{theorem}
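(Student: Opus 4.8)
The plan is to build the weak K.A.M. solutions for all cohomology classes $c \in \R$ simultaneously by a compactness/selection argument, exploiting the variational (Aubry–Mather) description of weak K.A.M. solutions for twist maps. First I would recall that, for a fixed $c$, a weak K.A.M. solution is obtained by the Lax–Oleinik procedure: starting from the generating function $S$ of the twist map $f$ (which exists by the twist property, and yields an action $S(\theta, \theta')$ with $\partial_2 S(\theta,\theta') = R$, $-\partial_1 S(\theta,\theta') = r$ along an orbit segment), one iterates the operator $T_c v(\theta) = \min_{\theta'}\big(v(\theta') + S(\theta',\theta) + c(\theta-\theta')\big) - \alpha(c)$, where $\alpha(c)$ is Mather's $\alpha$-function; a fixed point $u_c$ exists by the usual Schauder/Arzelà–Ascoli argument since the family of candidate functions is equi-Lipschitz and equi-semi-concave, with a Lipschitz constant that is locally uniform in $c$. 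The normalization $u_c(0)=0$ can be imposed by subtracting a constant. The key additional input I would need is that all these manipulations are \emph{locally uniform in} $c$: the generating function $S$ is independent of $c$, the Lipschitz and semi-concavity constants of the Lax–Oleinik iterates are controlled by $S$ and by $c$ ranging in a compact interval, and $\alpha(c)$ is continuous (in fact convex) in $c$.

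The heart of the matter is upgrading "for each $c$ there is a solution" to "there is a globally continuous selection $u(\theta,c)$". Here I would run the Lax–Oleinik semigroup on the product space: consider on $C^0(\T \times I, \R)$ (for $I$ a compact interval of cohomology classes) the operator $\mathbb{T}$ acting fiberwise by $T_c$, i.e. $(\mathbb{T}w)(\theta,c) = T_c\big(w(\cdot,c)\big)(\theta)$. Because $T_c$ depends continuously on $c$ (the only $c$-dependence, the term $c(\theta - \theta')$ and the constant $\alpha(c)$, is jointly continuous), $\mathbb{T}$ maps the space of continuous functions of $(\theta,c)$ into itself; and it preserves the convex, compact (by Arzelà–Ascoli, using the locally uniform equi-Lipschitz/equi-semi-concave bounds) set of functions $w$ that are, fiberwise, admissible Lax–Oleinik candidates normalized so that $w(0,c)=0$. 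Schauder's fixed point theorem then produces a fixed point $u \in C^0(\T\times I,\R)$, and by construction each $u_c = u(\cdot,c)$ is a fixed point of $T_c$, hence a weak K.A.M. solution for the class $c$. Covering $\R$ by such intervals and using the normalization $u(0,c)=0$ to glue consistently (the fiberwise fixed-point set for a given $c$ may not be a singleton, so I would instead build $u$ on an exhausting sequence of intervals and pass to a diagonal limit, or argue that a single application on all of $\R$ works after checking the candidate set is still compact for the topology of uniform convergence on compacta) yields the desired global $u$.

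Once $u$ is produced, the remaining assertions are essentially definitional consequences of being a weak K.A.M. solution, and I would cite the relevant facts recalled in subsection \ref{ssweakk}: semi-concavity of $u_c$ (hence a.e.\ differentiability) is part of the Lax–Oleinik construction — the infimum of a locally equi-semi-concave family is semi-concave — and backward invariance of the partial graph of $c + \partial u_c/\partial\theta$ under $f$ is the standard characterization of weak K.A.M. solutions for twist maps: at a point $\theta$ of differentiability, the pair $(\theta, c + \partial_\theta u_c(\theta))$ has a preimage under $f$ lying on the graph of $du_{c}$ read via the generating function, and this property passes to the closure of the pseudograph.

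The step I expect to be the main obstacle is the \emph{joint} compactness and continuity needed for Schauder: one must show that the Lipschitz constant and the semi-concavity modulus of the Lax–Oleinik iterates are bounded uniformly for $c$ in a compact set (this uses that the twist map's generating function has, on the relevant compact region of $\T\times\R$ swept out as $c$ varies, uniformly bounded derivatives — a point where properness/twist must be invoked carefully since $\A = \T\times\R$ is non-compact and the "relevant region" is itself $c$-dependent), and that the operator $\mathbb{T}$ genuinely lands in continuous functions of $(\theta,c)$ despite the minimum in $\theta'$ being taken over the non-compact line $\R$ — one must localize the minimization to a compact set depending continuously on $(\theta,c)$ using the superlinearity/coercivity of $S$ in the twist variable. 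Verifying this coercivity and the resulting continuous, compact localization is the technical crux; everything else is a routine assembly of standard weak K.A.M. facts.
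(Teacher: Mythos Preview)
Your Schauder argument has a genuine gap at the compactness step. You propose to apply Schauder to the fiberwise operator $\mathbb{T}$ on a set $K\subset C^0(\T\times I,\R)$ of functions that are ``fiberwise admissible Lax--Oleinik candidates normalized so that $w(0,c)=0$'', and you claim $K$ is compact by Arzel\`a--Ascoli using the equi-Lipschitz/equi-semi-concave bounds. But those bounds are in the $\theta$ variable only: the Lax--Oleinik operator $T_c$ regularizes in $\theta$ (the image is uniformly semi-concave, hence equi-Lipschitz in $\theta$ for $c$ in a compact), yet it exerts no smoothing whatsoever in $c$. If $w(\cdot,c)$ and $w(\cdot,c')$ differ wildly, so do $T_c\big(w(\cdot,c)\big)$ and $T_{c'}\big(w(\cdot,c')\big)$; concretely, if one tries to impose an a priori Lipschitz constant $L'$ in $c$, a single application of $\mathbb{T}$ already inflates it to roughly $L'+M+\mathrm{Lip}(\alpha)$ (where $M$ bounds $|\theta-\theta'|$ for the localized minimizers), so no finite $L'$ gives $\mathbb{T}K\subset K$. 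Without equicontinuity in $c$ the set $K$ is not compact in $C^0(\T\times I)$, and Schauder does not apply. Retreating to the product topology on $\prod_{c\in I}C^0(\T)$ and using Tychonoff would recover compactness but would only yield a selection with no continuity in $c$, which is the whole point. A natural attempt to force uniqueness via a discounted operator $\lambda T_c$ (unique fixed point depending continuously on $c$, then $\lambda\to1$) is exactly what the paper shows \emph{fails}: the discounted selection can be discontinuous in $c$ (see their Appendix example).

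The paper's proof is structurally quite different and uses features of the one-dimensional twist-map setting that your abstract approach ignores. First, for $c$ with irrational rotation number $\rho(c)$ the weak K.A.M.\ solution is \emph{unique} (after normalization), so $u_c$ is forced on a dense set $\ci$; continuity on $\ci$ is then obtained from a monotonicity property: if $\rho(c_1)<\rho(c_2)$ the full pseudographs are strictly vertically ordered (an Aubry non-crossing argument), hence $c\mapsto c+u_c'$ is monotone along sequences with strictly monotone rotation numbers, and dominated convergence plus Ascoli gives uniform convergence. Second, on a rational plateau $\rho^{-1}(p/q)=[a_1,a_2]$ they define $u_{a_1},u_{a_2}$ as the one-sided limits from $\ci$ (and prove these endpoints have a \emph{unique} solution), then for $c=\lambda a_1+(1-\lambda)a_2$ take the convex combination $\lambda u_{a_1}+(1-\lambda)u_{a_2}$ restricted to the Mather set $\cm(p/q)$---which is $c$-dominated because $\alpha$ is affine on the plateau---and extend it to the unique weak K.A.M.\ solution with those boundary values. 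Continuity in $c$ on the plateau and matching at the endpoints follow from this uniqueness-of-extension principle. None of these ingredients---uniqueness at irrationals, vertical ordering by rotation number, the Mather-set extension---is visible from the Schauder viewpoint, and they are what actually produce the transverse continuity.
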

 \begin{remks}\begin{enumerate}
 \item The choice of solution we   do is not the same than the discounted one given in \cite{DFIZ2} and \cite{SuThi}: in Appendix  \ref{sscomplint}, we will  give an example for which  the choice given in \cite{DFIZ2} and \cite{SuThi} is not continuous.
 \item In appendix A of \cite{Be2}, it is proved that the uniform  convergence of a sequence of equi-semi-concave functions implies their convergence $C^1$ in some sense. This implies for the function $u$ given in Theorem \ref{Tgenecont} a little more regularity. More precisely, if $c_n\rightarrow c$, if $\theta_n \rightarrow \theta$ and if $u_{c_n}$ is derivable at $\theta_n$ and $u_c$ at $\theta$, we have
 $$\lim_{n\rightarrow \infty} \frac{\partial u}{\partial \theta}(\theta_n, c_n)=\frac{\partial u}{\partial \theta}(\theta, c).$$
In fact, using technics given in \cite{Arna4},  we will prove in Appendix \ref{AppB3} that  the map that maps $c$ on the   full pseudograph\footnote{see the definition in subsection \ref{ssweakk}}  $\mathcal{PG}(c+  u_c') =\{ (0,c)+\partial  u_c (t), \quad t\in \T\}$ of $c+u_c'$ is continuous for the Hausdorff distance.  \end{enumerate}
 \end{remks}
 
\begin{theorem}\label{Tpseudofol}
 With the notations of Theorem \ref{Tgenecont}, we have
 \begin{enumerate}
 \item $\displaystyle { \bigcup_{c\in \R} \mathcal{PG} (c+u'_c)= \A;}$
 \item if the rotation number\footnote{see point \ref{PtAubrypseudo} in subsection \ref{ssweakk}.}  associated to $c$ is strictly smaller than the  one associated to $c'$, then for all $(q,p)\in \mathcal{PG} (c+u'_c)$ and $(q, p')\in \mathcal{PG} (c+u'_{c'})$, we have $p<p'$.
 \end{enumerate} 
 \end{theorem}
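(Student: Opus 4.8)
The plan is to deduce both assertions from the backward invariance of the partial graphs $\mathcal{PG}(c+u'_c)$ under $f$ together with the twist property, which provides a strong ordering mechanism on these pseudographs. First I would recall that for each $c$ the set $\mathcal{PG}(c+u'_c)$ is a pseudograph: over each $\theta$ the fiber is nonempty (it contains $c+\frac{\partial u_c}{\partial\theta}(\theta)$ at points of differentiability, and more generally the superdifferential at non-differentiability points), and by semi-concavity the full pseudograph is an upper-semicontinuous closed set whose fiber over $\theta$ is the compact interval $[c+D^+u_c(\theta)]$ bounded by the left and right derivatives. For the surjectivity statement (1), I would use that as $c\to\pm\infty$ the rotation number tends to $\pm\infty$ (this is standard for twist maps: the generating function estimates force the Aubry-Mather/weak K.A.M. pseudograph of cohomology $c$ to have height tending to infinity with $c$), hence for any fixed $\theta$ the values $c+\frac{\partial u_c}{\partial\theta}(\theta)$ sweep all of $\R$; combined with continuity of $u$ in both variables (Theorem~\ref{Tgenecont}) and the intermediate value theorem applied to $c\mapsto$ (a point in the fiber of $\mathcal{PG}(c+u'_c)$ over $\theta$), every $(\theta,p)\in\A$ lies in some $\mathcal{PG}(c+u'_c)$. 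Care is needed because the fibers are intervals rather than points, so I would phrase this via the closed multivalued map $c\mapsto\mathcal{PG}(c+u'_c)\cap(\{\theta\}\times\R)$ and use a connectedness argument on the union over $c$ in a compact range.

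For assertion (2), the key point is a strict ordering of backward-invariant pseudographs with distinct rotation numbers. I would argue by contradiction: suppose there is $\theta=q$ with a point $(q,p)\in\mathcal{PG}(c+u'_c)$ and $(q,p')\in\mathcal{PG}(c+u'_{c'})$ and $p\ge p'$, while the rotation number $\rho(c)<\rho(c')$. The standard tool here is the non-crossing property of pseudographs carried by weak K.A.M. solutions: two such pseudographs with $p\ge p'$ at one point must, by the twist condition propagated backward under $f$ (which preserves the pseudograph structure and reverses/controls the crossing order because of the monotonicity of $r\mapsto F_1(\tilde\theta,r)$), either coincide on an interval or the inequality must reverse somewhere, and iterating backward one produces orbits of $f$ sitting inside both pseudographs whose rotation numbers must then be squeezed together, contradicting $\rho(c)<\rho(c')$. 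Concretely I expect the argument to go through the Aubry-Mather sets: each $\mathcal{PG}(c+u'_c)$ contains the Aubry set $\mathcal{A}(c)$ supporting minimizing orbits with rotation number $\rho(c)$, and an Aubry orbit of $\mathcal{A}(c)$ and one of $\mathcal{A}(c')$ cannot cross (Aubry's crossing lemma / Mather's ordering), forcing the vertical order of the Aubry sets to agree with the order of rotation numbers; the full pseudograph order then follows because at a point of the pseudograph that is not in the Aubry set, backward iteration converges (calibration) toward the Aubry set without ever crossing the other pseudograph.

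I would organize the write-up in three steps: (i) recall the fiberwise description of $\mathcal{PG}(c+u'_c)$ and the behavior of heights/rotation numbers as $c\to\pm\infty$; (ii) prove surjectivity by the connectedness/intermediate-value argument using continuity of $u$; (iii) prove the strict order by the non-crossing argument, first for Aubry sets via the crossing lemma and monotonicity of the twist, then extending to the full pseudographs by backward calibration. The main obstacle I anticipate is step (iii): making the non-crossing argument fully rigorous for the \emph{pseudographs} rather than genuine graphs, since pseudographs can have jump discontinuities and overlapping vertical segments, so one must be careful that ``crossing'' is interpreted correctly (e.g.\ using the upper and lower boundaries of the interval fibers and the fact that backward images of vertical segments under a twist map become graphs, by the twist condition). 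I would handle this by working with one-sided derivatives and exploiting that $f^{-1}$ applied to a vertical segment yields, by the inverse twist property, a piece of a Lipschitz graph, which restores enough regularity to run the classical ordering argument.
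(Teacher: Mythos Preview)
Your outline for part (2) has the right core ingredient (Aubry's crossing lemma for minimizing sequences) but takes an unnecessary detour, and the ``main obstacle'' you flag is illusory. The point is that you never need to compare pseudographs geometrically or worry about vertical segments: by property \ref{ptinvgraph} of subsection~\ref{ssweakk}, for \emph{any} point $(q,p)$ in $\overline{\cg(c+u_c')}$ the full backward $F$-orbit $(\theta_k,r_k)_{k\le 0}$ is minimizing. So the paper simply takes the two offending points $(q,p)$ and $(q,p')$ themselves, looks at their backward orbits, and applies Aubry's fundamental lemma (Proposition~\ref{PAubryfund}) to the two minimizing sequences $(\theta_k^1)$ and $(\theta_k^2)$. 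The twist condition at time $0$ (since $p\ge p'$ but the points lie at the same $q$) forces $\theta_{-1}^1<\theta_{-1}^2$ or the reverse; the rotation-number inequality forces the opposite order for $k$ near $-\infty$; hence two crossings, contradiction. No Aubry-set detour, no ``extending to full pseudographs by calibration'', no pseudograph-vs-graph subtleties.

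What you \emph{are} missing is the justification that the backward orbit through an arbitrary pseudograph point has rotation number exactly $\rho(c)$, not just asymptotically approaches the Aubry set. The paper proves this as a separate lemma via a Krylov--Bogolyubov argument: any weak-$*$ limit of $\frac{1}{|n|}\sum_{i=n}^{-1}\delta_{(\theta_i,r_i)}$ is a Mather measure with rotation number $\rho(c)$, so integrating the displacement function $D(\theta,r)=\pi_1\circ F(\theta,r)-\theta$ gives $\lim_{k\to-\infty}\theta_k/k=\rho(c)$. Without this, your ``rotation numbers squeezed together'' step is not rigorous.

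For part (1), your intermediate-value sketch is plausible but vague about how to handle the multivalued fibers. The paper instead uses that each $\mathcal{PG}(c+u'_c)$ is an essential Lipschitz circle (Lemma~\ref{Lfullman}) and that $c\mapsto\mathcal{PG}(c+u'_c)$ is continuous for the Hausdorff distance (Proposition~\ref{hausdorff}, a consequence of equi-semi-concavity and the continuity of $u$ from Theorem~\ref{Tgenecont}). Surjectivity then follows from a connectedness argument: the union of pseudographs is closed in the annulus, and if it missed a ball one could split the family of essential circles into those above and below the ball, both open in Hausdorff topology, contradicting connectedness of $\R$. The step ``heights go to $\pm\infty$'' also requires a short argument with the generating function (Lemma~\ref{Lhorscomp}), which you correctly anticipate but should not treat as immediate.
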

  \begin{remks}\begin{enumerate}
\item    In \cite{KatOrn} the authors prove the existence of such a family of pseudographs.  Here we provide a precise example of such a family of pseudographs.
\item As a result of the proof, we will deduce (see Proposition \ref{Pordreirrat}) that the Aubry-Mather sets are contained in pseudographs that are vertically ordered by their rotation numbers. 
\end{enumerate}
 \end{remks}

Hence there always exists a continuous choice of  weak K.A.M. solutions. We can ask when $u$ is more regular. To answer the question, let us recall that a twist map is said to be $C^0$-integrable if the annulus $\T\times \R$ is $C^0$-foliated by $C^0$ invariant graphs. We obtain the following result.
\begin{theorem}\label{Tgeneder}
With the notations of Theorem \ref{Tgenecont}, we have equivalence of 
\begin{enumerate}
\item\label{pt1Tgeneder} $f$ is $C^0$-integrable; 
\item\label{pt2Tgeneder}  the map $u$ is  $C^1$. 
\end{enumerate} Moreover, in this case, $u$ is unique and  we have\footnote{See the notation $\pi_1$ at the beginning of subsection \ref{ssnota}.}
 \begin{itemize}
 \item the graph of $c+u_c'$ is a leaf of the invariant foliation;
 \item $h_c:\theta \mapsto \theta+\frac{\partial u}{\partial c}(\theta,c)$ is a semi-conjugation between the projected Dynamics $g_c: \theta\mapsto \pi_1\circ f\big(\theta, c+\frac{\partial u}{\partial \theta}(\theta,c)\big)$ and a rotation $R$ of $\T$, i.e. $h_{c}\circ g_c=R\circ h_{c}.$
   \end{itemize}
 \end{theorem}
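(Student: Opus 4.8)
The plan is to prove the two implications separately, to read off the ``moreover'' part from the arguments, and to concentrate the real effort on the dependence of $u$ on $c$. I begin with $(\ref{pt2Tgeneder})\Rightarrow(\ref{pt1Tgeneder})$. If $u\in C^1$, each $u_c'=\partial_\theta u(\cdot,c)$ is continuous, so $\mathcal{PG}(c+u_c')$ is a genuine continuous graph $\Gamma_c=\{(\theta,c+u_c'(\theta)):\theta\in\T\}$ depending continuously on $c$. Since $f$ is exact symplectic and $\Gamma_c$ is backward invariant, $f(\Gamma_c)$ (or $f^{-1}(\Gamma_c)$, according to the convention) is contained in the graph $\Gamma_c$; the associated projected circle map has degree one, hence is onto $\T$, so this image is itself a full graph and, lying inside $\Gamma_c$, equals it, so that $\Gamma_c$ is $f$-invariant. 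Its cohomology class is $\int_\T(c+u_c'(\theta))\,d\theta=c$ (as $u_c$ is $\T$-periodic), whence distinct parameters give distinct graphs; these are vertically ordered and cover $\A$ by Theorem~\ref{Tpseudofol}, and, being $f$-invariant graphs with distinct cohomology classes, they are pairwise disjoint (a point of contact would, via the orbit through it, contradict the vertical order together with the twist property). Therefore $(\theta,c)\mapsto(\theta,c+u_c'(\theta))$ is a continuous bijection of $\A$, hence a homeomorphism by invariance of domain, carrying the horizontal foliation onto $\{\Gamma_c\}_{c\in\R}$: $f$ is $C^0$-integrable.

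For $(\ref{pt1Tgeneder})\Rightarrow(\ref{pt2Tgeneder})$, assume $f$ is $C^0$-integrable with invariant foliation $\mathcal F$. By Birkhoff's theorem each leaf is a Lipschitz graph; the leaves are totally ordered and cover $\A$, so they are indexed by their cohomology class, $\Gamma_c=\{(\theta,\psi_c(\theta))\}$ with $\int_\T\psi_c=c$, and $(\theta,c)\mapsto\psi_c(\theta)$ is jointly continuous. A Lipschitz essential invariant graph carries only minimizing orbits, its projected Aubry set lies over $\pi_1(\Gamma_c)$, and the usual confinement argument (the backward calibrated curves of a class-$c$ weak K.A.M. solution remain trapped between two leaves, hence on $\Gamma_c$) forces the pseudograph of any class-$c$ weak K.A.M. solution into $\Gamma_c$; thus $c+u_c'=\psi_c$ wherever $u_c'$ exists, i.e.\ $u_c(\theta)=\int_0^\theta(\psi_c(s)-c)\,ds$ up to a constant. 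Hence $u_c$ is unique once $u(0,c)=0$ is imposed, is $C^1$ because $\psi_c$ is continuous, coincides with the function of Theorem~\ref{Tgenecont}, has its graph of $c+u_c'$ equal to the leaf $\Gamma_c$, and $\partial_\theta u(\theta,c)=\psi_c(\theta)-c$ is jointly continuous.

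It remains to show that $u$ is differentiable in $c$ with $\partial_c u$ jointly continuous; equivalently, that $c\mapsto F_\theta(c):=\int_0^\theta\psi_c(s)\,ds$ is $C^1$. This is the heart of the matter, and the main obstacle: the foliation is only $C^0$, so $c\mapsto\psi_c(\theta)$ itself need not be differentiable (cf.\ the ``strange'' foliations of the appendix), and one must show the transverse wildness of $\mathcal F$ disappears after integrating along a leaf. The idea is to identify the derivative dynamically: for $c<c'$ the region between $\Gamma_c$ and $\Gamma_{c'}$ is $f$-invariant of area $c'-c$, so the probability measures $\nu_{c,c'}=(\psi_{c'}-\psi_c)\,d\theta/(c'-c)$ have, as $c'\to c^{\pm}$, weak-$*$ limits invariant under the projected circle homeomorphism $g_c:\theta\mapsto\pi_1\circ f(\theta,\psi_c(\theta))$. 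When $\rho(c)$ is irrational, $g_c$ is uniquely ergodic, the limit is unique, equal to the pullback of Lebesgue measure by the Poincar\'e semi-conjugacy $h_c$ of $g_c$ to $R_{\rho(c)}$, so $F_\theta'(c)=h_c(\theta)$, and joint continuity follows from continuity of $g_c$ in $c$ and uniqueness of $h_c$. The genuinely delicate case is $\rho(c)$ rational, where $g_c$ need not be uniquely ergodic; I expect this to require a finer use of the invariance of the inter-leaf regions together with the vertical order of the leaves, so as to force the two one-sided limits to agree. Granting it, $\partial_c u(\theta,c)=h_c(\theta)-\theta$, and $h_c$ is nondecreasing with $h_c(0)=0$, $h_c(\theta+1)=h_c(\theta)+1$ (because $F_{\theta+1}(c)-F_\theta(c)=c$), hence a degree-one circle map, and $u\in C^1$.

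Finally, uniqueness of $u$ and the identification of the leaf as the graph of $c+u_c'$ have been established; for the semi-conjugacy, observe that every orbit on $\Gamma_c$ is calibrated, so the weak K.A.M. equation gives $u_c(g_c(\theta))-u_c(\theta)=K_c(\theta,g_c(\theta))+\bar\alpha(c)$ for all $\theta$, where $\bar\alpha$ is the Mather critical value (here $C^1$, being convex with continuous derivative $\rho$) and $K_c(\theta,\Theta)=S(\theta,\Theta)-c(\Theta-\theta)$ for a generating function $S$ of $f$. Fixing $\theta$ and $\Theta=g_c(\theta)$ and differentiating in $c$ the nonnegative function $c\mapsto[u_c(\Theta)-u_c(\theta)]-[K_c(\theta,\Theta)+\bar\alpha(c)]$, which vanishes at the chosen $c$, one gets from $\partial_c K_c(\theta,\Theta)=\theta-\Theta$ the identity $\partial_c u(g_c(\theta),c)-\partial_c u(\theta,c)=\theta-g_c(\theta)-\bar\alpha'(c)$, i.e.\ $h_c\circ g_c=h_c-\bar\alpha'(c)$. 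Thus $h_c\circ g_c=R\circ h_c$ for the rotation $R=R_{\rho(c)}$ (up to the sign convention in $S$), which together with the monotonicity and degree-one property of $h_c$ is the asserted semi-conjugation.
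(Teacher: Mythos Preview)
Your overall strategy matches the paper's, and your approach to the irrational case of $(\ref{pt1Tgeneder})\Rightarrow(\ref{pt2Tgeneder})$ via the between-leaves measures $\nu_{c,c'}$ is exactly the paper's (its $\Lambda_{c,c'}$). Your final differentiation-of-the-calibration argument for the semi-conjugacy identity is a clean alternative to the paper's measure-theoretic derivation (modulo the sign slip you already flagged). But there are two genuine gaps.

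\medskip
\textbf{The main gap: the rational case.} You explicitly write ``Granting it'' for the existence and continuity of $\partial_c u$ when $\rho(c)\in\Q$. This is not a detail; it is the core of the proof. At a rational leaf $g_c$ is \emph{not} uniquely ergodic, so your one-sided limit measures need not coincide, and even a common limit could a priori be atomic (supported on periodic orbits), in which case $h_c(\theta)=\mu_c([0,\theta])$ fails to be continuous and weak-$*$ convergence does not yield convergence of $\nu_{c,c'}([0,\theta])$. The paper resolves this by a completely different mechanism: along a rational leaf the restricted dynamics is \emph{completely periodic} ($g_c^q=\mathrm{Id}$), the two Green bundles coincide, and after conjugating so that the leaf is the zero section one has $Df(\theta,0)=\begin{pmatrix}1&s(\theta)\\0&1\end{pmatrix}$. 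An area argument on small ``triangles'' bounded by a nearby leaf, a vertical, and its $f$-image---combined with the fact that every ergodic measure between the leaves gives equal mass to all such triangles---yields the explicit formula
\[
\frac{\partial\eta_c}{\partial c}\Big|_{c=c_0}(\theta)=\Big(\int_\T\frac{dt}{\sqrt{s(t)}}\Big)^{-1}\frac{1}{\sqrt{s(\theta)}}>0,
\]
so the selected measure has a continuous positive density (hence no atoms, and $h_c$ is an honest conjugation here). Continuity of $c\mapsto\mu_c$ at rational $c$ then requires a further quantitative estimate (counting returns of $\tilde g_c$-orbits to $[0,\theta]$ via the triangle-area asymptotics). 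None of this is recoverable from ``invariance of the inter-leaf regions together with the vertical order''; the complete periodicity and the torsion computation are essential inputs.

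\medskip
\textbf{A smaller gap in $(\ref{pt2Tgeneder})\Rightarrow(\ref{pt1Tgeneder})$.} Your disjointness argument (``a point of contact would, via the orbit through it, contradict the vertical order together with the twist property'') does not go through: Theorem~\ref{Tpseudofol} only orders pseudographs with \emph{distinct} rotation numbers, and if $\Gamma_{c_1}\cap\Gamma_{c_2}\neq\varnothing$ then necessarily $\rho(c_1)=\rho(c_2)\in\Q$ (uniqueness of the invariant curve for each irrational rotation number). The paper closes this with a result of Bangert: above each $\theta$ there are at most two $r$ with a minimizing orbit of a given rational rotation number. Choosing $\theta$ with $\eta_{c_1}(\theta)\neq\eta_{c_2}(\theta)$, the continuous map $c\in[c_1,c_2]\mapsto\eta_c(\theta)$ would then take values in a two-point set while hitting both endpoints---impossible. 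You should replace your sentence by this argument.
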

 We give in Appendix \ref{sscomplint} an example of a $C^\infty$ integrable symplectic twist map (coming from an autonomous Tonelli Hamiltonian)   for which the discounted method doesn't select a transversely continuous weak K.A.M. solution. \\

 \begin{remks}\begin{enumerate}
\item What is the most surprising  in Theorem \ref{Tgeneder} is the fact that in the $C^0$-integrable case the semi-conjugation $h_c$ continuously depends on $c$ even at a $c$ where the rotation number is rational. At an irrational rotation number, this is an easy consequence of the uniqueness of the invariant measure supported on the corresponding leaf. What happens for a rational rotation number is more subtle.
\item Observe that   in the $C^k$-integrable case for $k\geq 1$, we can only claim that $u$ and $\frac{\partial u_c}{\partial \theta}$ are $C^k$; so in the $C^0$ case, even the derivability with respect to $c$ is surprising; this surely is related to the 2-dimensional setting in which we work.
\item The function   $u$ can be seen as  a $C^1$ generating function of a  continuous map $H:\T\times \R\rightarrow \T\times \R$ that is defined by
 \begin{equation}\label{Esemiconj}H(\theta ,r)=(x, c)\Longleftrightarrow x= \theta+\frac{\partial u}{\partial c}(\theta, c)  \quad{\rm and} \quad c=  r-\frac{\partial u}{\partial \theta}(\theta,c)
 .\end{equation}
 and that satisfies
 $H\circ f(\theta, r)=H(\theta, r)+(\rho(c), 0)$ with $\rho:\R\rightarrow\R$ increasing homeomorphism. Observe that if on some curve of the invariant foliation, the Dynamics is not recurrent (i.e. we have a Denjoy counter-example), then $H$ is not an homeomorphism because it is not injective. 
\item In \cite{Eva}, the author makes similar remarks concerning the link between the weak K.A.M. solutions and a generating function in the  Hamiltonian case.
\item In the proof we also see that the foliation $\eta_c=c+\frac{\partial u_c}{\partial \theta}$ has a partial derivative with respect to $c$ along any leaf having a rational rotation number.
 \item The results that we obtain in Theorems \ref{Tgeneder} and \ref{corLip}  concerning $C^0$ and Lipschitz integrability are in fact local and we could give similar results on a bounded annulus.
\end{enumerate}\end{remks}

An interesting question concerns the restricted Dynamics to the leaves in the $C^0$-integrable case. A priori, such a Dynamics can be a Denjoy counter-example (but we have no example for such a phenomenon). With more regularity of the foliation, we obtain  a more precise  result. We need some definitions.
\begin{defi}
\begin{itemize}
\item A {\em symplectic homeomorphism} is a homeomorphism that is locally a $C^0$ uniform limit of symplectic diffeomorphisms;
\item an {\em exact symplectic homeomorphism} is a homeomorphism that is locally a $C^0$ uniform limit of exact symplectic diffeomorphisms, where we recall that a diffeomorphism $f:\A\rightarrow \A$ is exact symplectic if $f$ is homotopic to Id and  the 1-form $f^*(rd\theta)-rd\theta$ is exact;
\item if $f:\A\rightarrow\A$ is a symplectic homeomorphism, {\em $C^0$ Arnol'd-Liouville coordinates} are given by a symplectic homeomorphism $\Phi:\A\rightarrow \A$ such that the standard foliation into graphs $\T\times \{ c\}$ is invariant by $\Phi\circ f\circ \Phi^{-1}$ and $\Phi\circ f\circ \Phi^{-1}(x, c)=(x+\rho(c), c)$ for some (continuous) function $\rho:\R\rightarrow \R$.

\end{itemize} 
\end{defi}
\begin{remks}
\begin{enumerate}
\item An orientation preserving homeomeorphism is symplectic if and only if it preserves the Lebesgue measure;
\item observe that if $f$ is exact symplectic and maps the graph of $c+u'$ onto the graph of $c'+v'$ where $c, c'\in\R$ and $u, v:\T\rightarrow \R$ are $C^1$, then $c=c'$;
\item if a symplectic homeomorphism $\phi:\A\rightarrow \A$ gives some Arnol'd-Liouville coordinates for $f$, then, composing  $\phi$ with $(x, c)\mapsto (\pm x, \pm c +c_0)$ for some $c_0\in \R$, we may assume that $\phi$ is exact symplectic.
\end{enumerate}
\end{remks}

\begin{theorem}\label{corLip} 
With the notations of Theorem \ref{Tgeneder}, we have equivalence of 
\begin{enumerate}
\item\label{pt1corLip} $f$ is Lipschitz integrable\footnote{See the definition in subsection \ref{ss31}}; 
\item\label{pt2corLip}  the map $u$ is $C^1$ with
\begin{itemize}
\item  $\frac{\partial u}{\partial \theta}$ locally Lipschitz continuous; 
\item$\frac{\partial u}{\partial c}$ uniformly Lipschitz continuous in the variable  $\theta$ on any compact set of $c$'s;
\item  for every compact subset $\ck\subset \A$, there exists a constant $k>-1$ such that $\frac{\partial^2 u}{\partial \theta\partial c}>k$  Lebesgue almost everywhere in $\ck$.
\end{itemize} 
\end{enumerate}
  In this case,  there exists $\Phi:\T\times\R\rightarrow \T\times \R$  exact symplectic homeomorphism that is $C^1$ in the $\theta$ variable  and maps the invariant foliation onto the standard one such that:
$$\forall (x, c)\in \T\times\R,\quad  \Phi\circ f\circ \Phi^{-1}(x, c)=(x+\rho(c), c);$$
where $\rho:\R\rightarrow \R$ is an increasing homeomorphism.
 
\end{theorem}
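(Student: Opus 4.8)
The plan is to bootstrap everything from Theorem~\ref{Tgeneder}. Since a Lipschitz integrable map is in particular $C^0$-integrable, that theorem already gives that $u$ is $C^1$, that the graph of $\gamma_c:=c+u_c'$ is a leaf of the invariant foliation, and that $h_c:\theta\mapsto\theta+\frac{\partial u}{\partial c}(\theta,c)$ is a \emph{semi}-conjugacy between the projected dynamics $g_c$ and a rotation $R_{\rho(c)}$. Three things remain: to refine ``$u\in C^1$'' into the regularity of (\ref{pt2corLip}), to run this backwards, and to upgrade the semi-conjugacy into a genuine conjugacy packaged as a symplectic homeomorphism. The computational backbone is the identity $u(\theta,c)=\int_0^\theta\bigl(\gamma_c(s)-c\bigr)\,ds$, valid because $u(0,c)=0$ and because the cohomology parametrization makes $c$ the average of the leaf $\gamma_c$; I will differentiate it in $c$ to translate between the geometry of the foliation and the analysis of $u$. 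For (\ref{pt1corLip})$\Rightarrow$(\ref{pt2corLip}): Lipschitz integrability provides a (locally) bi-Lipschitz chart straightening the foliation, so $(\theta,c)\mapsto\gamma_c(\theta)$ is locally Lipschitz --- whence $\frac{\partial u}{\partial\theta}=\gamma_c-c$ is locally Lipschitz --- and $\frac{\partial\gamma_c}{\partial c}$ exists a.e., lies in $L^\infty_{\mathrm{loc}}$, and is bounded below by a positive constant on compact sets (this positivity being exactly the Lipschitz regularity of the inverse chart). Differentiating the integral formula in $c$ then gives $\frac{\partial u}{\partial c}(\theta,c)=\int_0^\theta\bigl(\frac{\partial\gamma_c}{\partial c}(s)-1\bigr)\,ds$, so $\frac{\partial u}{\partial c}$ is Lipschitz in $\theta$ uniformly on compact $c$-sets, and $\frac{\partial^2 u}{\partial\theta\partial c}=\frac{\partial\gamma_c}{\partial c}-1>k$ a.e.\ on a given compact set for some $k>-1$.

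Conversely, for (\ref{pt2corLip})$\Rightarrow$(\ref{pt1corLip}): as $u\in C^1$, Theorem~\ref{Tgeneder} already gives $C^0$-integrability with leaves $\gamma_c=c+u_c'$; I only need the straightening $(\theta,c)\mapsto(\theta,\gamma_c(\theta))$ to be bi-Lipschitz. It is Lipschitz --- Lipschitz in $\theta$ by the hypothesis on $\frac{\partial u}{\partial\theta}$, and Lipschitz in $c$ since $\frac{\partial\gamma_c}{\partial c}=1+\frac{\partial^2 u}{\partial c\partial\theta}$ is bounded (Clairaut a.e.\ and the Lipschitz-in-$\theta$ control of $\frac{\partial u}{\partial c}$). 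Its inverse is Lipschitz because $c\mapsto\gamma_c(\theta)$ is increasing with $\frac{\partial\gamma_c}{\partial c}>1+k>0$ a.e. Hence the foliation is genuinely Lipschitz, i.e.\ $f$ is Lipschitz integrable.

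For the coordinates, I take $\Phi=H$ as in \eqref{Esemiconj}. For each fixed $\theta$ the map $c\mapsto\gamma_c(\theta)=c+\frac{\partial u}{\partial\theta}(\theta,c)$ is a homeomorphism of $\R$ (the leaves are pairwise disjoint, vertically ordered and fill $\A$, by Theorem~\ref{Tpseudofol}), so $c=r-\frac{\partial u}{\partial\theta}(\theta,c)$ determines $c=c(\theta,r)$ continuously and $\Phi(\theta,r)=\bigl(h_{c}(\theta),c\bigr)$. The bound $\frac{\partial^2 u}{\partial\theta\partial c}>k>-1$ together with the Lipschitz control makes $h_c$ a bi-Lipschitz orientation-preserving homeomorphism of $\T$, so $\Phi$ is a homeomorphism with $\Phi^{-1}(x,c)=\bigl(h_c^{-1}(x),\gamma_c(h_c^{-1}(x))\bigr)$, and its asserted regularity is read off from that of $u$. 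By construction $\Phi$ maps the leaf $\gamma_{c_0}$ onto $\T\times\{c_0\}$; since $f$ restricted to $\gamma_{c_0}$ is, through the base parametrization, exactly $g_{c_0}$, and $h_{c_0}\circ g_{c_0}=R_{\rho(c_0)}\circ h_{c_0}$ with $h_{c_0}$ now a homeomorphism, one computes $\Phi\circ f\circ\Phi^{-1}(x,c)=(x+\rho(c),c)$, with $\rho$ an increasing homeomorphism by Theorem~\ref{Tpseudofol}(2). Finally, to see that $\Phi$ is an \emph{exact symplectic} homeomorphism, I approximate $u$ in $C^1$ on compact sets by smooth functions $u_n$ still satisfying $1+\frac{\partial^2 u_n}{\partial\theta\partial c}>0$ there; the maps $\Phi_n$ defined from $u_n$ by \eqref{Esemiconj} are then diffeomorphisms isotopic to the identity, and the generating-function identity $c\,dx-r\,d\theta=d\bigl(c\,\tfrac{\partial u_n}{\partial c}-u_n\bigr)$ exhibits each $\Phi_n$ as exact symplectic; as $\Phi_n\to\Phi$ uniformly on compact sets, $\Phi$ is an exact symplectic homeomorphism.

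The step I expect to be the real obstacle is the transverse bookkeeping underlying (\ref{pt1corLip})$\Leftrightarrow$(\ref{pt2corLip}): extracting the \emph{quantitative} lower bound $\frac{\partial^2 u}{\partial\theta\partial c}>k>-1$ from Lipschitz integrability (and recognizing this as precisely the ingredient needed to rebuild a globally --- not merely leafwise --- Lipschitz foliation and a bi-Lipschitz $h_c$), and then using it to turn the semi-conjugacy of Theorem~\ref{Tgeneder} into an honest conjugacy, so that $h_c$ is a homeomorphism even on leaves with rational rotation number. This is exactly what makes the $C^0$ Arnol'd--Liouville normal form possible, and it is the place where the precise content of the definition of Lipschitz integrability is doing the work.
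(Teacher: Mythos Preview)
Your argument for the equivalence (\ref{pt1corLip})$\Leftrightarrow$(\ref{pt2corLip}) is correct and in fact more direct than the paper's. For (\ref{pt1corLip})$\Rightarrow$(\ref{pt2corLip}) the paper does not differentiate the integral formula for $u$; instead it runs a dynamical argument: writing $Df^k$ in the basis adapted to the leaf and the vertical, symplecticity forces $\frac{\partial g_c^k}{\partial\theta}\in[1/K^2,K^2]$ at a.e.\ point, so the iterates $g_c^k$ are equi-Lipschitz; then results of \cite{Arna1} give that each $\eta_c$ is $C^1$ and $g_c$ is $C^1$-conjugate to a rotation, and Herman's results \cite{Her2} make the conjugacies $h_c$ equi-biLipschitz. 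Only after this does the paper read off the regularity of $u$. Your route (differentiate $u(\theta,c)=\int_0^\theta(\gamma_c-c)$ under the integral, use the bi-Lipschitz bounds on $\gamma$ to control $h_c(\theta_2)-h_c(\theta_1)$ directly) gets the three bullets in (\ref{pt2corLip}) without invoking \cite{Arna1} or \cite{Her2}. The (\ref{pt2corLip})$\Rightarrow$(\ref{pt1corLip}) direction and the symplectic-homeomorphism construction via smoothing/generating functions are essentially the paper's arguments.

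There is, however, a genuine gap: you do not establish that $\Phi$ is \emph{$C^1$ in the $\theta$ variable}, which is part of the statement. Your argument only gives that $h_c$ is bi-Lipschitz (from $h_c'=1+\tfrac{\partial^2 u}{\partial\theta\partial c}\in(1+k,K]$ a.e.), and hence that $\Phi$ is a bi-Lipschitz (exact symplectic) homeomorphism. To get $C^1$ in $\theta$ one needs $h_c$ to be $C^1$, and this is precisely what the paper's dynamical detour buys: equi-Lipschitz iterates $g_c^k$ feed into \cite{Arna1}, which upgrades each leaf to $C^1$ and gives a $C^1$ conjugacy $h_c$. Your ``its asserted regularity is read off from that of $u$'' does not cover this, because the hypotheses in (\ref{pt2corLip}) only make $\tfrac{\partial u}{\partial c}$ Lipschitz (not $C^1$) in $\theta$. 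So either supplement your argument with the equi-Lipschitz-iterates step and the appeal to \cite{Arna1}, or weaken the conclusion to bi-Lipschitz in $\theta$.
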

\begin{remks} \begin{enumerate} 
\item  In this case, all the leaves are $C^1$ and the foliation is a $C^1$ lamination\footnote{See the definition at the beginning of subsection \ref{ss31}.}; moreover the Dynamics restricted to every leaf is $C^1$ conjugate to a rotation;
\item   the last part of Theorem \ref{corLip}  provides some  Arnol'd-Liouville coordinates. 
\end{enumerate}
\end{remks}
 Let us give two statements that will bring some light on these Arnol'd-Liouville coordinates. The first one asserts that a symplectic homeomorphism that has an invariant foliation that is exact symplectically homeomorphic to the standard one admits some global Arnol'd-Liouville coordinates.
\begin{propos}\label{Psymfolarn}
 Let $f:\A \to \A$ be an exact symplectic homeomorphism.  Let us assume that $f$ preserves { each leaf of} a foliation $\cF$ into $C^0$ graphs  which is  symplectically homeomorphic (by $\Phi : \A \to \A$) to the standard foliation $\cF_0=\Phi(\cF)$. Then there exists a continuous function $\rho:\R\rightarrow \R$ such that
 $$\forall (\theta, r)\in \A, \quad \Phi\circ f\circ \Phi^{-1}(\theta, r)=(\theta+\rho(r), r).$$
 \end{propos}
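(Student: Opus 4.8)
The plan is to conjugate by $\Phi$ and exploit area preservation. Write $g:=\Phi\circ f\circ\Phi^{-1}$. Since $\Phi$ carries the leaves of $\cF$ bijectively onto those of the standard foliation $\cF_0$, and $f$ fixes each leaf of $\cF$, the homeomorphism $g$ fixes each horizontal circle $\T\times\{c\}$; hence $g(\theta,c)=(g_c(\theta),c)$ for a family $(g_c)_{c\in\R}$ of self-homeomorphisms of $\T$ depending continuously on $c$. Here I use two standard facts: $\Phi$, being a symplectic homeomorphism, preserves orientation and Lebesgue measure (both properties pass to a $C^0$-limit from the approximating symplectic diffeomorphisms --- the Lebesgue-preservation being also the content of the remark following the definition of symplectic homeomorphism), while $f$, being exact symplectic, is area preserving; thus $g$ preserves orientation and Lebesgue measure. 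Since $g$ preserves orientation and fixes the $c$-coordinate (hence the coorientation of each leaf), each $g_c$ is an orientation-preserving homeomorphism of $\T$. It therefore suffices to show that every $g_c$ is a rotation.

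The key step is a Fubini computation. Fix an arc $J=[s,t]\subset\T$ with $0<t-s<1$ and a compact interval $[a,b]\subset\R$. Since $g(J\times[a,b])=\{(g_c(\theta),c)\ :\ \theta\in J,\ c\in[a,b]\}$, Fubini's theorem gives
\[
\mathrm{Leb}\big(g(J\times[a,b])\big)=\int_a^b \ell\big(g_c(J)\big)\,dc ,
\]
where $\ell\big(g_c(J)\big)$ denotes the Lebesgue measure in $\T$ of the image arc. As $g$ preserves Lebesgue measure, the left-hand side equals $(t-s)(b-a)$, so $\int_a^b\ell\big(g_c(J)\big)\,dc=(t-s)(b-a)$ for every interval $[a,b]$; since $c\mapsto\ell\big(g_c(J)\big)$ is continuous, this forces $\ell\big(g_c(J)\big)=t-s$ for every $c$ and every such arc $J$. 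Given $x\neq y$ in $\T$, applying this to the two complementary arcs with endpoints $x$ and $y$ --- whose images under the circle homeomorphism $g_c$ are exactly the two complementary arcs with endpoints $g_c(x)$ and $g_c(y)$ --- we deduce that $g_c$ preserves the distance of $\T$. An orientation-preserving isometry of $\T$ is a rotation, so $g_c=R_{\rho(c)}$ for a unique real number $\rho(c)$.

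To finish, lift $g$ to a continuous map $\tilde g(\theta,c)=(\tilde g_c(\theta),c)$ of $\R^2$; since each $g_c$ is a rotation, $\tilde g_c(\theta)-\theta$ is independent of $\theta$, so $\rho(c):=\tilde g_c(0)$ defines a continuous function $\rho:\R\to\R$ with $g(\theta,c)=(\theta+\rho(c),c)$, that is, $\Phi\circ f\circ\Phi^{-1}(\theta,r)=(\theta+\rho(r),r)$, as claimed. The substantive point is the Fubini argument of the second paragraph; the thing to be careful about --- though it is elementary --- is the bookkeeping of the first paragraph, namely that $g$ is an orientation-preserving, Lebesgue-measure-preserving homeomorphism fixing each leaf of $\cF_0$, which is exactly what reduces the statement to the case of the standard foliation.
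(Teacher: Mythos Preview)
Your proof is correct and follows the same route as the paper's: both reduce to $g=\Phi\circ f\circ\Phi^{-1}$, use that $g$ preserves each horizontal leaf and is area-preserving, and apply a Fubini computation to the region between two horizontals. The paper writes $\theta(r_2-r_1)=\int_{r_1}^{r_2}\big(g_1(\theta,r)-g_1(0,r)\big)\,dr$ and differentiates in $r_2$ to obtain $g_1(\theta,r)=\theta+g_1(0,r)$ directly, whereas you phrase the same conclusion as ``each $g_c$ preserves arc length, hence is a rotation''; these are equivalent and the difference is purely presentational.
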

{ 
\begin{remks}
The same proof applies in the slightly more general case where $f$ preserves the foliation $\cF$, possibly sending a leaf on a different one. Then the conclusion should be modified by: 
$$\exists r_0\in \R, \exists \lambda \in \{-1,1\},\forall (\theta, r)\in \A, \quad \Phi\circ f\circ \Phi^{-1}(\theta, r)=(\theta+\rho(r), \lambda r+r_0).$$
\end{remks}

}

Next, we characterize foliations by continuous  graphs that are exact symplectically homeomorphic to the standard foliation. The reader will notice a strong resemblance  with Theorems \ref{Tgeneder} and \ref{corLip}.

\begin{theorem}\label{TC0arn}
A $C^0$-foliation of $\A$: $(\theta, c) \mapsto \big(\theta , \eta_c(\theta)\big)$, where $\int_\T \eta_c = c$,  is exact symplectically homeomorphic to the standard foliation if an only if there exists a $C^1$ map $u : \A \to \R$ such that 
\begin{itemize}
\item $u(0,r) = 0$ for all $r\in \R$,
\item $\eta_c(\theta) = c+ \frac{\partial u}{\partial \theta}(\theta,c)$ for all $(\theta, c) \in \A$,
\item for all $c\in \R$, the map $\theta \mapsto \theta + \frac{\partial u}{\partial c}(\theta , c)$ is a homeomorphism of $\T$.
\end{itemize}

\end{theorem}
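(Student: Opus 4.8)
The plan is to treat the two implications separately, with the ``only if'' direction being a direct consequence of the earlier machinery and the ``if'' direction requiring an explicit construction of the conjugating homeomorphism. For the ``only if'' direction, suppose $\Phi:\A\to\A$ is an exact symplectic homeomorphism sending the foliation $(\theta,c)\mapsto(\theta,\eta_c(\theta))$ to the standard one $\cF_0$. By Proposition \ref{Psymfolarn} (applied with $f$ replaced by the identity, or rather by observing directly that $\Phi$ straightens the foliation), the map $\Phi$ has the form $\Phi(\theta,\eta_c(\theta)) = (h_c(\theta), c)$ for each leaf, where $h_c$ is an orientation-preserving homeomorphism of $\T$; the normalization by composition with translations (see the third remark after the definition of Arnol'd-Liouville coordinates) lets us assume $h_c(0)=0$. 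I would then define $u$ as a primitive: since $\Phi$ is a $C^0$-limit of exact symplectic diffeomorphisms, the 1-form obtained by pulling back $r\,d\theta$ across $\Phi$ is ``exact'' in the appropriate weak sense, and the standard generating-function identity $x\,dc - r\,d\theta = -d u$ (in the coordinates $x=h_c(\theta)$, $c=\int_\T\eta_c$) produces the desired $C^1$ function $u(\theta,c)$ with $\partial u/\partial\theta = \eta_c(\theta)-c$ and $\partial u/\partial c = h_c(\theta)-\theta$. The normalization $u(0,r)=0$ follows from $h_c(0)=0$ and a choice of constant.

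For the ``if'' direction, I would simply reverse this construction: given the $C^1$ function $u$ with the three listed properties, define
\begin{equation*}
\Phi(\theta, \eta_c(\theta)) = \Bigl(\theta + \tfrac{\partial u}{\partial c}(\theta,c),\; c\Bigr),
\end{equation*}
equivalently $\Phi(\theta,r) = (x,c)$ where $c = r - \partial u/\partial\theta(\theta,c)$ and $x = \theta + \partial u/\partial c(\theta,c)$ — this is exactly the map $H$ of formula \eqref{Esemiconj}. The twist-type hypothesis on $\eta_c$ (it is a genuine foliation, so $c\mapsto\eta_c(\theta)$ is monotonic for fixed $\theta$, or one uses that $\int_\T\eta_c=c$) guarantees that for each $(\theta,r)$ there is a unique $c$, so $\Phi$ is a well-defined bijection; continuity of $\Phi$ and $\Phi^{-1}$ follows from continuity of $u$, its derivatives, and the foliation. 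That $\Phi$ maps $\cF$ to $\cF_0$ is immediate from the definition. The point that $\Phi$ is a \emph{symplectic} homeomorphism — a $C^0$-limit of (exact) symplectic diffeomorphisms — I would get by approximating $u$ by smooth functions $u_n$ (e.g. convolution) whose derivatives still satisfy, for $n$ large, a strict twist condition on compact sets, so that each $u_n$ is a bona fide generating function of an exact symplectic diffeomorphism $\Phi_n$, and $\Phi_n\to\Phi$ locally uniformly; exactness is built into the generating-function formalism.

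The main obstacle, in both directions, is the interplay between the ``foliation'' hypothesis on $\eta_c$ and the invertibility/regularity needed to make $\Phi$ a homeomorphism: one must check that the implicit relation $c = r - \partial u/\partial\theta(\theta,c)$ really does determine $c$ uniquely and continuously from $(\theta,r)$, which is where the condition $\int_\T\eta_c = c$ (forcing the leaves to sweep out $\A$ monotonically in $c$) and the hypothesis that $\theta\mapsto\theta+\partial u/\partial c(\theta,c)$ is a homeomorphism of $\T$ must be combined — note that unlike in Theorem \ref{corLip} we have no Lipschitz control, so the argument must be purely topological (degree theory on $\T$, monotonicity, and properness in the $r$ variable) rather than quantitative. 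A secondary technical point is verifying that the smooth approximants $\Phi_n$ can be taken exact symplectic with the correct behavior at infinity so that the limit $\Phi$ is genuinely an exact symplectic homeomorphism in the sense defined above; this is routine but must be stated carefully.
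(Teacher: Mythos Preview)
Your proposal has a genuine gap in the ``only if'' direction. You write that since $\Phi$ is a $C^0$-limit of exact symplectic diffeomorphisms, pulling back $r\,d\theta$ is ``exact in the appropriate weak sense'' and that the generating-function identity $x\,dc - r\,d\theta = -du$ then produces a $C^1$ function $u$. But $\Phi$ is only a homeomorphism, so there is no pullback of $1$-forms to speak of, and no a priori reason that a generating function should exist, let alone be $C^1$. The function $u$ is already forced on you by the foliation: $u(\theta,c)=\int_0^\theta(\eta_c-c)$ gives $\partial u/\partial\theta = \eta_c - c$ continuously, and the actual content of the theorem is that $\partial u/\partial c$ \emph{exists} and is continuous. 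Your sketch does not explain why.

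The paper's idea is concrete and avoids any form-pulling. Push the vertical foliation $\{\theta\}\times\R$ forward by $\Phi$; transversality to the standard horizontal foliation forces these images to be graphs $r\mapsto\zeta_\theta(r)=\theta+\partial v/\partial r(\theta,r)$ for some $v$ with continuous $\partial v/\partial r$. Now use area preservation of $\Phi$ on the region bounded by two leaves of $\cF$ and two verticals (resp.\ its image, bounded by two horizontals and two graphs of $\zeta$): equating the two areas gives
\[
u(\theta_2,c_2)-u(\theta_1,c_2)-u(\theta_2,c_1)+u(\theta_1,c_1)
= v(\theta_2,c_2)-v(\theta_2,c_1)-v(\theta_1,c_2)+v(\theta_1,c_1),
\]
and setting $\theta_1=0$ shows $\partial u/\partial c$ exists and equals $\partial v/\partial c(\theta,c)-\partial v/\partial c(0,c)$, which is continuous. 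The homeomorphism condition on $\theta\mapsto\theta+\partial u/\partial c$ then follows because this map is $\zeta_\theta(c)$ up to a $c$-dependent translation, and $\theta\mapsto\zeta_\theta(c)$ is a homeomorphism by construction.

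For the ``if'' direction your approximation-by-convolution idea can be made to work (it is essentially what is done in the Lipschitz case of Theorem~\ref{corLip}), but it is more than is needed here: in dimension two an orientation-preserving homeomorphism is symplectic iff it preserves Lebesgue measure, so once you define $\Phi$ via the generating relation it suffices to rerun the same area computation (now with $v=u$) to see that $\Phi$ is area-preserving, hence an exact symplectic homeomorphism. No smoothing is required.
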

Using this characterization, we will be able to give (see  section \ref{Sfollag}) an example of a $C^0$-foliation of $\A$ into smooth graphs that is not exact symplectically homeomorphic to the standard foliation.

\begin{exa}
Let $\varepsilon:\R\rightarrow \R$ be a non-$C^1$ function that is $\frac{1}{4\pi}$-Lipschitz. Then the function
$$(\theta,c)\mapsto u_c(\theta)=\frac{\varepsilon(c)}{2\pi}\sin(2\pi \theta)$$
defines a Lipschitz foliation of $\A$ into smooth graphs of $\theta\in\T\mapsto c+\varepsilon(c)\cos (2\pi \theta)$ that is not symplectically homeomorphic to the standard foliation.\\
A result of Theorem \ref{corLip}  is that this foliation cannot be invariant by a symplectic twist map.
\end{exa}
\begin{remk} Theorem \ref{TC0arn} seems to be global. In fact, one can provide an analogous local statement and an example of a local continuous foliation in $C^0$ graphs that is not  straightenable by a (local) symplectic homeomorphism.
\end{remk}
\begin{cor}\label{Corota}
A symplectic twist map $f: \A \to \A$ is $C^0$-integrable with the dynamics on each leaf conjugated to a rotation if and only if it admits global $C^0$ Arnol'd-Liouville coordinates.
\end{cor}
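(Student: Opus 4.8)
The plan is to deduce the equivalence from the two characterizations already obtained: Theorem \ref{Tgeneder} (together with the refinement in Theorem \ref{corLip}) on one side, and Theorem \ref{TC0arn} with Proposition \ref{Psymfolarn} on the other. First, assume $f$ is $C^0$-integrable with the restricted dynamics on each leaf conjugate to a rotation. By Theorem \ref{Tgeneder}, the continuous function $u$ of Theorem \ref{Tgenecont} is then $C^1$, each leaf is the graph of $c+u_c'$, and the projected dynamics $g_c$ is semi-conjugate to a rotation $R$ via $h_c:\theta\mapsto\theta+\frac{\partial u}{\partial c}(\theta,c)$. The point is to upgrade this semi-conjugacy to a conjugacy: since we are assuming the dynamics on each leaf is conjugate to a rotation, $g_c$ has no wandering intervals, and a degree-one circle endomorphism that is monotone, surjective and semi-conjugate to an irrational rotation with no wandering interval is in fact a homeomorphism conjugate to that rotation; in the rational case the hypothesis forces $g_c$ itself to be conjugate to a rational rotation, hence again $h_c$ is a homeomorphism of $\T$. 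Thus the three bullets of Theorem \ref{TC0arn} are satisfied by this very $u$ (the normalization $u(0,c)=0$ is built in, and $\int_\T\eta_c=c$ holds because $\eta_c=c+u_c'$ with $u_c$ periodic), so the invariant foliation is exact symplectically homeomorphic to the standard one via $\Phi$ with $\Phi(\theta,c)=\big(h_c(\theta),c\big)$. Applying Proposition \ref{Psymfolarn} to $f$ and this foliation yields a continuous $\rho$ with $\Phi\circ f\circ\Phi^{-1}(\theta,r)=(\theta+\rho(r),r)$, i.e. global $C^0$ Arnol'd--Liouville coordinates.

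Conversely, suppose $f$ admits global $C^0$ Arnol'd--Liouville coordinates, i.e. there is a symplectic homeomorphism $\Phi:\A\to\A$ with $\Phi\circ f\circ\Phi^{-1}(x,c)=(x+\rho(c),c)$. Then $\Phi^{-1}$ carries the standard foliation $\T\times\{c\}$ into a foliation of $\A$ by graphs (one checks the images are graphs using that $\Phi$ is isotopic to the identity and fibered-preserving at the topological level; alternatively this is exactly the content of ``symplectically homeomorphic to the standard foliation''). This foliation is invariant by $f$ since $\Phi$ conjugates $f$ to a map preserving each $\T\times\{c\}$, and the restricted dynamics on the leaf $\Phi^{-1}(\T\times\{c\})$ is conjugate, via $\Phi$, to the rotation $x\mapsto x+\rho(c)$ of $\T$. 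Hence $f$ is $C^0$-integrable with the dynamics on each leaf conjugate to a rotation.

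The step I expect to be the main obstacle is the upgrade from semi-conjugacy to genuine conjugacy in the forward direction, and in particular the verification that the leaves produced by Theorem \ref{Tgeneder} are \emph{all} of the leaves of the given foliation and that they are genuine graphs with $h_c$ a homeomorphism \emph{uniformly enough} in $c$ for $\Phi$ to be continuous on $\A$ (not just on each leaf). One must rule out the Denjoy scenario — which is exactly where the hypothesis ``dynamics conjugate to a rotation'' is used — and then invoke the continuity of $u$ and of its partials from Theorem \ref{Tgenecont} to conclude that $\Phi(\theta,c)=\big(\theta+\frac{\partial u}{\partial c}(\theta,c),c\big)$ is a homeomorphism of $\A$, together with the remark (item (3) after the definition of Arnol'd--Liouville coordinates) allowing us to take $\Phi$ exact symplectic so that Proposition \ref{Psymfolarn} applies verbatim. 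The rest is bookkeeping with the normalizations already fixed in the earlier statements.
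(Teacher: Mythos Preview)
Your proposal follows essentially the same route as the paper: use Theorem \ref{Tgeneder} to obtain the $C^1$ function $u$ and the semi-conjugacies $h_c$, upgrade each $h_c$ to a true conjugation, then invoke Theorem \ref{TC0arn} and Proposition \ref{Psymfolarn}. Two points deserve correction.

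For the rational case, your sentence ``the hypothesis forces $g_c$ itself to be conjugate to a rational rotation, hence again $h_c$ is a homeomorphism'' is not a valid deduction: if $g_c$ is, say, the identity, then \emph{every} non-decreasing degree-one continuous map is a semi-conjugacy to $R_0$, and most are not homeomorphisms. The paper instead observes (Theorem \ref{Tdiffrat}, second bullet) that in the $C^0$-integrable setting the particular $h_c$ built from the measure with density $\frac{\partial \eta_c}{\partial c}>0$ is automatically a conjugation when $\rho(c)$ is rational, independently of the hypothesis. For irrational $\rho(c)$ your argument and the paper's coincide: by hypothesis $g_c$ is conjugate to $R_{\rho(c)}$, hence minimal, so the semi-conjugacy is unique up to constants and therefore equals the given conjugacy up to a constant.

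For the converse, the parenthetical ``$\Phi$ is \ldots fibered-preserving at the topological level'' is incorrect: nothing in the definition of Arnol'd--Liouville coordinates forces $\Phi$ to respect the vertical fibration. The clean way to see that $\Phi^{-1}(\T\times\{c\})$ is a graph is Birkhoff's theorem (recalled in point (\ref{ptgraphminimizing}) of the Aubry--Mather review): each leaf is an $f$-invariant essential circle, hence a Lipschitz graph. The paper simply declares this direction ``obvious'' for this reason. The reference to Theorem \ref{corLip} is unnecessary here.
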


To prove all these results, we will use together Aubry-Mather theory, weak K.A.M. theory in the discrete case and also   ergodic theory. Let us detail what will be in the different sections
\begin{itemize}
\item Section \ref{secth11} contains some reminders on twist maps,  Aubry-Mather theory, on discrete weak K.A.M. theory, some new results on the weak K.A.M. solutions and the proof of   Theorems \ref{Tgenecont}  and \ref{Tpseudofol};
\item Section \ref{secfirst} contains the proof of the first implication of Theorem \ref{Tgeneder}; after recalling some generalities about twist maps, we consider the case of the rational curves by using some ergodic theory, then we build the wanted function $u_c$ and prove its regularity by using also ergodic theory;
\item the second implication of Theorem \ref{Tgeneder} is proved in section \ref{secsecond};
\item Theorem \ref{corLip}  is proved in section \ref{secth13};
 \item Section \ref{Sfollag} contains proofs of Proposition \ref{Psymfolarn} and Theorem \ref{TC0arn} and gives an example of a $C^0$ foliation of $\A$ into continuous graphs that is not exact symplectically homeomorphic to the standard foliation.
 \item Appendix \ref{AppA} contains some examples, Appendix \ref{Apfulpseudo} deals with full pseudographs and Appendix  \ref{ssGreenb} recalls some results about Green bundles.
\end{itemize}

\subsection*{Acknowledgements} The authors are grateful to Philippe Bolle and Fr\' ed\' eric Le Roux for insightful discussions that helped clarify and simplify some proofs of this work.

 \section{ Aubry-Mather and weak K.A.M. theories for twist maps and proof of Theorems \ref{Tgenecont} and \ref{Tpseudofol}}\label{secth11}
 \subsection{The setting}\label{ssnota}  The definitions and results that we give here are very classical now. Good references are \cite{ForMat}, \cite{Gol1}, \cite{Mat2}, \cite{Mos}, \cite{Be2}.
 
 Let us introduce some notations
 \begin{notas}
 \begin{itemize}
 \item $\T=\R/\Z$ is the circle and $\A=\T\times \R$ is the annulus ;  $\pi: 
 \R\rightarrow \T$ is the usual projection;
 \item the universal covering of the annulus is denoted by $p:\R^2\rightarrow \A$;
 \item the corresponding projections are $\pi_1: (\theta, r)\in \A\mapsto \theta\in \T$ and $\pi_2: (\theta, r)\in \A\mapsto r\in \R$; we denote also the corresponding projections of the universal covering by $\pi_1$, $\pi_2~: \R^2\rightarrow \R$;
 \item the Liouville 1-form is defined on $\A$ as being $\lambda=\pi_2d\pi_1=rd\theta$; then $\A$ is endowed with the symplectic form $\omega=-d\lambda$.
 \end{itemize}
\end{notas}
 We gave in the introduction the definition of a symplectic twist map.
A $C^2$ generating function $S:\R\times\R\rightarrow \R$   that satisfies the following definition can be associated  to any lift $F$ of such a symplectic twist map $f$.
 
\begin{defi}
The $C^2$ function $S:\R^2\rightarrow \R$ is a {\em generating function} of the lift $F:\R^2\rightarrow \R^2$ of a symplectic twist 
map if
\begin{itemize}
\item $S(\theta+1, \Theta+1)=S(\theta, \Theta)$;
\item $\displaystyle{\lim_{|\Theta-\theta|\rightarrow \infty}\frac{S(\theta, \Theta)}{|\Theta-\theta|}=+\infty}$; we say that $S$ is {\em superlinear}; 
\item for every $\theta_0, \Theta_0\in \R$, the maps $\theta\mapsto \frac{\partial S}{\partial \Theta}(\theta, \Theta_0)$ and $\Theta\mapsto \frac{\partial S}{\partial \theta}(\theta_0, \Theta)$ are decreasing diffeomorphisms of $\R$;
\item for $(\theta, r), (\Theta, R)\in \R^2$, we have the following  equivalence $$F(\theta, r)=(\Theta, R)\Leftrightarrow r=-\frac{\partial S}{\partial \theta}(\theta, \Theta)\quad{\rm and}\quad R=\frac{\partial S}{\partial \Theta}(\theta, \Theta).$$
\end{itemize}
\end{defi}

 \begin{remk} 
 J.~Moser proved in \cite{Mos} that such a twist map is the time 1 map of a $C^2$ 1-periodic in time Hamiltonian $H:\T\times\R\times\R\rightarrow \R$  that is $C^2$ convex in the  fiber direction\footnote{In fact J.~Moser assumed that $f$ is smooth.}, i.e. such that $$\frac{\partial^2 H}{\partial r^2}(\theta, r, t)>0.$$
 Then there exists a relation between the Hamiltonian that was built by J.~Moser and the generating function. Indeed, if we denote by $(\Phi_t)$ the time $t$ map of the Hamiltonian $H$ that is defined on $\R^2$ and by $L$ the associated Lagrangian that is defined by
 $$L(\theta, v, t)=\max_{r\in\R}\big(rv-H(\theta, r, t)\big),$$
then we have
 \begin{itemize}
 \item for every $t\in (0, 1]$, $\Phi_t$ is a symplectic twist map and $\Phi_1=F$;
 \item there exists a $C^1$ time-dependent family of $C^2$ generating functions $S_t$ of $\Phi_t$ such $S_1=S$ and for all $ (\theta, r), (\Theta, R)\in\R^2,$
 $$ \Phi_t(\theta,r)=(\Theta, R)\Rightarrow S_t(\theta, \Theta)=\int_0^tL\big(\pi_1\circ \Phi_s(\theta, r), \frac{\partial}{\partial s}\big( \pi_1\circ \Phi_s(\theta, r)\big),s\big)ds.$$
 \end{itemize}
 In other words, the generating function is also the Lagrangian action.
 \end{remk}
 \subsection{Aubry-Mather theory} Good references for what is in this section are \cite{Ban}, \cite{Gol1} and \cite{Arna3}.
 Let us recall the definition of some particular invariant sets.
 \begin{defi} Let $F:\R^2\rightarrow \R^2$ be a lift of a symplectic twist map $f$.
 \begin{itemize}
 \item a subset $E$ of $\R^2$ is {\em well-ordered} if it is invariant under the translation $(\theta, r)\mapsto (\theta+1, r)$ and $F$ and if for every $x_1, x_2\in E$, we have
 $$\big[ \pi_1(x_1)<\pi_ 1(x_2)\big]\Rightarrow \big[ \pi_1\circ F(x_1)<\pi_1\circ F(x_2)\big];$$
 this notion is independent from the lift of $f$ we use;
 \item a subset $E$ of $\A$ is {\em well-ordered} if $p^{-1}(E)$ is well-ordered;
 \item an {\em Aubry-Mather set } for $f$ is a compact well-ordered set or the lift of such a set;
  \item a piece of orbit $(\theta_k, r_r)_{k\in [a, b]}$ for $F$ is {\em minimizing} if for every sequence  $ (\theta'_k)_{k\in [a, b]}$ with $\theta_a=\theta'_a$ and $\theta_b=\theta'_b$, it holds
 $$\sum_{j=a}^{b-1}S(\theta_j, \theta_{j+1})\leq \sum_{j=a}^{b-1}S(\theta'_j, \theta'_{j+1});$$
 then we say that $(\theta_j)_{j\in [a, b]}$ is a {\em minimizing sequence} or {\em segment};
 \item {  an infinite piece of orbit, or a full orbit for $F$ is minimizing if all its finite subsegments are minimizing;}
 \item an invariant set is said to be minimizing if all the orbits it contains are minimizing.
 \end{itemize}
 \end{defi}
 The following properties of the well-ordered sets are well-known
 \begin{enumerate}
 \item a minimizing orbit and its translated orbits by $(\theta, r)\mapsto (\theta+1, r)$  define a well-ordered set;
 \item the closure of a well-ordered set is a well-ordered set;
 \item\label{Ptcirclehomeom} any well-ordered set $E$ is contained in the (non-invariant) graph of a Lipschitz map $\eta:\T\rightarrow \R$;  it follows that the map $N = \big(\cdot , \eta(\cdot)\big) : \T \to {\rm Graph}(\eta)$ is Lipschitz and so are the maps  $\pi_1\circ f\circ N_{ |\pi_1(E)}$ and $\pi_1\circ f^{-1}\circ N_{ |\pi_1(E)}$ . This  implies that the projected restricted Dynamics $\pi_1\circ f\big(\cdot, \eta(\cdot)\big)_{|\pi_1(E)}$ to an Aubry-Mather set is the restriction of a biLipschitz orientation  preserving circle homeomorphism;
 \item any well-ordered set $E$ in $\R^2$  has a unique rotation number $\rho(E)$ \big(the one of the circle homeomorphism we mentioned in  Point (\ref{Ptcirclehomeom})\big), i.e.
 $$\forall x\in E, \quad \lim_{k\rightarrow\pm\infty} \frac{1}{k}\big(\pi_1\circ F^k(x)-\pi_1(x)\big)=\rho(E);$$
 \item for every $\alpha\in \R$, there exists a minimizing Aubry-Mather set $E$  such that $\rho (E)=\alpha$; 
 \item  if $\alpha$ is irrational, there is a unique  minimizing Aubry-Mather that is minimal (resp. maximal) for the inclusion; the minimal one is then a Cantor set  or a complete graph and the maximal one $\cm(\alpha)$ is the union of the minimal one and orbits that are homoclinic to the minimal one;
 \item if $\alpha$ is rational, any Aubry-Mather set that is minimal for the inclusion is a periodic orbit;
 \item\label{ptgraphminimizing}  any essential invariant curve by a symplectic twist map is in fact a Lipschitz graph (Birkhoff theorem, see \cite {Bir1}, \cite{Fa1}  and \cite {He1}) and a well-ordered set.
   \end{enumerate}
 We will need more precise properties for minimizing orbits.  \begin{defi}{\rm Let $a=(a_k)_{k\in I}$ and $b=(b_k)_{k\in I}$ be two finite or infinite sequences of real numbers. Then
 \begin{itemize}
 \item if $k\in I$, we say that $a$ and $b$ cross at $k$ if $a_k=b_k$;
 \item if $k, k+1\in I$, we say that $a$ and $b$ cross between $k$ and $k+1$ if $(a_k-b_k)(a_{k+1}-b_{k+1})<0$.
 \end{itemize}

}
\end{defi}
{Note that concerning the first item, the traditional terminology also imposes that $(a_{k-1}-b_{k-1})(a_{k+1}-b_{k+1})<0$ when $k$ is in the interior of $I$. However, due to the twist condition, this is automatic  for projections of orbits of $F$  as soon as $a_k=b_k$  if the two orbits are distinct.}

\begin{propos}\label{PAubryfund}{\bf (Aubry fundamental lemma)} Two distinct minimizing sequences cross at most once except possibly at the two endpoints  when the sequence is finite.

\end{propos}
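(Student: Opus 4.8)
The plan is to exploit the variational characterization of minimizing sequences together with the twist property, following the classical exchange argument due to Aubry. Let $a=(\theta_k)$ and $b=(\theta'_k)$ be two distinct minimizing sequences defined on a common index set $I$, and suppose for contradiction that they cross (in either sense of the definition) at two distinct ``times''. The first step is to normalize the situation: if the two sequences agree on three consecutive indices then, by the twist condition and the equivalence $F(\theta,r)=(\Theta,R)\Leftrightarrow r=-\partial_\theta S(\theta,\Theta),\ R=\partial_\Theta S(\theta,\Theta)$, the corresponding orbits of $F$ coincide everywhere, contradicting distinctness; so between any two crossings the sequences are genuinely different and we may assume the two crossings are ``isolated'' from each other in the relevant sense.

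Next I would build a competitor. Say the crossings happen at (or between) indices near $m<n$. Define a new sequence $c=(c_k)$ by $c_k=\theta_k$ for $k\le m$ and $c_k=\theta'_k$ for $k\ge $ (the index just after the first crossing), i.e. one swaps from following $a$ to following $b$ at the first crossing, and symmetrically one forms $d$ by swapping from $b$ to $a$. The key algebraic identity is that the total action is preserved under this exchange: $\sum S(c_j,c_{j+1})+\sum S(d_j,d_{j+1})=\sum S(\theta_j,\theta_{j+1})+\sum S(\theta'_j,\theta'_{j+1})$, because away from the splice points nothing changes and at the splice point the boundary terms match (this is immediate when the crossing is at an index $k$ with $\theta_k=\theta'_k$; when the crossing is between $k$ and $k+1$ one first has to argue that one can reduce to the case of an honest crossing, using continuity / an intermediate-value argument on an auxiliary one-parameter family, or treat it directly by the strict inequality below). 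Since $a$ and $b$ are minimizing with the appropriate endpoints, $c$ and $d$ are also minimizing — but now $c$ (and $d$) is a minimizing sequence with a corner: at the splice index the left derivative of the action in $c_k$ comes from the $a$-orbit and the right derivative from the $b$-orbit, and since $a\ne b$ these are different, so $c$ is not a minimizing sequence for $F$, or one gets a strict decrease from a further local modification. That contradiction closes the argument.

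Concretely, the cleanest route to the contradiction is the strict inequality: if $a$ and $b$ cross twice, then after the double exchange one of the two new configurations can be strictly lowered in action by smoothing the corner (because $S$ is $C^2$ and strictly satisfies the twist monotonicity, so the action of a broken geodesic strictly exceeds that of the true minimizer between the same endpoints unless it is already the minimizer — and it cannot be, since its one-sided velocities disagree). This shows $\sum S(c_j,c_{j+1})+\sum S(d_j,d_{j+1})<\sum S(\theta_j,\theta_{j+1})+\sum S(\theta'_j,\theta'_{j+1})$, so at least one of $c,d$ has strictly smaller action than the corresponding minimizer with the same endpoints, contradicting minimality of $a$ or $b$. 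Hence at most one crossing can occur; and the endpoint exception in the finite case is exactly the place where the exchange argument produces no interior corner, so nothing can be concluded there.

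The main obstacle I anticipate is handling the ``crossing between $k$ and $k+1$'' case uniformly with the ``crossing at $k$'' case, and more importantly making the strict-decrease step rigorous: one must verify that after exchanging, the resulting sequence really does have a genuine corner (equivalently, that $a$ and $b$ do not secretly coincide near the splice), and that a corner forces strictly larger action. Both follow from the twist hypothesis — the correspondence $r=-\partial_\theta S(\theta,\Theta)$ means the ``incoming momentum'' at an index is determined by the pair $(\theta_{k-1},\theta_k)$, so distinct orbits passing through the same point $\theta_k$ have distinct momenta there, which is precisely the corner — but spelling this out carefully, together with the reduction of the between-indices crossing to an at-index crossing (or a direct treatment of it), is the technical heart of the proof.
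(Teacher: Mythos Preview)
The paper does not prove this proposition; it is stated as a classical result with references to Bangert, Gol\'e, and Arnaud. So there is no ``paper's proof'' to compare against, and your sketch is in the right spirit: the exchange (cut-and-paste) argument is exactly how this lemma is proved in the standard references.

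That said, your construction has a genuine gap. You define $c$ by a \emph{single} swap at the first crossing $m$: $c_k=\theta_k$ for $k\le m$ and $c_k=\theta'_k$ for $k>m$, and $d$ symmetrically. On any finite window $[p,q]$ containing both crossings, $c$ then has endpoints $(\theta_p,\theta'_q)$ and $d$ has endpoints $(\theta'_p,\theta_q)$. These are \emph{not} the endpoints of $a$ or of $b$, so the minimality of $a$ and $b$ gives you no lower bound on the actions of $c$ and $d$, and the sentence ``since $a$ and $b$ are minimizing with the appropriate endpoints, $c$ and $d$ are also minimizing'' is unjustified. The identity ${\rm Action}(c)+{\rm Action}(d)={\rm Action}(a)+{\rm Action}(b)$ is correct (for an at-index crossing), but by itself it does not let you conclude anything.

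The fix is to exchange at \emph{both} crossings simultaneously: set $c$ equal to $a$ on $[p,m]\cup[n,q]$ and equal to $b$ on $(m,n)$, and $d$ the complementary choice. Now $c$ has the \emph{same} endpoints as $a$ and $d$ the same as $b$, so minimality gives ${\rm Action}(c)\ge{\rm Action}(a)$ and ${\rm Action}(d)\ge{\rm Action}(b)$. On the other hand the swap identity yields ${\rm Action}(c)+{\rm Action}(d)\le{\rm Action}(a)+{\rm Action}(b)$, with strict inequality whenever a crossing is between indices (this is where the twist condition $\partial^2 S/\partial\theta\,\partial\Theta<0$ enters, via $S(\theta_k,\theta_{k+1})+S(\theta'_k,\theta'_{k+1})>S(\theta_k,\theta'_{k+1})+S(\theta'_k,\theta_{k+1})$ when $(\theta_k-\theta'_k)(\theta_{k+1}-\theta'_{k+1})<0$). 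If both crossings are at indices, equality forces $c$ to be a minimizer with the endpoints of $a$, and then your corner argument applies cleanly at the interior index $m$: the Euler--Lagrange relation $\partial_\Theta S(c_{m-1},c_m)=-\partial_\theta S(c_m,c_{m+1})$ together with the twist forces $a_{m+1}=b_{m+1}$, contradicting distinctness. Your final paragraph already identifies this corner mechanism correctly; it is only the combinatorics of the swap that needs to be repaired.
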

\subsection{Classical results on weak K.A.M. solutions}\label{ssweakk}
Good references are \cite{Be1},  \cite{Be2} or \cite{GT1}. 
We assume that $S$ is a generating function of a lift $F$ of a symplectic twist map $f$.

We define on $C^0(\T, \R)$ the so-called {\em negative Lax-Oleinik maps} $T^c$ for $c\in \R$ as follows:

 if $ u\in C^0(\T, \R)$, we denote by $\tilde u : \R \to \R$ its lift and  
\begin{equation} \label{Edefweakk}  \forall \theta\in \R, \quad    \widetilde T^c \tilde u(\theta)=\inf_{\theta'\in\R} \big(\tilde u(\theta')+S(\theta', \theta)+c(\theta'-\theta)\big).\end{equation}
The function $ \widetilde T^c \tilde u$ is then  $1$-periodic and the negative  Lax-Oleinik operator is defined as the induced map $T^c u : \T \to \R$.

An alternative but equivalent definition is as follows (see also \cite{Za} for similar constructions): define the function 
\begin{equation}\label{projS}
\forall (x,x') \in \T\times \T, \quad S^c (x,x') = \inf_{\substack{\pi(\theta) = x \\ \pi(\theta')=x'}}  S(\theta, \theta')+c(\theta-\theta').
\end{equation}
Then
$$\forall x\in \T,\quad T^c u(x) = \inf_{x'\in \T} u(x')+S^c(x',x).$$

Then it can be proved that there exists a unique function $\alpha: \R\rightarrow \R$ such that the map $\widehat T^c=T^c+\alpha(c)$ that is defined by
$$\widehat T^c(u)=T^c(u)+\alpha(c)$$
has at least one fixed point in $C^0(\T, \R)$, i.e.  if $u\in C^0(\T, \R)$ is such a fixed point, its lift verifies 
\begin{equation} \label{Esolweakk}  \forall \theta\in \R, \quad   \tilde u(\theta) = \inf_{\theta'\in\R} \big(\tilde u(\theta')+\widetilde S(\theta', \theta)+c(\theta'-\theta)+\alpha(c)\big).\end{equation}
 
Such a fixed point is called a {\em weak K.A.M. solution}. It is not necessarily unique.  For example,  if $u$ is a weak K.A.M. solution, so is $ u+k$ for every $k\in \R$, but there can also be other solutions. We denote by $\cs_c$ the set of these weak K.A.M. solutions.  There is no link in general for solutions corresponding to distinct $c$'s.  We recall
\begin{defi}
Let $u:\R\rightarrow\R$ be a function and let $K>0$ be a constant. Then $u$ is $K$-semi-concave if for every $x$ in $\R$, there exists some $p\in \R$ so that:
$$\forall y\in \R,\quad u(y)-u(x)-p(y-x)\leq \frac{K}{2}(y-x)^2.$$

 A function $v :\T\to \R$ is $K$-semi-concave if  its lift $\tilde v: \R \to \R$ is.
\end{defi}
A good reference for semi-concave functions is the appendix A of  \cite{Be2} or \cite{cansin}.
\begin{nota}
If $u\in C^0(\T, \R)$ and $c\in \R$, we will denote by $\cg(c+u')$ the partial graph of $c+u'$. This is a graph above the set of derivability of $u$.\\
When $u$ is semi-concave, we sometimes say that $\cg(c+u')$ is a {\em pseudograph}.
\end{nota}

Let us end  with  definitions:
 \begin{defi}\label{Clarke}
 Let $g:\T \to \R$ be a Lipschitz function (hence derivable almost everywhere). We define 
 $$\forall x\in \T, \quad \partial g(x) = {\rm co} \big\{ (x,p)\in  \T\times\R, \ \ (x,p)\in \overline{ \cg (g')}\big\}.$$
 \end{defi}
The notation co stands for the convex hull  in the fiber direction. The sets  $\partial g(x)$ are non empty, (obviously) convex and compact. They are particular instances of the Clarke subdifferential. This set is a good candidate for a generalized derivative because if $g$ is derivable at $x$ then $g'(x)\in \partial g(x)$. Moreover, if $\partial g(x)$ is a singleton, then $g$ is derivable at $x$. The converse is in general not true, but it is however true for semi-concave functions. 

\begin{defi}
If $g : \T \to \R$ is Lipschitz and $c\in \R$, we define  $\mathcal{PG}(c+g') =\{ (0,c)+\partial g (t), \quad t\in \T\}$.  If $g$ is semi-concave, we call it the full pseudograph of $c+g'$.
\end{defi}

 A proof of the  following proposition is given in Appendix \ref{Apfulpseudo}.
\begin{propos}\label{hausdorff}
Let $(f_n)_{n\in \mathbb N} $ be a sequence of equi-semi-concave functions from $\T$ to $\R$ that converges (uniformly) to a function $f$ (that is hence also semi-concave).

Then $\mathcal{PG}(f'_n) $ converges to $\mathcal{PG}(f')$ for the Hausdorff distance.
\end{propos}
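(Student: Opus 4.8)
The plan is to argue entirely through the parabolic description of the full pseudograph of a semi-concave function. Recall (see Appendix A of \cite{Be2}, or \cite{cansin}) that if $g:\T\to\R$ is $K$-semi-concave, with lift $\tilde g$, then for $(x,p)\in\A$ one has
\[
(x,p)\in\mathcal{PG}(g')\iff \tilde g(y)\le \tilde g(x)+p(y-x)+\tfrac K2(y-x)^2\quad\text{for all }y\in\R,
\]
i.e. $p\in\partial g(x)$ exactly when the parabola of curvature $K$ through $(x,\tilde g(x))$ of slope $p$ lies above $\tilde g$; this rests on the fact that for semi-concave functions $\partial g$ agrees with the super-differential. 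I would first note that if all $f_n$ are $K$-semi-concave then so is $f$ (pick $p_n\in\partial f_n(x)$, extract a convergent subsequence — legitimate because $K$-semi-concave functions on $\T$ are Lipschitz with a constant controlled by $K$ and their oscillation, hence the $f_n$ are equi-Lipschitz since $f_n\to f$ uniformly — and pass to the limit in the quadratic inequality), and that this same equi-Lipschitz bound $L$ confines all the compact sets $\mathcal{PG}(f_n')$ and $\mathcal{PG}(f')$ to $\T\times[-L,L]\subset\A$, so the Hausdorff distance is meaningful.

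Next I would prove the \emph{upper} half-distance estimate: for every $\varepsilon>0$, $\mathcal{PG}(f_n')$ lies in the $\varepsilon$-neighborhood of $\mathcal{PG}(f')$ for $n$ large. This is a soft compactness argument. If it fails there are $\varepsilon>0$ and $(x_n,p_n)\in\mathcal{PG}(f_n')$ with $\mathrm{dist}\big((x_n,p_n),\mathcal{PG}(f')\big)\ge\varepsilon$; along a subsequence $(x_n,p_n)\to(x_*,p_*)$ in the compact region above, and passing to the limit (using $f_n\to f$ uniformly) in $\tilde f_n(y)\le \tilde f_n(x_n)+p_n(y-x_n)+\tfrac K2(y-x_n)^2$ gives $(x_*,p_*)\in\mathcal{PG}(f')$, contradicting the lower bound on the distance.

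The hard part is the \emph{lower} half-distance estimate: $\mathcal{PG}(f')$ lies in the $\varepsilon$-neighborhood of $\mathcal{PG}(f_n')$ for $n$ large. The obstruction is that a supporting parabola of $f$ may touch $f$ at several points, so sliding it onto $f_n$ gives no control on where it settles. I would fix this with a \emph{strictly steeper} parabola. Fix $\eta>0$, and given $(x_0,p_0)\in\mathcal{PG}(f')$ put $\phi(y)=\tilde f(x_0)+p_0(y-x_0)+\tfrac{K+\eta}{2}(y-x_0)^2$; by the characterization $\phi\ge\tilde f$, hence in fact $\phi-\tilde f\ge\tfrac\eta2(y-x_0)^2$, so $\phi$ meets $\tilde f$ only at $x_0$. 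Set $\delta_n=\|f_n-f\|_\infty$. Then $\phi+\delta_n-\tilde f_n\ge 0$, is $\le 2\delta_n$ at $x_0$, and tends to $+\infty$ at $\pm\infty$, so it attains its minimum $s_n\in[0,2\delta_n]$ at some $y_n\in\R$; thus $P_n:=\phi+\delta_n-s_n$ satisfies $P_n\ge\tilde f_n$ with equality at $y_n$. Since $P_n$ has curvature $K+\eta$, this touching yields $\tilde f_n(z)\le\tilde f_n(y_n)+\phi'(y_n)(z-y_n)+\tfrac{K+\eta}{2}(z-y_n)^2$ for all $z$, so $\phi'(y_n)=p_0+(K+\eta)(y_n-x_0)$ is a super-gradient of $\tilde f_n$ at $y_n$, hence lies in $\partial f_n(y_n)$; that is, $(y_n,\phi'(y_n))\in\mathcal{PG}(f_n')$. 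Finally $\tfrac\eta2(y_n-x_0)^2\le(\phi+\delta_n-\tilde f_n)(y_n)=s_n\le2\delta_n$, so $|y_n-x_0|\le2\sqrt{\delta_n/\eta}$ and
\[
\mathrm{dist}\big((x_0,p_0),\mathcal{PG}(f_n')\big)\le(1+K+\eta)\,|y_n-x_0|\le 2(1+K+\eta)\sqrt{\delta_n/\eta},
\]
which is \emph{uniform} in $(x_0,p_0)\in\mathcal{PG}(f')$. Taking $\eta=1$ and combining with the upper estimate gives $\mathcal{PG}(f_n')\to\mathcal{PG}(f')$ for the Hausdorff distance. The only non-routine ingredient is the steepening trick, which simultaneously makes the touching point of the auxiliary parabola unique and supplies the quantitative modulus $|y_n-x_0|=O(\sqrt{\delta_n})$.
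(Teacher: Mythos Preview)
Your proof is correct and takes a genuinely different route from the paper's own argument. Both proofs handle the ``upper'' half of the Hausdorff convergence (any accumulation point of points of $\mathcal{PG}(f_n')$ lies in $\mathcal{PG}(f')$) in the same way, by passing to the limit in the super-derivative inequality. For the ``lower'' half, however, the paper gives a soft topological argument specific to dimension one: it uses that each $\mathcal{PG}(f_n')$ is an essential Lipschitz circle in $\A$ (Lemma~\ref{Lfullman}, proved via the pendulum flow of \cite{Arna2}), and observes that if some point of $\mathcal{PG}(f')$ stayed at positive distance from $\mathcal{PG}(f_n')$ along a subsequence, then by the upper estimate $\mathcal{PG}(f_n')$ would eventually lie in a small neighbourhood of a proper arc of $\mathcal{PG}(f')$, hence fail to separate the annulus into two unbounded components --- a contradiction. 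Your steepening-parabola argument is instead purely analytic: it is more elementary (it needs neither the essential-circle structure nor the pendulum flow), it yields an explicit quantitative rate $d_H\big(\mathcal{PG}(f_n'),\mathcal{PG}(f')\big)=O(\sqrt{\|f_n-f\|_\infty})$, and it generalises verbatim to semi-concave functions on $\T^d$ --- which the paper itself notes in a footnote is the natural level of generality, but for which its topological proof does not apply. One small expository remark: when you conclude that $\phi'(y_n)\in\partial f_n(y_n)$, you are using that a $(K+\eta)$-super-derivative of a $K$-semi-concave function automatically lies in its Clarke subdifferential (because it is a Fr\'echet super-gradient, and for semi-concave functions the Fr\'echet super-differential coincides with the Clarke subdifferential); this is standard and covered by your references, but it would be worth stating explicitly since your opening characterisation is phrased only with the constant $K$.
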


The following results can be found in the papers that we quoted

 \begin{enumerate}[label=(\alph*)]
\item\label{first} the function $\alpha$ is convex   and superlinear;
\item if $u\in C^0(\T, \R)$, then $\widehat T^cu$  is semi-concave and then differentiable Lebesgue almost everywhere; 
\item\label{ptdifftt} the function  $\widehat T_c u$ is differentiable at $x$ if and only if there is only one $y$ where the minimum is attained in Equality (\ref{Esolweakk}); in this case, if $u$ is semi-concave, then it is differentiable at $y$ and we have 
$$f\big(y, c+u'(y)\big)=\big(x, c+(\widehat T^cu)'(x)\big);$$
if $u$ is a weak K.A.M. solution for $\widehat T^c$ {that is differentiable at $x$ then} $\Big(f^{k}\big(x, c+u'(x)\big)\Big)_{k\in\Z_-}$ is a minimizing piece of orbit that is contained in $\cg(c+u')$;
\item\label{same} moreover, for any compact subset $K$ of $\R$, the weak K.A.M. solutions for $T^c$ with $c\in K$ are uniformly semi-concave (i.e. for a fixed constant of semi-concavity) and then uniformly Lipschitz;
\item\label{ptinvgraph} if $u\in C^0(\T, \R)$, then 
$$f^{-1}\big(\overline{\cg(c+(\widehat T^cu)')}\big)\subset \cg(c+u');$$
if $u$ is a weak K.A.M. solution for $\widehat T^c$, then $\cg(c+u')$ satisfies
$$f^{-1}\big(\overline{\cg(c+u')}\big)\subset \cg(c+u')$$
and for every $(\theta,r)\in \overline{\cg(c+u')}$, then $\big(f^{k}(\theta,r)\big)_{k\in\Z_-}$ is minimizing;\\
we will give in Appendix \ref{sspseudo} an example of a backward invariant pseudograph that doesn't correspond to any weak K.A.M. solution;
\item \label{PtAubrypseudo} moreover, if $u$ is a weak K.A.M. solution  for $\widehat T^c$, then the set $$\bigcap_{n\in\N}f^{-n}\big(\cg(c+u')\big)$$ is a { $f$-invariant} minimizing compact well-ordered set to which we can associate a unique rotation number. It results from Mather's theory that this rotation number only depends on $c$ and is equal to $\rho(c)=\alpha'(c)$; because of the convexity of $\alpha$, observe in particular that $\alpha$ is $C^1$ and $\rho$ is continuous and non-decreasing;   

\item it then follows from the first \ref{first} and the previous  \ref{same} and \ref{PtAubrypseudo} points that, as in \ref{same}, for any compact subset $K$ of $\R$, the weak K.A.M. solutions for $T^c$ with $\rho(c)\in K$ are uniformly semi-concave (i.e. for a fixed constant of semi-concavity) and then uniformly Lipschitz;

\item in the setting of point \ref{PtAubrypseudo}, then for every weak K.A.M. solution for $\widehat T^c$, the graph $\cg(c+u')$ contains any minimizing Aubry-Mather set with rotation number $\rho(c)$ that is minimal for the inclusion; we denote the union of these  Aubry sets by $\ca\big(\rho(c)\big)$. If $\rho(c)$ is irrational,  then two possibilities may occur:
\begin{itemize}
\item either $\ca\big(\rho(c)\big)$ is  an invariant  Cantor set  and  $\cg(c+u')$ is contained in the unstable set of the Cantor set $\ca\big(\rho(c)\big)$;
\item  or $\ca\big(\rho(c)\big)=\cg(c+u')$ and $u$ is $C^1$.
\end{itemize}
 If $\rho(c)$ is rational, then $\ca\big(\rho(c)\big)$ is the union of some periodic orbits   and  $\cg(c+u')$ is contained in the union of the unstable sets of these periodic orbits.

\end{enumerate}
We noticed that to any $c\in \R$ there corresponds a unique rotation number $\rho(c)$. But it can happen that distinct numbers $c$ correspond to a same rotation number $R$. In this case, because $\rho(c)=\alpha'(c)$ is non decreasing (because of  point \ref{PtAubrypseudo}), $\rho^{-1}(R)=[c_1, c_2]$ is an interval. It can be proved that this may happen  only for rational $R$'s. 
This is a result of John Mather. A simple proof can be found in \cite[Proposition 6.5]{Be3}.

Finally, when $c$ corresponds to an irrational rotation number $\rho(c)$, then there exists only one weak K.A.M. solution up to constants.  
 The argument comes from \cite{Be3}.

\begin{nota}
When $\rho(c)$ is irrational, we will denote by $u_c$ the  (unique) solution such that $u_c(0)=0$.\end{nota}
\subsection{More results on weak K.A.M. solutions}\label{ssmoreK.A.M.}

 We start with a lemma stating that some minimizing sequences admit a rotation number:

\begin{lemma}
Let $u$ be a weak K.A.M. solution  for $\widehat T^c$. Let $(\theta_0,r)\in \overline{\cg(c+u')}$, and $\tilde \theta_0\in \R$ a lift of $\theta_0$. Let  $(\tilde\theta_k,r_k)_{k\in\Z_-}= \big(F^{k}(\tilde\theta,r)\big)_{k\in\Z_-}$. Then
$$\lim_{k\to -\infty} \frac{\tilde\theta_k}{k} = \rho(c).$$
\end{lemma}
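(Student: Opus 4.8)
The plan is to exploit the structure of the backward orbit of a point in $\overline{\cg(c+u')}$ as a minimizing piece of orbit, together with the fact that this backward orbit accumulates (in the quotient annulus) on the Aubry-Mather set associated with $c$, whose rotation number is $\rho(c)$ by point \ref{PtAubrypseudo}. Concretely, by point \ref{ptinvgraph} the full backward orbit $\big(F^{k}(\tilde\theta_0,r)\big)_{k\in\Z_-}$ is a minimizing piece of orbit contained in $\cg(c+u')$, and by point \ref{same}/point (g) the pseudograph $\cg(c+u')$ sits inside the graph of a fixed Lipschitz function $\eta:\T\to\R$ (being well-ordered in the closure), so the $r_k$ stay bounded and the $\tilde\theta_k$ are essentially determined by the $\theta_k\in\T$; thus it suffices to control $\tilde\theta_k/k$.

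First I would set up the well-ordering: since $\overline{\cg(c+u')}$ is backward invariant and its intersection $\bigcap_n f^{-n}(\cg(c+u'))$ is a well-ordered minimizing Aubry-Mather set $\ca$ of rotation number $\rho(c)$, the backward orbit of $(\theta_0,r)$ together with $\ca$ forms a well-ordered set in $\A$ (use Aubry's fundamental lemma, Proposition \ref{PAubryfund}: two distinct minimizing sequences cross at most once, so the backward orbit cannot oscillate transversally across orbits of $\ca$ infinitely often). Lifting, the sequence $(\tilde\theta_k)_{k\le 0}$ is then squeezed, for $k$ very negative, between two orbits of the lift of $\ca$ with the same asymptotic slope $\rho(c)$: pick orbits $(\tilde\a_k)$, $(\tilde\b_k)$ of $\ca$ with $\tilde\a_0<\tilde\theta_0<\tilde\b_0$, with $\tilde\b_0-\tilde\a_0<1$; by the crossing property and translation-invariance of the well-ordered set, there is $K_0$ with $\tilde\a_k-1 \le \tilde\theta_k \le \tilde\b_k+1$ for all $k\le K_0$ (or the analogous sandwiching that survives under $(\theta,r)\mapsto(\theta+1,r)$). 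Dividing by $k$ and letting $k\to-\infty$, and using that both $\tilde\a_k/k$ and $\tilde\b_k/k$ tend to $\rho(c)$, the squeeze theorem gives $\tilde\theta_k/k\to\rho(c)$.

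The main obstacle I anticipate is the case where $(\theta_0,r)$ itself lies on $\ca$ but is a homoclinic orbit (when $\rho(c)$ is irrational and $\ca\big(\rho(c)\big)$ is a Cantor set, $\cg(c+u')$ lies in its unstable set, per the last point of the list), or the rational case where $\ca\big(\rho(c)\big)$ is a union of periodic orbits and the backward orbit converges to one of them: in both situations one must be careful that "accumulating on a set of rotation number $\rho(c)$" really forces the averaged displacement to converge, not merely to have $\rho(c)$ as a limit point. This is handled by the sandwiching above — homoclinic/heteroclinic orbits of a well-ordered set are themselves trapped between the nearest orbits of the minimal set, which all have slope exactly $\rho(c)$ — so no averaging subtlety remains; the only genuinely quantitative input is the classical fact that every orbit in an Aubry-Mather set has the well-defined rotation number $\rho(c)$, uniformly, which is item (4) in the list of properties of well-ordered sets.

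An alternative, perhaps cleaner, route: observe that $(\tilde\theta_k)_{k\le 0}$ is a minimizing backward orbit, hence by Aubry's lemma it crosses each translate $(\tilde\theta_k+n)_{k\le 0}$ at most once; this monotone comparison with its own integer translates shows directly that $\big(\tilde\theta_{k}-\tilde\theta_{k+1}\big)$, while not monotone, is confined to a bounded window and, combined with the sub/superadditivity of $k\mapsto \tilde\theta_k$ inherited from the twist order, yields convergence of $\tilde\theta_k/k$; identifying the limit with $\rho(c)$ then again uses that the $\omega$-limit (backward) set is contained in $\ca$ with rotation number $\rho(c)$. I expect the write-up to follow the first route, as it is the most transparent and reuses Proposition \ref{PAubryfund} and point \ref{PtAubrypseudo} verbatim.
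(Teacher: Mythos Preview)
Your approach is correct in outline but takes a genuinely different route from the paper. The paper argues by contradiction via a Krylov--Bogoliubov argument: if some subsequence $\tilde\theta_{n_k}/n_k$ stayed $\varepsilon$-away from $\rho(c)$, one extracts a weak limit of the empirical measures $\frac{1}{|n_k|}\sum_{i}\delta_{(\theta_i,r_i)}$ along the backward orbit; this limit is an $f$-invariant probability supported in $\overline{\cg(c+u')}$, hence a Mather measure with rotation number $\rho(c)$, and integrating the displacement $D(\theta,r)=\pi_1\circ F(\theta,r)-\theta$ against it yields a contradiction. Your argument is instead purely order-theoretic: sandwich the backward orbit between two lifted Aubry orbits and use that the latter have slope $\rho(c)$.

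One point in your write-up needs sharpening. Invoking Proposition~\ref{PAubryfund} to say ``two minimizing sequences cross at most once'' does not by itself give the sandwich $\tilde\alpha_k-1\le\tilde\theta_k\le\tilde\beta_k+1$: a single crossing with each translate $(\tilde\alpha_k+m)$ is allowed, and nothing yet prevents $(\tilde\theta_k)$ from crossing $(\tilde\alpha_k)$, then $(\tilde\alpha_k-1)$, then $(\tilde\alpha_k-2)$, etc., drifting away. What actually makes the sandwich work is stronger: for $k\le -1$ \emph{both} orbits lie in the graph $\cg(c+u')$, and the projected backward map $x\mapsto y(x)$ (the unique minimizer in~(\ref{Esolweakk})) is strictly increasing on $\R$. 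Indeed, if $x_1<x_2$ had $y_1=y(x_1)\ge y_2=y(x_2)$, adding the two minimality inequalities gives $S(y_1,x_1)+S(y_2,x_2)\le S(y_2,x_1)+S(y_1,x_2)$, which contradicts the twist (this is exactly the length-two case of Aubry's lemma). Hence if $\tilde\alpha_{-1}<\tilde\theta_{-1}<\tilde\alpha_{-1}+1$ then $\tilde\alpha_k<\tilde\theta_k<\tilde\alpha_k+1$ for all $k\le -1$, with no crossing at all and no need for $K_0$ or the $\pm1$. With this fix your argument is complete.

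As for what each approach buys: the paper's measure-theoretic proof is shorter, uses nothing beyond weak-$*$ compactness, and generalizes verbatim to higher dimensions where no order is available. Your argument is more elementary (no measure theory), and in fact yields the stronger conclusion that the backward orbit is trapped between consecutive integer translates of a single Aubry orbit, which is occasionally useful in its own right.
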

\begin{proof}
Let us argue by contradiction. If this is not the case, there exists $\varepsilon>0$ and a subsequence $n_k\to -\infty$ such that for all $k$, 
$$ \left| \frac{\tilde\theta_{n_k}}{n_k} - \rho(c)\right|  >\varepsilon.$$ 

Up to an extra extraction, we may assume that the following sequence of measures converges:
$$\lim_{k\to -\infty}\frac{1}{|n_k|} \sum_{i=-n_k}^{-1}\delta_{(\tilde\theta_i,r_i)} = \mu.$$
We then know that $\mu$ is a minimizing Mather measure whose support is made of points having  rotation number  $\rho(c)$. Consider the function $D(\tilde\theta,r) = \pi_1\circ F(\tilde\theta,r) - \tilde \theta$. It is a periodic function in $\tilde\theta$ that is the lift of a function on $\A$.

We may compute that 
\begin{multline*}
\left| \int D d\mu - \rho(c)\right| = \left| \lim_{k\to-\infty} \frac{1}{|n_k|} \sum_{i=-n_k}^{-1}D(\tilde\theta_i,r_i) - \rho(c)\right| \\
= \left| \lim_{k\to-\infty} \frac{1}{|n_k|} (\tilde\theta_0 - \tilde\theta_{n_k}) - \rho(c)\right| \geq \varepsilon.
\end{multline*}
This contradicts the fact that $\mu$ has rotation number $\rho(c)$.
\end{proof}

\begin{propos}\label{Pordreirrat}
Let $u_1$, $u_2$ be two weak K.A.M. solutions corresponding to $T^{c_1}$, $T^{c_2}$, such that $\rho(c_1)<\rho(c_2)$. Then we have
\begin{itemize}
\item $c_1<c_2$;
\item { for any $t\in \T$, if $(t,p_1)\in \partial u_1(t)$  and   $(t,p_2)\in \partial u_2(t)$ then $c_1+p_1 < c_2+p_2$; }
\item in particular, at every point of differentiability $t$ of $u_1$ and $u_2$: $c_1+u'_{1}(t)<c_2+u'_{2}(t)$.
\end{itemize}

\end{propos}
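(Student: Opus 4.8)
The plan is to extract all three conclusions from the Aubry fundamental lemma (Proposition \ref{PAubryfund}) together with the fact (point \ref{PtAubrypseudo}) that the backward orbit of any point of $\overline{\cg(c+u')}$ under $F$ is minimizing and, by the preceding lemma, has rotation number $\rho(c)$. First I would fix lifts $\widetilde u_1, \widetilde u_2 : \R \to \R$ of $u_1, u_2$, and argue that the two partial graphs $\cg(c_1 + \widetilde u_1')$ and $\cg(c_2 + \widetilde u_2')$ in $\R^2$ cannot ``touch or interlace'' in the following sense: if there were a fiber over some $\tilde t$ containing both a point $(\tilde t, p_1)$ of $\overline{\cg(c_1+\widetilde u_1')}$ with $c_1 + p_1 \geq c_2 + p_2$ for some point $(\tilde t, p_2)$ of $\overline{\cg(c_2+\widetilde u_2')}$, I would look at the two backward orbits $(\tilde\theta_k^{(i)})_{k\le 0}$ issued from these points. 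Both are minimizing segments (of any finite length), and their projections to $\R$ are therefore minimizing sequences in the sense of the generating function $S$, so Proposition \ref{PAubryfund} applies: two distinct such sequences cross at most once.

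The key step is a monotonicity-at-infinity argument. Since the first orbit has rotation number $\rho(c_1)$ and the second has rotation number $\rho(c_2) > \rho(c_1)$, for $k \to -\infty$ we have $\tilde\theta_k^{(1)} - \tilde\theta_k^{(2)} \to +\infty$ (the slower rotation number overtakes going backward in time). On the other hand, near $k=0$ the twist condition forces the order of $\tilde\theta_{-1}^{(1)}$ versus $\tilde\theta_{-1}^{(2)}$: because $F(\tilde\theta, r) = (\tilde\theta, \cdot)$ has $\pi_1\circ F$ strictly increasing in $r$ for fixed $\tilde\theta$, and $\tilde\theta_0^{(1)} = \tilde\theta_0^{(2)} = \tilde t$ while $r_0^{(1)} = c_1 + p_1 \geq c_2 + p_2 = r_0^{(2)}$, I get $\tilde\theta_{-1}^{(1)} \leq \tilde\theta_{-1}^{(2)}$ (strict unless the two points coincide). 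Combined with $\tilde\theta_k^{(1)} - \tilde\theta_k^{(2)} \to +\infty$, the difference $\tilde\theta_k^{(1)} - \tilde\theta_k^{(2)}$ is $\leq 0$ at $k=-1$ and $\to+\infty$, so by discrete intermediate value reasoning the two sequences cross (or touch) at least once strictly before $k=-1$, i.e. at some $k\le -2$, in addition to the contact at $k=0$. That is two crossings of two distinct minimizing sequences not both at endpoints — and since we may take the finite segments long enough, this contradicts Aubry's lemma. (The degenerate case where the two orbits are not distinct is impossible since they have different rotation numbers.) Hence $c_1 + p_1 < c_2 + p_2$ for every pair of points in the respective Clarke subdifferentials over a common $t$, which is the second bullet; the third bullet is the special case of points of differentiability.

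Finally, for the first bullet $c_1 < c_2$: I would integrate. Since $\int_\T u_i' = 0$ (the derivative of a periodic function integrates to zero, interpreting via the a.e. derivative of the Lipschitz function $u_i$), we have $\int_\T (c_i + u_i') \, d\theta = c_i$. Picking, for a.e.\ $t$, points of differentiability and using the strict inequality $c_1 + u_1'(t) < c_2 + u_2'(t)$ just established, integration over $\T$ yields $c_1 \leq c_2$; strictness then follows either from the strict inequality holding on a set of positive measure, or more robustly by noting that if $c_1 = c_2 =: c$ the two pseudographs would be disjoint graphs over the same $c$ with $u_1' < u_2'$ a.e., forcing $\int (u_2' - u_1') > 0$, contradicting $\int u_i' = 0$. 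The main obstacle I anticipate is handling the boundary/endpoint subtlety in Aubry's lemma cleanly — making sure that the crossing near $k=0$ coming from the twist and the contact forced by the rotation-number mismatch genuinely produce two crossings on a common finite subsegment with neither forced to be an endpoint — and dealing carefully with the closures $\overline{\cg(c_i + u_i')}$ and the Clarke subdifferentials rather than just points of differentiability.
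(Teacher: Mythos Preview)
Your proposal is correct and follows essentially the same route as the paper: assume for contradiction that some $(t,c_1+p_1)\in\overline{\cg(c_1+u_1')}$ lies weakly above some $(t,c_2+p_2)\in\overline{\cg(c_2+u_2')}$, look at the two backward minimizing orbits, use the twist to get $\tilde\theta_{-1}^{(1)}<\tilde\theta_{-1}^{(2)}$ and the rotation-number lemma to force $\tilde\theta_k^{(1)}>\tilde\theta_k^{(2)}$ for $k\ll 0$, and invoke Aubry's fundamental lemma; then integrate the strict a.e.\ inequality to get $c_1<c_2$, and pass to convex hulls for the full Clarke subdifferential statement.

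Two small remarks. First, the paper disposes of the equality case $c_1+p_1=c_2+p_2$ slightly differently: it observes up front that the closures $\overline{\cg(c_i+u_i')}$ are disjoint (since they carry different rotation numbers), so the inequality must be strict and the twist step gives a strict order at $k=-1$ immediately. Your ``the orbits cannot coincide since the rotation numbers differ'' reaches the same conclusion. Second, your worry about the endpoint clause in Aubry's lemma is well placed but harmless here: once the difference $\tilde\theta_k^{(1)}-\tilde\theta_k^{(2)}$ is strictly negative at $k=-1$ and strictly positive at some $k=N\ll 0$, you have a genuine crossing strictly inside $(N,-1]$, together with the coincidence at $k=0$; restricting to $[N,0]$ already contradicts the lemma (two distinct minimizing segments sharing an endpoint cannot cross elsewhere). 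Your integration argument for strictness of $c_1<c_2$ is a touch more elaborate than needed: since $c_1+u_1'(t)<c_2+u_2'(t)$ at every common differentiability point (hence a.e.), integrating over $\T$ gives $c_1<c_2$ directly.
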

\begin{proof}
Let $\tilde u_1$ and $\tilde u_2$ be the lifts of $u_1$ and $u_2$. We introduce the notation $v(t)=\tilde u_{2}(t)-\tilde u_{1}(t) +(c_2-c_1)t$. Then $v$ is Lipschitz and thus Lebesgue everywhere differentiable and equal to a primitive of its derivative. 
Let us assume by contradiction  that  there exist $(x, c_1+p_1)\in \overline{ \cg(c_1+u_1')}$  and   $(x, c_2+p_2)\in \overline{\cg (c_2+u_2')}$
\begin{equation}\label{EineqAubry} c_2+p_2 \leq c_1+p_1.\end{equation}
As $\rho(c_1)\not=\rho(c_2)$,  the two graphs correspond to distinct rotation numbers. Thanks to \ref{ptinvgraph} their closures have no intersections. The inequality (\ref{EineqAubry}) is then strict. \\
We introduce the notation 
$(x^1, y^1)=(x, c_1+p_1)$ and $(x^2, y^2)=(x, c_2+p_2)$.
 Then the orbit of $(x^i, y^i)$ is denoted by $(x^i_k, y^i_k)_{k\in\Z}$. We know that the negative orbits $(x^i_k, y^i_k)_{k\in\Z_-}$, that are contained in the corresponding graphs, are minimizing. Hence the sequences $(x^i_k)_{k\in\Z_-}$ are minimizing. By Aubry's fundamental lemma, we know that they can cross at most once (hence only at $x$). But we have
\begin{itemize}
\item because of the twist condition, as $x^1_0=x^2_0$ and $y^1_0>y^2_0$, then $x^1_{-1}<x^2_{-1}$;
\item as  $\rho(c_1)<\rho(c_2)$,
and thus for $k$ close enough to $-\infty$, we have: $x^1_k>x^2_k$.
\end{itemize}
Finally we find two crossings for two minimizing sequences, a contradiction.

 We have in particular for any point $t$ of derivability of $u_1$ and $u_2$
$$c_1+u_1'(t)<c_2+u_2'(t).$$
Integrating this inequality, we deduce that $c_1<c_2$.

Finally, for any $t\in \T$, 
as for all
$(t,p_1)\in \overline{ \cg(c_1+u_1')}$  and   $(t,p_2)\in \overline{\cg (c_2+u_2')}$
\begin{equation}\label{RineqAubry} c_2+p_2 > c_1+p_1,\end{equation}
taking convex hulls, we get the result.
 
\end{proof}
As to an  irrational rotation  number a unique constant $c$ corresponds, we deduce the following corollary.
\begin{cor}\label{Cordre}
With the same notation as in Proposition \ref{Pordreirrat}, assume that $c_1<c_2$ are such that at least one of $\rho(c_1)$ and $\rho(c_2)$ is irrational. Then
the function $t\in\R\mapsto\tilde  u_{c_2}(t)-\tilde u_{c_1}(t) +t(c_2-c_1)$ is strictly increasing.
\end{cor}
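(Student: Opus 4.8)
The plan is to reduce the statement to Proposition~\ref{Pordreirrat} followed by an elementary integration, the only substantive point being to observe that the hypothesis forces a \emph{strict} inequality between the rotation numbers. First I would note that, since $\rho=\alpha'$ is non-decreasing (point~\ref{PtAubrypseudo}) and $c_1<c_2$, we automatically have $\rho(c_1)\leq\rho(c_2)$. If equality held, then $c_1$ and $c_2$ would both belong to the level set $\rho^{-1}\big(\rho(c_1)\big)$; but by assumption at least one of $\rho(c_1),\rho(c_2)$ is irrational, so under the assumed equality the common value $\rho(c_1)=\rho(c_2)$ is irrational, and by Mather's theorem recalled at the end of subsection~\ref{ssweakk} (see \cite[Proposition 6.5]{Be3}) the $\rho$-preimage of an irrational number is a single point. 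This contradicts $c_1\neq c_2$, so in fact $\rho(c_1)<\rho(c_2)$ and Proposition~\ref{Pordreirrat} applies to any weak K.A.M. solutions $u_{c_1}$ of $\widehat T^{c_1}$ and $u_{c_2}$ of $\widehat T^{c_2}$.

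Next I would set $v(t)=\tilde u_{c_2}(t)-\tilde u_{c_1}(t)+(c_2-c_1)t$ and argue as in the proof of Proposition~\ref{Pordreirrat}: by point~\ref{same} the solutions $u_{c_1},u_{c_2}$ are (uniformly) semi-concave, hence Lipschitz and differentiable Lebesgue-almost everywhere, so $v$ is Lipschitz, hence absolutely continuous and equal to a primitive of its derivative. At every $t$ where both $u_{c_1}$ and $u_{c_2}$ are differentiable --- that is, for a.e.\ $t\in\R$ --- the last item of Proposition~\ref{Pordreirrat} gives
$$v'(t)=\big(c_2+u_{c_2}'(t)\big)-\big(c_1+u_{c_1}'(t)\big)>0.$$
Consequently, for any $s<t$ one has $v(t)-v(s)=\int_s^t v'(\tau)\,d\tau>0$, since an integrable function that is positive almost everywhere has strictly positive integral over every interval of positive length. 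Hence $v$ is strictly increasing. (This conclusion is insensitive to the additive constants in $u_{c_1},u_{c_2}$, so it is unambiguous; note also that no uniform lower bound on $v'$ is needed, although one is available because the two closed full pseudographs are disjoint compact subsets of $\A$.)

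I expect the delicate step to be the first one: realizing that the irrationality hypothesis is \emph{exactly} what is needed to upgrade $\rho(c_1)\leq\rho(c_2)$ to the strict inequality, through Mather's rigidity result for irrational level sets of $\rho$; once that is in hand, the corollary follows from Proposition~\ref{Pordreirrat} and a routine absolutely-continuous-function argument.
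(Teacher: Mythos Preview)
Your argument is correct and follows exactly the route the paper indicates: the one-line observation preceding the corollary (``to an irrational rotation number a unique constant $c$ corresponds'') is precisely your first step, and the integration of the strict pointwise inequality from Proposition~\ref{Pordreirrat} is the implicit remainder. Nothing is missing or different.
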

\begin{remk}
A consequence of Proposition \ref{Pordreirrat} is that the { pseudo}graphs corresponding to the weak K.A.M. solutions having distinct  rotation numbers are vertically ordered with the same order as the one between the rotation numbers.\\
\end{remk} 
Now we  recall some results that are contained in \cite{GT1} (see especially Theorem 9.3).

\begin{nota}
If $\theta_1, \theta_2$ are in $\R$, $  c\in\R$  and $n\geq 1$, we denote by $  \cs^c_n:\R\times \R\rightarrow \R$ the function that is defined by
$$ \cs^c_n(\theta, \Theta)=\inf_{\substack{\theta_0=\theta \\ \theta_n-\Theta\in\Z}} \left\{\sum_{i=1}^n\big( S(\theta_{i-1},\theta_i)+c(\theta_{i-1}-\theta_i)\big)\right\}.$$
 Observe that $\cs^c_n$ is $\mathbb Z^2$-periodic.
 \end{nota}

\begin{enumerate}
\item\label{Matherinv} If $R$ is any rotation number, for any $c\in\rho^{-1}(R)$ and any weak K.A.M. solution $u$ for $\widehat T^c$, the set of invariant Borel probability measures with support in $\cg(c+u')$ is independent from $c\in\rho^{-1}(R)$ and $u$.  Those measures are called Mather measures and the union of the supports of these measures is called the {\em Mather set} for $R$ and is denoted by $\cm(R)$;
\item We say that a function $u$ defined on a part $A$ of $\T$ is $c$-dominated if,    denoting by $\widetilde A$ the lift of $A$ to $\R$,  and $\tilde u$ a lift of $u$, we have
$$\forall \theta, \theta'\in \widetilde A,\forall n\geq 1, \quad  \tilde u(\theta)-\tilde u(\theta')\leq \cs^c_n(\theta', \theta)+ n\alpha(c);$$
a weak K.A.M. solution for $\widehat T^c$ is always $c$-dominated; {if $A=\T$ a function $u : \T\to \R$ is $c$-dominated  if and only if
$$\forall \theta, \theta'\in \R, \quad  \tilde u(\theta)-\tilde u(\theta')\leq S(\theta', \theta)+c(\theta'-\theta)+\alpha(c);$$}
\item\label{K.A.M.ext} If $u:\cm\big(\rho(c)\big)\rightarrow \R$ is dominated, then there exists only one extension $U$ of $u$ to $\T$ that is a weak K.A.M. solution for $\widehat T^c$.   This function is   given by
$$\forall x\in \T, \quad U(x) = \inf_{y\in  \cm\textrm{$\big($}\rho(c)\textrm{$\big)$}} u(y) + S^c(y,x),$$
where $S^c $ is given by   Equation \eqref{projS}. The proof of this last point uses very standard ideas. As we have not found it exactly stated in this way in the literature, we provide a sketch of proof for the reader's convenience in  appendix \ref{appendix-3}.
\end{enumerate}

\subsection{Proof of Theorem \ref{Tgenecont}} 
  When there is no ambiguity in the notations, we will put $\sim$ signs to signify that we consider lifts of functions defined on $\T$.
 We will need the following lemma.
\begin{lemma}\label{Llimweak} Let $(c_n)$ be a  sequence of real numbers convergent to $c$ and let $(u_{c_n})$ be a  sequence of functions uniformly convergent to $v$ such that $u_{c_n}$ is  a weak K.A.M. solution for $\widehat T^{c_n}$. Then $\displaystyle{\lim_{n\rightarrow \infty} u_{c_n}}$ is a weak K.A.M. solution for $\widehat T^c$.
\end{lemma}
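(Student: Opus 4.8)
The plan is to identify $v=\lim_n u_{c_n}$ as a fixed point of the operator $\widehat T^c=T^c+\alpha(c)$; since $v$ is continuous (uniform limit of continuous functions), this is precisely what it means for $v$ to be a weak K.A.M. solution for the class $c$, and the remaining properties (semi-concavity, backward invariance of the associated pseudograph) then follow automatically from the general facts recalled in \S\ref{ssweakk}. Because $\widehat T^{c_n}u_{c_n}=u_{c_n}$ by hypothesis and $u_{c_n}\to v$ uniformly, it suffices to prove the joint-continuity statement
$$\widehat T^{c_n}u_{c_n}\longrightarrow \widehat T^c v\qquad\text{uniformly on }\T,\text{ as }n\to\infty.$$
Since $\alpha$ is continuous (it is convex by point \ref{first}) we have $\alpha(c_n)\to\alpha(c)$, so everything reduces to showing that $T^{c_n}u_{c_n}\to T^c v$ uniformly.

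\textbf{Step 1: uniform localization of the minimizers.} The functions $u_{c_n}$ converge uniformly, hence are uniformly bounded, and $(c_n)$ is bounded. Using the superlinearity of $S$, one checks that for $\theta$ in a fundamental domain $[0,1]$ the function
$$\theta'\longmapsto \tilde u_{c_n}(\theta')+\tilde S(\theta',\theta)+c_n(\theta'-\theta)$$
tends to $+\infty$ as $|\theta'|\to+\infty$, with a rate that is \emph{uniform} in $n$ and in $\theta\in[0,1]$, while its value at $\theta'=\theta$ stays uniformly bounded. Consequently there is a constant $M>0$, independent of $n$, such that for every $\theta\in[0,1]$ the infimum defining $\widetilde{T^{c_n}u_{c_n}}(\theta)$ is attained at some $\theta'\in[-M,M]$; since $v$ and $c$ obey the same bounds, the same $M$ serves for the infimum defining $\widetilde{T^c v}(\theta)$.

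\textbf{Step 2: estimate on the compact window and conclusion.} On $[-M,M]\times[0,1]$ the term $\tilde S(\theta',\theta)$ is common to the two expressions, so
$$\sup_{(\theta',\theta)\in[-M,M]\times[0,1]}\Big|\tilde u_{c_n}(\theta')+c_n(\theta'-\theta)-\tilde v(\theta')-c(\theta'-\theta)\Big|\ \le\ \|\tilde u_{c_n}-\tilde v\|_\infty+|c_n-c|\,(M+1),$$
which tends to $0$ as $n\to\infty$. Since $|\inf f-\inf g|\le\sup|f-g|$ when both infima are attained on the same compact set, Step 1 yields $\sup_{\theta\in[0,1]}\big|\widetilde{T^{c_n}u_{c_n}}(\theta)-\widetilde{T^c v}(\theta)\big|\to0$, and by $1$-periodicity this is uniform convergence on $\T$, i.e. $T^{c_n}u_{c_n}\to T^c v$ uniformly. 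Combining with $\alpha(c_n)\to\alpha(c)$ gives $\widehat T^{c_n}u_{c_n}\to\widehat T^c v$ uniformly; but the left-hand side equals $u_{c_n}$, which converges to $v$. Hence $\widehat T^c v=v$, as desired.

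Everything except Step 1 is the elementary remark that the infimum of a uniformly convergent family of functions over a fixed compact set converges; the one point needing genuine care is the \emph{uniform} localization of the minimizers in Step 1, which is exactly where superlinearity of $S$ together with the uniform bounds on $(u_{c_n})$ and $(c_n)$ enters. An alternative would be to pass to the limit separately in the domination inequality $\tilde u(\theta)-\tilde u(\theta')\le S(\theta',\theta)+c(\theta'-\theta)+\alpha(c)$ (immediate) and in the calibration property (which still requires extracting a convergent subsequence of calibrating points, again via superlinearity); but the operator-convergence route above is the most economical.
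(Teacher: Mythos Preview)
Your proof is correct and follows essentially the same approach as the paper: localize the minimizers to a fixed compact set via the superlinearity of $S$ and the uniform bounds on $(u_{c_n})$ and $(c_n)$, then pass to the limit on that compact using uniform convergence (and continuity of $\alpha$). The paper's write-up is terser but the argument is the same.
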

\begin{proof}
We know from Equation (\ref{Esolweakk}) that 
$$\tilde u_{c_n}(\theta)=\inf_{\theta'\in\R} \big(\tilde u_{c_n}(\theta')+S(\theta', \theta)+c_n(\theta'-\theta)+\alpha(c_n)\big).$$
Because of the superlinearity of $S$ and the fact that the $u_{c_n}$ and $c_n$ are uniformly bounded, there exists a fixed compact set $I$ in $\R$ such that for every $n$, we have
$$\tilde u_{c_n}(\theta)=\inf_{\theta'\in I} \big(\tilde u_{c_n}(\theta')+S(\theta', \theta)+c_n(\theta'-\theta)+\alpha(c_n)\big).$$
We  deduce from the uniform convergence of $(u_{c_n})$ to $v$ that
$$\tilde v(\theta)=\inf_{\theta'\in I} \big(\tilde v(\theta')+S(\theta', \theta)+c(\theta'-\theta)+\alpha(c)\big).$$
As we could do the same proof for $I$ as large as wanted, we have in fact
\begin{equation}\label{ElimwehK.A.M.} \tilde v(\theta)=\inf_{\theta'\in \R} \big(\tilde v(\theta')+S(\theta', \theta)+c(\theta'-\theta)+\alpha(c)\big).\end{equation}

\end{proof}
Let us now prove Theorem  \ref{Tgenecont}.\\
We have seen in subsection \ref{ssweakk} that we have only one possible choice for $u_c$ when $\rho(c)$ is irrational. 
\begin{nota}
We use the notation  $\ci=\{ c\in \R; \ \ \rho(c)\in \R\backslash \Q\}$.
\end{nota}
Let us prove that any extension $c\mapsto u_c$ that maps $c$ on a weak K.A.M. solution for $\widehat T^c$ that vanishes at $0$ is continuous at every $c\in \ci$. Let us consider a monotone sequence $(c_n)$  that converges to $c\in\ci$  and such that the sequence $\rho(c_n)$ is strictly monotone. Then the sequence $(c_n+u'_{c_n})$ is also monotone   
by Corollary \ref{Cordre}  and bounded  because the $u_{c_n}$ are equi semi-concave and then equiLipschitz
so convergent to a function $d$. We define for every $t\in [0, 1]$, $v(t)=\int_0^td(s)ds-ct$. Then we have $d=c+v'$   Lebesgue almost everywhere. As the sequence  $(c_n+u'_{c_n})$ is  bounded,   the Lebesgue dominated convergence Theorem implies that $(u_{c_n})$  pointwise converges to $v$. Because of the Ascoli Theorem, this convergence is uniform.\\
Because of Lemma \ref{Llimweak}, $v$ is a weak K.A.M. solution for $\widehat T^c$. As $v$ vanishes at $0$ and $\rho(c)$ is irrational, then $v$ is the unique weak K.A.M. solution for $\widehat T^c$ that vanishes at $0$, i.e. $v=u_c$.\\
 If now $(c_n)$ is any monotone sequence    that converges to $c\in\ci$, we can choose a  monotone sequence $(c'_n)$  that converges to $c\in\ci$ and satisfies
\begin{itemize}
\item  the sequence $\rho(c'_n)$ is a strictly monotone sequence;
\item for every $n\in \N$, there exists $k_n\in\N$ such that $\rho(c_n)$ is strictly between $\rho(c'_{k_n})$ and $\rho(c'_{k_n+1})$.
\end{itemize}
Then $(c_n+u'_{c_n})$ and $(c'_n+u'_{c'_n})$  have the same limit by Corollary \ref{Cordre} and we conclude as before that $(u_{c_n})$  uniformly converges to $u_c$.

This gives the wanted continuity at every point of $\ci$.\\

 Building  a function $u$, the only problem of continuity we have now to consider is at the points of the set $\rho^{-1}(\Q)$.\\
 Observe that if we find a continuous extension to $\R^2$ such that every $u_c$ is a weak K.A.M. solution for $\widehat T^c$, replacing $u_c$ by $u_c-u_c(0)$, we obtain an extension as wanted.

Let us now assume that $\frac{p}{q}$ is rational and let us introduce the notations $\rho^{-1}(\frac{p}{q})=[a_1,a_2]$,  $\ci_+=\ci\cap [a_2, +\infty)$ and $\ci_-=\ci\cap (-\infty, a_1]$. Observe that $a_1$ (resp. $a_2$) is a limit point of the set $\ci_-$ (resp. $\ci_+$). Let $(c_n)$ be a decreasing sequence in $\ci_+$ that converges to $a_2$.  Then by Proposition \ref{Pordreirrat}, $(c_n+u'_{c_n})_{n\in\N}$ is a decreasing sequence and then $\big(v_n:\theta\in[0, 1]\mapsto c_n\theta+\tilde u_{c_n}(\theta)\big)_{n\in\N}$ is also a decreasing sequence, thus convergent and even uniformly convergent by the Ascoli Theorem. By Lemma \ref{Llimweak}, $\displaystyle{ u_{a_2}(\theta)=\lim_{n\rightarrow \infty} v_n(\theta)-c_n\theta}$ defines a weak K.A.M. solution for $T^{a_2}$ such that $u_{a_2}(0)=0$.
Observe that we have in fact
$$\lim_{\substack{c\in \ci_+\\ c\rightarrow a_2}} u_{c}=u_{a_2}$$
because each such decreasing sequence $(c_n)$ defines a uniformly convergent sequence $(u_{c_n})$ and so the limit doesn't depend on the considered decreasing sequence.\\
In a similar way, we define a weak K.A.M. solution for $T^{a_1}$ by taking increasing sequences $(c_n)$
$$\lim_{\substack{c\in \ci_-\\ c\rightarrow a_1}} u_{c}=u_{a_1}.$$
Because $u_{a_1}$ and $u_{a_2}$ are weak K.A.M. solutions, they are dominated and we have
$$\forall x, y\in \R, \forall n\geq 1, \quad u_{a_i}(x)-u_{a_i}(y)\leq  \cs^{a_i}_n(x, y)+n\alpha (a_i).$$
Let $c=\lambda a_1+(1-\lambda) a_2\in [a_1, a_2]$. We use the notation $v_c=\lambda u_{a_1}+(1-\lambda) u_{a_2}$. Observe that $\alpha (c)=\lambda\alpha(a_1)+(1-\lambda)\alpha(a_2)$ because $\alpha'=\frac{p}{q}$ is constant on $[a_1, a_2]$. Then we have
$$\forall x, y\in \widetilde\cm\Big(\frac{p}{q}\Big),\quad v_c(y)-v_c(x)\leq \cs^c_n(x,y)+n\alpha(c);$$
i.e. $v_c$ is $c$-dominated on $\cm\big(\frac{p}{q}\big)$. We deduce from Point (\ref{K.A.M.ext}) of subsection \ref{ssmoreK.A.M.} that there exists only one extension   $u_c$ of  $v_{c}$ restricted to $\widetilde\cm\big(\frac{p}{q}\big)$ that is a weak K.A.M. solution for $\widehat T^c$.\\

Let us  prove that $c\in [a_1, a_2]\mapsto u_c$ is continuous. By definition of $u_c$, the map $c\mapsto u_{c|\cm(\frac{p}{q})}$ is continuous. Let us now consider a sequence $(c_n)$ in $[a_1, a_2]$ that converges to some $c\in [a_1, a_2]$. By Ascoli Theorem the set $\{ u_{c_n}, n\in\N\}$ is relatively compact for the uniform convergence distance. Let $U$ be a limit point of the sequence $(u_{c_n})$. By Lemma \ref{Llimweak}, we know that $U$ is a weak K.A.M. solution for $\widehat T^c$. Moreover, we have $U_{|\widetilde\cm(\frac{p}{q})}=u_{c|\widetilde\cm(\frac{p}{q})}$.  Using Point (\ref{K.A.M.ext}) of subsection \ref{ssmoreK.A.M.}, we deduce that $U=u_c$ and the wanted continuity.

To conclude that our choice is continuous everywhere, we only have to prove that $a_1$ is a continuity point from the left and that $a_2$ is a continuity point from the right. If we know that there is only one weak K.A.M. solution for $\widehat T^{a_i}$ that vanishes at $0$, we will conclude by the same argument we used for any $c\in\ci$.\\
Let us assume that $v$ is another weak K.A.M. solution for $\widehat T^{a_1}$ that vanishes at $0$. Because of Proposition \ref{Pordreirrat}, we have
$$\forall c<a_1,\quad c+u_c'<a_1+v'.$$
Taking the limit in this inequality and using the definition of $u_{a_1}$, we deduce that $v'\geq u'_{a_1}$. As $0=\int_\T v'=\int_\T u'_{a_1}$, we deduce that $u'_{a_1}=v'$ Lebesgue almost everywhere and then $v=u_{a_1}$.

At the end of the previous proof, we have actually established a fact that will be useful later:
\begin{propos}\label{unicite}
Let $R=\frac{p}{q}$ be a rational number and set $[a_1,a_2] = \rho^{-1}(R)$. Then, up to constants, there exists a unique weak K.A.M. solution for $\widehat T^{a_1}$ (resp. $\widehat T^{a_2}$).
\end{propos}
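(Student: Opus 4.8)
The plan is to note that Proposition \ref{unicite} is essentially already contained in the final paragraphs of the proof of Theorem \ref{Tgenecont}, and to spell out the argument. Fix $R=\frac pq$ and write $[a_1,a_2]=\rho^{-1}(R)$; I will treat $a_1$, the case of $a_2$ being symmetric. Recall from the construction above that $u_{a_1}$ was obtained as the uniform limit $\lim_{c\in\ci_-,\ c\to a_1}u_c$, where $\ci_-=\ci\cap(-\infty,a_1]$ consists of parameters with irrational rotation number, and that this limit exists, is independent of the approximating sequence, and is a weak K.A.M. solution for $\widehat T^{a_1}$ vanishing at $0$ (via Lemma \ref{Llimweak} and the Ascoli theorem). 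Since adding a constant to a weak K.A.M. solution gives again one, it suffices to prove that every weak K.A.M. solution $v$ for $\widehat T^{a_1}$ with $v(0)=0$ equals $u_{a_1}$.

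So let $v$ be such a solution, and pick any $c\in\ci_-$. By monotonicity of $\rho$ we have $\rho(c)\le\rho(a_1)=\frac pq$, and since $\rho(c)$ is irrational this inequality is strict, so Proposition \ref{Pordreirrat} applies to the pair $(u_c,v)$: at every common point of differentiability $t$, $c+u_c'(t)<a_1+v'(t)$. Both $u_c$ and $v$ are Lipschitz (being semi-concave), hence absolutely continuous, so the Lipschitz function $\phi_c(t)=\tilde v(t)-\tilde u_c(t)+(a_1-c)t$ has $\phi_c'>0$ almost everywhere, and is therefore strictly increasing on $[0,1]$. Letting $c\to a_1$ along $\ci_-$ and using the uniform convergence $u_c\to u_{a_1}$, we conclude that $t\mapsto\tilde v(t)-\tilde u_{a_1}(t)$ is non-decreasing on $[0,1]$.

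Finally, $\tilde v$ and $\tilde u_{a_1}$ are lifts of functions on $\T$, hence $\Z$-periodic, so $\tilde v(1)-\tilde u_{a_1}(1)=\tilde v(0)-\tilde u_{a_1}(0)=0$; a non-decreasing function on $[0,1]$ taking equal values at the endpoints is constant, whence $\tilde v=\tilde u_{a_1}$ identically. This gives uniqueness of the solution normalized at $0$, and therefore uniqueness up to constants. For $a_2$ one runs the same argument with $\ci_+=\ci\cap[a_2,+\infty)$ and decreasing sequences $c\to a_2$ (now $\rho(c)>\frac pq$), applying Proposition \ref{Pordreirrat} to $(v,u_c)$ and obtaining that $\tilde v-\tilde u_{a_2}$ is non-increasing, hence constant, hence zero. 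The only step needing a word of care is the passage to the limit $c\to a_1$ in the monotonicity statement, but this is harmless: the inequality is preserved under integration and $u_c\to u_{a_1}$ uniformly, so no $C^1$-type convergence of the derivatives is required, and everything else is a direct invocation of Proposition \ref{Pordreirrat} together with the already-established description of $u_{a_1}$ as a monotone limit.
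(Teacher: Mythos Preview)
Your proof is correct and follows essentially the same route as the paper, which notes that the proposition was already established in the last paragraph of the proof of Theorem~\ref{Tgenecont}: compare an arbitrary solution $v$ at $a_1$ with the $u_c$ for $c\in\ci_-$ via Proposition~\ref{Pordreirrat}, pass to the limit, and use that $\int_\T v'=\int_\T u_{a_1}'=0$. Your version is in fact slightly cleaner at the limit step, since by working with the primitives $\phi_c$ and using only uniform convergence $u_c\to u_{a_1}$ you avoid invoking convergence of derivatives of equi-semi-concave functions.
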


\subsection{More on the constructed function}

In this paragraph, $u : \A \to \R$ is any function given by Theorem \ref{Tgenecont} meaning that 
\begin{itemize}
\item  $u$ is continuous;
\item $u(0,c)=0$;
\item each $u_c= u(\cdot ,c)$ is a weak K.A.M. solution for the cohomology class $c$. 
\end{itemize}
We aim to give the range of the 
 map 
$(\theta,c) \mapsto \big(\theta, c+\frac{\partial u}{\partial \theta}(\theta,c)\big)$.  The following proposition asserts that any twist map is weakly integrable in the sense that $\A$ is covered by Lipschitz circles arising from weak K.A.M. solutions.

Recall that  $\mathcal{PG}(c+u'_c) =\{ (0,c)+\partial u_c (t), \quad t\in \T\}$ is the full pseudograph of $c+u'_c$.

\begin{propos}\label{+}
The following holds:
\begin{equation}\label{a0}
 \bigcup_{c\in \R} \mathcal{PG} (c+u'_c)=\bigcup_{\substack{t\in \T \\ c\in \R}} (0,c)+\partial u_c (t) = \A.
\end{equation}
\end{propos}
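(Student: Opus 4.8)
The plan is to prove the equality in \eqref{a0} by a two-sided inclusion. The inclusion $\bigcup_{c} \mathcal{PG}(c+u'_c) \subseteq \A$ is trivial since each full pseudograph is a subset of $\A$. The content is in the reverse inclusion: every point $(\theta_0, r_0) \in \A$ lies in $\mathcal{PG}(c + u'_c)$ for some $c \in \R$. The key structural tool is Proposition \ref{Pordreirrat} together with the continuity of $u$ from Theorem \ref{Tgenecont}: these say that the full pseudographs $\mathcal{PG}(c+u'_c)$ are \emph{vertically ordered} in a way compatible with $c$, and they vary continuously (in a suitable sense) with $c$. The strategy is then an intermediate-value argument: for fixed $\theta_0$, one shows that the set of ``heights'' $\{\, p : (\theta_0, p) \in \mathcal{PG}(c+u'_c)\, \}$ sweeps monotonically over all of $\R$ as $c$ ranges over $\R$, so there is a $c$ for which $r_0$ is attained.

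First I would set up the relevant objects. For fixed $\theta_0 \in \T$, and for each $c$, let $I_c = \{\, p \in \R : (\theta_0, p) \in \mathcal{PG}(c+u'_c)\,\} = \{\, p : (\theta_0, p) \in (0,c) + \partial u_c(\theta_0)\,\}$; this is a nonempty compact interval (by the properties of the Clarke subdifferential $\partial u_c$ recalled before Definition \ref{Clarke}). Write $m(c) = \min I_c$ and $M(c) = \max I_c$. From Proposition \ref{Pordreirrat} (third bullet, and the general $\partial u$ version in the second bullet) I get that if $\rho(c_1) < \rho(c_2)$ then $M(c_1) < m(c_2)$, so $c \mapsto I_c$ is monotone: the intervals move strictly upward as $c$ (hence $\rho(c)$) increases — except possibly on plateaus of $\rho$, where I will use the finer information. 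Next I need two facts: (i) the ``graph'' $\bigcup_c (\{c\} \times I_c)$ is closed, which follows from the continuity of $u$ together with Proposition \ref{hausdorff} (equi-semi-concavity on compacta gives Hausdorff convergence of the full pseudographs, hence upper semicontinuity of $c \mapsto I_c$); and (ii) $m(c) \to -\infty$ as $c \to -\infty$ and $M(c) \to +\infty$ as $c \to +\infty$. For (ii) one uses that $\rho(c) \to \pm\infty$ (by superlinearity of $\alpha$, point \ref{first}) together with the vertical ordering: since there are pseudographs at arbitrarily large and small rotation numbers and they are all ordered, their heights at $\theta_0$ are unbounded above and below.

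With these in hand, the intermediate-value step runs as follows. Fix $(\theta_0, r_0)$. Let $c_- = \sup\{\, c : M(c) < r_0 \,\}$ and note $c_-$ is finite by (ii). By upper semicontinuity / closedness from (i), there is a sequence $c_n \uparrow c_-$ (or $c_n \downarrow c_-$ from the other side) along which the intervals $I_{c_n}$ accumulate on a point $\geq r_0$ from one side and $\leq r_0$ from the other; combined with the fact that $I_{c_-}$ itself is an interval, one concludes $r_0 \in I_{c_-}$, i.e. $(\theta_0, r_0) \in \mathcal{PG}(c_- + u'_{c_-})$. The only delicate case is when $c_-$ falls in the interior or at the endpoint of a plateau $\rho^{-1}(p/q) = [a_1, a_2]$, where the strict ordering of Proposition \ref{Pordreirrat} does not separate $I_{c}$ for $c, c' \in (a_1, a_2)$; here I would invoke the explicit construction of $u_c$ for $c \in [a_1,a_2]$ as the weak K.A.M. extension of $v_c = \lambda u_{a_1} + (1-\lambda) u_{a_2}$ from the Mather set (from the proof of Theorem \ref{Tgenecont}) to check that the union $\bigcup_{c \in [a_1,a_2]} I_c$ still fills the gap $[M(a_1^-), m(a_2^+)]$ — this uses that the backward-invariant pseudographs $\cg(c+u'_c)$ for $c$ in the plateau sweep across the region between the two bounding Aubry–Mather sets, a fact from Aubry–Mather theory about the dynamics between neighboring periodic orbits.

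\textbf{The main obstacle} I anticipate is exactly this last point: handling the rational plateaus $\rho^{-1}(p/q) = [a_1,a_2]$, where the vertical order is not strict and one must argue that the family $\{\mathcal{PG}(c+u'_c)\}_{c \in [a_1,a_2]}$ genuinely covers the annular region trapped between the pseudographs at $a_1$ and $a_2$, rather than leaving a gap. The clean way to see this is probably to observe that for $c$ ranging over $[a_1,a_2]$ the corresponding backward-invariant pseudographs contain all minimizing orbits of rotation number $p/q$ (point (\ref{PtAubrypseudo}) and the last item of subsection \ref{ssweakk}), and that as $c$ moves from $a_1$ to $a_2$ the selected orbits homoclinic to the periodic orbits sweep out the full ``Birkhoff region of instability'' between the two extreme Aubry–Mather sets of that rotation number; a continuity/connectedness argument in $c$ then closes the gap. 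Everything else is routine once the monotonicity, closedness, and unboundedness of $c \mapsto I_c$ are established.
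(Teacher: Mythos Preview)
Your overall strategy---a connectedness / intermediate-value argument driven by the vertical ordering of Proposition \ref{Pordreirrat} and the Hausdorff continuity of $c\mapsto \mathcal{PG}(c+u'_c)$---is the right one, and it is essentially the paper's approach, just localized to a fixed vertical fiber $\{\theta_0\}\times\R$ rather than phrased via essential circles and Jordan's theorem. However, you have misidentified where the real work lies.

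\textbf{The plateaus are not the obstacle.} If you set $C_-=\{c: M(c)<r_0\}$ and $C_+=\{c: m(c)>r_0\}$, then the closed-graph property of $c\mapsto I_c$ (which follows from uniform convergence of $u_{c_n}\to u_c$ and local equi-semi-concavity, via the super-derivative characterization of $\partial u_c$) makes $M$ upper semicontinuous and $m$ lower semicontinuous, so $C_-$ and $C_+$ are both \emph{open}. They are disjoint (each $I_c$ is nonempty), so once you know they are both nonempty you are done by connectedness of $\R$---no special treatment of rational plateaus is needed. Your proposed ``sweep out the Birkhoff region of instability'' argument is unnecessary and, as you suspect, would be hard to make rigorous.

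\textbf{The actual gap is your point (ii).} You claim $M(c)\to+\infty$ as $c\to+\infty$ follows from ``$\rho(c)\to+\infty$ together with vertical ordering,'' but this is not an argument: an ordered family of disjoint essential circles can perfectly well stay bounded at a given $\theta_0$ (think of horizontal lines $r=1-1/n$). The paper devotes real effort to this step (Lemma \ref{Lhorscomp} and its Corollary): using the generating function $S$ and the twist property, one shows that for $|c|$ large the calibrating point $\tilde\theta$ in the Lax--Oleinik infimum must lie outside a fixed window, which forces $|c+u'_c(\theta_0)|$ to be large at \emph{every} differentiability point $\theta_0$; the vertical ordering with the reference pseudograph $\mathcal{PG}(u'_0)$ then pins down the sign. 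This is where the structure of the twist map actually enters, and it cannot be replaced by soft ordering considerations alone.
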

Let us define two auxiliary functions  with values in $\R\cup\{ +\infty, -\infty\}$:
$$ \forall \theta \in \T,\quad \eta_+(\theta) = \sup
 \big\{p\in \R ; \quad \exists c\in\R; \quad (\theta,p)\in \overline{\cg (c+u_c')}\big\},
$$
and 
$$ \forall \theta \in \T,\quad \eta_-(\theta) = \inf
 \big\{p\in \R ; \quad  \exists c\in\R; \quad (\theta,p)\in \overline{\cg (c+u_c')}\big\}.
$$
Finally define $\A_0 = \big\{(\theta,c)\in \A , \quad \eta_-(\theta)<c<\eta_+(\theta)\big\}$.

The following lemma is proved in Appendix \ref{ssLfullman}.

\begin{lemma}\label{Lfullman}
For all $c\in \R$, $\mathcal{PG}(c+u'_c)$ is a Lipschitz one dimensional compact manifold, hence it is an essential circle. 
\end{lemma}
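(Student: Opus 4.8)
The plan is to realise the lift of $\mathcal{PG}(c+u_c')$ to the universal cover as a bi\-Lipschitz sheared copy of the graph of a maximal monotone operator, and then to pass to the quotient. Fix $c\in\R$. By point \ref{same} of subsection \ref{ssweakk}, $u_c$ is $K$-semi-concave for some $K>0$, so its lift $\tilde u_c:\R\to\R$ is $K$-semi-concave; then $\tilde h:=\tilde u_c-\tfrac K2\theta^2$ is concave and finite on all of $\R$, and $\partial\tilde h$ is a maximal monotone operator (in the decreasing sense) with domain all of $\R$. Unwinding Definition \ref{Clarke} together with the standard description of the super­differential of a concave function, one gets that the fibre of $\partial u_c$ over $\theta$ is $\{\theta\}\times\big(\partial\tilde h(\theta)+K\theta\big)$, whence the preimage $\widetilde{\mathcal G}:=p^{-1}\big(\mathcal{PG}(c+u_c')\big)\subset\R^2$ satisfies
$$\widetilde{\mathcal G}=\big\{(\theta,\,c+K\theta+q)\ :\ \theta\in\R,\ q\in\partial\tilde h(\theta)\big\}=L\big(\mathrm{Graph}\,\partial\tilde h\big),$$
where $L(\theta,q)=(\theta,\,c+K\theta+q)$ is an affine bi\-Lipschitz automorphism of $\R^2$ (a shear composed with a translation).

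Next I would treat $G:=\mathrm{Graph}\,\partial\tilde h$ via the classical Minty parametrisation: the rotation $\Psi(\theta,q)=\tfrac1{\sqrt2}(\theta-q,\theta+q)$ carries $G$ onto the graph $\{(\zeta,\Phi(\zeta)):\zeta\in\R\}$ of a $1$-Lipschitz function $\Phi:\R\to\R$. Here the two facts to check are that $\zeta=\tfrac1{\sqrt2}(\theta-q)$ is a strictly increasing \emph{surjective} parameter on $G$ — strict monotonicity is immediate from monotonicity of $\partial\tilde h$, and surjectivity uses $\mathrm{dom}\,\partial\tilde h=\R$ together with monotonicity to force $\zeta\to\pm\infty$ — and that $\xi=\tfrac1{\sqrt2}(\theta+q)$ varies $1$-Lipschitzly with $\zeta$, again by monotonicity of $\partial\tilde h$. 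Consequently $G$, and hence $\widetilde{\mathcal G}=L(G)$, is a properly embedded one dimensional Lipschitz submanifold of $\R^2$ homeomorphic to $\R$; along the parameter $\zeta$ its $\theta$-coordinate is $\theta(\zeta)=\tfrac1{\sqrt2}\big(\zeta+\Phi(\zeta)\big)$, which is non-decreasing (as $\Phi$ is $1$-Lipschitz) and tends to $\pm\infty$.

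To descend to $\A$, note that $\widetilde{\mathcal G}=p^{-1}\big(\mathcal{PG}(c+u_c')\big)$ is by construction invariant under the deck transformation $\tau:(\theta,r)\mapsto(\theta+1,r)$. Then $\tau$ restricts to a fixed-point-free homeomorphism of $\widetilde{\mathcal G}\cong\R$, which is orientation preserving by the intermediate value theorem, so the $\Z$-action it generates is free and properly discontinuous with quotient a topological circle. Since $p:\R^2\to\A$ is a local bi\-Lipschitz homeomorphism, it follows that $\mathcal{PG}(c+u_c')=\widetilde{\mathcal G}/\Z$ is a compact connected one dimensional Lipschitz submanifold of $\A$ — compactness because the portion of $\widetilde{\mathcal G}$ over $\theta\in[0,1]$ is closed and bounded, $\partial\tilde h$ being locally bounded — hence a Lipschitz circle. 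Finally, $\tau$ shifts the $\theta$-coordinate by exactly $1$ along a fundamental domain of the $\Z$-action, while $\theta(\zeta)$ is monotone, so $\pi_1$ restricts to a degree-one map on $\mathcal{PG}(c+u_c')$, which is therefore essential.

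The identification of $\partial u_c$ with the sheared super­differential of $\tilde h$ (first paragraph) and the fixed-point-free $\Z$-action (third paragraph) are routine; the only point that genuinely needs care is the Minty parametrisation — specifically the surjectivity of $\zeta$, i.e. that the resulting $1$-Lipschitz graph $\Phi$ is defined over \emph{all} of $\R$, which is exactly where the finiteness of $\tilde h$ (equivalently $\mathrm{dom}\,\partial\tilde h=\R$) is used.
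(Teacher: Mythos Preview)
Your argument is correct, and it takes a genuinely different route from the paper's proof. The paper simply quotes \cite{Arna2}: for any $K$-semi-concave $u:\T\to\R$ there is a small $\tau>0$ such that the backward pendulum flow $\varphi_{-\tau}$ carries $\mathcal{PG}(c+u')$ onto the graph of a Lipschitz function on $\T$; since $\varphi_{-\tau}$ is a smooth diffeomorphism isotopic to the identity, the full pseudograph is therefore a Lipschitz essential circle. This is a dynamical/geometric argument, and the pendulum flow is a recurring tool in the paper.

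Your proof is a purely convex-analytic one: you shear off the semi-concavity to reduce to a concave function $\tilde h$, identify the lifted pseudograph with an affine image of $\mathrm{Graph}\,\partial\tilde h$, and then invoke the Minty parametrisation to realise the graph of a maximal monotone operator as a rotated $1$-Lipschitz graph over all of $\R$. The quotient by the deck action and the degree computation for essentiality are carried out carefully. The identification $\partial u_c(\theta)=\{\theta\}\times(K\theta+\partial\tilde h(\theta))$ that you use is exactly the content of the Proposition in Appendix \ref{Apfulpseudo} (equality of the Clarke set with the set of super-derivatives for semi-concave functions), so that step is justified within the paper.

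What each approach buys: the paper's proof is a one-liner given the cited result, and it ties into the same pendulum-flow machinery used elsewhere. Your argument is self-contained, makes no appeal to dynamics, and shows more transparently \emph{why} the result holds for any semi-concave periodic function --- it is essentially the standard fact that the subdifferential of a convex function is a Lipschitz curve after a $45^\circ$ rotation. One small remark: in your Minty step, the surjectivity of $\zeta$ and the connectedness of $G$ (needed to conclude $\Phi$ is defined on all of $\R$) are genuinely the delicate points, and you are right to flag them; both follow from the maximality of $\partial\tilde h$ together with $\mathrm{dom}\,\partial\tilde h=\R$, and your use of the periodicity of $u_c$ (hence boundedness of $\partial^+u_c$) to force $\zeta\to\pm\infty$ is the clean way to see it here.
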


It follows that the set $\A_0$ is open and connected (we will see at the end that it is in fact $\A$).
Indeed, by Jordan's theorem and Proposition \ref{Pordreirrat}, for $c<c'$ such that $\rho(c)<\rho(c')$, the set $\big\{(t,p)\in \A , \quad c+\partial u_c(t)<p<c'+\partial u_{c'} (t)\big\}$ is open and connected. Now $\A_0$ is an increasing union of such sets. 
 
\begin{propos}
The following equality holds:
$$\A_0 = \bigcup_{c\in \R} \mathcal{PG} (c+u'_c).$$
\end{propos}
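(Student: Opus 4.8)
The plan is to prove the two inclusions separately, using Lemma~\ref{Lfullman} together with Proposition~\ref{Pordreirrat} and the definitions of $\eta_\pm$ and $\A_0$. For the inclusion $\bigcup_{c\in\R}\mathcal{PG}(c+u'_c)\subset\A_0$, I would fix a point $(\theta,p)\in\mathcal{PG}(c_0+u'_{c_0})$ for some $c_0$, so $p\in c_0+\partial u_{c_0}(\theta)$ and in particular $(\theta,p)\in\overline{\cg(c_0+u'_{c_0})}$ (up to passing to the convex hull and using that $\partial u_{c_0}(\theta)$ is the fiberwise convex hull of points of $\overline{\cg(u'_{c_0})}$). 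By definition this gives $\eta_-(\theta)\le p\le\eta_+(\theta)$, so the only thing to rule out is equality. If $p=\eta_+(\theta)$, then by the ordering in Proposition~\ref{Pordreirrat} (pseudographs with larger rotation number lie strictly above), picking any $c_1$ with $\rho(c_1)>\rho(c_0)$ would give a point of $\overline{\cg(c_1+u'_{c_1})}$ strictly above $p$ in the fiber over $\theta$, contradicting that $p$ is the supremum — provided such a $c_1$ exists, i.e. provided $\rho$ is not eventually constant. That is where I expect the main subtlety: I must invoke that $\rho=\alpha'$ is superlinear-derivative-like, more precisely that $\rho(c)\to\pm\infty$ as $c\to\pm\infty$ (which follows from $\alpha$ being convex and superlinear, point~\ref{first} of subsection~\ref{ssweakk}), so that rotation numbers strictly larger and strictly smaller than $\rho(c_0)$ are always available. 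Symmetrically $p=\eta_-(\theta)$ is excluded. Hence $(\theta,p)\in\A_0$.

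For the reverse inclusion $\A_0\subset\bigcup_{c\in\R}\mathcal{PG}(c+u'_c)$, I would fix $(\theta,p)$ with $\eta_-(\theta)<p<\eta_+(\theta)$ and exhibit a $c$ with $p\in c+\partial u_c(\theta)$. The idea is a continuity/intermediate-value argument in $c$: consider the set-valued map $c\mapsto c+\partial u_c(\theta)$, which by Lemma~\ref{Lfullman} has nonempty compact (interval) values, and whose graph $\{(c,q):q\in c+\partial u_c(\theta)\}$ is closed by the continuity of $u$ (uniform convergence of equi-semi-concave functions controls the Clarke subdifferential, cf. Proposition~\ref{hausdorff}). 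Define $m(c)=\min(c+\partial u_c(\theta))$ and $M(c)=\max(c+\partial u_c(\theta))$; these are respectively lower and upper semicontinuous, and by Proposition~\ref{Pordreirrat} they are strictly increasing in the rotation number (so nondecreasing in $c$, strictly increasing across distinct rotation numbers). As $c\to+\infty$, $m(c)\to+\infty$ and as $c\to-\infty$, $M(c)\to-\infty$ (again using $\rho(c)\to\pm\infty$ and the vertical ordering). Now I would set $c^\ast=\sup\{c:\ M(c)<p\}$ (finite, by the two limits, and well-defined if the set is nonempty; if it is empty one argues symmetrically with $m$). A closed-graph / limit argument shows $p\in c^\ast+\partial u_{c^\ast}(\theta)$: approaching $c^\ast$ from below gives points $q_n\le p$ converging into the fiber over $c^\ast$, approaching from above gives points $\ge p$ (otherwise $c^\ast$ would not be the sup), and since the fiber over $c^\ast$ is a closed interval containing points $\le p$ and $\ge p$, it contains $p$. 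The one delicate point here is the plateau $\rho^{-1}(p/q)=[a_1,a_2]$: if $c^\ast$ falls inside such an interval, the pseudographs $\mathcal{PG}(c+u'_c)$ for $c\in[a_1,a_2]$ are genuinely distinct and vertically ordered (by the first bullet of Proposition~\ref{Pordreirrat}: $c_1<c_2\Rightarrow c_1+p_1<c_2+p_2$ for the rational case too, once one checks the strict inequality survives — which it does because the closures are disjoint by \ref{ptinvgraph} only across distinct rotation numbers, but \emph{within} a plateau one uses instead that $v(t)=\tilde u_{c_2}(t)-\tilde u_{c_1}(t)+(c_2-c_1)t$ is nondecreasing, hence $c_1+\partial u_{c_1}\le c_2+\partial u_{c_2}$, combined with Corollary~\ref{Cordre}-type strictness at the endpoints). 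So the interval-valued map $c\mapsto c+\partial u_c(\theta)$, restricted to the plateau, still sweeps monotonically and its graph is closed; the intermediate-value argument applies verbatim.

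I would organize the write-up as: (1) recall $\rho(c)\to\pm\infty$; (2) prove $\subset$ via the ordering and the "supremum is not attained" observation; (3) set up $m,M$ and their monotonicity/semicontinuity and limits at $\pm\infty$; (4) run the intermediate-value argument to get $\supset$, with the plateau case handled by the within-plateau monotonicity of $c+\partial u_c$. The main obstacle, as indicated, is step~(4) at a rational plateau: one must be careful that "$p$ strictly between $\eta_-(\theta)$ and $\eta_+(\theta)$" really is hit by some $c+\partial u_c(\theta)$ and does not slip through a gap between two consecutive (in $c$) pseudographs — but Lemma~\ref{Lfullman} (each $\mathcal{PG}$ is a full essential circle, so $\partial u_c(\theta)$ is a full interval, not just a point) together with the closed graph of $c\mapsto c+\partial u_c(\theta)$ closes precisely this gap. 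Everything else is routine compactness and the already-established vertical ordering.
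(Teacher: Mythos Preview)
Your approach is correct and genuinely different from the paper's. The paper does \emph{not} run a fiberwise intermediate-value argument; instead it shows that $\mathcal B=\bigcup_c\mathcal{PG}(c+u'_c)$ is closed in $\A_0$, assumes there is a ball $B\subset\A_0\setminus\mathcal B$, and then obtains a contradiction from a connectedness argument: every $\mathcal{PG}(c+u'_c)$ is an essential circle avoiding $B$, hence lies either ``above $B$'' or ``under $B$''; both classes are open for the Hausdorff distance, and $c\mapsto\mathcal{PG}(c+u'_c)$ is continuous with values in both classes for $c$ large and $c$ small, contradicting the connectedness of $\R$. Your route avoids this Jordan-curve/essential-circle topology entirely and works one vertical fiber at a time, which is more elementary and arguably cleaner; the paper's route has the advantage of treating $\A_0$ globally and reusing the Hausdorff continuity lemma directly.

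One point where you are being overcautious: the plateau case is \emph{not} delicate for your method, and you should drop the appeal to monotonicity of $v(t)=\tilde u_{c_2}(t)-\tilde u_{c_1}(t)+(c_2-c_1)t$ inside a rational interval (this is not established in the paper and you do not need it). Your closed-graph argument already suffices: with $c^\ast=\sup\{c:M(c)<p\}$, for $c_n\downarrow c^\ast$ one has $M(c_n)\ge p$, and any subsequential limit of $(c_n,M(c_n))$ lies in the closed graph, giving $M(c^\ast)\ge p$; since then $c^\ast$ is not in the set it is a genuine accumulation point from the left, and the analogous argument with $m(c_n)\le M(c_n)<p$ gives $m(c^\ast)\le p$. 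No monotonicity within the plateau is invoked anywhere. For the first inclusion your reasoning is fine; just note that a point of $\mathcal{PG}(c_0+u'_{c_0})$ need not lie in $\overline{\cg(c_0+u'_{c_0})}$, but the \emph{endpoints} of $c_0+\partial u_{c_0}(\theta)$ do (Appendix~\ref{Apfulpseudo}), which is all you use.
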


\begin{proof}
We now denote by $\mathcal B$ the right hand side of  the previous equation. Observe that $\cb\subset \A_0$.
First we prove that $\mathcal B$ is closed in $\A_0$. To this end, let $(t_n,p_n)\in \mathcal{PG}(c_n+u'_{c_n})$ be a sequence converging to $(t,p)\in \A_0$. By definition of $\A_0$, there are $C_0<C_1$ such that $P_0<p<P_1$ where $P_0$ and $P_1$ are such that $(t,P_0) \in \mathcal{PG}(C_0+u'_{C_0})$ and $(t,P_1) \in \mathcal{PG}(C_1+u'_{C_1})$. 
Now let $c_-<C_0<C_1<c_+$ be such that $\rho(c_-)<\rho(C_0)$, $\rho(c_+)>\rho(C_1)$  and $\rho(c_-)$, $\rho(c_+)$ are irrational. As the pseudographs are vertically ordered (Proposition \ref{Pordreirrat}), $(t,p)$ is trapped in the open sub-annulus between $\mathcal{PG}(c_-+u'_{c_-})$ and $\mathcal{PG}(c_++u'_{c_+})$. It follows that for $n$ large enough, so is $(t_n,p_n)$.   Hence $\mathcal{PG}(c_n+u'_{c_n})$ is a full pseudograph that contains a point strictly between  $\mathcal{PG}(c_-+u'_{c_-})$and $\mathcal{PG}(c_++u'_{c_+})$. Proposition \ref{Pordreirrat} implies that  $\rho(c_-)\leq\rho(c_n)\leq \rho (c_+)$. As $\rho(c_-)$, $\rho(c_+)$ are irrational, there is a unique weak K.A.M. solution for these rotation numbers and then $\rho(c_n)\notin \{ \rho(c_-), \rho (c_+)\}$. We deduce that
$\rho(c_-)<\rho(c_n)< \rho (c_+)$  and then that $c_-<c_n<c_+$.

Up to extracting, we may assume that $c_n \to c_\infty$ and by continuity of the pseudographs  with respect to $c$ (for the Hausdorff distance, see Proposition \ref{hausdorff}), it follows that $(t,p)\in \mathcal{PG}(c_\infty+u'_{c_\infty})\subset \A_0$.

Next we prove that $\mathcal B=\A_0$. We argue by contradiction, by the first part, if this is not the case, there is an open ball $B=(\theta_0,\theta_1)\times (r_0,r_1)$ such that $\overline B\subset \A_0 \setminus \mathcal B$. 

We say that a topological essential circle $\mathcal C$ is above $B$ if $B$ is included in the lower connected component of $\A \setminus \mathcal C$\footnote{Recall that by Jordan's theorem, $\A \setminus \mathcal C$ has two open connected components, one we call upper that contains $\T \times (k,+\infty)$  and one  we call lower, that contains $\T \times (-\infty, -k)$ for $k$ large enough.} and $\mathcal C$ is under $B$ if $B$ is included in the upper connected component of $\A \setminus \mathcal C$. Therefore, if we set $EC_B$ the set of essential circles  of the family $\mathcal C \subset \A\setminus B$, $EC_B$ is the union of circles above $B$:  $EC_B^+$ and those under $B$: $EC_B^-$.

We will prove that 
\begin{lemma}
Both $EC_B^+$ and $EC_B^-$ are open subsets of $EC_B$ for the Hausdorff distance.
\end{lemma}
\begin{proof}
We prove it for $EC_B^+$. Let $\mathcal C^+$ be a circle above $B$. As the lower connected component of $\A \setminus \mathcal C^+$ is path connected, there is a continuous  path $\gamma : [0,+\infty) \to \A \backslash \cC^+$ such that $\gamma(0)\in B$ and $\gamma(t) = (0,-t)$ for $t$ large enough. Let $\varepsilon>0$ be such that  $\mathcal C^+$ is at distance greater than $\varepsilon$ from $\gamma$. If $\mathcal C^-$ is any circle under $B$, then it must intersect $\gamma$. Hence $d_H(\mathcal C^-,\mathcal C^+)>\varepsilon$ where $d_H$ stands for the Hausdorff distance. This proves the lemma. 
\end{proof}
We will obtain a contradiction as $\R$ is connected and the map $c\mapsto \mathcal{PG}(c+u'_c)$ is continuous for the Hausdorff distance, provided  we prove that for $c$ large, $\mathcal{PG}(c+u'_c)$ is above $B$ while for $c$ small $\mathcal{PG}(c+u'_c)$ is under $B$.

\begin{lemma}
For $c$ large, $\mathcal{PG}(c+u'_c)$ is above $B$ while for $c$ small $\mathcal{PG}(c+u'_c)$ is under $B$.
\end{lemma}
\begin{proof}
We establish only the first fact. Let $\theta\in (\theta_0,\theta_1)$. By definition of $\eta_+$, there exists $C$ such that for $c>C$, then $p>r_1$ for all $p$ verifying $(\theta,p) \in \mathcal{PG} (c+u_c')$. For $t>0$ small it follows that  $p>r_1$ for all $p$ verifying $(\theta,p) \in \varphi_{-t}\big(\mathcal{PG} (c+u_c')\big)$, where $\varphi$ denotes here the flow of the pendulum. Moreover, up to taking $t$ smaller, we may require $\varphi_{-t}\big(\mathcal{PG} (c+u_c')\big)$ disjoint from $B$. But it is proved in \cite{Arna2} that $\varphi_{-t}\big(\mathcal{PG} (c+u_c')\big)$ is the Lipschitz graph for small $t>0$ of a function $\alpha_t : \T \to \R$. Hence it follows from the intermediate value theorem that $\alpha(\theta) >r_1$ for $\theta \in (\theta_0, \theta_1)$ and it becomes obvious that  $B = (\theta_0,\theta_1)\times (r_0,r_1)$ is under $\varphi_{-t}\big(\mathcal{PG} (c+u_c')\big)$ and letting $t\to 0$ and passing to the limit, we obtain that $B$ is under $\mathcal{PG} (c+u_c')$.
\end{proof}

\end{proof}

In order to conclude, we have to prove that $\A = \A_0$ which is equivalent to proving that $\eta_+$ is identically $+\infty $ and $\eta_-$ is identically $-\infty$. We will establish the result for $u_+$.

\begin{lemma}\label{Lhorscomp}
Let $[a,b]$ be a segment, there exists $C >0$ depending on 
 $[a, b]$ such that if $|c|>C$ then 
$$ \forall \theta\in [0,1], \theta' \in [a,b], \quad  S(\theta,\theta') + c(\theta-\theta') > \min_{n\in \Z}S(\theta,\theta'+n) + c(\theta-\theta'-n).$$

\end{lemma}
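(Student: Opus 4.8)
The plan is to compare, for fixed $(\theta,\theta')\in[0,1]\times[a,b]$, the term indexed by $n=0$ on the right-hand side with a single competitor: the term $n=1$ when $c$ is large and positive, and the term $n=-1$ when $c$ is large and negative. The heuristic is transparent: replacing $n=0$ by $n=\pm1$ changes the affine part $c(\theta-\theta'-n)$ by $\mp c$, and this linear-in-$c$ gain beats the change of $S$, which stays bounded because $(\theta,\theta')$ ranges over a compact box. So the minimum over $n\in\Z$ will be realized away from $n=0$ once $|c|$ is large enough, which is exactly the claim.

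Concretely, I would first note that since $S$ is $C^2$, the two functions $(\theta,\theta')\mapsto S(\theta,\theta'+1)-S(\theta,\theta')$ and $(\theta,\theta')\mapsto S(\theta,\theta'-1)-S(\theta,\theta')$ are continuous, hence bounded on the compact set $[0,1]\times[a,b]$; let $M=M(a,b)\geq 0$ be a common bound for their absolute values, and set $C=M$. For $c>C$ and any $(\theta,\theta')\in[0,1]\times[a,b]$ one then computes
$$S(\theta,\theta'+1)+c(\theta-\theta'-1)=\big(S(\theta,\theta'+1)-S(\theta,\theta')\big)-c+\big(S(\theta,\theta')+c(\theta-\theta')\big)<S(\theta,\theta')+c(\theta-\theta'),$$
since $\big(S(\theta,\theta'+1)-S(\theta,\theta')\big)-c\leq M-c<0$. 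Symmetrically, for $c<-C$ one compares with $n=-1$:
$$S(\theta,\theta'-1)+c(\theta-\theta'+1)=\big(S(\theta,\theta'-1)-S(\theta,\theta')\big)+c+\big(S(\theta,\theta')+c(\theta-\theta')\big)<S(\theta,\theta')+c(\theta-\theta'),$$
since $\big(S(\theta,\theta'-1)-S(\theta,\theta')\big)+c\leq M+c<0$. In either case the minimum over $n\in\Z$ on the right-hand side is bounded above by the displayed competing term, hence is strictly smaller than the $n=0$ term, which is the assertion of the lemma.

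Finally I would add a one-line remark that the minimum over $n$ is indeed attained: by superlinearity of $S$, for fixed $(\theta,\theta',c)$ the quantity $S(\theta,\theta'+n)+c(\theta-\theta'-n)$ tends to $+\infty$ as $|n|\to\infty$ — though this is not actually needed, since the argument only uses that the infimum is $\leq$ one specific term. There is essentially no obstacle here; the only point requiring care is the uniformity in $(\theta,\theta')$, which is precisely why the constant $C$ is allowed to depend on $[a,b]$ and why compactness is invoked, and a secondary (bookkeeping) subtlety is that $M$ must control the forward shift $\theta'\mapsto\theta'+1$ and the backward shift $\theta'\mapsto\theta'-1$ simultaneously, so that one and the same $C$ handles both signs of $c$.
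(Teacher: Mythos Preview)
Your proof is correct and follows essentially the same approach as the paper: compare the $n=0$ term with the $n=1$ term when $c>C$ and with the $n=-1$ term when $c<-C$. The only cosmetic difference is that the paper bounds $|S(\theta,\theta'\pm1)-S(\theta,\theta')|$ via the mean value theorem (taking $C=\Delta+1$ where $\Delta=\max_{[0,1]\times[a-1,b+1]}|\partial S/\partial\theta'|$), whereas you bound this difference directly by continuity on the compact box.
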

\begin{proof}
Let us set $\Delta = \max \left\{ \Big|\frac{\partial S}{\partial \theta'}(\theta,\theta')\Big| , \ \ \theta\in [0,1], \theta' \in [a-1, b+1]\right\}$ and $C=\Delta +1$.

If $|c|>C$ two cases may occur:
\begin{itemize}
\item either $c>\Delta+1$. In this case, if $(\theta,\theta')\in  [0,1]\times [a,b]$, by Taylor's inequality we find 
$$S(\theta,\theta')+ c(\theta-\theta') >S(\theta,\theta')+ c\big(\theta-(\theta'+1)\big)+\Delta \geq  S(\theta,\theta'+1)+ c\big(\theta-(\theta'+1)\big);$$
\item or $c <-\Delta -1$, in which case 
$$S(\theta,\theta')+ c(\theta-\theta') >S(\theta,\theta')+ c\big(\theta-(\theta'-1)\big)+\Delta \geq  S(\theta,\theta'-1)+ c\big(\theta-(\theta'-1)\big).$$

\end{itemize}

\end{proof}

\begin{cor}
The function $\eta_+$ is identically $+\infty$.
\end{cor}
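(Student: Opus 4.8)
The plan is to show that for any real number $p$ and any angle $\theta_0 \in \T$, there exists $c\in \R$ with $(\theta_0, p)\in \overline{\cg(c+u_c')}$, or at least with $(\theta_0,p)$ lying below some full pseudograph $\mathcal{PG}(c+u_c')$; this forces $\eta_+(\theta_0) = +\infty$. Equivalently, since we already know (Proposition \ref{+}, whose first equality is what we are proving here, so really from Lemma \ref{Lhorscomp} and the structure of weak K.A.M. solutions) that the pseudographs $\mathcal{PG}(c+u'_c)$ are vertically ordered essential circles depending continuously on $c$, it suffices to prove that $\mathcal{PG}(c+u'_c)$ escapes to $+\infty$ as $c\to +\infty$, uniformly in $\theta$. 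So the real content is: for every $M>0$ there is $C>0$ such that $c>C$ implies $c+u_c'(\theta) > M$ for every $\theta$ at which $u_c$ is differentiable (equivalently $\min \partial(c+u_c)(\theta) > M$ for all $\theta$).

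First I would fix a compact interval $[a,b] \subset \R$ (for instance $[a,b] = [-M-1, M+1]$, or whatever range is needed to bound the relevant derivatives) and invoke Lemma \ref{Lhorscomp} to get the threshold $C$. The point of that lemma is that once $|c| > C$, in the definition of the Lax--Oleinik operator (Equation \eqref{Edefweakk}) the infimum over lifts $\theta'$ of a given angle is never attained at a $\theta'$ whose ``displacement'' $\theta - \theta'$ falls in a prescribed bounded range — the competitor shifted by $\pm 1$ strictly beats it. Concretely, if $c$ is large, then writing the weak K.A.M. equation \eqref{Esolweakk} at a point $\theta$ where $u_c$ is differentiable, the minimizing $\theta'$ satisfies $\theta' - \theta \notin [a,b]$ (after the appropriate normalization), and by point \ref{ptdifftt} of subsection \ref{ssweakk} we have $c + u_c'(\theta) = -\frac{\partial S}{\partial \theta}(\theta, \theta') - c \cdot(\text{sign correction})$... — more cleanly, $f(\theta', c+u_c'(\theta')) = (\theta, c + u_c'(\theta))$, so $c + u_c'(\theta)$ is the $R$-coordinate of the image, and the generating-function relations $r = -\frac{\partial S}{\partial \theta}(\theta',\theta_{\mathrm{next}})$, $R = \frac{\partial S}{\partial \Theta}(\theta',\theta)$ combined with the fact that the displacement is forced to be large (by Lemma \ref{Lhorscomp}) and the superlinearity/monotonicity of $S$ in its slot forces $R$ to be large. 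I would make this quantitative: choose the interval $[a,b]$ in Lemma \ref{Lhorscomp} large enough (and $C$ accordingly) that ``displacement outside $[a,b]$'' together with the twist monotonicity of $\Theta \mapsto \frac{\partial S}{\partial \theta}(\theta_0,\Theta)$ yields $\frac{\partial S}{\partial \Theta}(\theta',\theta) > M$.

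The main obstacle I anticipate is bookkeeping with lifts and with the two ways the large-$c$ behavior can manifest: Lemma \ref{Lhorscomp} rules out displacements in a bounded window, but a priori the minimizing $\theta'$ could be far on either side, and one must check that the ``wrong'' side (the one that would make $R$ small) is actually excluded — this should follow from the fact that for $c$ large the rotation number $\rho(c)$ is large (it is non-decreasing and unbounded, because $\alpha$ is superlinear by point \ref{first}), so orbits in $\cg(c+u_c')$ drift rapidly in the positive $\theta$ direction, pinning down the sign of the displacement. Once the sign is fixed, the estimate is a one-line Taylor/monotonicity bound as in the proof of Lemma \ref{Lhorscomp} itself. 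Finally, knowing $c + u_c'(\theta) > M$ for all $\theta$ and all large $c$, and knowing each $\mathcal{PG}(c+u_c')$ is an essential circle, I conclude $\eta_+(\theta) \geq M$ for every $\theta$; since $M$ is arbitrary, $\eta_+ \equiv +\infty$. The statement for $\eta_-$ is symmetric (take $c \to -\infty$, using the other case of Lemma \ref{Lhorscomp}), and then $\A_0 = \A$, completing Proposition \ref{+}.
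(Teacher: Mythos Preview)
Your proposal is correct and follows essentially the same route as the paper: use Lemma \ref{Lhorscomp} to force the minimizer $\tilde\theta$ in the weak K.A.M. equation outside a window $[-B,B]$ (where $B$ is chosen so that $|\tilde\theta|>B$ makes $|\partial S/\partial\Theta(\tilde\theta,\theta_0)|$ large), then read off $c+u_c'(\theta_0)=\partial S/\partial\Theta(\tilde\theta,\theta_0)$ to get $|c+u_c'(\theta_0)|>A+1$ at every differentiability point. The only real difference is the sign resolution: rather than your rotation-number drift heuristic, the paper observes that $\int_0^1(c+u_c')=c>0$ gives one point with $c+u_c'(\theta_0)>A+1$, and then the vertical ordering of $\mathcal{PG}(c+u_c')$ above $\mathcal{PG}(u_0')$ (Proposition \ref{Pordreirrat}, using $\rho(c)>\rho(0)$ for $c$ large) forces $c+u_c'(\theta)>-A$ everywhere, excluding the case $c+u_c'(\theta)<-(A+1)$.
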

\begin{proof}
Let  us fix $A>0$. We assume that for all $(\theta,p)\in  \mathcal{PG}(u'_0)$, then $|p|\leq A$ (or in other words, $u_0$ is $A$-Lipschitz).  As every map $\theta\mapsto \frac{\partial S}{\partial \Theta}(\theta, \Theta_0)$ is a decreasing diffeomorphism of $\R$, there exists a constant $B>0$ such that for every $\Theta_0\in [0, 1]$, we have
$$\theta>B\Rightarrow  \frac{\partial S}{\partial \Theta}(\theta, \Theta_0)<-(A+1)\quad{\rm and}\quad \theta<-B\Rightarrow \frac{\partial S}{\partial \Theta}(\theta, \Theta_0)>A+1.$$
Let $C$ be the constant given by Lemma \ref{Lhorscomp} for the segment $[-B,B]$  and let us choose   $c>\sup\{ B, C\}$.    Let $\theta_0\in [0, 1]$ be any derivability point of $u_c$.   Because of Lemma \ref{Lhorscomp}, if $\tilde u_c$ is a lift of $u_c$ and if $\tilde\theta$ verifies 
$$\tilde u_c (\theta_0) = \inf_{\theta\in \R} \tilde u_c(\theta) + S(\theta,\theta_0) +c(\theta-\theta_0)= \tilde  u_c(\tilde\theta) + S(\tilde\theta,\theta_0) + c(\tilde\theta-\theta_0),$$
then $\tilde \theta \notin [-B,B]$ and then $\Big|\frac{\partial S}{\partial \Theta}(\tilde\theta, \theta_0)\Big|>A+1$.

 We deduce from point \ref{ptdifftt} of section \ref{ssweakk} that $f\big(\tilde\theta, c+u_c'(\tilde \theta)\big)=\big(\theta_0, c+u_c'(\theta_0)\big)$ and then
 $$c+\tilde u_c'(\theta_0) =   \frac{\partial S}{\partial \Theta} (\tilde\theta,\theta_0),$$
and then $|c+\tilde u_c'(\theta_0)| >A+1$.\\
As  $\int_0^1 \big(c+\tilde u'_c(s)\big) ds = c >0$, we can choose $\theta_0$ such that  $c+\tilde u'_c(\theta_0) >0$ and so $c+\tilde u_c'(\theta_0)>A+1$. 

As the pseudographs are vertically ordered (Proposition \ref{Pordreirrat}), $\mathcal{PG}(c+u'_c)$ is above $\mathcal{PG}(u'_0)$. We conclude that for all derivability point $\theta$ of $u_c $ then $c+\tilde u_c'(\theta) >A+1$. Finally, the whole full pseudograph $\mathcal{PG}(c+u'_c)$ lies above the circle $\{(t,A) , \ \ t\in \T\}$.



We have just established that if $c> B$, then $\mathcal{PG}(c+u'_c)$ lies above the circle $\{(t,A) , \ \ t\in \T\}$, that concludes the proof.

\end{proof}

\section{Proof of the implication (\ref{pt1Tgeneder}) $\Rightarrow$  (\ref{pt2Tgeneder})  in   Theorem \ref{Tgeneder}}\label{secfirst}

We assume that $f:\A\rightarrow \A$ is a $C^k$ symplectic twist map (with $k\geq 1$) that has a continuous invariant foliation into continuous graphs $a\in\R\mapsto  \eta_a\in C^{0}(\T, \R)$ where we choose $\eta_a(0)=a$. Then Birkhoff's theorem (see \cite {Bir1}, \cite{Fa1}  and \cite {He1}) implies that all the $\eta_a$ are Lipschitz.\\
\begin{nota}
For every $a\in\R$, we will denote by $g_a:\T\rightarrow \T$ the restricted-projected Dynamics to the graph of $\eta_a$, i.e
$$g_a(\theta)=\pi_1\circ f\big(\theta, \eta_a(\theta)\big).$$
\end{nota}
\subsection{Some generalities}\label{ss31}\hglue 1 truemm

\begin{nota}
\begin{itemize}
\item In $\R^2$ we denote by $B(x, r)$ the open disc for the usual Euclidean distance with center $x$ and radius $r$;
\item we denote by $R_\alpha:\T\rightarrow \T$ the rotation $R_\alpha(\theta)=\theta+\alpha$;
\item if E is a finite set, $\sharp(E)$ is the number of elements it contains;
\item we denote by $\lfloor \cdot\rfloor : \R\rightarrow \Z$ the integer part.
\end{itemize}
\end{nota}
\begin{defi}
\begin{itemize}
\item We say that $a\mapsto\eta_a$ defines a   {\em Lipschitz foliation} if $(\theta, a)\mapsto \big(\theta, \eta_a(\theta)\big)$ is an homeomorphism that is locally biLipschitz; if $f$ has an invariant Lipschitz foliation, $f$  is {\em Lipschitz integrable};
\item we say that $a\mapsto\eta_a$ defines a   {\em $C^k$ foliation} if $(\theta, a)\mapsto \big(\theta, \eta_a(\theta)\big)$ is a $C^k$ diffeomorphism;  if $f$ has an invariant $C^k$ foliation, $f$  is {\em $C^k$ integrable};
\item we say that $a\mapsto\eta_a$ defines a   {\em $C^k$ lamination} if $(\theta, a)\mapsto \big(\theta, \eta_a(\theta)\big)$ is an homeomorphism,  every $\eta_a$ is $C^k$ and the map $a\mapsto \eta_a$ is continuous when $C^k(\T, \R)$ is endowed with the $C^k$ topology.
\end{itemize}
\end{defi} 
\begin{proposition}
Assume that the $C^1$ symplectic twist map $f:\A\rightarrow \A$ has an invariant continuous (resp. locally Lipschitz continuous) foliation into graphs $a\in\R\mapsto  \eta_a\in C^{0}(\T, \R)$. Then the map $\ca: a\in\R\mapsto \int_\T \eta_a(t)dt\in\R$ is an homeomorphism (resp. locally biLipschitz homeomorphism).
\end{proposition}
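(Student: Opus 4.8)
The plan is to establish the three properties that together force a map $\ca:\R\to\R$ to be a homeomorphism --- continuity, strict monotonicity, and surjectivity --- since a continuous strictly monotone surjection of the line is automatically a homeomorphism, and then to refine the two monotonicity estimates into local bi-Lipschitz bounds under the stronger hypothesis. Throughout I would encode the foliation by the map $\Phi:\A\to\A$, $\Phi(\theta,a)=(\theta,\eta_a(\theta))$, which is a homeomorphism by assumption, and use repeatedly the normalization $\eta_a(0)=a$. It is worth noting at the outset that neither the invariance of the foliation under $f$ nor the twist condition is used anywhere: the statement is really about foliations of $\A$ by continuous (resp. locally Lipschitz) graphs.

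First I would treat \emph{monotonicity and the behaviour at infinity}. Two distinct leaves of a foliation are disjoint, so for $a\neq a'$ the continuous function $\theta\mapsto\eta_a(\theta)-\eta_{a'}(\theta)$ never vanishes on the connected circle $\T$ and hence keeps a constant sign; its value at $\theta=0$ being $a-a'$, this shows that $a<a'$ implies $\eta_a<\eta_{a'}$ everywhere, whence $\ca(a)=\int_\T\eta_a<\int_\T\eta_{a'}=\ca(a')$, so $\ca$ is strictly increasing. Fixing $\theta$, the map $a\mapsto\eta_a(\theta)=\pi_2\circ\Phi(\theta,a)$ is continuous and, by the above, strictly increasing; it is onto $\R$ because the leaves cover $\A$ (the point $(\theta,r)$ lies on exactly one leaf), so it is an increasing bijection of $\R$, and in particular $\eta_a(\theta)\to+\infty$ as $a\to+\infty$ and $\eta_a(\theta)\to-\infty$ as $a\to-\infty$, for every fixed $\theta$.

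Next, \emph{continuity and surjectivity of $\ca$}. Continuity of $a\mapsto\eta_a$ from $\R$ into $(C^0(\T,\R),\|\cdot\|_\infty)$ follows from a standard compactness argument: if $a_n\to a_0$ but $\|\eta_{a_n}-\eta_{a_0}\|_\infty\not\to 0$, pick $\theta_n$ with $|\eta_{a_n}(\theta_n)-\eta_{a_0}(\theta_n)|\geq\varepsilon$, extract $\theta_n\to\theta_0$, and combine continuity of $\Phi$ at $(\theta_0,a_0)$ with continuity of $\eta_{a_0}$ to get $\eta_{a_n}(\theta_n)-\eta_{a_0}(\theta_n)\to 0$, a contradiction; then $\ca$ is continuous because $u\mapsto\int_\T u$ is $1$-Lipschitz for the uniform norm. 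Since $\ca$ is continuous and strictly increasing, to see it is onto $\R$ it is enough to show $\ca(a)\to\pm\infty$ as $a\to\pm\infty$. For $a\geq 0$ one has $0\leq\eta_a-\eta_0$, the family $(\eta_a-\eta_0)_{a\geq 0}$ is nondecreasing in $a$, and $\eta_a(\theta)-\eta_0(\theta)\to+\infty$ pointwise by the previous step, so the monotone convergence theorem gives $\ca(a)-\ca(0)=\int_\T(\eta_a-\eta_0)\to+\infty$; the case $a\to-\infty$ is symmetric. Hence $\ca$ is a homeomorphism of $\R$.

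Finally, the \emph{locally Lipschitz case}: assume $\Phi$ and $\Phi^{-1}$ are locally Lipschitz. Fixing a compact interval $I$, the compact set $\T\times I$ and its image $\Phi(\T\times I)$ carry Lipschitz constants $L$ and $L'$ for $\Phi$ and $\Phi^{-1}$ respectively (a standard Lebesgue-number argument turns local Lipschitzness into a Lipschitz bound on a compact set). Then for $a,a'\in I$ and every $\theta$, $|\eta_a(\theta)-\eta_{a'}(\theta)|\leq L\,|a-a'|$, and, applying the bound for $\Phi^{-1}$ at the points $(\theta,\eta_a(\theta))$ and $(\theta,\eta_{a'}(\theta))$ of $\Phi(\T\times I)$, $|a-a'|\leq L'\,|\eta_a(\theta)-\eta_{a'}(\theta)|$; integrating over $\T$ (of total mass $1$) and using that $\eta_a-\eta_{a'}$ has the sign of $a-a'$, one gets $\frac{1}{L'}|a-a'|\leq|\ca(a)-\ca(a')|\leq L\,|a-a'|$, so $\ca$ is bi-Lipschitz on $I$; thus $\ca$ and $\ca^{-1}$ are locally Lipschitz. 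The only step above that is more than routine bookkeeping is the fiberwise claim $\eta_a(\theta)\to\pm\infty$, which is precisely where one uses that a foliation of $\A$ by graphs makes the leaves over each vertical fibre a linearly ordered family that exhausts it; so I do not expect a genuine obstacle here, only a point that must not be skipped.
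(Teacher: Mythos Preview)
Your proof is correct. The paper itself offers no argument beyond the sentence ``The proof is straightforward'', so your detailed verification --- monotonicity from disjointness of leaves plus the normalization $\eta_a(0)=a$, continuity via compactness of $\T$, surjectivity via monotone convergence, and the bi-Lipschitz bounds from those of $\Phi$ and $\Phi^{-1}$ on compacta --- is a faithful and complete fleshing-out of what the authors leave to the reader; your remark that neither the $f$-invariance nor the twist condition plays any role is also accurate.
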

The proof is straightforward. Using this result, we can use $c=\ca(a)$ instead of $a$ as a parameter, what we do from now.\\
\begin{notas}
We fix a lift $F:\R^2\rightarrow \R^2$ of $f$.   We denote by $\tilde \eta_c:\R\rightarrow \R$ the lift of $\eta_c$.
We denote by $\rho$ the function that maps $c\in \R$ to the rotation number $\rho(c)\in\R$ of the restriction of $F$ to the graph of   $\tilde\eta_c$.
\end{notas}
 The map $\rho$  is then an increasing homeomorphism. \\
When moreover the foliation is biLipschitz, we will prove that $\rho$ is a biLipschitz homeomorphism (see Proposition \ref{rho}).

  We recall a well-known result concerning the link between invariant measures and semi-conjugations for orientation preserving homeomorphisms of $\T$.

\begin{proposition}\label{Pconjmeas} Assume that $\mu_c$ is a non-atomic Borel invariant probability measure by $g_c$. Then,  if $\rho(c)$  is irrational or $g_c$ is $C^0$ conjugate to a rotation,
 the map $h_c:\T\rightarrow \T$ defined by $h_c(\theta)=\int_0^\theta d\mu_c$ is a semi-conjugation between $g_c$ and the rotation with angle $\rho(c)$, i.e:
$$h_c\big(g_c(\theta)\big)=h_c(\theta)+\rho(c).$$

\end{proposition}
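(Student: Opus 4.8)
The plan is to argue entirely with lifts. Let $G:\R\to\R$ be the lift of $g_c$ whose translation number is $\rho(c)$, and let $\tilde\mu_c$ be the $\Z$-periodic lift of $\mu_c$ to $\R$ (invariant under $x\mapsto x+1$ and, since $\mu_c$ is $g_c$-invariant, under $G$, normalized by $\tilde\mu_c([0,1))=1$). Define $H:\R\to\R$ by $H(x)=\tilde\mu_c([0,x))$ for $x\ge 0$ and $H(x)=-\tilde\mu_c([x,0))$ for $x<0$, so that with the signed convention $\tilde\mu_c([a,b))=-\tilde\mu_c([b,a))$ one has $H(y)-H(x)=\tilde\mu_c([x,y))$ for all $x,y$. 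Because $\mu_c$ is non-atomic, $H$ is continuous; it is non-decreasing and $H(x+1)=H(x)+1$, so it is a lift of $h_c:\T\to\T$, and $h_c$ is a continuous, monotone, degree-one (hence surjective) circle map. It then suffices to prove that $\varphi(x):=H(G(x))-H(x)$ is the constant $\rho(c)$, since read on the circle this is exactly $h_c\circ g_c=R_{\rho(c)}\circ h_c$.

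First I would note two elementary facts about $\varphi$. From $H(x+1)=H(x)+1$ and $G(x+1)=G(x)+1$ we get $\varphi(x+1)=\varphi(x)$, so $\varphi$ descends to a continuous function on $\T$. Next, $\varphi$ is $g_c$-invariant: using $H(y)-H(x)=\tilde\mu_c([x,y))$, the $G$-invariance of $\tilde\mu_c$, and the fact that $G$ is an increasing homeomorphism carrying the interval from $x$ to $G(x)$ onto the interval from $G(x)$ to $G^2(x)$, one has
$$\varphi(x)=\tilde\mu_c\big([x,G(x))\big)=\tilde\mu_c\big([G(x),G^2(x))\big)=H(G^2(x))-H(G(x))=\varphi(G(x)).$$
This invariance step — and the bookkeeping with lifts and with the orientation of intervals — is where the hypotheses on $\mu_c$ (and implicitly on $g_c$, guaranteeing that we are in a situation where such a non-atomic measure exists) are used, and it is the only part requiring care.

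Finally I would telescope. Iterating $\varphi\circ g_c=\varphi$ gives, for every $n\ge 1$ and every $x$,
$$n\,\varphi(x)=\sum_{k=0}^{n-1}\varphi\big(G^k(x)\big)=\sum_{k=0}^{n-1}\big(H(G^{k+1}(x))-H(G^k(x))\big)=H\big(G^n(x)\big)-H(x).$$
Since $x\mapsto H(x)-x$ is continuous and $1$-periodic, hence bounded by some $M$,
$$\left|\varphi(x)-\frac{G^n(x)-x}{n}\right|=\frac{\big|(H(G^n(x))-G^n(x))-(H(x)-x)\big|}{n}\le\frac{2M}{n},$$
which tends to $0$ as $n\to\infty$, while $\frac{G^n(x)-x}{n}\to\rho(c)$ by definition of the translation number of $G$. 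Hence $\varphi\equiv\rho(c)$, i.e. $h_c(g_c(\theta))=h_c(\theta)+\rho(c)$. The main obstacle is thus the $\varphi\circ g_c=\varphi$ step; the rest is formal. Alternatively, when $\rho(c)$ is irrational one may invoke unique ergodicity of $g_c$ to recognize $h_c$ as the classical semi-conjugacy, but the computation above covers both cases of the statement uniformly.
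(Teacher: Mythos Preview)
Your proof is correct and follows the same overall strategy as the paper: pass to lifts, use the $G$-invariance of $\tilde\mu_c$ to show that $H(G(x))-H(x)$ is independent of $x$, and then identify this constant with $\rho(c)$. The organizational difference is only in this last identification. The paper computes directly that $h_c(g_c(\theta))-h_c(\theta)=\tilde\mu_c\big([0,\tilde g_c(0)]\big)$ and then invokes (in a footnote) the classical fact that for a circle homeomorphism with irrational rotation number, or one conjugate to a rotation, every invariant measure satisfies $\mu\big([x,g(x))\big)=\rho(g)$. You instead prove this identity from scratch: once you know $\varphi$ is $G$-invariant, your telescoping $n\varphi(x)=H(G^n(x))-H(x)$ together with the boundedness of $H(x)-x$ forces $\varphi(x)=\lim_n (G^n(x)-x)/n=\rho(c)$.

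What this buys you is a uniform, self-contained argument that does not explicitly call on the dichotomy ``irrational rotation number or conjugate to a rotation''; indeed your proof shows the conclusion holds for \emph{any} non-atomic invariant probability measure, and the extra hypothesis in the statement is really only there to guarantee that such a measure exists (when $\rho(c)$ is rational and $g_c$ is not conjugate to a rotation, every invariant measure is supported on periodic orbits and hence atomic). The paper's route is shorter because it outsources that step, but yours makes the mechanism transparent.
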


\begin{proof}
Let $\tilde \mu_c$ be the pull back  measure of $\mu_c$  to $\R$ and let $\tilde g_c:\R\rightarrow \R$ be a lift of $g_c$ to $\R$. Then we have for every $\Theta\in [0, 1]$ lift of $\theta\in \T$:
$$\tilde \mu_c([0, \Theta])=\tilde \mu_c([\tilde g_c(0), \tilde g_c(\Theta)])=\tilde \mu_c\big(\big[ \lfloor \tilde g_c(0)\rfloor, \tilde g_c(\Theta)\big]\big)-\tilde \mu_c\big(\big[\lfloor \tilde g_c(0)\rfloor, \tilde g_c(0)\big]\big);$$
where $\lfloor\tilde g_c(0)\rfloor$ is the integer part of $\tilde g_c(0)$. This implies\footnote{ { Recall that if $f : \T \to \T$ is an orientation preserving homeomorphism then either $\rho(f)$ is irrational, $f$ is semi-conjugated (by $h$) to the rotation $R_{\rho(f)}$  and the only invariant measure is the pull back of the Lebesgue measure by $h$; or $\rho(f)$ is rational and the invariant measures are supported on periodic orbits.  When $\rho(f)$ is irrational or when $f$ is $C^0$ conjugate to a rotation, then for any invariant measure $\mu$ and $x\in \T$, $\mu([x,f(x)[) = \rho(f)$. }  }
$$h_c(\theta)=h_c\big(g_c(\theta)\big)-\tilde\mu_c\big(\big[0, g_c(0)\big]\big)=h_c\big(g_c(\theta)\big)-\rho(c).$$
Moreover, as we assumed that $\mu_c$ is non-atomic, $h_c$ is continuous.
\end{proof}

\begin{remks}
\begin{enumerate}
\item In the other sense, if $h_c$ is a  (non-decreasing) semi-conjugation such that $h_c\circ g_c=h_c+\rho(c)$, then $\mu([0, \theta])=h_c(\theta)-h_c(0)$ defines a $g_c$-invariant Borel probability measure;
\item When $\rho(c)$ is irrational, it is well known that the Borel invariant probability measure $\mu_c$ is unique and that the semi-conjugation $h_c$ is unique up to constant.  
\end{enumerate}
\end{remks}
\begin{nota}
When $\rho(c)$ is irrational, we will denote by $h_c$ the semi-conjugation such that $h_c(0)=0$.
\end{nota}


Before entering the core of the proof, let us mention a useful fact about iterates of $C^0$-integrable twist maps:
\begin{proposition}\label{iterateTwist}
Let $f: \A \to \A$ be a $C^0$-integrable twist map, then so is $f^n$ for all $n>0$.
\end{proposition}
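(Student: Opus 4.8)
The plan is to show directly that if $f$ preserves a continuous foliation into continuous graphs, then so does $f^n$, and then to verify that $f^n$ is itself a twist map. For the first point, let $\{\eta_a\}_{a\in\R}$ be the invariant foliation for $f$, so that for each $a$ the graph $G_a = \{(\theta,\eta_a(\theta))\}$ satisfies $f(G_a)=G_{\sigma(a)}$ for a homeomorphism $\sigma:\R\to\R$ (after reparametrising, one may in fact take $\sigma$ to be the shift $a\mapsto \ca(a)$ of the previous proposition, but any reparametrisation works here). Since $f$ is a homeomorphism carrying each leaf onto a leaf, $f^n$ carries each leaf $G_a$ onto $f^n(G_a)=G_{\sigma^n(a)}$, which is again a leaf of the same foliation. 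Hence $f^n$ preserves exactly the same continuous foliation into continuous graphs $\{\eta_a\}$, and the parametrisation property $(\theta,a)\mapsto(\theta,\eta_a(\theta))$ being a homeomorphism is unchanged. This is the easy half.

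The substance is to check that $f^n$ has the twist property, i.e. that for a lift $F^n = ((F^n)_1, (F^n)_2)$ the map $r\mapsto (F^n)_1(\tilde\theta, r)$ is an increasing $C^1$ diffeomorphism of $\R$ onto $\R$ for each fixed $\tilde\theta$. First I would record the infinitesimal form of the twist condition: writing $F(\theta,r)=(\Theta,R)$ with generating function $S$, the relation $r = -\partial_\theta S(\theta,\Theta)$, $R = \partial_\Theta S(\theta,\Theta)$ shows that $\partial\Theta/\partial r = -1/\partial^2_{\theta\Theta}S > 0$ along the fibre $\theta=$ const (the twist of $S$ gives $\partial^2_{\theta\Theta}S<0$). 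So $\partial_r F_1 > 0$ everywhere. Now I would argue by induction on $n$: if $F^{n-1}$ has positive $\partial_r (F^{n-1})_1$ and $F$ has positive $\partial_r F_1$, then differentiating $F^n = F\circ F^{n-1}$ at a fixed $\tilde\theta$ and using the chain rule together with positivity of the relevant partials, one gets monotonicity of $r\mapsto (F^n)_1(\tilde\theta,r)$. The cleaner route, which I would prefer, is to use the variational/composition structure: the $n$-step generating function $\cs^{0}_n(\theta,\Theta)$ (in the notation of subsection \ref{ssmoreK.A.M.}, with $c=0$) is a generating function for $F^n$, it is superlinear, $\Z^2$-periodic, and — this is the point to verify — it still satisfies the twist monotonicity $\partial^2_{\theta\Theta}\cs^{0}_n < 0$. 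This last fact is a standard consequence of Aubry's fundamental lemma (Proposition \ref{PAubryfund}): the minimising intermediate points $\theta_1,\dots,\theta_{n-1}$ realising $\cs^0_n(\theta,\Theta)$ depend monotonically on the endpoints, because two distinct minimising segments cross at most once, and this monotone dependence translates into $\partial^2_{\theta\Theta}\cs^0_n<0$ on the open set where the minimiser is nondegenerate, with the degenerate set having empty interior.

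The main obstacle is exactly this regularity/nondegeneracy point for $\cs^0_n$: a priori the minimiser need not be unique or smooth, so one must be careful to argue that $\cs^0_n$ is $C^1$ with $\partial_\theta\cs^0_n$ and $\partial_\Theta\cs^0_n$ each a decreasing homeomorphism of $\R$ in the respective variable. I would handle this by noting that for a twist map the composition $F^n$ is again a $C^1$ diffeomorphism and that the sign of $\partial_r F_1$ propagates under composition along fibres by a direct Jacobian computation — monotonicity of $r\mapsto (F^n)_1(\tilde\theta,r)$ follows from $\partial_r(F^n)_1(\tilde\theta,r)>0$, which by induction and the chain rule reduces to $\partial_r F_1>0$ and $\partial_\theta F_1\cdot(\text{something})$ being controllable; the surjectivity onto $\R$ then follows from properness, which is inherited because $F^n$ is a diffeomorphism of $\R^2$ commuting with the $\Z$-translation and the energy growth forces $(F^n)_1(\tilde\theta,r)\to\pm\infty$ as $r\to\pm\infty$. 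In the write-up I would most likely just give the generating-function argument: exhibit $\cs^0_n$ as a $C^2$ generating function for $F^n$ satisfying the four bullet conditions of the definition, invoking Aubry's lemma for the twist-monotonicity bullet, since that is the cleanest and matches the tools already set up in the paper.
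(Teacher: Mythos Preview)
Your proposal has a genuine gap: neither of your two routes to the twist property of $f^n$ actually uses the $C^0$-integrability hypothesis, and without it the conclusion is simply false. As the paper remarks immediately after the proposition, an iterate of a twist map is \emph{not} a twist map in general (think of a neighbourhood of an elliptic fixed point). So any argument that would apply to an arbitrary twist map cannot be correct.

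Concretely, your chain-rule route (a) fails because
\[
\partial_r (F^n)_1 \;=\; \partial_\theta F_1\cdot \partial_r(F^{n-1})_1 \;+\; \partial_r F_1\cdot \partial_r(F^{n-1})_2,
\]
and the first term can be negative; there is nothing ``controllable'' about $\partial_\theta F_1$ without extra structure. Your generating-function route (b) has the same defect: Aubry's crossing lemma gives order-preservation of minimizers for any twist map, yet $F^n$ is still not a twist map in general, so the lemma cannot by itself yield $\partial^2_{\theta\Theta}\cs^0_n<0$ in the strong sense you need (and indeed $\cs^0_n$ need not be $C^2$, as you note, but then never resolve).

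The paper's proof uses the foliation in an essential way. Parametrise vertical fibres by the leaf index $c$ via the homeomorphism $(\theta,c)\mapsto(\theta,\eta_c(\theta))$; since $c\mapsto\eta_c(\theta)$ is increasing, it suffices to show $c\mapsto \tilde g_c^{\,n+1}(t)=\pi_1\circ F^{n+1}\big(t,\eta_c(t)\big)$ is an increasing diffeomorphism of $\R$. Two elementary monotonicities do the work: each $\tilde g_c$ is the lift of an orientation-preserving circle homeomorphism, hence increasing in its argument; and the twist of $f$ gives that $c\mapsto \tilde g_c(s)$ is increasing for each fixed $s$. With the induction hypothesis $\tilde g_{c_2}^{\,n}(t)>\tilde g_{c_1}^{\,n}(t)$ for $c_2>c_1$, one gets
\[
\tilde g_{c_2}\circ\tilde g_{c_2}^{\,n}(t)\;>\;\tilde g_{c_2}\circ\tilde g_{c_1}^{\,n}(t)\;>\;\tilde g_{c_1}\circ\tilde g_{c_1}^{\,n}(t),
\]
and the same splitting bounds the difference from below by $\tilde g_{c_2}(s)-\tilde g_{c_1}(s)$ at $s=\tilde g_{c_1}^{\,n}(t)$, which tends to $\pm\infty$ as $c_2\to+\infty$ (resp.\ $c_1\to-\infty$), giving surjectivity. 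This is where integrability enters: the invariant graphs turn the vertical twist question into a comparison of one-dimensional circle dynamics $\tilde g_c$, for which monotonicity composes cleanly.
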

 This is specific to the integrable case: { in general, an iterated twist map is not a twist map as can be seen in the neighborhood of an elliptic fixed point.}
\begin{proof}
We argue by induction on $n>0$. The initialization being trivial, let us assume the result true for some $k>0$. Let $F : \R^2 \to \R^2$ be a lift of $f$. For any $c\in \R$ using the notations given at the beginning of section \ref{secfirst}, we have 
$$\forall \theta \in \T, \forall m>0 , \quad f^m\big(\theta, \eta_c(\theta)\big) = \big(g_c^m(\theta), \eta_c\circ g_c^m(\theta)\big).$$
 Observe that if $f^m$ satisfies the twist condition and $c_1<c_2$ are two real numbers, then we have
$$\tilde g^m_{c_1}(t)=\pi_1\circ F^m\big(t, \eta_{c_1}(t)\big)<\pi_1\circ F^m\big(t, \eta_{c_2}(t)\big)=\tilde g^m_{c_2}(t)$$
and $\displaystyle{\lim_{t\rightarrow\pm\infty}\tilde g_{c_1}(t)=\pm\infty}$.

 Let us prove this. Let $c_1<c_2$ and $t\in \R$. Denoting with $\sim$ the lifts of the considered functions we obtain that 
$$ \pi_1\big(F^{n+1}(t,c_2)\big) -\pi_1\big(F^{n+1}(t,c_1)\big) = \tilde g_{c_2}\circ \tilde g^{n}_{c_2}(t) -\tilde g_{c_1}\circ  \tilde g^{n}_{c_1}(t) \geq  \tilde g_{c_2}\circ \tilde g^{n}_{c_1}(t) -\tilde g_{c_1}\circ \tilde g^{n}_{c_1}(t) ,$$
where we have used the induction hypothesis and the fact that $\tilde g_{c_2}$ is increasing. It follows that $c\mapsto  \pi_1\big(F^{n+1}(t,c)\big) $ is an increasing diffeomorphism on its image.  Observe also that this inequality implies that $\displaystyle{\lim_{c_2\rightarrow+\infty} \pi_1\big(F^{n+1}(t,c_2)\big) =+\infty}$ because $\displaystyle{\lim_{c_2\rightarrow +\infty}\tilde g_{c_2}(s) =+\infty}$. Moreover 
$$ \pi_1\big(F^{n+1}(t,c_2)\big) -\pi_1\big(F^{n+1}(t,c_1)\big) = \tilde g_{c_2}\circ \tilde g^{n}_{c_2}(t) -\tilde g_{c_1}\circ  \tilde g^{n}_{c_1}(t) \leq  \tilde g_{c_2}\circ \tilde g^{n}_{c_2}(t) -\tilde g_{c_1}\circ \tilde g^{n}_{c_2}(t) ,$$
implies that $\displaystyle{\lim_{c_1\rightarrow-\infty} \pi_1\big(F^{n+1}(t,c_1)\big) =-\infty}$ because $\displaystyle{\lim_{c_1\rightarrow -\infty}\tilde g_{c_1}(s) =-\infty}$. So finally $c\mapsto  \pi_1\big(F^{n+1}(t,c)\big) $ is an increasing diffeomorphism onto $\R$.

\end{proof}

\subsection{Differentiability and conjugation along the rational curves}\label{ssrat}
It is proved in \cite{Arna1} that for every $r=\frac{p}{q}\in\Q$, $\eta_c=\eta_{\rho^{-1}(r)}$ is   $C^k$   and the restriction of $f$ to the graph $\Gamma_c$ of $\eta_c$ is completely periodic: $f^q_{|\Gamma_c}={\rm Id}_{\Gamma_c}$. Moreover, along these particular curves, the two Green bundles (see Appendix  \ref{ssGreenb} for definition and results) are equal:
$$G_-\big(\theta, \eta_c(\theta)\big)=G_+\big(\theta, \eta_c(\theta)\big).$$
  \begin{theorem}\label{Tdiffrat}
  \begin{itemize}
\item Along every leaf $\Gamma_c$ such that $\rho(c)\in\Q$, the derivative $\frac{\partial \eta_c(\theta)}{\partial c}=1+\frac{\partial^2u_c}{\partial c\partial \theta}>0$ exists and  $C^{k-1}$  depends on $\theta$;
\item for any $c$ such that $\rho(c)$ is rational, the measure $\mu_c$ on $\T$ with density $\frac{\partial \eta_c}{\partial c}$ is a Borel probability measure invariant by $g_c$ and  for $\theta\in [0, 1]$, the equality
$$h_c(\theta)=\mu_c([0, \theta])=\theta+\frac{\partial u}{\partial c}(\theta, c)$$ 
defines a conjugation between $g_c$ and the rotation with angle $\rho(c)$;
\item then the map $c\in\R\mapsto \mu_c$ is continuous and also $c\in\R\mapsto h_c$ for the uniform $C^0$ topology. Thus $(\theta, c)\mapsto h_c(\theta)$ is continuous.
\end{itemize}
\end{theorem}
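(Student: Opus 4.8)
The statement to prove is Theorem \ref{Tdiffrat}, about the behavior of the foliation along leaves with rational rotation number. Since by the result of \cite{Arna1} each such leaf $\Gamma_c$ is a $C^k$ graph on which $f^q$ is the identity, and the two Green bundles coincide there, the strategy is to exploit the fact that the dynamics near such a leaf is a completely periodic twist map (after passing to $f^q$, which is again a $C^0$-integrable twist map by Proposition \ref{iterateTwist}). First I would fix $c$ with $\rho(c)=p/q$ and work with $f^q$, whose restriction to $\Gamma_c$ is the identity. The equality of Green bundles $G_-=G_+$ along $\Gamma_c$ says precisely that the leaf is ``free of conjugate points'' for $f^q$ in the transverse direction, which is exactly the hypothesis needed to get differentiability of the foliation across the leaf: I would invoke the Green bundle machinery from Appendix \ref{ssGreenb} to show $\frac{\partial \eta_c}{\partial c}(\theta)$ exists, is positive, and is $C^{k-1}$ in $\theta$ — this is essentially a statement that the derivative of the foliation in the $c$-direction solves a linear (variational/Jacobi) equation along the orbit that has a bounded, non-vanishing solution precisely because the bundles agree. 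The identity $\frac{\partial \eta_c}{\partial c}=1+\frac{\partial^2 u_c}{\partial c\partial\theta}$ is then immediate from $\eta_c(\theta)=c+\frac{\partial u}{\partial\theta}(\theta,c)$ and the chain rule, once we know the mixed partial exists.

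Second, given that $\mu_c$ has a continuous positive density $\frac{\partial\eta_c}{\partial c}$ that integrates to $1$ (the integral being $\ca'(a)\cdot(\text{stuff})$, or more simply since $\int_\T\frac{\partial^2 u}{\partial c\partial\theta}=\frac{d}{dc}\int_\T u'_c=0$), I would check $\mu_c$ is $g_c$-invariant. This is where one uses that $g_c$ is the projected dynamics of a symplectic (area-preserving) map restricted to an invariant graph: the pushforward of the area element between two graphs $\Gamma_c$ and $\Gamma_{c'}$ under $f$ is the area element between $\Gamma_{f(c)}$ and $\Gamma_{f(c')}$, and differentiating in $c'$ at $c'=c$ shows that $g_c$ pushes $\frac{\partial\eta_c}{\partial c}\,d\theta$ to itself; alternatively one differentiates the relation $f(\theta,\eta_{c'}(\theta))=(g_{c'}(\theta),\eta_{c'}(g_{c'}(\theta)))$ in $c'$ and uses that $f$ preserves the form $\omega=d\theta\wedge dr$. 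Then Proposition \ref{Pconjmeas} applies: since on $\Gamma_c$ the map $g_c$ is $C^0$-conjugate to a rotation (its $q$-th power is the identity, so every orbit is periodic of period dividing $q$ and $g_c$ is conjugate to $R_{p/q}$), $h_c(\theta)=\mu_c([0,\theta])=\theta+\frac{\partial u}{\partial c}(\theta,c)$ is a semi-conjugation; and since $\mu_c$ has full-support positive density, $h_c$ is a homeomorphism, hence an actual conjugation.

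Third, for the continuity in $c$ of $\mu_c$ and $h_c$ including at rational $c$: this is the delicate point flagged in the remarks after Theorem \ref{Tgeneder}. Away from rational $\rho$, $h_c$ is continuous in $c$ by uniqueness of the invariant measure and standard arguments (it is the limit of the $h$'s, or one uses that $u$ itself is $C^0$ and $\frac{\partial u}{\partial c}$ behaves well). At a rational $c_0=p/q$ with $\rho^{-1}(p/q)=[a_1,a_2]$, one must show the densities $\frac{\partial\eta_c}{\partial c}$ do not degenerate or oscillate as $c\to a_1^-$ or $c\to a_2^+$ from the irrational side. Here I expect the Green bundle estimates to do the work again: $G_-(\theta,\eta_c(\theta))$ and $G_+(\theta,\eta_c(\theta))$ are respectively lower and upper semicontinuous in $c$, they squeeze the ``slope'' of the foliation, and on $\Gamma_{c_0}$ they are equal, so the foliation's transverse derivative is forced to converge; combined with the already-proven continuity of $c\mapsto u_c$ (Theorem \ref{Tgenecont}) and of the pseudographs (Proposition \ref{hausdorff}), this pins down $\lim \mu_c$. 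Then $h_c(\theta)=\theta+\frac{\partial u}{\partial c}(\theta,c)$ inherits continuity, and joint continuity of $(\theta,c)\mapsto h_c(\theta)$ follows from equicontinuity in $\theta$ (all $h_c$ are monotone surjections of $\T$, hence the family is equicontinuous once pointwise convergent, by a Dini/Pólya-type argument).

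\textbf{Main obstacle.} The hard part is the last bullet — the continuity of $c\mapsto \mu_c$ (equivalently of $\frac{\partial u}{\partial c}$) \emph{across} a rational rotation number, i.e. at the endpoints $a_1,a_2$ of $\rho^{-1}(p/q)$. The existence and regularity of $\frac{\partial\eta_c}{\partial c}$ along a single rational leaf is a fairly direct consequence of the cited result of \cite{Arna1} and the Green bundle coincidence; the invariance of $\mu_c$ and the conjugation property are then formal. But matching the transverse derivative of the foliation coming from the rational leaf with the limit of the ones coming from nearby irrational leaves requires genuinely using that the foliation is continuous and that on the rational leaf the two Green bundles agree (so there is no room for the slope to jump), and this is the subtle two-dimensional phenomenon the authors advertise.
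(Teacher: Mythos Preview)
Your plan identifies the right overall architecture (reduce to $f^q$, use that $\Gamma_c$ is $C^k$ and totally periodic, invoke Proposition~\ref{Pconjmeas}, finish with Dini--Poly\`a), but the two substantive steps --- the existence of $\tfrac{\partial\eta_c}{\partial c}$ along a rational leaf, and the continuity of $c\mapsto\mu_c$ across a rational rotation number --- are not actually carried out, and the mechanism you propose for them does not work as stated.

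\textbf{Green bundles control the wrong direction.} The coincidence $G_-=G_+$ along $\Gamma_c$ is a statement about the \emph{tangent} line to the leaf: it pins down $\eta_c'(\theta)$, the slope in $\theta$. What you need for the first bullet is the \emph{transverse} derivative $\tfrac{\partial\eta_c}{\partial c}(\theta)$, i.e.\ how the height of the leaf moves as $c$ varies. These are different objects, and the semi-continuity of $s_\pm$ squeezes the former, not the latter. Your sentence ``the derivative of the foliation in the $c$-direction solves a linear Jacobi equation \ldots\ precisely because the bundles agree'' presupposes the existence of the very derivative you are trying to construct, and the link to $G_-=G_+$ is not the one you suggest. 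For the same reason, your proposed argument for the third bullet (``$G_-$ and $G_+$ are semicontinuous, equal on $\Gamma_{c_0}$, so the foliation's transverse derivative is forced to converge'') does not go through: again you are squeezing $\eta_c'$, not $\tfrac{\partial\eta_c}{\partial c}$.

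\textbf{What the paper actually does.} For the first bullet, after reducing to $f$ fixing $\T\times\{0\}$ pointwise, the paper uses a concrete \emph{area} argument. For each $\theta$ it considers the curvilinear triangle $\ct(\theta)$ bounded by the graph of $\eta_c$, the vertical $\{\theta\}\times\R$, and its image $f(\{\theta\}\times\R)$; one computes $\lambda(\ct(\theta))\approx\tfrac12\eta_c(\theta)^2 s(\theta)$ where $s(\theta)$ is the torsion $\partial_r(\pi_1\circ f)(\theta,0)$. The key observation is ergodic: for every ergodic $f$-invariant measure supported in the thin annulus $\ca_c$, the measure of $\ct(\theta)$ equals the rotation number and is therefore \emph{independent of $\theta$}; by ergodic decomposition the same holds for Lebesgue. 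Equating areas gives $\eta_c(\theta')/\eta_c(\theta)\to\sqrt{s(\theta)/s(\theta')}$ uniformly as $c\to0$, and integrating in $\theta'$ yields the explicit formula
\[
\frac{\partial\eta_c}{\partial c}\Big|_{c=0}(\theta)=\Big(\int_\T\frac{dt}{\sqrt{s(t)}}\Big)^{-1}\frac{1}{\sqrt{s(\theta)}},
\]
which is visibly positive and $C^{k-1}$ in $\theta$. The second bullet then follows by taking the $f$-invariant normalized Lebesgue measure $\Lambda_{c,c'}$ between two leaves and letting $c'\to c$; its projection has density $\tfrac{\partial\eta_c}{\partial c}$ and inherits invariance.

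For the third bullet at a rational $c_0$ the paper again works explicitly. Reducing to $c_0=0$, $\rho(0)=0$, one counts $M_c(\theta)=\sharp\{j:\tilde g_c^j(0)\in[0,\theta]\}$ and shows $\mu_c([0,\theta])=M_c(\theta)\rho(c)+O(\rho(c))$. The point is that $M_c(\theta)$ can be read off from areas: since $\int_\tau^{\tilde g_c(\tau)}\eta_c\approx c^2/(\int_\T dt/\sqrt{s(t)})^2$ is essentially independent of $\tau$ (this is exactly the ``$\lambda(\ct(\theta))$ constant'' fact from the first part), one gets
\[
M_c(\theta)\approx\frac{1}{c}\Big(\int_\T\frac{dt}{\sqrt{s(t)}}\Big)\int_0^\theta\frac{dt}{\sqrt{s(t)}},\qquad N_c=M_c(1)\approx\frac{1}{c}\Big(\int_\T\frac{dt}{\sqrt{s(t)}}\Big)^2,
\]
so $\mu_c([0,\theta])\to\mu_0([0,\theta])$ uniformly in $\theta$. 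Dini--Poly\`a then upgrades this to uniform convergence of $h_c$, as you said.

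In short: the second bullet is close to what you wrote (your alternative derivation of invariance by differentiating $f(\theta,\eta_{c'}(\theta))$ would also work once $\tfrac{\partial\eta_c}{\partial c}$ is known to exist), but for the first and third bullets the paper's argument is an explicit area/counting computation rather than a Green-bundle squeeze, and your proposed mechanism targets the wrong derivative.
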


\begin{remks}
\begin{enumerate}
\item Observe that because $c\mapsto \eta_c$ is increasing, we know that for Lebesgue almost every $(\theta, c)\in \T\times \R$, the derivative $\frac{\partial \eta_c(\theta)}{\partial c}$ exists (see \cite{KFT}). But our theorem says something different.
\item Because of the continuous dependence on $\theta$  along the rational curve,  we obtain that $\frac{\partial \eta_c(\theta)}{\partial c}$ restricted to every rational curve is bounded (that is clear when we assume that the foliation is Lipschitz but not if the foliation is just continuous).
\end{enumerate}
\end{remks}

\noindent{\em Proof of the first point.}
  We fix $A\in \R$ such that $\rho(A)=\frac{p}{q}\in \Q$. Replacing $f$ by $f^q$, we can assume that $\rho(A)\in\Z$. Observe that because of the $C^0$-integrability of $f$, $f^q$ is also a ($C^0$-integrable with the same invariant foliation) twist map (Proposition \ref{iterateTwist}).

We define $G_A: \T\times \R\rightarrow \T\times \R$ by 
\begin{equation}\label{Ecurvezero} G_A(\theta, r)=\big(\theta, r+\eta_A(\theta)\big).\end{equation}
Then $G_A^{-1}\circ f^q\circ G_A$ is also a $C^0$-integrable $C^k$ twist map and $\T\times\{ 0\}$ is filled with fixed points.

We finally have to prove our theorem in this case and we use the notation $f$ instead of    $G_A^{-1}\circ f^q\circ G_A$. We can assume that $A=0$  instead of $A\in\Z$.

Because of the semi-continuity of the two Green bundles $G_-=\R(1, s_-)$ and $G_+=\R(1, s_+)$, we have for any point $x=(\theta, r)$ sufficiently close to $\T\times \{ 0\}$: $\max\{ |s_-(x)|, |s_+(x)|\} <\varepsilon$ is small.

Now we fix $c$ small and consider for every $\theta\in\T$ the small triangular domain $\ct(\theta)$ that is delimited by the three following red curves

\begin{itemize}
\item the graph of $\eta_c$;
\item the vertical $\cv_\theta=\{\theta\}\times \R$;
\item the image $f(\cv_\theta)$ of the vertical at $\theta$.
\end{itemize}
\begin{center}
\includegraphics[width=5cm]{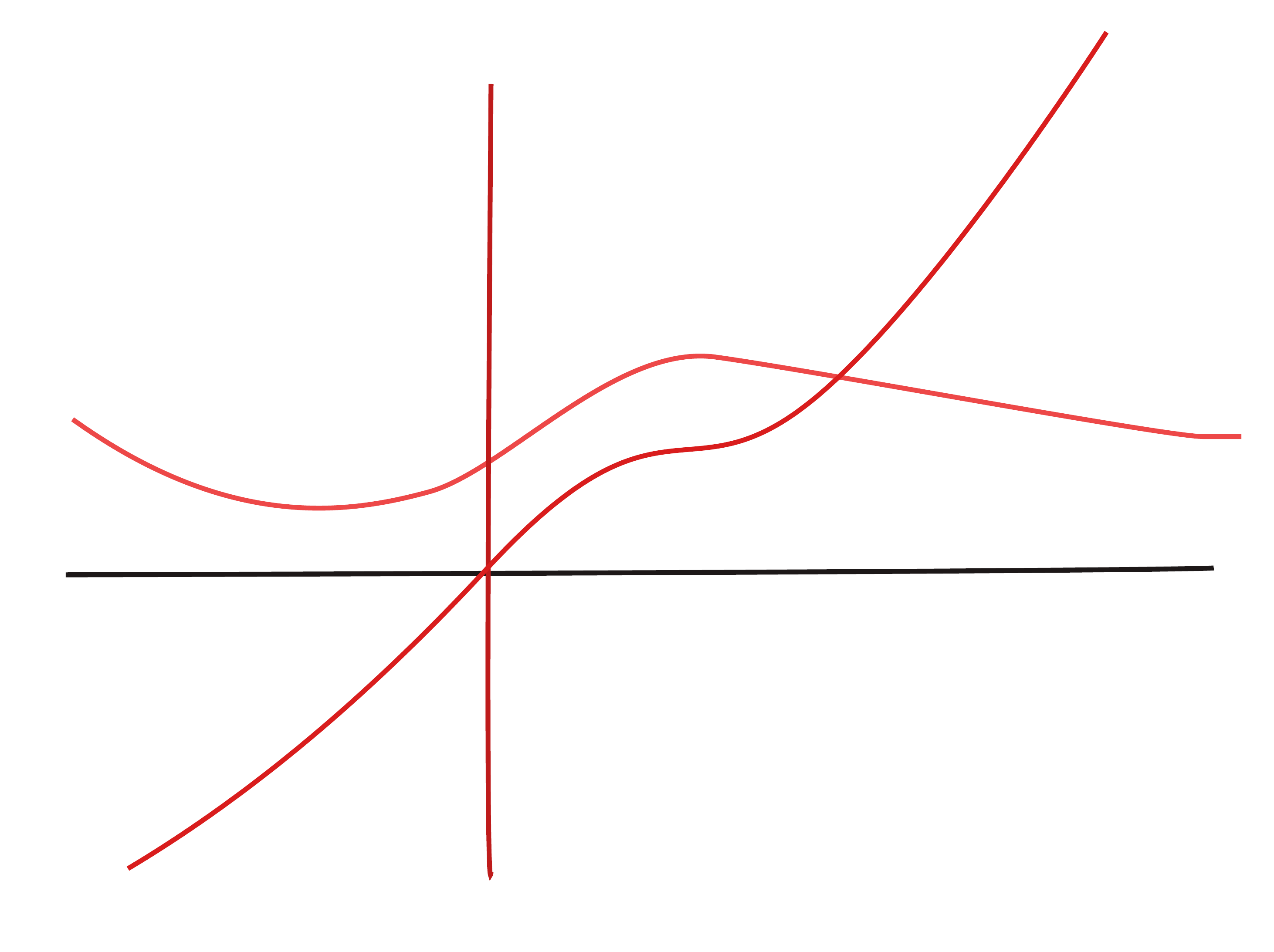}
\end{center}
 To be more precise, $\ct(\theta)$ is `semi-open' in the following sense; it contains  its whole boundary except the image $f(\cv_\theta)$ of the vertical at $\theta$.\\
We assume that $c>0$. The case $c<0$ is similar.\\
As the slope of $\eta_c$ is almost $0$ (because between the slope of the two Green bundles,  see Proposition \ref{PGreensand}) and the slope of the side of the triangle that is in $f(\cv_\theta)$ is almost $\frac{1}{s(\theta)}$ where $s(\theta)>0$ is the torsion that is defined by
\begin{equation}\label{Ematf} Df(\theta, 0)=\begin{pmatrix} 1&s(\theta)\\
0& 1\end{pmatrix},\end{equation}
the area of this triangle is 
\begin{equation}\label{E1}\lambda\big(\ct(\theta)\big)=\frac{1}{2}\big( \eta_c(\theta)\big)^2\big( s(\theta)+\varepsilon (\theta, c)\big);\end{equation}
where 
\begin{equation}\label{E2}{\rm uniformly}\quad{\rm for}\quad \theta \in \T,\quad  \lim_{c\rightarrow 0}\varepsilon (\theta, c)=0.\end{equation}
Let $\lambda$ be the Lebesgue measure restricted to the  invariant sub-annulus 
$$\ca_c=\bigcup_{\theta\in\T} \{ \theta\}\times [0, \eta_c(\theta)].$$
Being symplectic,   $f$ preserves $\lambda$. Moreover, every ergodic measure $\mu$ for $f$ with support in $\ca_c$ is supported on one curve $\Gamma_A$ with $A\in [0, c]$. But $f_{|\Gamma_A}$ is semi-conjugated to a rotation with an  angle  $\rho(A)$ that is close to $0$. Hence every interval in $\Gamma_A$ that is between some $\big(\theta, \eta_A(\theta)\big)$ and $f\big(\theta, \eta_A(\theta)\big)$ has the same $\mu$-measure, which is just given by the rotation number $\rho(A)$  on the graph of $\eta_A$. This implies that $\theta\mapsto \mu\big(\ct(\theta)\big)$ is constant. Hence for every $\theta, \theta'\in \T$ and for every ergodic measure with  support in $\ca_c$, we have $\mu\big(\ct(\theta)\big)=\mu\big(\ct(\theta')\big)$. Using the ergodic decomposition of invariant measures (see e.g. \cite{Man1}) $\lambda=\int\mu_ad\nu(a)$, we deduce that:
\begin{equation}\label{Airetriangle}\forall \theta, \theta'\in \T,\quad  \lambda\big(\ct(\theta)\big)=\lambda\big(\ct(\theta')\big)=\int \rho(a)d\nu(a).\end{equation}

We deduce from equations (\ref{E1}) and (\ref{E2}) that 
$${\rm uniformly}\ \ {\rm for}\quad \theta, \theta'\in\T,\quad  \lim_{c\rightarrow 0} \frac{\eta_c(\theta')}{\eta_c(\theta)}=\sqrt{\frac{s(\theta)}{s(\theta')}}.$$
 Integrating with respect to $\theta'$, we deduce that uniformly in $\theta$, we have
$$ \lim_{c\rightarrow 0} \frac{c}{\eta_c(\theta)}=\sqrt{s(\theta)}\int_\T \frac{dt}{\sqrt{s(t)}}.$$

This implies that 
\begin{equation}\label{neq 0}
\frac{\partial \eta_c(\theta)}{\partial c}_{|c=0} =\Big( \int_\T \frac{dt}{\sqrt{s(t)}} \Big)^{-1}\frac{1}{\sqrt{s(\theta)}};
\end{equation}
and even
\begin{equation}\label{Eeta} \eta_c(\theta)=c\Big( \int_\T \frac{dt}{\sqrt{s(t)}} \Big)^{-1}\bigg(\frac{1}{\sqrt{s(\theta)}}+\varepsilon(\theta,c)\bigg)
\end{equation}
where
\begin{equation}\label{Eetaunif}{\rm uniformly}\ \ {\rm for}\quad \theta \in \T,\quad \lim_{c\rightarrow 0}\varepsilon (\theta, c)=0.\end{equation}

Observe that $ \frac{\partial \eta_c}{\partial c}=\Big( \int_\T \frac{dt}{\sqrt{s(t)}} \Big)^{-1}\frac{1}{\sqrt{s( \cdot)}}$ is a  $C^{k-1}$  function of $\theta'$. This proves the first point of theorem \ref{Tdiffrat}.

\noindent{\em Proof of the second point.} We deduce  from the first point that for any $c$ such that $\rho(c)$ is rational, the function  $\frac{\partial \eta_c}{\partial c} $ is continuous and positive. Moreover, its integral on $\T$ is 1. Hence $\frac{\partial \eta_c}{\partial c} $  is the density of a Borel probability measure that is equivalent to Lebesgue.  We now introduce:

\begin{nota}
If $c<c'$, we denote by $\Lambda_{c, c'}$ the normalized Lebesgue measure between the graph of $\eta_c$ and the graph of $\eta_{c'}$.
\end{nota}
Then $f$ preserves $\Lambda_{c, c'}$. Observe that   for any measurable $I\subset \T$, we have
\begin{equation}\label{E4}\Lambda_{c, c'} \big(\{ (\theta, r); \theta \in I, \  r\in [\eta_c(\theta), \eta_{c'}(\theta)]\}\big)=\frac{1}{c-c'}\int_I \big(\eta_c(\theta)-\eta_{c'}(\theta)\big)d\theta.
\end{equation}
\begin{lemma}\label{Lratmeas}
If $\rho(c)$ is rational, then $\displaystyle{\lim_{c'\rightarrow c}\Lambda_{c, c'}}$ is a measure supported on the graph of $\eta_c$ whose projected measure  $\mu_c$   has  density   $\frac{\partial \eta_c}{\partial c}$ with respect to Lebesgue  of $\T$.

Hence if $h_c(\theta)=\int_0^\theta  \frac{\partial \eta_c}{\partial c}(t)dt$, we have $$h_c\circ \pi_1\circ f\big(\theta, \eta_c(\theta)\big)=h_c(\theta)+\rho(c).$$
\end{lemma}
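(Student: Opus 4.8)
The strategy is to identify the limit measure $\Lambda_c := \lim_{c'\to c}\Lambda_{c,c'}$ and then check it projects to $\mu_c$. First I would use equation \eqref{E4}: for any measurable $I\subset\T$, the mass that $\Lambda_{c,c'}$ puts on the strip above $I$ equals $\frac{1}{c'-c}\int_I\bigl(\eta_{c'}(\theta)-\eta_c(\theta)\bigr)\,d\theta$ (note that since $c\mapsto\eta_c$ is increasing and $\int_\T\eta_a=a$, the normalization constant is $c'-c$). By the first point of Theorem \ref{Tdiffrat}, along the rational leaf $\Gamma_c$ the derivative $\frac{\partial\eta_c}{\partial c}$ exists, is continuous in $\theta$, is strictly positive, and has integral $1$ over $\T$; moreover equation \eqref{Eeta}–\eqref{Eetaunif} (and its translate to a general rational leaf, obtained by the conjugation $G_A$ used in the proof of the first point) shows that the difference quotient $\frac{\eta_{c'}(\theta)-\eta_c(\theta)}{c'-c}$ converges to $\frac{\partial\eta_c}{\partial c}(\theta)$ \emph{uniformly} in $\theta$. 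Hence $\frac{1}{c'-c}\int_I(\eta_{c'}-\eta_c)\,d\theta\to\int_I\frac{\partial\eta_c}{\partial c}(\theta)\,d\theta$ for every $I$, which forces the whole family of strips above $I$ to carry, in the limit, exactly the mass $\mu_c(I)$, where $\mu_c$ is the probability measure with density $\frac{\partial\eta_c}{\partial c}$. Since also the vertical thickness of $\Lambda_{c,c'}$ shrinks to zero (the two leaves coalesce), the limit $\Lambda_c$ is supported on $\Gamma_c$ and its pushforward under $\pi_1$ is $\mu_c$; a tightness/Prokhorov argument on each compact piece of $\T$ guarantees that the limit exists and is this measure, using that the total mass is $1$ on a fundamental domain and the measures are uniformly supported near $\Gamma_c$.

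Once $\mu_c$ is identified as a $g_c$-invariant probability measure, the semi-conjugation statement is essentially Proposition \ref{Pconjmeas} applied to $g_c$, but I must be careful about the hypothesis there: $\mu_c$ is non-atomic (it is equivalent to Lebesgue, having a continuous positive density), and either $\rho(c)$ is irrational — handled already — or, when $\rho(c)$ is rational, one needs $g_c$ to be $C^0$-conjugate to a rotation. The cleanest way is: $\Lambda_{c,c'}$ is $f$-invariant for every $c'$ because $f$ is symplectic and preserves each leaf, hence $f$ preserves the normalized Lebesgue measure on the sub-annulus between the leaves; passing to the limit, $\Lambda_c$ is $f$-invariant, and projecting, $\mu_c$ is $g_c$-invariant. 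Then $h_c(\theta)=\int_0^\theta\frac{\partial\eta_c}{\partial c}(t)\,dt=\mu_c([0,\theta])$ is a homeomorphism of $\T$ (continuous, strictly increasing because the density is strictly positive, degree one), and invariance of $\mu_c$ gives $h_c\bigl(g_c(\theta)\bigr)-h_c(\theta)=\mu_c\bigl([\theta,g_c(\theta))\bigr)$, which is independent of $\theta$ by invariance, hence equal to its average, namely $\rho(c)$. This bypasses the need to invoke a prior conjugacy result: the existence of the continuous invariant density is exactly what upgrades $g_c$ to being conjugate to a rotation.

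The main obstacle I anticipate is the \emph{uniform} convergence of the difference quotients $\frac{\eta_{c'}-\eta_c}{c'-c}$, together with verifying that no mass of $\Lambda_{c,c'}$ escapes in the limit. The first part is delivered by equations \eqref{Eeta}–\eqref{Eetaunif} from the proof of Theorem \ref{Tdiffrat}'s first point — but those were stated for $c$ near a leaf of integer rotation number after the reduction $G_A^{-1}\circ f^q\circ G_A$, so one should note that this reduction is exactly the one already performed and that the conclusion transports back verbatim to a general rational $c$; the convergence being uniform on $\T$ is what allows interchanging limit and integral for every $I$ simultaneously and thus pins down the limit measure rather than merely a subsequential limit. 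The escape-of-mass issue is mild here because everything happens in a bounded sub-annulus $\ca_c$ for $c'$ close to $c$, so the measures $\Lambda_{c,c'}$ are supported in a fixed compact annulus; tightness is automatic and the identification of $\mu_c$ on all intervals $I$ determines the limit uniquely.
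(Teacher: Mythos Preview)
Your proposal is correct and follows essentially the same route as the paper: both use the uniform convergence of $(\eta_{c'}-\eta_c)/(c'-c)$ to $\partial\eta_c/\partial c$ coming from \eqref{Eeta}--\eqref{Eetaunif} to identify the weak limit of $\Lambda_{c,c'}$ via \eqref{E4}, obtaining an $f$-invariant measure on $\Gamma_c$ whose projection has density $\partial\eta_c/\partial c$. The only minor difference is in the last step: the paper simply invokes Proposition~\ref{Pconjmeas} (the hypothesis ``$g_c$ is $C^0$ conjugate to a rotation'' being available from the complete periodicity $f^q_{|\Gamma_c}=\mathrm{Id}$ recalled at the start of subsection~\ref{ssrat}), whereas you give the self-contained argument that a positive continuous invariant density makes $h_c$ a genuine homeomorphism, which \emph{produces} the conjugacy directly---this is a clean and equivalent variant.
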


\begin{proof}
 Using  Equations (\ref{Eeta}) and (\ref{Eetaunif}),  we can take the limit in Equation (\ref{E4}) or more precisely for any $\psi\in C^0(\A, \R)$ in 
$$\int \psi(\theta, r)d\Lambda_{c, c'}(\theta, r)=\int_\T\frac{1}{c-c'}\bigg( \int_{\eta_{c'}(\theta)}^{\eta_c(\theta)}\psi(\theta, r)dr\bigg) d\theta$$
 and obtain that the limit is an invariant measure supported in the graph of $\eta_c$ whose projected measure $\mu_c$ has a density with respect to Lebesgue that is equal to
$\frac{\partial \eta_c}{\partial c}$. We then use Proposition \ref{Pconjmeas} to conclude that $h_c$ is the wanted conjugation.
 \end{proof}
\noindent   {\em Proof of the third point.} 
We noticed that when $\rho(c)$ is irrational, there is only one invariant Borel probability measure that is supported on the graph of $\eta_c$. This implies the continuity of the map $c\mapsto \mu_c$ at such a $c$. Let us look at what happens when $\rho(c)$ is rational.
 
\begin{proposition}
For every $c_0\in \R$ such that $\rho(c_0)$ is rational, for every $\theta\in [0, 1]$, we have
$$\lim_{c\rightarrow c_0} \mu_c([0, \theta])=\mu_{c_0}([0, \theta])$$
and the limit is uniform in $\theta$.
\end{proposition}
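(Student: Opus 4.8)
The plan is to combine the explicit asymptotics for $\eta_c$ near a rational leaf obtained in the proof of the first point with a compactness argument, treating separately the approach from rational and irrational parameters $c$. First I would reduce, exactly as in the proof of the first point, to the case $\rho(c_0)\in\Z$ and $c_0=0$, replacing $f$ by $G_{c_0}^{-1}\circ f^q\circ G_{c_0}$ (using Proposition~\ref{iterateTwist} so this is still a $C^0$-integrable twist map); note that $h_c$, being built from the normalized density $\frac{\partial\eta_c}{\partial c}$ (or from the unique invariant measure when $\rho(c)$ is irrational), transforms in a controlled way under this conjugation, so it suffices to prove the uniform convergence $\mu_c([0,\theta])\to\mu_0([0,\theta])$ in this normalized setting. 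Since each $\mu_c$ is a Borel probability measure on $\T$ and $h_c(\theta)=\mu_c([0,\theta])$ is non-decreasing with $h_c(0)=0$, $h_c(1)=1$, by Helly's selection theorem any sequence $c_n\to 0$ has a subsequence along which $h_{c_n}\to H$ pointwise for some non-decreasing $H$; the goal is to show $H=h_0$ and that the convergence is uniform, which by Dini-type reasoning (monotone limit of monotone functions converging to a continuous limit) follows once the pointwise identification is done, because $h_0$ is continuous (its density $\frac{\partial\eta_0}{\partial c}=\big(\int_\T s^{-1/2}\big)^{-1}s^{-1/2}$ is continuous and positive by the first point).

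The heart of the matter is identifying the limit $H$ with $h_0$. Here I would use the weak-$*$ convergence of the measures $\Lambda_{0,c_n}$ (the normalized Lebesgue measures between the leaves $\eta_0$ and $\eta_{c_n}$), which by Lemma~\ref{Lratmeas} converge, as $c_n\to 0^+$, to the invariant measure on the graph of $\eta_0$ whose projection $\mu_0$ has density $\frac{\partial\eta_0}{\partial c}$. The uniform asymptotics \eqref{Eeta}--\eqref{Eetaunif} give, for any continuous test function $\psi$,
\begin{equation*}
\int\psi\, d\Lambda_{0,c_n}=\int_\T\frac{1}{c_n}\Big(\int_0^{\eta_{c_n}(\theta)}\psi(\theta,r)\,dr\Big)d\theta\longrightarrow \Big(\int_\T \tfrac{dt}{\sqrt{s(t)}}\Big)^{-1}\int_\T \psi\big(\theta,0\big)\tfrac{d\theta}{\sqrt{s(\theta)}},
\end{equation*}
so the projected measures converge weak-$*$ to $\mu_0$; comparing with the cumulative distribution functions $h_{c_n}$ (for which the relation $\Lambda_{0,c_n}\big(\{\theta\in I,\ r\in[0,\eta_{c_n}(\theta)]\}\big)=\frac1{c_n}\int_I\eta_{c_n}$ ties the projected $\Lambda_{0,c_n}$-measure directly to $\mu_{c_n}$ via the first-point formula), one gets $H=h_0$ at every continuity point of $h_0$, hence everywhere since $h_0$ is continuous. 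For $c_n\to 0^-$ one argues symmetrically with $\Lambda_{c_n,0}$. Since every subsequence has a further subsequence with limit $h_0$, the full family $h_c$ converges pointwise, and then uniformly, to $h_0$ as $c\to c_0$.

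The main obstacle I anticipate is the interchange of the limit $c\to 0$ with the projection/integration when $\rho(c)$ is \emph{irrational} for the nearby parameters $c$: in that regime $\mu_c$ is the pushforward of Lebesgue under the (possibly non-differentiable, merely continuous) semi-conjugation $h_c$, and one cannot directly use the density formula. The remedy is to avoid pointwise manipulation of $\mu_c$ and work only through the weak-$*$ convergence of $\Lambda_{0,c}$, which is insensitive to the arithmetic nature of $\rho(c)$ since it relies only on the uniform geometric estimate \eqref{Eeta}; the identity $\Lambda_{0,c}(\{\theta\in I\})=\frac1c\int_I\eta_c$ holds regardless, and the invariance of $\Lambda_{0,c}$ under $f$ together with the uniqueness of the invariant measure on the limiting leaf (which is the content of Lemma~\ref{Lratmeas} and does not require $\rho(c)$ irrational for the nearby leaves) forces the limit to be $\mu_0$. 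A secondary technical point is checking that $H$ is genuinely a distribution function, i.e. that no mass escapes — but this is immediate since $\mu_c(\T)=1$ and the $\eta_c$ are uniformly bounded near the leaf, so $h_c(1)=1$ passes to the limit.
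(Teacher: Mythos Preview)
Your argument has a genuine gap: you conflate two different families of measures. The measure $\mu_c$ whose convergence you must establish is the $g_c$-invariant probability on $\T$ (the unique one when $\rho(c)$ is irrational, and the one with density $\frac{\partial\eta_{c'}}{\partial c'}\big|_{c'=c}$ when $\rho(c)$ is rational). What your weak-$*$ argument via $\Lambda_{0,c}$ actually computes is the limit of $\pi_{1*}\Lambda_{0,c}$, whose density is the \emph{difference quotient} $\frac{\eta_c(\theta)}{c}$. Lemma~\ref{Lratmeas} and the asymptotics~(\ref{Eeta}) do show that $\pi_{1*}\Lambda_{0,c}\to\mu_0$, but there is no reason for $\mu_c$ to equal, or even be close to, $\pi_{1*}\Lambda_{0,c}$; the sentence ``ties the projected $\Lambda_{0,c_n}$-measure directly to $\mu_{c_n}$ via the first-point formula'' is where the argument breaks. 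Note also that a compactness argument alone cannot identify the limit: since $g_0$ is the identity after your reduction, \emph{every} probability on $\T$ is $g_0$-invariant, so knowing that a subsequential limit of $(\mu_c)$ is $g_0$-invariant gives no information.

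The paper's proof avoids this by never passing through $\Lambda_{0,c}$ to study $\mu_c$. Instead it exploits directly the $g_c$-invariance of $\mu_c$: for any such invariant measure one has $\mu_c\big([\tilde g_c^j(0),\tilde g_c^{j+1}(0))\big)=\rho(c)$, so $\mu_c([0,\theta])$ is, up to an error $O(\rho(c))$, equal to $\rho(c)\cdot M_c(\theta)$ where $M_c(\theta)$ counts the orbit points $\tilde g_c^j(0)$ in $[0,\theta]$. The asymptotics~(\ref{Eeta}) are then used to show that the \emph{areas} $\int_\tau^{\tilde g_c(\tau)}\eta_c$ are, to leading order in $c$, independent of $\tau$; this turns the orbit-counting $M_c(\theta)$ into the ratio $\int_0^\theta s^{-1/2}\big/\int_\T s^{-1/2}$ and gives $\mu_c([0,\theta])\to\mu_0([0,\theta])$ uniformly. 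The Dini-type upgrade to uniform convergence at the end of your outline is the same as the paper's; what is missing is this orbit-counting link between the $g_c$-invariance of $\mu_c$ and the geometric area estimates.
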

This joint with the continuity of $h_{c_0}$ implies the continuity of $(\theta, c)\mapsto h_c(\theta)$ at $(\theta, c_0)$.\\
\begin{proof}  In this proof, we will use different functions $\varepsilon_i(\tau, c)$ and all these functions will satisfy uniformly in $\tau$
$$\lim_{c\rightarrow 0}\varepsilon_i(\tau, c)=0.$$
  As in the proof of the first point of Theorem \ref{Tdiffrat},  we can assume that $u_{c_0}=0$ (and then $c_0=0$)  and $\rho(0)=0$.\\
  We fix $\varepsilon>0$. Because of the continuity of  $\rho$, we can choose $\alpha$ such that if $|c|<\alpha$, then $|\rho(c)|<\varepsilon$.\\
 Let us introduce the notation $N_c=\lfloor \frac{1}{\rho(c)}\rfloor$ for $c\not=0$.    Let us assume that $c>0$ and $\theta\in (0, 1]$. 
We also denote by $  \tilde g_c$ the lift of $g_c$ such that $ \tilde g_c(0)\in[0, 1)$ and by $M_c(\theta)$
$$M_c(\theta)=\sharp \{ j\in \N;\quad  \tilde g _{c}^j(0)\in [0, \theta]\}.$$
 Hence, $M_c(\theta)$ is the number of points of the orbit of $0$ under $\tilde g_c$ that belong to $[0, \theta]$.  Observe that $M_c(\theta)$ is non-decreasing with respect to $\theta$.\\
As $\eta_c>0$, any primitive $\cN_c$ of $\eta_c$ is increasing, hence $M_c(\theta)$ is also the number of $\tilde g^k(0)$ such that $\cN_c\big(\tilde g^k(0)\big)$ belongs to $[\cN_c(0), \cN_c(\theta)]$, i.e.
\begin{equation}
\label{EestMC}
\begin{matrix} M_c(\theta)&=\sharp \Big\{ j\in  \N;\quad  \int_0^{\tilde g_{c}^j(0)}\eta_{c}(t)dt\leq  \int_0^{\theta}\eta_{c}(t)dt\Big\}\\
& =\sup\Big\{ j\in  \N;\quad  \int_0^{\tilde g_{c}^j(0)}\eta_{c}(t)dt\leq  \int_0^{\theta}\eta_{c}(t)dt\Big\}.\end{matrix}
\end{equation}
 Note that $M_c(1)=N_c$ because $g_c$ has rotation number $\rho(c)$ and that   we have $\forall \theta\in (0, 1]$, $M_c(\theta)\leq N_c$ as $M_c$ is non decreasing. We have also\\
$$\mu_c([0, \theta])=\sum_{j=0}^{M_c(\theta)-1}\mu_c([\tilde g_c^j(0), \tilde g_c^{j+1}(0)[)+\mu_c([\tilde g^{M_c(\theta)}(0), \theta])$$
and thus
$\mu_c([0, \theta])=M_c(\theta)\rho(c) +\Delta\rho(c)$ with $\Delta\in [0, 1]$  because $[\tilde g^{M_c(\theta)}(0), \theta]\subset [\tilde g^{M_c(\theta)}(0), \tilde g^{M_c(\theta)+1}(0)[$.\\
Hence
 \begin{equation}\label{Emesure}\mu_c([0, \theta])\in [ M_c(\theta)\rho(c), M_c(\theta)\rho(c)+\rho(c)]\subset \left[ \frac{M_c(\theta)}{N_c+1}, \frac{M_c(\theta)+1}{N_c}\right] .\end{equation}
Hence to estimate the measure $\mu_c([0, \theta])$ we need a good estimate of the number of $j$  such that  $\tilde g_{c}^j(0)$ belongs to $[0, \theta]$.
We have proved in Equations (\ref{Eeta}) and (\ref{Eetaunif}) that 
\begin{equation}
\label{Eetabis} \eta_{c}(\tau)=\Big(\int_\T \frac{dt}{\sqrt{s(t)}}\Big)^{-1}\frac{c\big(1+\varepsilon_0(\tau,c)\big)}{\sqrt{s(\tau)}}.
\end{equation}

 We deduce from Equation (\ref{Ematf}) that  $\tilde g_{c}(\tau)= \tau+\big(s(\tau)+\varepsilon_1(\tau, c)\big)\eta_c(\tau)$ where uniformly in $\tau$, we have: $\displaystyle{\lim_{c\rightarrow 0}\varepsilon_1(\tau, c)=0}$ and then by Equation (\ref{Eetabis}):
\begin{equation}\label{Epetitair}\int_\tau^{\tilde g_{c}(\tau)}\eta_{c}(t)dt=\eta_c(\tau)^2\big(s(\tau)+\varepsilon_2(\tau, c)\big) =\frac{c^2\big(1+\varepsilon_3(\tau, c)\big)}{\Big(\int_\T \frac{dt}{\sqrt{s(t)}}\Big)^2} .
\end{equation}
This says that the area between $\tau$ and $\tilde g_c(\tau)$ that is limited by the zero section and the graph of $\eta_{c}$ is almost constant (i.e. doesn't depend a lot on $\tau$).\\

 We deduce from Equation (\ref{EestMC})  that 
$$\int_0^{\tilde g_c^{M_c(\theta)}(0)}\eta_c(t)dt\leq\int_0^\theta\eta_c(t)dt<\int_0^{\tilde g_c^{M_c(\theta)+1}(0)}\eta_c(t)dt.$$
Hence 
$$\sum_{j=0}^{M_c(\theta)-1}\int_{\tilde g^j(0)}^{\tilde g^{j+1}(0)}\eta_c(t)dt\leq \int_0^\theta\eta_c(t)dt\leq 
\sum_{j=0}^{M_c(\theta)}\int_{\tilde g^j(0)}^{\tilde g^{j+1}(0)}\eta_c(t)dt.$$
Using Equation (\ref{Epetitair}), we deduce that
$$M_c(\theta)\frac{ c^2\big(1+\varepsilon_4(\theta, c)\big)}{\Big(\int_\T\frac{dt}{\sqrt{s(t)}}\Big)^2}\leq\frac{c\big(1+\varepsilon_5(\theta, c)\big)}{\int_\T\frac{dt}{\sqrt{s(t)}}}\int_0^\theta \frac{dt}{\sqrt{ s(t)}} <(M_c(\theta)+1)\frac{ c^2\big(1+\varepsilon_6(\theta, c)\big)}{\Big(\int_\T\frac{dt}{\sqrt{s(t)}}\Big)^2},
$$
and then 
\begin{equation}\label{EMC}M_c (\theta)
=\left\lfloor \frac{1}{c}\bigg({\int_\T \frac{dt}{\sqrt{s(t)}}}\Big( \int_0^{\theta}\frac{dt}{\sqrt{s(t)}}+\varepsilon_7(\theta, c)\Big)\bigg)\right\rfloor.\end{equation}
This implies that
\begin{equation}\label{ENC}N_c=M_c(1)=\left\lfloor \frac{1}{c}\bigg(\Big({\int_\T \frac{dt}{\sqrt{s(t)}}}\Big)^2+\varepsilon_8(\theta, 1)\bigg)\right\rfloor\end{equation}
 and by Equations (\ref{neq 0}), (\ref{Emesure}), (\ref{EMC}) and (\ref{ENC}).
\begin{equation}\mu_c([0, \theta])=\frac{M_c(\theta)}{N_c}+\varepsilon_9(\theta, c)= \frac{\int_0^{\theta}\frac{dt}{\sqrt{s(t)}}}{\int_\T \frac{dt}{\sqrt{s(t)}}}+\varepsilon_{10}(\theta, c)=\mu_0([0,\theta])+\varepsilon_{11}(\theta, c).\end{equation}
As none of the measures $\mu_c$ has atoms, this implies that $c\mapsto \mu_c$ and all the maps $c\mapsto \mu_c([0, \theta])=h_c(\theta)$ are continuous. As every map $h_c$ is non decreasing in the variable $\theta$, we deduce from the Dini-Poly\`a Theorem  \cite[Exercise 13.b page 167]{Rudin} that $c\mapsto h_c$ is continuous for the $C^0$ uniform topology.
\end{proof}

\begin{remk}
If $\rho(c)=\frac{p}{q}$, then we proved that $\frac{\partial \eta_c(\theta)}{\partial c}=  \Big(\int_\T \frac{dt}{\sqrt{\textrm{$s_q\big(t, \eta_c(t)\big)$}}}\Big)^{-1}\frac{1}{\sqrt{\textrm{$s_q\big(\theta, \eta_c(\theta)\big)$}}}$ where $$Df^q(x)=\begin{pmatrix}
a_q(x)&s_q(x)\\
c_q(x)&d_q(x)
\end{pmatrix}.$$
This
 gives  for the conjugation
$$ h_c(\theta)=\mu_c([0, \theta])=\bigg(\int_\T \frac{dt}{\sqrt{s_q\big(t, \eta_c(t)\big)}}\bigg)^{-1}\int_0^\theta \frac{1}{\sqrt{s_q\big(t, \eta_c(t)\big)}}dt.$$
Observe that this $C^k$ depends on $\theta$.\\
Observe too that Equations (\ref{Eeta}) and (\ref{Eetaunif}) can be rewritten as
\begin{equation}\label{Eetab}\eta_c(\theta)=c\bigg[\bigg(\int_\T \frac{dt}{\sqrt{s_q\big(t, \eta_c(t)\big)}}\bigg)^{-1}\frac{1}{\sqrt{s_q\big(\theta, \eta_c(\theta)\big)}}+\varepsilon(\theta,c)\bigg],
\end{equation}
where
\begin{equation}\label{Eetaunifb}{\rm uniformly}\ \ {\rm for}\ \  \theta \in \T, \quad \lim_{c\rightarrow 0}\varepsilon (\theta, c)=0.\end{equation}

\end{remk}
\subsection{Generating function and regularity}  

 Let $u_c:\T\rightarrow \R$ be the $C^1$ function such that $u_c(0)=0$ and $\eta_{c}=c+u_c'$. In other words,  identifying $\T$ with $[0, 1]$, we have
 $$u_c(\theta)=\int_0^\theta \eta_{c}(t)dt-c\theta.$$
 Observe that for every $\theta$, the map $c\mapsto u_c(\theta)+c\theta$ is increasing because every $  c\mapsto \eta_c(\theta)$ is increasing.
  \begin{theorem}
 The map $(\theta, c)\mapsto u_c(\theta)$ is  $C^1$. 
 Moreover, in this case, $u$ is unique and  we have 
 \begin{itemize}
 \item the graph of $c+\frac{\partial u_c}{\partial \theta}$ is a leaf of the invariant foliation;
 \item $ \theta \mapsto \theta+\frac{\partial u_c}{\partial c}(\theta)$ is   the semi-conjugation $h_c$ between $g_c$ and $R_{\rho(c)}$ given in Theorem \ref{Tdiffrat}. We have: $h_c\circ g_c=h_c+\rho(c)$.
 
  \end{itemize}
 \end{theorem}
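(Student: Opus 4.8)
Write $\Psi(\theta,c)=\int_0^\theta\eta_c(t)\,dt$, so that $u(\theta,c)=\Psi(\theta,c)-c\theta$. The plan is to show that both partial derivatives of $u$ exist everywhere on $\A$ and are continuous, which yields $u\in C^1$, and then to read off the two ``moreover'' items together with uniqueness. The $\theta$-derivative is the easy half: since the foliation is continuous and each leaf $\eta_c$ is Lipschitz by Birkhoff's theorem, the map $(\theta,c)\mapsto\eta_c(\theta)$ is continuous (in fact $c\mapsto\eta_c$ is continuous into $C^0(\T,\R)$ because $\T$ is compact), so $\frac{\partial u}{\partial\theta}(\theta,c)=\eta_c(\theta)-c$ exists and is continuous. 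All the substance is in the $c$-derivative, which I claim equals $h_c(\theta)-\theta$, where $h_c(\theta)=\int_0^\theta d\mu_c$ is the (semi-)conjugation of Theorem \ref{Tdiffrat}.

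To compute $\frac{\partial\Psi}{\partial c}$, fix $(\theta_0,c_0)$ and let $c\to c_0$. For $c>c_0$ the difference quotient $\frac{1}{c-c_0}\big(\Psi(\theta_0,c)-\Psi(\theta_0,c_0)\big)=\frac{1}{c-c_0}\int_0^{\theta_0}\big(\eta_c(t)-\eta_{c_0}(t)\big)\,dt$ is precisely the mass the push-forward $(\pi_1)_*\Lambda_{c_0,c}$ assigns to $[0,\theta_0]$; for $c<c_0$ one uses $\Lambda_{c,c_0}$ and gets the same expression. Since $f$ is symplectic it preserves Lebesgue measure, and the region between two leaves of the invariant foliation is invariant, so each such $\Lambda$ is an $f$-invariant probability measure; moreover, by continuity of the foliation these regions are relatively compact and shrink (in Hausdorff distance) to the graph of $\eta_{c_0}$ as $c\to c_0$, so the family $\{\Lambda_{c_0,c}\}$ is tight and every weak-$*$ limit is $f$-invariant and carried by the graph of $\eta_{c_0}$, hence projects to a $g_{c_0}$-invariant probability on $\T$. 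When $\rho(c_0)$ is irrational, $g_{c_0}$ is uniquely ergodic and this projection is forced to be $\mu_{c_0}$; when $\rho(c_0)$ is rational, Lemma \ref{Lratmeas} already identifies $\lim_{c\to c_0}\Lambda_{c_0,c}$ as the measure on the graph of $\eta_{c_0}$ with projection $\mu_{c_0}$. In both cases the projected limit is the non-atomic measure $\mu_{c_0}$, so the $[0,\theta_0]$-masses converge to $\mu_{c_0}([0,\theta_0])=h_{c_0}(\theta_0)$. Therefore $\frac{\partial\Psi}{\partial c}(\theta,c)=h_c(\theta)$ everywhere, so $\frac{\partial u}{\partial c}(\theta,c)=h_c(\theta)-\theta$, which is continuous because $(\theta,c)\mapsto h_c(\theta)$ is continuous by the third point of Theorem \ref{Tdiffrat}. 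Having both partials continuous, $u$ is $C^1$.

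The two itemized conclusions are now immediate: $c+\frac{\partial u_c}{\partial\theta}=\eta_c$ is a leaf of the invariant foliation, and $\theta\mapsto\theta+\frac{\partial u_c}{\partial c}(\theta)=h_c(\theta)$ is exactly the semi-conjugation of Theorem \ref{Tdiffrat}, so $h_c\circ g_c=h_c+\rho(c)$. For uniqueness it suffices to check that in the $C^0$-integrable case the weak K.A.M. solution at each class $c$ is unique up to an additive constant (this $u_c$ is itself a weak K.A.M. solution, its graph being the invariant essential curve $\eta_c$). Let $v$ be a weak K.A.M. solution for $\widehat T^c$ and suppose $x=(\theta_0,p_0)\in\cg(c+v')$ with $p_0\ne\eta_c(\theta_0)$; then $x$ lies on a leaf $\eta_{c'}$ with $c'\ne c$, and since that leaf is $f$-invariant the whole orbit of $x$ stays on it, so the closure $K$ of the backward orbit of $x$ is a compact $f$-invariant set with rotation number $\rho(c')$. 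But $K\subset\overline{\cg(c+v')}$ and, using $f^{-1}\big(\overline{\cg(c+v')}\big)\subset\cg(c+v')$, one gets $K\subset\bigcap_n f^{-n}\big(\cg(c+v')\big)$, which by point \ref{PtAubrypseudo} of subsection \ref{ssweakk} has rotation number $\rho(c)$; hence $\rho(c)=\rho(c')$, contradicting the injectivity of $\rho$ (which is an increasing homeomorphism in the $C^0$-integrable case). So $\cg(c+v')$ lies in the graph of $\eta_c$, whence $v'=\eta_c-c$ Lebesgue-a.e.; as $v$ is Lipschitz, $v$ agrees with $u_c$ up to a constant, and normalizing at $0$ makes it unique.

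The step I expect to be the main obstacle is the identification of the weak-$*$ limit of the difference-quotient measures $(\pi_1)_*\Lambda_{c_0,c}$ at a parameter $c_0$ with a \emph{rational} rotation number, where unique ergodicity is unavailable; this is exactly where the area estimates \eqref{Eeta}--\eqref{Eetaunif} and the ergodic-decomposition argument behind Theorem \ref{Tdiffrat} and Lemma \ref{Lratmeas} are doing the real work, and where the continuity of $(\theta,c)\mapsto h_c(\theta)$ from that theorem is what upgrades the bare existence of $\frac{\partial u}{\partial c}$ to continuity, hence to $u\in C^1$.
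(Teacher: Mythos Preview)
Your argument is correct and follows essentially the same route as the paper: both identify the difference quotient $\frac{1}{c-c_0}\big(u_c(\theta)-u_{c_0}(\theta)\big)$ with the projected mass $(\pi_1)_*\Lambda_{c_0,c}\big([0,\theta]\big)-\theta$, split into the irrational case (unique ergodicity forces the limit to be $\mu_{c_0}$) and the rational case (Lemma \ref{Lratmeas}), and then invoke the continuity of $(\theta,c)\mapsto h_c(\theta)$ from Theorem \ref{Tdiffrat} to conclude $u\in C^1$.

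Two small remarks. First, in your uniqueness argument you write ``the closure $K$ of the backward orbit of $x$ is a compact $f$-invariant set''; the closure of a backward orbit is only backward-invariant in general. Replace $K$ by the $\alpha$-limit set of $x$ (which is genuinely $f$-invariant, nonempty, compact, and contained both in the graph of $\eta_{c'}$ and in $\overline{\cg(c+v')}$), and your chain of inclusions $K\subset\bigcap_n f^{-n}\big(\cg(c+v')\big)$ goes through verbatim. Second, the paper's own proof of this theorem does not spell out the uniqueness of $u$; your argument that every weak K.A.M. solution for $\widehat T^c$ has its pseudograph on the leaf $\eta_c$ (via the rotation-number contradiction using injectivity of $\rho$) is a clean way to fill that in.
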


 \begin{cor}
 The semi-conjugation $h_c$      continuously depends on $c$. 
 \end{cor}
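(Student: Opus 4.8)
The plan is to deduce the corollary directly from the preceding theorem, whose statement already puts everything in place: it asserts both that $u:\A\to\R$ is $C^1$ and that, for every $c\in\R$, the semi-conjugation $h_c$ of Theorem \ref{Tdiffrat} equals the map $\theta\mapsto\theta+\frac{\partial u}{\partial c}(\theta,c)$. First I would check that this formula genuinely defines a continuous self-map of $\T$: identifying $\T$ with $[0,1]$, we have $u_c(1)=\int_0^1\eta_c(t)\,dt-c=0=u_c(0)$ for every $c$ (recall that with the parametrization $c=\ca(a)$ one has $\int_\T\eta_c=c$), hence $\frac{\partial u}{\partial c}(0,c)=\frac{\partial u}{\partial c}(1,c)=0$; therefore $\theta\mapsto\theta+\frac{\partial u}{\partial c}(\theta,c)$ takes the value $0$ at $\theta=0$ and $1$ at $\theta=1$ and so descends to a map $\T\to\T$. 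Since it equals $\theta\mapsto\mu_c([0,\theta])$ (Theorem \ref{Tdiffrat}), it is a non-decreasing degree-one circle map — an orientation-preserving homeomorphism when $\mu_c$ has full support and a genuine semi-conjugation otherwise.

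Granting this, I would conclude as follows. Because $u$ is $C^1$, the partial derivative $\frac{\partial u}{\partial c}$ is continuous on all of $\A=\T\times\R$, so the two-variable map $H:\A\to\T$ defined by $H(\theta,c)=\theta+\frac{\partial u}{\partial c}(\theta,c)$ is continuous and satisfies $H(\cdot,c)=h_c$. It then remains only to upgrade joint continuity of $H$ to continuity of the one-variable map $c\mapsto h_c$ valued in $C^0(\T,\T)$ equipped with the uniform distance $d_\infty$. This is a routine compactness argument: fix $c_0\in\R$, $\varepsilon>0$ and a compact neighbourhood $K\ni c_0$; the restriction of $H$ to the compact set $\T\times K$ is uniformly continuous, so there is $\delta>0$ such that $|c-c_0|<\delta$ forces $d_\T\big(H(\theta,c),H(\theta,c_0)\big)<\varepsilon$ for every $\theta\in\T$, i.e. $d_\infty(h_c,h_{c_0})<\varepsilon$. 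Equivalently, once $h_c=\mathrm{id}_\T+\frac{\partial u}{\partial c}(\cdot,c)$ is known, the corollary is literally the third point of Theorem \ref{Tdiffrat}, which already records that $c\mapsto h_c$ is continuous for the uniform $C^0$ topology.

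I do not expect a genuine obstacle: this corollary merely repackages regularity already obtained. The only points needing a word of care are the two isolated above — that $\theta\mapsto\theta+\frac{\partial u_c}{\partial c}(\theta)$ descends to a self-map of $\T$ (which uses $u_c(0)=u_c(1)$) and the compactness step turning a continuous function of two variables into a continuous path in a function space. The real work sits upstream, in the preceding theorem's claim that $u$ is $C^1$ — precisely that $\frac{\partial u}{\partial c}$ exists and is continuous — which one gets by identifying this partial derivative with $h_c(\theta)-\theta=\mu_c([0,\theta])-\theta$: one recognizes $\frac{1}{c'-c}\int_0^\theta\big(\eta_{c'}(t)-\eta_c(t)\big)\,dt$ as the $\Lambda_{c,c'}$-mass of a region trapped between two nearby leaves, lets $c'\to c$ so that the leaves collapse onto $\Gamma_c$ and every weak-$\ast$ limit of $\Lambda_{c,c'}$ is the lift to $\Gamma_c$ of the unique $g_c$-invariant probability measure $\mu_c$, and finally invokes the continuity of $c\mapsto\mu_c$ from Theorem \ref{Tdiffrat}.
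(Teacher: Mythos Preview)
Your proposal is correct and takes essentially the same approach as the paper: the corollary is stated immediately after the theorem and given no separate proof, since it follows at once from the identification $h_c(\theta)=\theta+\frac{\partial u}{\partial c}(\theta,c)$ together with the $C^1$ regularity of $u$ (and, as you note, the continuity of $c\mapsto h_c$ is already the third point of Theorem~\ref{Tdiffrat}). Your additional checks---that the map descends to $\T$ and the compactness upgrade to uniform continuity---are correct and harmless, if a bit more than the paper bothers to spell out.
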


 \begin{proof}  The first point is a consequence of the definition of $u_c$.\\
 Then $u_c$ and $\frac{\partial u_c}{\partial \theta}=\eta_{c}-c$ continuously depend on $(\theta, c)$.\\
 Observe that with the  notation (\ref{E4}), we have  
 \begin{multline*}
 \Lambda_{c, c'} \big(\{ (\theta, r); \ \ \theta \in [\theta_1, \theta_2],  r\in [\eta_{c}(\theta), \eta_{c'}(\theta)]\}\big)= \\
 =\frac{1}{c'-c}\Big( \big(u_{c'}(\theta_2)-u_{c'}(\theta_1)\big)-\big(u_{c}(\theta_2)-u_{c}(\theta_1)\big) \Big)+(\theta_2-\theta_1).
 \end{multline*}
  Moreover, if $\rho(c_0)\in \Q$, we deduce from   Lemma \ref{Lratmeas} that $u_c$   admits a derivative with respect to $c$ at $c_0$   $$\frac{\partial u_c}{\partial c}_{|c=c_0}(\theta)=\lim_{c\rightarrow c_0}\frac{1}{c-c_0}\Big(\big(u_{c}(\theta)-u_{c}(0)\big)-\big(u_{c_0}(\theta)-u_{c_0}(0)\big)\Big)$$
 that is given by

   $$\frac{\partial u_c}{\partial c}_{|c=c_0}(\theta)=\mu_{c_0}([0, \theta])-\theta=h_{c_0}(\theta)-\theta$$
  and this derivative continuously depends on $\theta$.\\
Assume now that $\rho(c_0)$ is   irrational and let $c$ tend to $c_0$. Every limit point of $\Lambda_{c,  c_0}$ when $c$ tends to $c_0$  is a Borel probability measure that is invariant by $f$ and supported on the graph of $\eta_{ c_0}$. As there exists only one such measure,  whose projection was denoted by $\mu_{c_0}$,  we deduce that 
$${ \pi_{1*}}\Big(\lim_{c\rightarrow c_0}\Lambda_{c,  c_0}\Big) =\mu_{c_0}.$$
 As $\mu_{c_0}$ has no atom, we have for all $\theta_0\in [0, 1)$
\begin{multline*}
  h_{c_0}(\theta_0)=\ \mu_{c_0}([0, \theta_0])  \\
\!\!\!\!\!\!\!\!\!\!\!\!\!\!=\lim_{c\rightarrow c_0}\Lambda_{c_0, c} (\{ (\theta, r); \theta \in [0, \theta_0],  r\in [\eta_{c_0}(\theta), \eta_{c}(\theta)]\}) \\
\qquad \quad =\lim_{c\rightarrow c_0}\frac{1}{c-c_0}\Big(\big(u_{c}(\theta_0)-u_{c}(0)\big)-\big(u_{c_0}(\theta_0)-u_{c_0}(0)\big)\Big)+\theta_0 \\
=\frac{\partial u_c}{\partial c}(\theta_0)_{|c=c_0}+\theta_{0},
\end{multline*}
hence $u_c$ admits a derivative with respect to $c$  and 
$$h_{c_0}(\theta)=\mu_{c_0}([0, \theta])=\theta+\frac{\partial u_c}{\partial c}(\theta)_{|c=c_0}.$$

Because of Theorem \ref{Tdiffrat}, $(\theta, c)\mapsto \frac{\partial u_c}{\partial c}(\theta)=h_c(\theta)-\theta$ is continuous. As the two partial derivatives $\frac{\partial u_c}{\partial \theta}$ and $\frac{\partial u_c}{\partial c}$ are continuous in $(\theta, c)$, we conclude that $u$ is $C^1$.

\end{proof}

\section{Proof of the implication (\ref{pt2Tgeneder}) $\Rightarrow$  (\ref{pt1Tgeneder})  in   Theorem \ref{Tgeneder}}\label{secsecond}
We use the same notations as in Theorem \ref{Tgenecont}. We assume that the map $u$ is   $C^1$. \\
Then the graph of every $\eta_c=c+\frac{\partial u_c}{\partial \theta}$ is a  continuous graph that is backward invariant, hence invariant.
If for $c_1\not=c_2$ the two graphs of $\eta_{c_1}$ and $\eta_{c_2}$ have a non-empty intersection, then their common rotation number is rational because a symplectic twist map has at most one invariant curve with a fixed irrational rotation number (see \cite{He1}).   Moreover, for every $c\in [c_1, c_2]$, we have $\rho(c)=\rho(c_1)$.\\
 Using results   of \cite{Ban} (see section 5),  we know that above any $\theta\in \T$, there are at most two $r_1, r_2\in\R$ such that the orbit of $(\theta, r_i)$ is minimizing with rotation number $\rho(c_1)$. As $c_1\not=c_2$, there exists then $\theta\in \T$ such that $r_1=\eta_{c_1}(\theta)\not= \eta_{c_2}(\theta)=r_2$. But for $c\in [c_1, c_2]$, the orbit of $\big(\theta, \eta_c(\theta)\big)$ is minimizing with rotation number equal to $\rho(c_1)$ and then $\eta_c(\theta)\in \{ r_1, r_2\}$. As $c\mapsto \eta_c(\theta)$ is continuous with values in $\{\eta_{c_1}(\theta), \eta_{c_2}(\theta)\}$ and satisfies $\eta_{c_1}(\theta)\not=\eta_{c_2}(\theta)$, we obtain a contradiction.\\
 
   So finally the graphs of the $\eta_c$ define a lamination of $\A$ and then $f$ is $C^0$-integrable.

\section{Proof of Theorem \ref{corLip}}\label{secth13}

\begin{proposition}\label{rho}
Assume that the $C^1$ symplectic twist map $f:\A\rightarrow \A$ has an invariant  locally Lipschitz continuous  foliation into graphs $c\in\R\mapsto  \eta_c\in C^{0}(\T, \R)$. Then the map $\rho: c\in\R\mapsto \rho(c)$ is a locally biLipschitz homeomorphism.
\end{proposition}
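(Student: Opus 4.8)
The plan is to prove more than what is already known from subsection \ref{ss31} (that $\rho$ is an increasing homeomorphism): on every compact interval of parameters we must produce $0<\ell\le L$ with $\ell\,(c'-c)\le\rho(c')-\rho(c)\le L\,(c'-c)$ for $c<c'$. First I would cash in the Lipschitz hypothesis: since $(\theta,c)\mapsto(\theta,\eta_c(\theta))$ is locally bi\nobreakdash-Lipschitz, on any compact set there are $0<m\le M$ with $m\,(c'-c)\le\eta_{c'}(\theta)-\eta_c(\theta)\le M\,(c'-c)$ for all $\theta\in\T$ and $c<c'$ (the lower bound uses that the \emph{inverse} of the straightening is Lipschitz); equivalently $\frac{\partial\eta_c}{\partial c}=h_c'\in[m,M]$ almost everywhere, so each conjugation $h_c$ is bi\nobreakdash-Lipschitz with uniform constants. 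The twist condition supplies the companion bounds $0<\beta_-\le\partial_r(\pi_1\circ F)\le\beta_+$ on the relevant compact subannulus.

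The second ingredient is a flux identity. Let $D(\theta,r)=\pi_1\circ F(\theta,r)-\theta$ (a well defined function on $\A$). Disintegrating Lebesgue measure on the invariant subannulus $\A_{c_1,c_2}=\{\eta_{c_1}(\theta)\le r\le\eta_{c_2}(\theta)\}$ along the foliation — the conditional measure on the leaf $\Gamma_c$ being the $f$\nobreakdash-invariant probability with projected density $\frac{\partial\eta_c}{\partial c}$, which is exactly Theorem \ref{Tdiffrat} and Lemma \ref{Lratmeas} on the rational leaves and follows from uniqueness of the disintegration together with $f$\nobreakdash-invariance of $\mathrm{Leb}|_{\A_{c_1,c_2}}$ for the other $c$'s — and using $\int_{\Gamma_c}D\,d\hat\mu_c=\rho(c)$, one gets, with $\Phi(c):=\int_0^c\rho$,
\[
\Phi(c_2)-\Phi(c_1)=\int_{\A_{c_1,c_2}}D\,d\mathrm{Leb}=\int_\T\bigl[\mathcal G(\theta,\eta_{c_2}(\theta))-\mathcal G(\theta,\eta_{c_1}(\theta))\bigr]\,d\theta,
\]
where $\mathcal G(\theta,\cdot)$ is the $r$\nobreakdash-primitive of $D(\theta,\cdot)$, hence convex with $\partial_r^2\mathcal G=\partial_r(\pi_1\circ F)\in[\beta_-,\beta_+]$.

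Since $\Phi'=\rho$ is monotone, $\Phi$ is convex, and $\rho$ is locally bi\nobreakdash-Lipschitz exactly when $\Phi(c+h)-2\Phi(c)+\Phi(c-h)\asymp h^2$ locally uniformly. Inserting the flux identity and Taylor\nobreakdash-expanding $\mathcal G(\theta,\cdot)$ around $\eta_c(\theta)$, this second difference is the sum of a manifestly positive term — trapped between $\tfrac{\beta_-}{2}\int_\T\bigl[(\eta_{c+h}-\eta_c)^2+(\eta_{c-h}-\eta_c)^2\bigr]$ and the same expression with $\beta_+$, hence comparable to $h^2$ by the foliation bounds — plus the linear remainder $\int_\T D(\theta,\eta_c(\theta))\,\sigma_h(\theta)\,d\theta$, where $\sigma_h=\eta_{c+h}+\eta_{c-h}-2\eta_c$ has zero mean. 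Using $\int_\T\sigma_h=0$ to replace $D(\theta,\eta_c(\theta))$ by $D(\theta,\eta_c(\theta))-\rho(c)$, writing $\sigma_h=\partial_\theta\bigl(u(\cdot,c+h)+u(\cdot,c-h)-2u(\cdot,c)\bigr)$ (recall $\eta_c=c+u_c'$), and integrating by parts on $\T$ — legitimate because $\theta\mapsto D(\theta,\eta_c(\theta))$ is Lipschitz, $f$ being $C^1$ and $\eta_c$ a Lipschitz graph — the remainder is bounded by a fixed constant times $\int_\T\bigl|u(\theta,c+h)+u(\theta,c-h)-2u(\theta,c)\bigr|\,d\theta$.

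The crux, and the step I expect to be the real obstacle, is to show this last quantity is $o(h^2)$, i.e. that $c\mapsto\partial_c u(\cdot,c)=h_c-\mathrm{id}$ is locally Lipschitz (into $L^1(\T)$), which is finer than the mere continuity furnished by Theorem \ref{Tdiffrat}. I would obtain it from the flux identity together with the \emph{uniform} bi\nobreakdash-Lipschitz bounds $m\le h_c'\le M$: these make the conjugations $h_c$ rigid enough under the $c$\nobreakdash-perturbation (the measures $\mu_c$ stay uniformly equivalent to Lebesgue, unlike in the general circle\nobreakdash-dynamics situation), which upgrades their continuous dependence on $c$ to a locally Lipschitz one; feeding this back closes the estimate and gives $\Phi(c+h)-2\Phi(c)+\Phi(c-h)\asymp h^2$. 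The Lipschitz bound on $\rho$ and the Lipschitz bound on $\rho^{-1}$ then both follow, via the elementary fact that a convex function whose symmetric second differences are pinched between $c_1h^2$ and $c_2h^2$ has distributional second derivative comparable to Lebesgue measure.
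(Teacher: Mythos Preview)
Your flux identity $\Phi(c_2)-\Phi(c_1)=\int_{\A_{c_1,c_2}}D\,d\mathrm{Leb}$ is correct, but the argument breaks at exactly the point you flag as the crux. You assert that Lipschitz continuity of $c\mapsto\partial_c u(\cdot,c)=h_c-\mathrm{id}$ into $L^1(\T)$ would make the linear remainder $o(h^2)$; this is wrong --- Lipschitz continuity of a first derivative yields only $O(h^2)$ for the symmetric second difference, never $o(h^2)$. And an $O(h^2)$ bound with an uncontrolled constant does not close your estimate: after the integration by parts the remainder is bounded by $\sup_\theta\big|\partial_\theta D\big(\theta,\eta_c(\theta)\big)\big|$ times the Lipschitz constant of $c\mapsto h_c$, and nothing forces this product to be smaller than $\beta_- m^2$, which is what the lower bound on $\Phi(c+h)-2\Phi(c)+\Phi(c-h)$ would require to survive. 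Worse still, you give no actual proof that $c\mapsto h_c$ is Lipschitz; the phrase ``feeding this back closes the estimate'' is circular, since the object you propose to feed back is essentially the bi-Lipschitz control on $\rho$ you are trying to establish.

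The paper's route is far more direct and sidesteps all of this. The real work, carried out in subsection~\ref{ss1impl}, is to show that the Lipschitz foliation hypothesis forces the whole family $(g_c^k)_{k\in\Z,\,c\in[a,b]}$ to be equi-Lipschitz; this in turn implies (via results of Herman) that each $g_c$ is conjugate to a rotation by a map $h_c$ that is bi-Lipschitz with a constant $\widetilde K$ \emph{uniform in $c$}. Proposition~\ref{rho} then follows from the elementary Lemma~\ref{lemme rho}: if one of two circle lifts is $\widetilde K$-bi-Lipschitz conjugate to a translation and they differ pointwise by $d$, their rotation numbers differ by at most $\widetilde K d$ and at least $d/\widetilde K$. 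Since the twist condition and the foliation bounds give $\beta_-\,m\,(c_2-c_1)\le\tilde g_{c_2}-\tilde g_{c_1}\le\beta_+\,M\,(c_2-c_1)$ pointwise, both the Lipschitz bound on $\rho$ and on $\rho^{-1}$ drop out immediately --- no second differences of $\Phi$, no integration by parts, and in particular no need whatsoever to control how $h_c$ varies with $c$.
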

 This result will not be used in what follows and its proof is postponed to the end of this section.

\subsection{Proof of the first implication}\label{ss1impl}
We assume that the invariant foliation is   $K$-Lipschitz on a compact $\ck =\{ (\theta, \eta_c(\theta)); \theta\in \T, c\in [a, b]\}$, which means 
\begin{equation}\label{Efollip}
\forall \theta \in \T, \forall c_1, c_2\in  [a,b], \quad \frac{|c_1-c_2|}{K}\leq \left| \eta_{c_1}(\theta)-\eta_{c_2}(\theta)\right|\leq K|c_1-c_2|.
\end{equation} 
 As the Lispchitz constant of the invariant graphs are locally uniform in $c$, changing $\ck$ and $K$, we also have
\begin{equation*}
\forall \theta_1, \theta_2 \in \R, \forall c\in [a,b],\quad  \left| \eta_{c}(\theta_1)-\eta_{c}(\theta_2)\right|\leq K|\theta_1-\theta_2|.
\end{equation*} 
and then 
\begin{equation*}
\forall \theta_1, \theta_2 \in \R, \forall c_1, c_2\in [a,b],\quad   \left| \eta_{c_1}(\theta_1)-\eta_{c_2}(\theta_2)\right|\leq K\left(|\theta_1-\theta_2|+|c_1-c_2]\right).
\end{equation*}

 Hence the map  $(\theta, c)\mapsto \eta_{c}(\theta)$ is Lipschitz and then Lebesgue almost everywhere differentiable by Rademacher theorem. We denote the set of its differentiability points in $\T\times [a, b]$ by $\cN$. Let us fix some $(\theta_0, c_0)\in \T\times \R$    where $(\theta, c)\mapsto \eta_{c}(\theta)$ is differentiable.  Because of Equation (\ref{Efollip}), we have $\frac{\partial \eta}{\partial c}(\theta_0, c_0)\geq \frac{1}{K}$.\\
 Along the orbit $\big(\theta_k, \eta_{c_0}(\theta_k)\big)$ of $\big(\theta_0, \eta_{c_0}(\theta_0)\big)$, we use the basis $\big(1, \eta_{c_0}'(\theta_k)\big)$ of the tangent subspace. We have in the  basis $\Big( \big(1, \eta_{c_0}'(\theta_j)\big), (0, 1)\Big)_{j\in\Z}$ the following symplectic matrix 
$$Df^k\big(\theta_0, \eta_{c_0}(\theta_0)\big)=\begin{pmatrix} a_k&b_k\\ 0 & d_k \end{pmatrix}$$
where $a_k=\frac{\partial g_{c_0}^k}{\partial \theta}(\theta_0)$.
\\
We recall that $g_c(\theta)=\pi_1\circ f\big(\theta, \eta_c(\theta)\big)$ and that   
\begin{equation}\label{Eitlip} \forall c\in \R,\forall k\in\Z,  \forall \theta\in \T, \quad f^k\big(\theta, \eta_c(\theta)\big)=\big(g_c^k(\theta), \eta_c\circ g_c^k(\theta)\big).\end{equation}
 Observe that this implies that $g_c^k(\theta)=\pi_1\circ f^k\big(\theta, \eta_c(\theta)\big)$. Moreover, using the fact that
 $f^k$ is a twist map (see Proposition \ref{iterateTwist}),
   we also deduce $b_k>0$ and
 $$\frac{\partial g_{c_0}^k}{\partial c}(\theta_0)=b_k\frac{\partial \eta_{c_0}}{\partial c}(\theta_0).$$
 
Equation (\ref{Eitlip})  implies that {the functions $(\theta,c)\mapsto g^k_{c}(\theta)$ are differentiable at $(\theta_0,c_0)$ and }
$$Df^k\big(\theta_0, \eta_{c_0}(\theta_0)\big)\Big(0, \frac{\partial\eta_{c_0}}{\partial c}(\theta_0)\Big)=\Big(\frac{\partial g^k_{c_0}}{\partial c}(\theta_0), \frac{\partial\eta_{c_0}}{\partial c}\big(g_{c_0}^k(\theta_0)\big)+\eta_{c_0}'\big(g_{c_0}^k(\theta_0)\big)\frac{\partial g^k_{c_0}}{\partial c}(\theta_0)\Big),
$$
i.e.
  $$Df^k\big(\theta, \eta_{c_0}(\theta_0)\big)\Big(0, \frac{\partial\eta_{c_0}}{\partial c}(\theta_0)\Big)=\frac{\partial g^k_{c_0}}{\partial c}(\theta_0)\Big(1, \eta_c'\big(g_{c_0}^k(\theta_0)\big)\Big)+\frac{\partial \eta_{c_0}}{\partial c}\big(g_{c_0}^k(\theta_0)\big)(0,1),
$$

i.e.
$$ b_k\frac{\partial\eta_{c_0}}{\partial c}(\theta_0)=\frac{\partial g^k_{c_0}}{\partial c}(\theta_0)\quad{\rm and}\quad d_k= \frac{\partial\eta_{c_0}}{\partial c}\big(g_{c_0}^k(\theta_0)\big) \left(\frac{\partial\eta_{c_0}}{\partial c}(\theta_0)\right)^{-1}.$$
The matrix being symplectic,   we have $a_kd_k=1$ and then 
$$\frac{\partial g_{c_0}^k}{\partial \theta}(\theta_0)= \frac{\partial\eta_{c_0}}{\partial c}(\theta_0)\left(  \frac{\partial\eta_{c_0}}{\partial c}\big(g_{c_0}^k(\theta_0)\big)\right)^{-1}
\in [\frac{1}{K^2}, K^2]
$$ is uniformly bounded.\\
 As $\cN$ has full Lebesgue measure in $\T\times [a, b]$, there exists a set $C\subset [a, b]$ with full Lebesgue measure such that for every $c_0\in C$, $\cN\cap (\T\times \{ c_0\})$ has full Lebesgue measure in $\T\times \{ c_0\}$. We obtain that for every $c\in C$, the familly $(g_c^k)_{k\in\Z}$ is uniformly $K^2$-Lipschitz. As $C$ is dense in $[a, b]$, we deduce by continuity of $c\mapsto g_c$ that the maps $\{g_c^k, \ \ k\in\Z,\  c\in [a,b]\}$ are $K^2$ Lipschitz.
\\
Finally,  every $g_c$ is a biLipschitz orientation preserving homeomorphism of $\T$ whose all iterated homeomorphisms are equiLipschitz. We deduce  from results of \cite{Arna1} that $\eta_c$ is in fact $C^1$ (and the two Green bundles coincide along its graphs) and that $g_c$ is $C^1$ conjugated to a rotation.
Hence all the points are recurrent. Moreover, as the two Green bundles are equal everywhere, they are continuous. Because they coincide with the tangent space to the foliation, the foliation is a $C^1$ lamination.\\
As the $(g_c^k)'$ are equibounded by some constant $\widetilde K$, we deduce from results that are contained in \cite{Her2} that the conjugations $h_c$ to a rotation are $\widetilde K$-equibiLipschitz. \\
Finally, we deduce from Theorem \ref{Tgeneder} that $u$ is $C^1$  with partial derivatives that are
\begin{itemize}
\item $\frac{\partial u_c}{\partial\theta}(\theta)=\eta_c(\theta)-c$ which is  locally Lipschitz (as $\eta_c$ is)   as a function of $(\theta,c)$;
\item $\frac{\partial u_c}{\partial c}(\theta)=h_c(\theta)-\theta$ which is uniformly Lipschitz\footnote{This function is even $C^1$ at $c$'s such that $\rho(c)$ is irrational.} in the variable $\theta$ on any compact set of $c$'s.
\end{itemize}
 If we denote by $K$ a local Lipschitz constant for $h_c$ and $h_c^{-1}$, we have Lebesgue almost everywhere that
$$\frac{\partial h_c}{\partial \theta}(\theta)\in \Big[\frac{1}{K}, K\Big]$$
and then
$$\frac{\partial^2 u_c}{\partial \theta\partial c}(\theta)\in \Big[-1+\frac{1}{K}, -1+K\Big],$$ that gives the last point of Theorem \ref{corLip}.\\
 Note that this improves the fact that $u$ is $C^1$.

Let $v:\R^2\rightarrow \R_+$ be the $C^\infty$ function with support in $B(0, 1)$ defined  by $v(\theta, c) = a \exp\big((1-\|(\theta, c)\|)^{-2}\big)$ for $(\theta, c) \in B(0,1)$ and where $a$ is    such that $\int  v=1$. We denote by $v_\varepsilon$ the function $v_\varepsilon (x)=\frac{1}{\varepsilon^2}v(\frac{x}{\varepsilon})$. Then we define for every $\varepsilon>0$.
$$U_\varepsilon(\theta, c)=(u*v_\varepsilon) (\theta, c),$$
where we recall the formula for the convolution
$$u*v(x)= \int u(x-y)v(y)dy.$$
 Then $U_\varepsilon$ is  $1$-periodic in $\theta$  and smooth and when $\varepsilon$ tends to $0$, the functions $U_\varepsilon$ tend  to $U$ in the $C^1$ compact-open  topology.\\
Observe that for every $\theta$, the function $c\mapsto c+\frac{\partial u}{\partial \theta}(\theta,c)$ is  increasing.
 We deduce that  the convolution $c\mapsto c+\frac{\partial U_\varepsilon}{\partial \theta}(\theta,c)$ is a $C^\infty$ diffeomorphism as it is a mean of $C^\infty$ diffeomorphisms  thanks to Lemma \ref{Philippe}. Finally, the maps $F_\varepsilon: (\theta, c)\mapsto \big(\theta,  c+\frac{\partial U_\varepsilon}{\partial \theta}(\theta, c)\big)$ define  $C^\infty$ foliations that  converge to the initial foliation $F_0: (\theta, c)\mapsto \big(\theta,  c+\frac{\partial u}{\partial \theta}(\theta, c)\big)$ for the $C^0$ compact-open topology when $\varepsilon$ tends to $0$.\\ 
Observe that the  $h_c$'s are   assumed to be increasing. We deduce  that the  maps $G_\varepsilon: (\theta, c)\mapsto (\theta+\frac{\partial U_\varepsilon}{\partial c}(\theta,c), c)$ are $C^\infty$ diffeomorphisms of $\T\times \R$ that    converge for the $C^0$  compact-open topology   to  $G_0:(\theta, c)\mapsto (\theta+\frac{\partial u}{\partial c}(\theta, c), c)$.\\
Finally, the $\ch_\varepsilon= G_\varepsilon\circ F_\varepsilon^{-1}$  are $C^\infty$ diffeomorphisms of $\T\times \R$ that converge   for the $C^0$  compact-open topology  to $G_0\circ F_0^{-1}=\Phi$.

This exactly means that $\Phi$ is a symplectic homeomorphism. Moreover, we have
$$\Phi\circ f\circ \Phi^{-1}(x, c)= G_0\circ F_0^{-1}\circ F\circ F_0\circ G_0^{-1}(x, c)= (x+\rho(c), c).$$

\begin{lemma}\label{Philippe}
Let $f : \R\to \R$ be a non-negative, non-trivial, smooth, integrable and even function such that $ f' \leq 0$ on $[0,+\infty)$. Then if $g : \R \to \R$ is increasing, $f*g$ is an increasing $C^\infty$ diffeomorphism.
\end{lemma}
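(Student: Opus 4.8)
Here is how I would prove Lemma~\ref{Philippe}.

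The plan is to establish, for $h:=f*g$, the three facts: (a)~$h$ is well defined and of class $C^\infty$; (b)~$h'(x)>0$ for every $x\in\R$; (c)~$h(x)\to\pm\infty$ as $x\to\pm\infty$. A $C^\infty$ map $\R\to\R$ enjoying (b) is a strictly increasing local diffeomorphism, and adding (c) makes it onto $\R$, hence a global increasing $C^\infty$ diffeomorphism; so (a)--(c) suffice. One may and does assume $\int_\R f=1$: since $f\geq 0$ is continuous and not identically $0$, $\int_\R f\in(0,+\infty)$, and replacing $f$ by $f/\!\int_\R f$ only rescales $h$ by a positive constant, which changes nothing in the conclusion. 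Note also that $f$, being even and non-increasing on $[0,+\infty)$, attains its maximum at $0$ with $f(0)>0$; hence $\{f>0\}$ contains an open interval $(-a,a)$, in particular a set of positive Lebesgue measure. Here "increasing" is understood strictly and $g$ is, as in the applications, an increasing homeomorphism of $\R$ onto $\R$.

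Facts (a) and (c) are routine. Writing $h(x)=\int_\R f(x-z)g(z)\,dz$, the integrability of $f$ and the local boundedness of the monotone function $g$ allow differentiation under the integral sign, giving $h^{(k)}=f^{(k)}*g$ for all $k$; this is the classical regularising effect of convolving with a smooth kernel, and it is entirely elementary in the situations where the lemma is applied, where $f$ is compactly supported. For (c), write $h(x)=\int_\R f(y)g(x-y)\,dy$: since $g$ is an increasing homeomorphism, for a fixed large $M$ one has $g(x-y)\geq g(x-M)$ whenever $|y|\leq M$, and $g(x-M)\to+\infty$ as $x\to+\infty$; hence $\int_{-M}^{M}f(y)g(x-y)\,dy\geq g(x-M)\int_{-M}^{M}f\to+\infty$, while the remaining part of the integral causes no trouble (it is identically zero once $f$ is compactly supported). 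The limit $h(x)\to-\infty$ as $x\to-\infty$ is symmetric.

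The heart of the matter is (b). Let $\mu:=dg$ be the Stieltjes measure of the increasing function $g$, a non-negative Radon measure. Because $g$ is \emph{strictly} increasing, $\mu$ charges every non-empty open interval: if $\mu\big((\alpha,\beta)\big)=0$ then $g$ would be constant on $(\alpha,\beta)$, a contradiction. Integrating by parts in $h(x)=\int_\R f(x-z)g(z)\,dz$ (the boundary terms vanishing by the decay of $f$, trivially so when $f$ has compact support) yields
$$h'(x)=\int_\R f(x-z)\,d\mu(z).$$
Fix $x$. The set $\{z:\ f(x-z)>0\}$ equals $x-(-a,a)$, a non-empty open interval, hence has positive $\mu$-measure, and the integrand is strictly positive on it; therefore $h'(x)>0$. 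This holds for every $x\in\R$, and together with (a) and (c) it proves the lemma.

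I expect the only genuinely delicate point to be (b). One must resist deducing it from the soft remark that a non-negative average of translates of the increasing function $g$ is again increasing — that argument only yields $h'\geq 0$. Strict positivity really uses \emph{both} that $g$ is strictly increasing, so that $dg$ has full support, \emph{and} that $f$ is positive on a neighbourhood of $0$, which is exactly the content of the hypotheses on $f$ (non-negative, continuous, even, non-increasing on $[0,+\infty)$, non-trivial). Everything else — differentiation under the integral, vanishing of the boundary terms in the integration by parts, and the passages to the limit in (c) — is standard, and is immediate in every application because there the kernel $f$ is compactly supported.
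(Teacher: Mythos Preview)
Your proof is correct and takes a genuinely different route from the paper's. The paper's argument for strict positivity of $(f*g)'$ is a three-line symmetrisation: since $f$ is even, $f'$ is odd, so
\[
(f*g)'(x)=\int_\R f'(y)\,g(x-y)\,dy=\int_0^{+\infty} f'(y)\big(g(x-y)-g(x+y)\big)\,dy,
\]
and the integrand is non-negative and not identically zero because $f'(y)\leq 0$ on $[0,+\infty)$ while $g(x-y)-g(x+y)<0$ for $y>0$. Your approach instead integrates by parts to reach $h'(x)=\int_\R f(x-z)\,dg(z)$ and then invokes that the Stieltjes measure $dg$ charges every open interval together with $f>0$ on a neighbourhood of the origin. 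Both arguments are clean; the paper's symmetrisation is more elementary (no Stieltjes integration, no boundary terms to dispose of), while yours decouples the two hypotheses more transparently and would go through under weaker shape assumptions on $f$ (for the positivity of $h'$ you only use $f\geq 0$ continuous with $\{f>0\}$ open and non-empty). You are also more scrupulous than the paper about well-definedness of the convolution and about surjectivity --- your point~(c) --- which the paper's proof silently takes for granted; the conclusion ``diffeomorphism of $\R$'' does indeed require $g$ to be onto, exactly as you flag.
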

\begin{proof}
As $f$ is even, $f'$ is odd. Just notice that 
$$(f*g)'(x) = \int_\R f'(y)g(x-y) dy = \int_0^{+\infty} f'(y) \big(g(x-y)-g(x+y)\big) dy.$$
The result follows as $g(x-y)-g(x+y) <0$ and $f'(y) \leq 0$ and does not vanish everywhere.
\end{proof}

We conclude this section by returning to the proof of Proposition \ref{rho}. We will use the following
\begin{lemma}\label{lemme rho}
Let $f,g : \R \to \R$ be lifts of homeomorphisms of $\T$ that preserve orientation (implying $f(\cdot +1) = f(\cdot) +1$ and $g(\cdot +1) = g(\cdot) +1$). Assume that 
\begin{itemize}
\item either $f$ or $g$ is conjugated to a translation $t_\alpha : x \mapsto x+\alpha$ by a homeomorphism $h$ lift of a homeomorphism of $\T$ that preserves orientation;
\item $h$ and $h^{-1}$ are $K$-Lipschitz.
\end{itemize}

\begin{enumerate}
\item If there exists $d>0$ such that $f<g+d$, then
 $\rho(f)\leq \rho(g)+Kd$.
 \item If there exists $d>0$ such that $f+d <g$ 
 then $\rho(f) +\frac{d}{K}\leq \rho(g)$.
 \end{enumerate}
\end{lemma}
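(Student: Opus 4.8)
The plan is to reduce the statement to the standard facts about the rotation number $\rho$ of a lift of an orientation-preserving circle homeomorphism: $\rho$ is monotone (if $\phi\leq\psi$ pointwise and both are lifts of orientation-preserving circle homeomorphisms, a translation being a special case, then $\rho(\phi)\leq\rho(\psi)$), it is invariant under conjugation by a lift $h$ of an orientation-preserving circle homeomorphism ($\rho(h\phi h^{-1})=\rho(\phi)$), and $\rho(t_\beta)=\beta$ for the translation $t_\beta$.

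First I would use conjugation-invariance to normalise. Replacing $f$ and $g$ by $\tilde f=hfh^{-1}$ and $\tilde g=hgh^{-1}$ changes neither $\rho(f)=\rho(\tilde f)$ nor $\rho(g)=\rho(\tilde g)$, and turns whichever of the two maps was conjugate to a translation into $t_\alpha$ itself, with $\alpha$ equal to its rotation number (so $\alpha=\rho(f)$ or $\alpha=\rho(g)$, as the case may be). The substance is to track what the hypotheses become under this change. Applying the increasing map $h$ to $f<g+d$ and using that $h$ is $K$-Lipschitz yields, for every $x$, $h(f(x))<h(g(x)+d)\leq h(g(x))+Kd$; rewriting with $y=h(x)$ this reads $\tilde f<\tilde g+Kd$. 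Likewise, applying $h$ to $f+d<g$ and using that $h^{-1}$ is $K$-Lipschitz (equivalently $h(v)-h(u)\geq (v-u)/K$ for $u<v$) gives $h(f(x))<h(g(x)-d)\leq h(g(x))-\tfrac{d}{K}$, that is $\tilde f+\tfrac{d}{K}<\tilde g$.

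It then remains only to compare with a translation. For item (1): if $\tilde g=t_\alpha$ with $\alpha=\rho(g)$, then $\tilde f<t_{\alpha+Kd}$ and monotonicity gives $\rho(f)=\rho(\tilde f)\leq\alpha+Kd=\rho(g)+Kd$; if instead $\tilde f=t_\alpha$ with $\alpha=\rho(f)$, then $t_{\alpha-Kd}<\tilde g$ and monotonicity gives $\rho(g)\geq\alpha-Kd=\rho(f)-Kd$, the same inequality. For item (2), starting from $\tilde f+\tfrac{d}{K}<\tilde g$: if $\tilde g=t_\alpha$ then $\tilde f<t_{\alpha-d/K}$, so $\rho(f)\leq\rho(g)-\tfrac{d}{K}$; if $\tilde f=t_\alpha$ then $t_{\alpha+d/K}<\tilde g$, so $\rho(g)\geq\rho(f)+\tfrac{d}{K}$. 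In either case $\rho(f)+\tfrac{d}{K}\leq\rho(g)$.

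I do not expect a genuine obstacle here; the points requiring care are: using the Lipschitz bound on $h$ for the hypothesis $f<g+d$ and the one on $h^{-1}$ for $f+d<g$ (it is exactly this asymmetry that produces the factors $K$ and $1/K$), and checking that $\tilde f$ and $\tilde g$ are again lifts of orientation-preserving circle homeomorphisms, so that monotonicity and conjugation-invariance of $\rho$ apply and each has a well-defined rotation number — which holds since conjugates of such lifts, and maps squeezed pointwise between two such lifts, remain in that class. Finally, only the non-strict inequalities are claimed in the conclusions, so the strictness assumed in the hypotheses is more than sufficient.
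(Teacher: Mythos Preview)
Your proof is correct and follows essentially the same approach as the paper: conjugate by $h$ to turn one of $f,g$ into the translation $t_\alpha$, use the $K$-Lipschitz bound on $h$ to convert $f<g+d$ into $\tilde f<\tilde g+Kd$ and the $K$-Lipschitz bound on $h^{-1}$ to convert $f+d<g$ into $\tilde f+\tfrac{d}{K}<\tilde g$, and then read off the rotation-number inequality by monotonicity. The paper treats only the case where $g$ is the conjugate of a translation and declares the other case analogous, whereas you spell out both; otherwise the arguments coincide. (One small remark: your final parenthetical about ``maps squeezed pointwise between two such lifts'' is unnecessary, since $\tilde f$ and $\tilde g$ are already conjugates of lifts of orientation-preserving circle homeomorphisms and hence themselves such lifts.)
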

\begin{proof}
Let us say that $h\circ g \circ h^{-1} = t_\alpha$, hence $\rho(g) = \alpha$ (the proof when $f$ is conjugated to a translation is the same). 
\begin{enumerate}
\item 
By hypothesis, $ f\circ h^{-1}< g\circ h^{-1} +d $. Using that $h$ is increasing and  $K$-Lipschitz, it follows that for all $ x\in \R$,
$$ \quad h\circ f\circ h^{-1}(x)<h(g\circ h^{-1}(x) +d)< h\circ g\circ h^{-1}(x)+Kd=x+\alpha+Kd.$$
Finally, as $\rho(f) = \rho(  h\circ f\circ h^{-1})$, we conclude that 
$$\rho(f)\leq \alpha+Kd = \rho(g)+Kd.$$
\item By hypothesis, $ f\circ h^{-1}+d< g\circ h^{-1}  $. Using that $h$ is increasing, it follows that
$$\forall x\in \R, \quad h( f\circ h^{-1}(x)+d)<h\circ g\circ h^{-1}(x)=x+\alpha.$$
Because $h^{-1}$ is $K$-Lipschitz and increasing, observe that
\begin{multline*}
d=h^{-1}(h(f\circ h^{-1}(x)+d))-h^{-1}(h\circ f\circ h^{-1}(x))\\
\leq K\left( h(f\circ h^{-1}(x)+d)-h\circ f\circ h^{-1}(x)\right).
\end{multline*}
Then
$$h\circ f\circ h^{-1}(x)\leq h(f\circ h^{-1}(x)+d)-\frac{d}{K}<x+\alpha -\frac{d}{K};$$
hence $\rho(f)+\frac{d}{K}\leq \rho(g)$.

\end{enumerate}
\end{proof}

\begin{proof}[Proof of Proposition \ref{rho}]
The proof is now a direct application of the previous Lemma. Indeed, we have seen that when the foliation is $K$-Lipschitz, if $c$ varies in a compact  set $\mathcal K$, the dynamics $g_c$ are all conjugated to rotations. We have moreover proven there exists a constant $\widetilde K$ such that the conjugating functions $h_c$ may be chosen equi-Lipschitz (for $c\in \mathcal K$).
Finally, when $\rho(c)$ is irrational, we deduce from results of \cite{Her2}  (see also \cite{ArnaXue}) that $h_c^{-1}$ is also $\widetilde K$-Lipschitz. We therefore conclude that $\rho$ is $K\widetilde K$-Lipschitz when restricted to $\rho^{-1}(\R\setminus \Q)$. By density, $\rho$ is Lipschitz.

 We denote the minimum torsion on $\ck$ by
$$b_{\rm min}=\min_{x\in\ck}\frac{\partial f_1}{\partial \theta}(x).$$
For $c_1<c_2$ in $[a, b]$ such that either $\rho(c_1)$ or $\rho(c_2)$ is irrational, we have
\begin{multline*}
\tilde g_{c_2}(\theta)-\tilde g_{c_1}(\theta)=F_1\big(\theta, \eta_{c_2}(\theta)\big)-F_1\big(\theta, \eta_{c_1}(\theta)\big)\\
\geq b_{\rm min}\big(\eta_{c_2}(\theta)-\eta_{c_1}(\theta)\big)\geq \frac{b_{\rm min}}{K}(c_2-c_1).
\end{multline*}
We deduce from the second point of Lemma \ref{lemme rho} that
$$\rho(g_{c_2})-\rho(g_{c_1})\geq \frac{b_{\rm min}}{K^2}(c_2-c_1).$$
As previously, by density, we get that $\rho^{-1}$ is also locally Lipschitz.  
\end{proof}

\subsection{Proof of the second implication}
We assume that  the map $u$ is $C^1$ with $\frac{\partial u}{\partial \theta}$ locally Lipschitz continuous and $\frac{\partial u}{\partial c}$ uniformly Lipschitz in the variable  $\theta$ on any compact set of $c$'s and there exists a constant $k>-1$ such that $\frac{\partial^2 u}{\partial \theta\partial c}(\theta,c)>k$ almost everywhere.\\
  Theorem \ref{Tgeneder} yields that
  the graphs of the $\eta_c$ define a lamination of $\A$ into Lipschitz graphs and that the map   $h_c:\theta \mapsto \theta+\frac{\partial u_c}{\partial c}(\theta)$ is a semi-conjugation between the projected Dynamics $g_c: \theta\mapsto \pi_1\circ f\big(\theta, c+\frac{\partial u_c}{\partial \theta}(\theta)\big)$ and a rotation $R$ of $\T$, i.e. $h_{c}\circ g_c=R\circ h_{c}.$
 
By assumption, $\eta_c=c+\frac{\partial u_c}{\partial \theta}$ is locally Lipschitz. We want to prove that $(\theta, c)\mapsto \big(\theta, \eta_c(\theta)\big)$ is locally biLipschitz. We only need to prove that locally, we have for Lebesgue almost every $(\theta, c)$ a uniform positive lower bound for $\frac{\partial \eta_c}{\partial c}$ (observe that $\frac{\partial \eta_c}{\partial c}$ is always non-negative because every $c\mapsto \eta_c(\theta)$ is increasing). \\
 As $\frac{\partial^2 u}{\partial \theta\partial c}(\theta,c)>k$ almost everywhere, the set 
$$C=\Big\{ c\in \R; \quad \frac{\partial^2 u}{\partial \theta\partial c}(\theta,c)>k\quad{\rm for}\quad{\rm Lebesgue} \quad{\rm almost}\quad{\rm every}\quad \theta\in\T\Big\}$$
has full Lebesgue measure.

Then, for every $c\in C$ and $\theta>\theta'$ in $[0, 1]$, we have
$$\frac{\partial u}{\partial c}(\theta, c)-\frac{\partial u}{\partial c}(\theta', c)=\int_{\theta'}^{\theta} \frac{\partial^2 u}{\partial \theta\partial c}(a,c)da\geq k(\theta-\theta').$$

Hence, if  $c>c'$ in $\R$, we have
$$u(\theta, c)-u(\theta, c')-u(\theta', c)+u(\theta', c')= \int_{c'}^c\left(  \frac{\partial u}{\partial c}(\theta, t)-\frac{\partial u}{\partial c}(\theta', t)  \right)dt\geq   k(\theta-\theta')(c-c').$$
If we divide by $\theta-\theta'$ and take the limit $\theta'\rightarrow \theta$, we obtain
$$\frac{\partial u}{\partial \theta}(\theta, c)-\frac{\partial u}{\partial \theta}(\theta, c')\geq k(c-c'),$$
that is equivalent to 
$$\forall \theta\in \T, \quad \eta_c(\theta)-\eta_{c'}(\theta)\geq (1+k)(c-c').$$
As $1+k>0$, we conclude that the foliation is biLipschitz.

\section{Foliations by  graphs}\label{Sfollag}
\subsection{Proof of Proposition \ref{Psymfolarn}}
 Let $f:\A \to \A$ be an exact  symplectic homeomorphism.  We assume the $f$ invariant foliation $\cF$ into $C^0$ graphs  is  symplectically homeomorphic (by $\Phi : \A \to \A$) to the standard foliation $\cF_0=\Phi(\cF)$. Then the standard foliation is invariant by the exact symplectic homeomorphism $g=\Phi\circ f\circ \Phi^{-1}$. Hence we have
 $$g(\theta, r)=(g_1(\theta ,r), r).$$
 As $g$ is area preserving, for every $\theta\in [0, 1]$ and every $r_1<r_2$, the area of $[0, \theta]\times [r_1, r_2]$ is equal to the area of $g\big( [0, \theta]\times [r_1, r_2]\big)$, i.e.
 $$\theta(r_2-r_1)=\int_{r_1}^{r_2} \big(g_1(\theta, r)-g_1(0, r)\big)dr.$$
 Dividing by $r_2-r_1$ and taking the limit  when $r_2$ tends to $r_1$, we obtain
 $$g_1(\theta, r_1)=\theta+g(0, r_1).$$
 This proves the proposition for $\rho=g_1(0, \cdot)$.
 
 \subsection{Proof of Theorem \ref{TC0arn}}
Let us consider a $C^0$-foliation $\cF$ of $\A$: $(\theta, c) \mapsto \big(\theta , \eta_c(\theta)\big)$, where $\int_\T \eta_c = c$. Then there exists a continuous function $u:\A\rightarrow \R$ that admits a continuous derivative with respect to $\theta$ such that $\eta_c(\theta)=c+\frac{\partial u}{\partial \theta}(\theta,c)$ and $u(0, c)=0$. \\
{\bf Proof of the first implication.}\\
We assume that this foliation is  exact symplectically homeomorphic to the standard foliation $\cF_0=\Phi(\cF)$ by some exact symplectic homeomorphism $\Phi$.

Observe that the foliation $\cF$ is transverse to the ``vertical'' foliation $\cg_0$ into $\{\theta\}\times \R$ for $\theta\in\T$. Hence the foliation $\cg=\Phi(\cg_0)$ is a foliation that is transverse to the standard (``horizontal'') foliation $\cF_0=\Phi(\cF)$. This exactly means that the foliation  $\cg$ is a foliation into graphs of maps $\zeta_\theta: \R\rightarrow \T$. Hence there exists a continuous function $v:\A\rightarrow \R$ that admits a continuous derivative with respect to $r$ such that the foliation $\cg$ is the foliation into graphs $\Phi(\{\theta\}\times\R)$ of $\zeta_\theta: r\mapsto \theta+\frac{\partial v}{\partial r}(\theta, r)$. Observe that by definition of $\zeta_\theta$, we have $\Phi\big(\zeta_\theta(c), c\big)=\big(\theta,\eta_c(\theta)\big)$. As a result, every map $\theta\mapsto \zeta_\theta(c)$ is a homeomorphism of $\T$.
\begin{center}
\includegraphics[width=8cm]{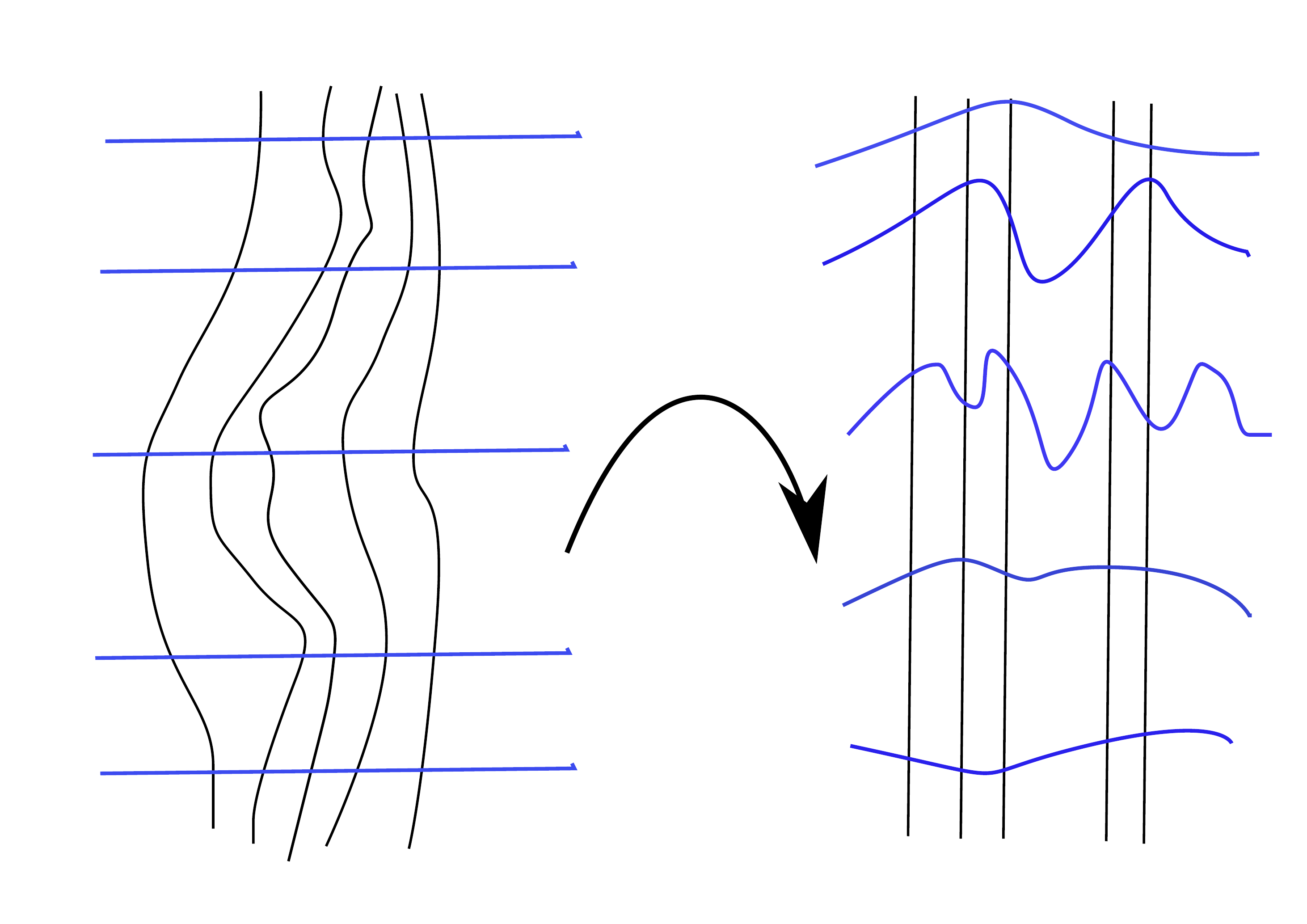}
\end{center}

We now use the preservation of the area. We fix $\theta_1<\theta_2$ in $[0, 1]$ and $r_1<r_2$ in $\R$. Because $\Phi$ is a symplectic homeomorphism, $\Phi$ preserves the area and so the two following domains have the same area
\begin{itemize}
\item the domain delimited by $\T\times \{ c_1\}$, $\T\times \{ c_2\}$, the graph of $c\in\R\mapsto \zeta_{\theta_1}(c)$ and the graph of $c\in\R\mapsto \zeta_{\theta_2}(c)$;
\item the domain delimited by the graphs of $\eta_{c_1}$, $\eta_{c_2}$ and the verticals $\{\theta_1\}\times\R$ and $\{ \theta_2\}\times \R$.
\end{itemize}
This can be written
$$  \int_{c_1}^{c_2}\Big( \big(\theta_2+\frac{\partial v}{\partial c} (\theta_2,c)\big)-\big(\theta_1+\frac{\partial v}{\partial c} (\theta_1,c)\big)\Big) dc=\int_{\theta_1}^{\theta_2}\Big(\big(c_2+\frac{\partial u}{\partial \theta} (\theta,c_2)\big) - \big(c_1+\frac{\partial u}{\partial \theta} (\theta,c_1)\big)\Big)d\theta .$$
It follows that 
\begin{multline*}
u(\theta_2,c_2)-u(\theta_1,c_2)-u(\theta_2,c_1)+u(\theta_1,c_1) = \\
=v(\theta_2,c_2) -v(\theta_2,c_1) -v(\theta_1,c_2) +v(\theta_1,c_1).
\end{multline*}

Evaluating for $\theta_1=0$ we find
$$u(\theta_2,c_2)-u(\theta_2,c_1) = v(\theta_2,c_2) -v(\theta_2,c_1) -v(0,c_2) +v(0,c_1).$$
Finally, as $v$ admits a continuous partial derivative with respect to $c$, we conclude that 
$\frac{\partial u}{\partial c}(\theta, c) = \frac{\partial v}{\partial c}(\theta, c)-\frac{\partial v}{\partial c}(0, c)$ exists and is continuous. Hence $u$ is $C^1$.  Moreover, every map $\theta\mapsto \theta+\frac{\partial u}{\partial c}(\theta, c)= \zeta_c(\theta)-\frac{\partial v}{\partial c}(0, c)$ is a homeomorphism of $\T$ and we have established the first implication.\\
{\bf Proof of the second implication.}\\
We assume that there exists a $C^1$ map $u : \A \to \R$ such that 
\begin{itemize}
\item $u(0,r) = 0$ for all $r\in \R$,
\item $\eta_c(\theta) = c+ \frac{\partial u}{\partial \theta}(\theta,c)$ for all $(\theta, c) \in \A$,
\item for all $c\in \R$, the map $\theta \mapsto \theta + \frac{\partial u}{\partial c}(\theta , c)$ is a homeomorphism of $\T$.
\end{itemize}
Then we can define a unique homeomorphism $\Phi$ of $\A$ by  $$\Phi\Big(\theta + \frac{\partial u}{\partial c}(\theta , c), c\Big)=\Big(\theta, c+ \frac{\partial u}{\partial \theta}(\theta,c)\Big).$$
The previous computations (with $v=u$) proves that $\Phi$ preserves the area and so is an exact symplectic homeomorphism.
\subsection{Proof of Corollary \ref{Corota}}

The  {\em   if} part is obvious.

Let us prove the {\em  only if} part, that is we assume $f$ is $C^0$-integrable with the Dynamics on each leaf conjugated to a rotation. We denote by    $u : \A \to \R$ the map given by  theorem \ref{Tgenecont} and that enjoys the properties of Theorems \ref{Tgeneder}  and \ref{Tdiffrat}.  Hence $h_c:\theta \mapsto \theta+\frac{\partial u_c}{\partial c}(\theta)$ is a semi-conjugation between the projected Dynamics $g_c: \theta\mapsto \pi_1\circ f\big(\theta, c+\frac{\partial u_c}{\partial \theta}(\theta)\big)$ and the rotation $R_{\rho(c)}$ of $\T$ and even is a conjugation when $\rho(c)$ is rational.\\
If $\rho(c)$ is irrational, it follows from the hypothesis that $g_c$ is conjugated to a rotation. As the dynamics is minimal, there is up to constants a unique (semi)-conjugacy and and then $h_c$ is a true conjugation.

As an application, here is a Lipschitz foliation that is not symplectically homeomorphic to the standard foliation. Let $\eta_c(\theta) =  c+ \varepsilon(c)\cos(2\pi \theta)$. We assume that $\varepsilon$ is a contraction ($k$-Lipschitz with $k<1$) that is not everywhere differentiable. It follows that $(\theta,c)\mapsto \eta_c(\theta)$ is a biLipschitz foliation. Were it   symplectically homeomorphic to the standard foliation,   the associated function given by Theorem  \ref{TC0arn} would be 
$$(\theta,c)\mapsto u_c(\theta)=\frac{\varepsilon(c)}{2\pi}\sin(2\pi \theta).$$
However, by Theorem \ref{TC0arn}, this function should be $C^1$ which is not the case as it does not admit partial derivatives with respect to $c$.

 \appendix
 
 \section{Examples}\label{AppA}
 \subsection{An example a semi-concave function that is not a weak K.A.M. solution for $\widehat T^c$ and that satisfies $f^{-1}\big(\overline{\cg(c+u')}\big)\subset \cg(c+u')$}\label{sspseudo}
 Let us begin by introducing $g_t:\A\rightarrow\A$ as being the time $t$ map  of the Hamiltonian flow of the double pendulum Hamiltonian
 $$H(\theta, r)=\frac{1}{2}r^2+\cos (4\pi\theta).$$
 If  $t>0$ is small enough, $g_t$ is a twist map.\\
 Observe that $H$ is a so-called Tonelli Hamiltonian (see \cite{Fa3} for the definition) with associated Lagrangian $L(\theta, v)=\frac{1}{2}v^2-\cos (4\pi\theta)$. The global minimum $-1$ of $L$ is attained in $(0, 0)$ and $(\frac{1}{2}, 0)$.\\
 If $G_t$ is the time $t$ map of the lift of $H$ to $\R^2$, then $G_t$ is a lift of $g_t$ and if $G_s(\theta, r)=(\theta_s, r_s)$, a generating function of $G_t$ is 
 $$S_t(\theta, \theta_t)=\int_0^tL(\theta_s, \dot\theta_s)ds.$$
 By using this formula, observe that the only ergodic minimizing measures for the cohomology class $0$ are the Dirac measure at $0$ and $\frac{1}{2}$.\\

 Then we denote by $h:\A\rightarrow \A$ the map that is defined by $h(\theta, r)=(\theta+\frac{1}{2}, r)$. Then $f=h\circ g_t=g_t\circ h$  is again a twist map and $H$ is an  integral for $f$, which means that $H\circ f=H$.\\
 It is easy to check that a generating function of a lift $F$ of $f$ is given by
 $$S(\theta, \Theta)= S_t\big(\theta, \Theta-\frac{1}{2}\big).$$
 From this, we deduce that the Mather set corresponding to the cohomology class zero (and the rotation number $\frac{1}{2}$) is the support of a unique ergodic measure, that is the mean of two Dirac measure $\frac{1}{2}(\delta_{(0, 0)}+\delta_{(\frac{1}{2}, 0)})$.\\
 As there is only one such minimizing measure, we know that there is a unique, up to constants, weak K.A.M. solution $u$ with cohomology class $0$. But there are a lot of graphs of $v'$ with $v:\T\rightarrow \R$ semi-concave that are invariant by $f$. The first one we draw corresponds to the weak K.A.M. solution whose graph is strictly mapped into itself by   $f^{-1}$. Perturbing slightly the pseudograph in the level $\{ H=1\}$, we obtain another backward invariant pseudograph that doesn't correspond to a weak K.A.M. solution.
 
 In the right drawing, the perturbation of the pseudograph must be small enough so that, in the right eye on the upper manifold, the piece of pseudograph that goes beyond the vertical dotted line is mapped  by $f$ in the upper piece of pseudograph of the left eye. \begin{center}
\includegraphics[width=10cm]{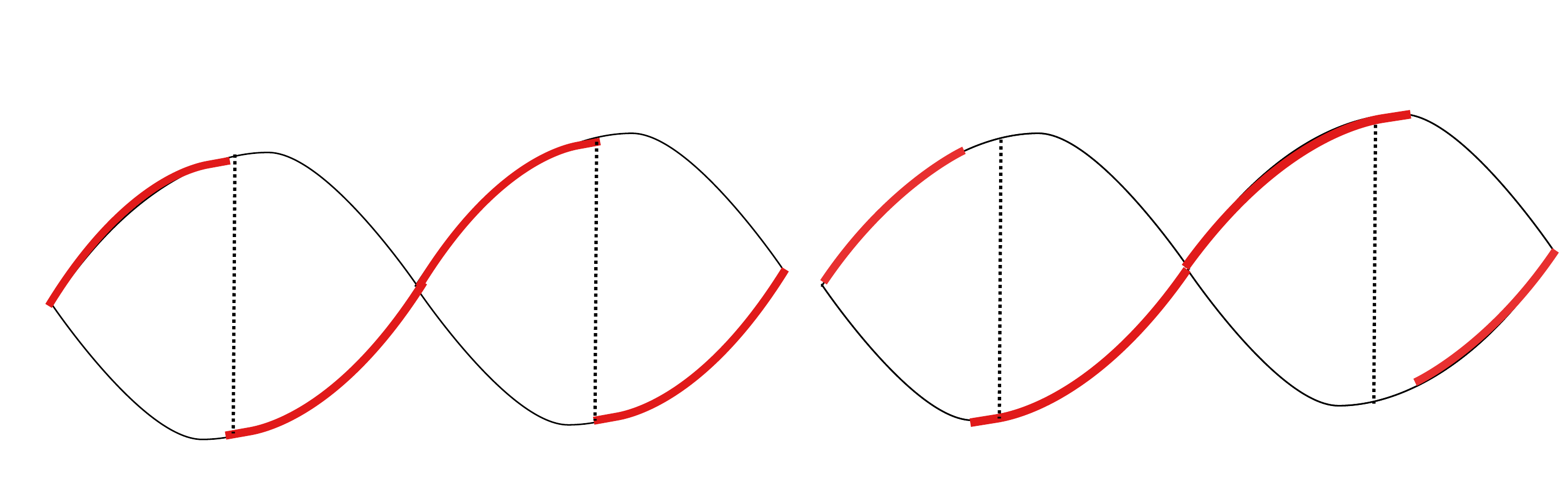}
\end{center}

 \subsection{Cases where the discounted solution doesn't depend continuously on $c$}\label{sscomplint}
 
 Let us start this appendix of counterexamples with a positive result. We will show that even if discounted solutions may depend in a discontinuous way on $c$, the same is not true for their derivative. In what follows we use the notion of Clarke sub-derivative introduced earlier
  in Definition \ref{Clarke}.

Let us recall that by Proposition \ref{hausdorff},
if $g_n : \T\to \R$ are equi-semi-concave functions  converging to $g : \T \to \R$, then $\mathcal{PG} (g_n')$ converges to $\mathcal{PG} ( g')$ for the Hausdorff distance.

Let us now state our result:
\begin{propos}\label{discount}
Let $f:\A \to \A$ be a symplectic twist map. For $c\in \R$, we denote by $\cu_c$ the weak K.A.M. discounted solution. Then the map $c\mapsto \mathcal{PG} ( \cu_c')$ is continuous.
\end{propos}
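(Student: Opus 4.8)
The plan is to combine compactness of the family of discounted solutions with the continuity of pseudographs under uniform limits, thereby reducing the statement to the uniqueness \emph{modulo additive constants} of weak K.A.M. solutions. Each $\cu_c$ is a weak K.A.M. solution for $\widehat T^c$, so by point~\ref{same} of subsection~\ref{ssweakk} the family $\{\cu_c:\ c\in K\}$ is equi-semi-concave, hence equi-Lipschitz, for every compact $K\subset\R$. Since replacing $\cu_c$ by $\cu_c-\cu_c(0)$ changes neither $\cu_c'$ nor $\mathcal{PG}(\cu_c')$, I may assume $\cu_c(0)=0$, so that the family is also equibounded. Fixing $c_0$ and a sequence $c_n\to c_0$, Ascoli's theorem gives, along any subsequence, a further subsequence with $\cu_{c_n}\to v$ uniformly for some semi-concave $v$; by Proposition~\ref{hausdorff}, $\mathcal{PG}(\cu_{c_n}')$ then converges to $\mathcal{PG}(v')$ for the Hausdorff distance. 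Hence it suffices to prove that every such limit $v$ coincides with $\cu_{c_0}$ up to an additive constant (so that $v'=\cu_{c_0}'$ and $\partial v=\partial\cu_{c_0}$, giving $\mathcal{PG}(v')=\mathcal{PG}(\cu_{c_0}')$); the usual sub-subsequence argument then yields continuity of $c\mapsto\mathcal{PG}(\cu_c')$.

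By Lemma~\ref{Llimweak}, $v$ is a weak K.A.M. solution for $\widehat T^{c_0}$. If $\rho(c_0)$ is irrational, there is a unique weak K.A.M. solution for $\widehat T^{c_0}$ modulo constants (subsection~\ref{ssweakk}), so $v=\cu_{c_0}+\mathrm{const}$ and we are done. If $\rho(c_0)=p/q$ is rational and $c_0$ is an endpoint of $\rho^{-1}(p/q)=[a_1,a_2]$, Proposition~\ref{unicite} again gives uniqueness modulo constants, and the same conclusion holds.

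The remaining, genuinely delicate, case is $c_0\in(a_1,a_2)$ with $\rho(c_0)=p/q$: this is precisely where the discounted solution itself may fail to be continuous, the jump being by an additive constant. Here I would work on the Mather set. By point~(\ref{Matherinv}) of subsection~\ref{ssmoreK.A.M.}, both $\cm(p/q)$ and the Mather measures are the same for all $c\in[a_1,a_2]$, and by point~(\ref{K.A.M.ext}) a weak K.A.M. solution for $\widehat T^{c_0}$ is determined by its $c_0$-dominated restriction to $\cm(p/q)$. Using the variational characterization of the discounted solution from \cite{DFIZ2} --- roughly, $\cu_c$ is the largest $c$-dominated function that lies below $0$ against every projected Mather measure, the relevant normalization being carried by $\cm(p/q)$ --- one sees that $c\mapsto\cu_c|_{\cm(p/q)}$ is continuous modulo additive constants on $[a_1,a_2]$: the constraint set is fixed, while the set of $c$-dominated functions varies continuously with $c$, so the extremal element does too, up to a constant. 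Letting $c_n\to c_0$ and invoking the uniqueness of the weak K.A.M. extension (point~(\ref{K.A.M.ext})), $v$ and $\cu_{c_0}$ agree on $\cm(p/q)$ up to a constant, hence everywhere up to a constant. I expect this last step to be the main obstacle: everything preceding it is soft (compactness, Lemma~\ref{Llimweak}, Proposition~\ref{hausdorff}, and the known uniqueness statements), whereas controlling the discounted solution \emph{modulo constants} on the fixed Mather set across a plateau of $\rho$ is where the subtlety of the whole paper concentrates. Should a direct appeal to the \cite{DFIZ2} estimates prove awkward, an alternative is to show instead that any two weak K.A.M. solutions for $\widehat T^{c_0}$ obtained as limits of discounted solutions along sequences converging to $c_0$ must share the same full pseudograph, by verifying that on each chain-transitive piece of $\cm(p/q)$ the discounted procedure selects the same relative height in the limit.
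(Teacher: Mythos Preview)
Your overall architecture is correct and matches the paper: compactness and equi-semi-concavity reduce everything to showing that any subsequential uniform limit $v$ of $\cu_{c_n}$ agrees with $\cu_{c_0}$ up to an additive constant, and you dispatch the irrational and endpoint cases exactly as the paper does (uniqueness via subsection~\ref{ssweakk} and Proposition~\ref{unicite}).

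The gap is precisely where you flag it: the interior case $c_0\in(a_1,a_2)$. Your sentence ``the constraint set is fixed, while the set of $c$-dominated functions varies continuously with $c$, so the extremal element does too, up to a constant'' is not an argument. A supremum over a family of constraints that varies with a parameter has no reason to be continuous, even modulo constants, without additional structure; and your proposed fallback (comparing pseudographs of different limits on chain-transitive pieces) is also not developed.

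The paper closes this gap with a one-line idea you are missing: on the plateau $(a_1,a_2)$ the map $c\mapsto\cu_c$ is \emph{concave} (pointwise in $x$), hence continuous outright --- not merely up to constants. The reason is that the variational formula from \cite{DFIZ2},
\[
\cu_c(x)=\sup\Big\{u(x):\ u\ \text{is $c$-dominated and}\ \int u\,d\mu\le 0\ \text{for all Mather measures }\mu\Big\},
\]
has two features on the plateau: the Mather measure constraint is \emph{independent of $c$} (point~(\ref{Matherinv})), and $\alpha$ is \emph{affine} there. So if $u$ is $c$-dominated and $w$ is $c'$-dominated, then $\lambda u+(1-\lambda)w$ is $(\lambda c+(1-\lambda)c')$-dominated and still satisfies the measure constraint; taking $u=\cu_c$, $w=\cu_{c'}$ gives $\lambda\cu_c+(1-\lambda)\cu_{c'}\le\cu_{\lambda c+(1-\lambda)c'}$. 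Concavity then yields continuity on $(a_1,a_2)$, and Proposition~\ref{hausdorff} finishes. This is the missing key lemma; once you see it, there is no need to pass through restrictions to $\cm(p/q)$ or to argue ``modulo constants.''
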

As a straightforward corollary, we deduce for instance that if $c_n \to c$ and $x_n \to x$ and if  the $\cu'_{c_n}(x_n)$ exist, as well as $\cu'(c)(x)$, then $\cu'_{c_n}(x_n) \to \cu'(c)(x)$.
 
 
%

%

\begin{proof}[proof of Proposition \ref{discount}]
If $\rho(c_0)\in \R\setminus \Q$, there is a unique weak K.A.M. solution up to constants, hence continuity of $\mathcal{PG} ( \cu_c')$ at $c_0$ follows from Proposition \ref{hausdorff}.

If $\rho(c) = r\in \Q$, let us denote $\rho^{-1}(r) = [c_1,c_2]$. Again, continuity at $c_1$ and $c_2$ is obvious as there is a unique weak K.A.M solution at these cohomology classes (see Proposition \ref{unicite}).

It remains to study what happens inside $(c_1,c_2)$ and we will prove that in this interval, the map $c\mapsto \cu_c$ is concave. Let us set $M $ the set of Mather measures corresponding to any cohomology class $c\in (c_1,c_2)$. Recall that as seen in \eqref{Matherinv} page \pageref{Matherinv}, this set does not depend on $c$. Moreover, the function $\alpha$ is affine on $(c_1,c_2)$. 

From \cite{DFIZ2}, we know that $\cu_c(x) = \sup_{u} u(x)$, where the supremum is taken amongst (continuous) $c$-dominated functions $u:\T\to \R$ such that $\int u(x) d\mu(x,y) \leq 0$ for all $\mu \in M$. Moreover, it is proven that $\int \cu_c(x) d\mu(x,y) \leq 0$  for all $\mu \in M$. Let now $c,c' \in (c_1,c_2)$ and $\lambda \in [0,1]$. Let us set $v= \lambda \cu_c + (1-\lambda)\cu_{c'}$.

As $\int \cu_c(x) d\mu(x,y) \leq 0$ and $\int \cu'_c(x) d\mu(x,y) \leq 0$   for all $\mu \in M$ we deduce that $\int v(x) d\mu(x,y) \leq 0$  for all $\mu \in M$.

Moreover, passing to lifts (with the same $\sim$ notation as previously), from

$$\forall \theta, \theta'\in \R, \quad  \widetilde \cu_c(\theta)-\widetilde \cu_c(\theta')\leq S(\theta', \theta)+c(\theta'-\theta)+ \alpha(c);$$
$$\forall \theta, \theta'\in \R, \quad  \widetilde \cu_{c'}(\theta)-\widetilde \cu_{c'}(\theta')\leq S(\theta', \theta)+c'(\theta'-\theta)+ \alpha(c');$$
and recalling that $\alpha\big(\lambda c + (1-\lambda)c'\big) = \lambda \alpha(c) + (1-\lambda)\alpha(c')$, we get

$$\forall \theta, \theta'\in \R, \quad  \tilde v(\theta)-\tilde v(\theta')\leq S(\theta', \theta)+\big(\lambda c + (1-\lambda)c'\big)(\theta'-\theta)+\alpha\big(\lambda c+(1-\lambda) c'\big).$$
Hence $v$ is $\big(\lambda c+(1-\lambda) c'\big)$-dominated. We conclude that $v\leq \cu_{\lambda c + (1-\lambda) c'}$, proving the claim, and the Proposition.

\end{proof}
 \begin{remk}
 The previous proof is intimately linked to the $1$-dimensional setting we work with. Indeed, it was communicated to us by Patrick Bernard that as soon as we move up to dimension $2$, there are examples on $\T^2$ for which it is not possible to construct a function $c\mapsto u_c$ that maps to each cohomology class a weak K.A.M. solution and such that $c\mapsto Du_c$ is continuous (in any possible way). 
 \end{remk}
We obtain as a corollary:

\begin{cor}
The function $\cu(x,c) = \cu_c(x) - \cu_c(0)$ also satisfies the conclusions of Theorem \ref{Tgenecont}.
\end{cor}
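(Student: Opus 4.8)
The plan is to observe that every assertion of Theorem \ref{Tgenecont} for $\cu$ except the joint continuity of $(x,c)\mapsto \cu(x,c)$ is immediate, and then to derive that continuity from Proposition \ref{discount}. First, by construction $\cu(0,c)=\cu_c(0)-\cu_c(0)=0$. Next, since $\cu_c$ is a weak K.A.M. solution for $\widehat T^c$ and the set $\cs_c$ of weak K.A.M. solutions is stable under the addition of constants, each $\cu(\cdot,c)=\cu_c-\cu_c(0)$ is again a weak K.A.M. solution for $\widehat T^c$; in particular it is semi-concave and the partial graph of $c+\partial_\theta\cu(\cdot,c)$ is backward invariant by $f$, by the properties recalled in subsection \ref{ssweakk}. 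So only the continuity statement requires work.

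For that, I would fix a compact interval $K\subset\R$. By point \ref{same} of subsection \ref{ssweakk}, the family $\{\cu(\cdot,c):c\in K\}$ is uniformly semi-concave, hence uniformly Lipschitz, and since all these functions vanish at $0$ they are also uniformly bounded. Let $c_n\to c$ in $K$. By the Ascoli theorem, any subsequence of $(\cu(\cdot,c_n))_n$ admits a further subsequence $(\cu(\cdot,c_{n_k}))_k$ converging uniformly to some semi-concave $v$ with $v(0)=0$, and by Lemma \ref{Llimweak} this $v$ is a weak K.A.M. solution for $\widehat T^c$. Now $\cu(\cdot,c_{n_k})$ and $\cu_{c_{n_k}}$ differ by a constant, so they have the same full pseudograph; hence, applying Proposition \ref{hausdorff} to the equi-semi-concave sequence $\cu(\cdot,c_{n_k})$ and then Proposition \ref{discount}, one gets
$$\mathcal{PG}(v')=\lim_{k\to\infty}\mathcal{PG}\big(\cu(\cdot,c_{n_k})'\big)=\lim_{k\to\infty}\mathcal{PG}\big(\cu'_{c_{n_k}}\big)=\mathcal{PG}(\cu'_c)$$
for the Hausdorff distance. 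Since $v$ and $\cu_c$ are semi-concave they are differentiable almost everywhere, and at each common point of differentiability $t$ the intersection of $\mathcal{PG}(v')=\mathcal{PG}(\cu'_c)$ with the vertical $\{t\}\times\R$ is the singleton $\{v'(t)\}=\{\cu'_c(t)\}$; thus $v'=\cu'_c$ almost everywhere, $v-\cu_c$ is constant equal to $v(0)-\cu_c(0)=-\cu_c(0)$, and therefore $v=\cu_c-\cu_c(0)=\cu(\cdot,c)$. Every subsequential limit being $\cu(\cdot,c)$, the whole sequence $\cu(\cdot,c_n)$ converges uniformly to $\cu(\cdot,c)$, so $c\mapsto\cu(\cdot,c)$ is continuous from $\R$ to $C^0(\T,\R)$; combined with the local uniform Lipschitz bound in $\theta$, this gives continuity of $(x,c)\mapsto\cu(x,c)$.

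The only real difficulty is identifying the subsequential limit $v$ with $\cu(\cdot,c)$ when $\rho(c)$ is rational, where uniqueness of weak K.A.M. solutions fails; this is exactly where Proposition \ref{discount} is used, upgrading the (automatic) Hausdorff convergence of the pseudographs of a uniformly convergent equi-semi-concave sequence into a statement that pins down the limiting pseudograph, and hence the limiting function up to the additive constant removed by the normalization at $0$. Everything else is routine.
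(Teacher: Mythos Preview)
Your proof is correct and is exactly the intended argument: the paper states this result as an immediate corollary of Proposition \ref{discount} without further comment, and you have spelled out precisely how that proposition, combined with Ascoli compactness, Lemma \ref{Llimweak}, and Proposition \ref{hausdorff}, forces every subsequential limit of $\cu(\cdot,c_n)$ to coincide with $\cu(\cdot,c)$. There is nothing to add.
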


We now  give a $C^\infty$ integrable example for which the discounted method doesn't select a transversely continuous weak K.A.M. solution. 
 
 \begin{exa}
 We use the notation of Theorem \ref{Tgeneder}. We define $F_0, H:\A\rightarrow \A$ by $F_0(\theta, r)=(\theta +r, r)$ and $H(\theta, r)=(h(\theta), \frac{r}{h'(\theta)})$ where $h:\T\rightarrow \T$ is a smooth orientation preserving diffeomorphism of $\T$ such that $h(t)=t+d(t)$ and  $d:\T\rightarrow \R$ satisfies $d(0)=0$ and 
 \begin{equation}\label{Edemi}\int_\T d(t)dt>\frac{d(\frac{1}{2})}{2}.
 \end{equation}  Observe that $h^{-1}(t)=t-d\circ h^{-1}(t)$.  As the symplectic diffeomorphism $H$ maps a vertical $\{ \theta\}\times \R$ onto a vertical $\{ h(\theta)\}\times \R$ and preserves the transversal orientation, the smooth diffeomorphism\footnote{{Note that $F_0$ is the time-$1$ map of the Hamiltonian function $f_0(\theta, r) = \frac 12 r^2$. It follows that $F$, being conjugated to $F_0$ by a symplectic map, is itself the time-$1$ map of the Tonelli Hamiltonian $f_0\circ H^{-1}$. }}  $F=H\circ F_0\circ H^{-1}$ is also a symplectic  $C^\infty$ integrable twist diffeomorphism. The new invariant foliation is the set of the graphs of $\eta_c(\theta)=\frac{c}{h'\textrm{$\big($}h^{-1}(\theta)\textrm{$\big)$}}=c(h^{-1})'(\theta)$. Hence we have $u_c(\theta)=-cd\circ h^{-1}(\theta)$. Observe that the function $u$ is smooth.\\
Then $H_c(\theta)=\theta+\frac{\partial u_c}{\partial c}(\theta)=\theta-d\circ h^{-1}(\theta)=h^{-1}(\theta)$. Hence the measure defined on $\T$ by $\mu ([0, \theta])=h^{-1}(\theta)$, i.e. the measure with density $\frac{1}{h'\circ h^{-1}}$, is invariant by the restricted-projected Dynamics $g_c$. When the rotation number $ \rho(c)$ of $g_c$ is irrational, this is the only measure invariant by $g_c$.\\
 Let us recall that the discounted solution $\cu_c$ that is selected in \cite{SuThi} and  \cite{DFIZ2} is the weak K.A.M. solution that is the supremum of the subsolutions that satisfy for every   minimizing $g_c$-invariant measure $\mu$: $\int u_cd\mu\leq 0$. When $c$ is irrational, we deduce that
 $$\cu_c(\theta)=u_c(\theta)-\int u_c(t)d\mu(t)=c\left(\int_\T  d\circ h^{-1}(t)(h^{-1})'(t)dt -d\circ h^{-1}(\theta)\right);$$
 i.e. 
\begin{equation}\label{Eirrat} \cu_c(\theta)=c\left(\int_\T d(t)dt-d\circ h^{-1}(\theta)\right)=u_c(\theta)+c\int_\T d(t)dt.\end{equation}
  Assume now that $c=\frac{1}{2}$. Then 
  $$g_{\frac{1}{2}}(0)=h\circ R_{\frac{1}{2}}\circ h^{-1}(0)=h\Big(\frac{1}{2}\Big)=\frac{1}{2}+d\Big(\frac{1}{2}\Big)\quad{\rm and}\quad g_{\frac{1}{2}}\left( \frac{1}{2}+d\Big(\frac{1}{2}\Big)\right)=0.$$
  The mean of the two Dirac measures
  $$\nu=\frac{1}{2}\left( \delta_0+\delta_{\frac{1}{2}+d(\frac{1}{2})}\right)$$
  is a measure that is invariant by $g_\frac{1}{2}$. Hence $\cu_{\frac{1}{2}}(\theta)=u_{\frac{1}{2}}(\theta)-K$ with $K\geq\int_\T u_{\frac{1}{2}}d\nu$. We deduce that $$K\geq \frac{1}{2}\left( u_{\frac{1}{2}}(0)+u_{\frac{1}{2}}\bigg(\frac{1}{2}+d\Big(\frac{1}{2}\Big)\bigg)\right)=-\frac{1}{4}\left( d\circ h^{-1}(0)+d\circ h^{-1} \bigg(\frac{1}{2}+d\Big(\frac{1}{2}\Big)\bigg)\right);$$
  i.e.
  $$K\geq -\frac{1}{4}d\Big(\frac{1}{2}\Big).$$
By  Inequality  (\ref{Edemi}), we know that $\varepsilon=\int_\T d(t)dt-\frac{d(\frac{1}{2})}{2}>0$.  We have then 
  $$\cu_{\frac{1}{2}}(\theta)\leq u_{\frac{1}{2}}(\theta)+ \frac{1}{4}d\Big(\frac{1}{2}\Big)= u_{\frac{1}{2}}(\theta)+\frac{1}{2}\int_\T d(t)dt-\frac{\varepsilon}{2}
  $$
  Using Equation (\ref{Eirrat}), we deduce that
  $$\limsup_{c\rightarrow\frac{1}{2}}\cu_c(\theta)\geq \cu_{\frac{1}{2}}(\theta)+\frac{\varepsilon}{2}.$$
  Hence $(\theta, c)\mapsto \cu_c(\theta)$ is not continuous.\\
  Observe that in the integrable case, there exists a unique weak K.A.M. solution in each cohomology class up to the addition of a constant. Hence selecting a weak K.A.M. solution in every cohomology class is reduced in this case to choosing a constant. Using this remark, it can be proved that for the integrable case, the discounted choice is lower semi-continuous.
 \end{exa}

\subsection{A  foliation by graphs that is the inverse image of the standard foliation by a symplectic map but not by a symplectic  homeomorphism}
 We will use two special functions
 \begin{itemize}
 \item $\gamma:\T\rightarrow \R$ a $C^\infty$ function such that $\gamma'_{[\frac{1}{2}-\varepsilon, \frac{1}{2}+\varepsilon]}=-1$ and 
 $\gamma'_{\T\backslash [\frac{1}{2}-\varepsilon, \frac{1}{2}+\varepsilon]}>-1$;
 \item $\zeta:\R\rightarrow \R$ a $C^\infty$ function that is increasing, such that $\zeta'(0)=1$ and $\zeta'_{\R\backslash\{ 0\}}<1$ with $\displaystyle{\lim_{\pm\infty}\zeta'=\frac{1}{2}}$.
 \end{itemize}
 The function $u(\theta, c)=\zeta(c)\gamma(\theta)$ defines the foliation in graphs of $$\eta_c=c+\frac{\partial u}{\partial\theta}=c+\zeta(c)\gamma'.$$
 The derivative with respect to $c$ of $\eta_c(\theta)$ is then $\frac{\partial \eta_c}{\partial c}(\theta)=1+\zeta'(c)\gamma'(\theta)$ that is non negative, vanishes only for $(\theta, c)\in [\frac{1}{2}-\varepsilon, \frac{1}{2}+\varepsilon]\times \{0\}$ and is larger that $\frac{1}{3}$ close to $\pm\infty$. Hence every map $c\in\R\mapsto \eta_c(\theta)\in\R$ is a homeomorphism and we have indeed a $C^0$ foliation.\\
 Let us introduce $h_c(\theta)=\theta+\frac{\partial u}{\partial c}(\theta)=\theta+\gamma(\theta)\zeta'(c)$. Its derivative is $1+\zeta'(c)\gamma'(\theta)$  that is non negative and vanishes only if $(\theta, c)\in [\frac{1}{2}-\varepsilon, \frac{1}{2}+\varepsilon]\times \{0\}$. Hence $h_0$ is not a homeomorphism but all the other $h_c$ are homeomorphisms. \\
 We deduce from Theorem \ref{TC0arn} that this foliation is not symplectically homeomorphic to the standard one.
 
 We will now prove that the map defined by $H\big(\theta, \eta_c(\theta)\big)=(h_c(\theta), c)$ is a symplectic map, i.e. the limit (for the $C^0$ topology) of 
a sequence  of symplectic diffeomorphisms.\\
Let $\gamma_n:\T\rightarrow \R$ be a sequence of $C^\infty$ maps that converges to $\gamma$ in $C^1$ topology and satisfies $\gamma_n'>-1$. Let $(\zeta_n)$ be a sequence of $C^\infty$ diffeomorphisms of $\R$ that $C^1$ converges to $\zeta$ and satisfies $\zeta_n'<1$. We introduce $u_n(\theta, c)=\gamma_n(\theta)\zeta_n(c)$. Then $\eta_c^n(\theta)=c+\zeta_n(c)\gamma_n'(\theta)$ defines a smooth foliation, $h_c^n(\theta)=\theta+\gamma_n(\theta)\zeta_n'(c)$ is a smooth diffeomorphism of $\T$ and
$$K_n(\theta, c)=\left( \left( h_c^n\right)^{-1}(\theta), \eta^n_c\left(\big( h_c^n\right)^{-1}(\theta)\big)\right)$$
is a symplectic smooth diffeomorphism that maps the standard foliation  to the foliations by the graphs of $\left( \eta_c^n\right)_{c\in\R}$.\\
If $H_n=K_n^{-1}$, observe that $H_n=G_n\circ F_n^{-1}$ where 
\begin{itemize}
\item $F_n(\theta, c)=\big(\theta, c+\frac{\partial u_n}{\partial \theta}(\theta, c)\big)$ converges uniformly to $F(\theta, c)=\big(\theta, c+\frac{\partial u}{\partial \theta}(\theta, c)\big)$;
\item $G_n(\theta, c)=(\theta+\frac{\partial u_n}{\partial c}(\theta, c), c)$ converges uniformly to $G(\theta, c)=(\theta+\frac{\partial u}{\partial c}(\theta, c), c)$.
\end{itemize}
Finally, $H_n=G_n\circ F_n^{-1}$ converges uniformly to $H=G\circ F^{-1}$

 \section{Some results concerning the full pseudographs}\label{Apfulpseudo}

 Most of the results that follow are standard and even hold in all dimension. One can find them in similar of different formulations in \cite{cansin}. However, we provide proofs for the reader's convenience.

 \subsection{An equivalent definition}
\hglue 14truecm 
 
 \begin{defi}
 Let $u:\R\rightarrow \R$ be a $K$ semi-concave function. Then $p\in\R$ is a {\em super-derivative} of $u$ at $x\in\R$ if
$$\forall y\in \R,\quad u(y)-u(x)-p(y-x)\leq \frac{K}{2}(y-x)^2.$$
We denote the set of super-derivatives of $u$ at $x$ by $\partial^+u(x)$.   It is a convex set.
\end{defi}
Observe that a derivative is always a super-derivative.  If $u:\R\rightarrow \R$ is $K$-semi-concave, then  $x\mapsto u(x)-\frac{K}{2}x^2$ is concave and thus  locally Lipschitz, and $x\mapsto u'(x)-Kx$ is non-increasing. Hence a $1$-periodic $K$-semi-concave function is $K$-Lipschitz.

 Observe also that $\displaystyle{\bigcup_{x\in \T}\{ x\}\times \partial^+u(x)}$ is compact.
\begin{propos}
 Let $u:\R\rightarrow \R$ be a $K$-semi-concave function. Then, for every $x\in\R$, we have 
 $$\partial u (x)=\{ x\}\times \partial ^+u(x).$$
\end{propos}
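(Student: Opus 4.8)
The plan is to show that for a $K$-semi-concave function $u:\R\to\R$, the Clarke subdifferential $\partial u(x)$ (defined as the convex hull in the fiber direction of the closure of the partial graph of $u'$) coincides with $\{x\}\times\partial^+u(x)$. I would first reduce to the concave case: since $x\mapsto w(x):=u(x)-\frac{K}{2}x^2$ is concave, locally Lipschitz, and satisfies $\partial^+u(x)=\partial^+w(x)+Kx$ while also $u'(y)=w'(y)+Ky$ wherever $u$ (equivalently $w$) is differentiable, both sides of the claimed equality transform the same way under the shift by $Kx$. Hence it suffices to prove the statement when $K=0$, i.e. for $w$ concave; here $\partial^+w(x)$ is the usual superdifferential of a concave function, which is well known to be the (nonempty, compact) interval $[w'_+(x),w'_-(x)]$ bounded by the one-sided derivatives (right derivative below, left derivative above, with the usual reversal of inequalities for concave functions).

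Next I would establish the two inclusions. For $\partial w(x)\subset\{x\}\times\partial^+w(x)$: any point of $\overline{\cg(w')}$ over $x$ is a limit $(x_n,w'(x_n))\to(x,p)$ with $w$ differentiable at $x_n$; since $w'(x_n)\in\partial^+w(x_n)$ and the map $z\mapsto \partial^+w(z)$ has closed graph (a standard upper-semicontinuity property of the superdifferential of a concave function, immediate from passing to the limit in the defining inequality $w(y)-w(x_n)-w'(x_n)(y-x_n)\le 0$), the limit $p$ lies in $\partial^+w(x)$. As $\partial^+w(x)$ is convex, taking the convex hull of such points keeps us inside $\{x\}\times\partial^+w(x)$. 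For the reverse inclusion $\{x\}\times\partial^+w(x)\subset\partial w(x)$: because $\partial^+w(x)=[w'_+(x),w'_-(x)]$ and $\partial w(x)$ is a closed interval in the fiber, it is enough to show the two endpoints $w'_+(x)$ and $w'_-(x)$ belong to $\partial w(x)$. For the right endpoint, approximate $x$ from the right by points $x_n\downarrow x$ at which $w$ is differentiable (these are dense since $w$, being concave, is differentiable off a countable set); then $w'(x_n)\to w'_+(x)$ by monotonicity of the difference quotients, so $(x,w'_+(x))\in\overline{\cg(w')}\subset\partial w(x)$. The left endpoint is symmetric.

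The only point requiring a little care is the identification $\partial^+w(x)=[w'_+(x),w'_-(x)]$ together with the convexity and compactness statements for $\partial^+u(x)$ asserted just before the proposition; these are classical facts about concave functions of one variable, and I would either cite the appendix of \cite{Be2} or \cite{cansin} or recall the one-line argument: $p\in\partial^+w(x)$ iff $w(y)-w(x)\le p(y-x)$ for all $y$, which forces $p\le \frac{w(y)-w(x)}{y-x}$ for $y<x$ and $p\ge\frac{w(y)-w(x)}{y-x}$ for $y>x$, i.e. $w'_+(x)\le p\le w'_-(x)$, and conversely any such $p$ works by concavity. I do not expect a genuine obstacle here; the main thing to get right is the bookkeeping of the shift by $Kx$ when passing from $u$ to $w$, and making sure that "convex hull in the fiber direction" is literally the operation that turns $\overline{\cg(w')}\cap(\{x\}\times\R)=\{w'_+(x),w'_-(x)\}$ (when $w$ is not differentiable at $x$) into the full interval.
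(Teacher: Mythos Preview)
Your argument is correct and follows essentially the same route as the paper's proof. The only cosmetic difference is that you first subtract $\tfrac{K}{2}x^2$ to reduce to the concave case and then invoke the classical one-sided derivative theory for concave functions, whereas the paper keeps the quadratic term and works directly with the $K$-semi-concave inequality; in both cases the first inclusion comes from the closedness and convexity of the super-derivative set, and the reverse inclusion is obtained by identifying the endpoints of $\partial^+u(x)$ as limits of $u'$ along differentiability points approaching $x$ from the right and from the left.
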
 
 Hence the full pseudograph of $u$ is also the subbundle of all the super-derivatives of $u$.

 \begin{proof}
Let us prove the inclusion $\partial u (x)\subset\{ x\}\times \partial ^+u(x)$.  Let us consider $(x, p)\in\partial u (x)$.  Then there exist $(x,p_-), (x, p_+)\in \overline{\cg(u')}$ such that $p_-\leq p\leq p_+$ and there exist two sequences $(x_n, p_n), (y_n, q_n)\in\cg(u')$ that respectively converge to $(x, p_-)$, $(x,p_+)$. Every derivative is a super-derivative and a limit of super-derivatives is a super-derivative. Hence, we have $p_-, p_+\in \partial^+u(x)$. By convexity of $\partial^+u(x)$, we deduce that $p\in \partial^+u(x)$.


Let us now prove the reverse inclusion. Being $K$-semi-concave, $u$ is $K$-Lipschitz, hence the set of all its super-derivatives is bounded (by $K$). If $x\in \R$, we have then $\partial^+u(x)=[p_-, p_+]$ with $-K\leq p_-\leq p_+\leq K$. We will prove that $(x,p_-), (x, p_+)\in \partial u(x)$. We have
\begin{multline*}
\forall y\in \R,\quad  u(y)-u(x)-p_-(y-x)\leq \frac{K}{2}(y-x)^2
\\
{\rm and}\quad u(y)-u(x)-p_+(y-x)\leq \frac{K}{2}(y-x)^2.
\end{multline*}
This implies that
\begin{itemize}
\item for $y>x$, we have 
$$\frac{u(y)-u(x)}{y-x}\leq p_-+\frac{K}{2}(y-x);$$
\item for $y<x$, we have 
$$\frac{u(y)-u(x)}{y-x}\geq p_++\frac{K}{2}(y-x).$$
\end{itemize}
Recall that $\frac{u(y)-u(x)}{y-x}=\frac{1}{y-x}\int_x^yu'(t)dt$. This gives the existence of two sequences $(x_n)\in (-\infty, x)$ and $(y_n)\in (x, +\infty)$ that converge to $x$ where $u$ is differentiable and
$$\limsup u'(x_n)\geq p_+\quad{\rm and}\quad \liminf u'(y_n)\leq p_-.$$
As we know that a derivative is a super-derivative, that the set of super-derivatives is closed and that $\partial^+u(x)=[p_-, p_+]$, we deduce that 
$$\big(x, \lim  u'(x_n)\big)=(x,  p_+)\in \partial u(x) \quad{\rm and}\quad \big(x, \lim  u'(y_n)\big)=(x,  p_-)\in \partial u(x).$$
 \end{proof}
 \subsection{Proof of Lemma \ref{Lfullman}}\label{ssLfullman}
 We just recall the argument of the proof of
 \begin{lemma}
For all $c\in \R$, $\mathcal{PG}(c+u'_c)$ is a Lipschitz one dimensional compact manifold that is an essential circle. 
\end{lemma}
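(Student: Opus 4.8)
The plan is to show that $\mathcal{PG}(c+u'_c)$ is, up to a translation by $(0,c)$, the full pseudograph of the $K$-semi-concave function $u_c$, and that such a set is always a Lipschitz essential circle. First I would recall from the previous subsection that $\partial u_c(t) = \{t\}\times\partial^+u_c(t)$, where $\partial^+u_c(t) = [p_-(t),p_+(t)]$ is the (non-empty, compact) interval of super-derivatives; hence
$$\mathcal{PG}(c+u'_c) = \{(t, c+p) : t\in\T,\ p\in[p_-(t),p_+(t)]\}.$$
By $K$-semi-concavity the functions $t\mapsto p_+(t)-Kt$ and $t\mapsto p_-(t)-Kt$ are, respectively, the right and left derivatives of the concave function $t\mapsto \tilde u_c(t)-\tfrac{K}{2}t^2$; so $p_+$ is upper semi-continuous, $p_-$ is lower semi-continuous, $p_-\le p_+$ everywhere, and $p_+(t)=p_-(t)$ off a countable set. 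Moreover $p_+(s)\le p_-(t)+K(t-s)$ whenever $s<t$, which is exactly the statement that the ``graph'' $\{(t,p): p\in[p_-(t)-Kt,\,p_+(t)-Kt]\}$ of the monotone multifunction associated to a concave function is a Lipschitz curve.

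The key step is then to produce an explicit Lipschitz parametrization. I would use the standard trick of arclength-type reparametrization: set $\varphi(t) = t + \big(p_+(t)+Kt\big)$ type quantity — more precisely, consider on a fundamental domain the non-decreasing function obtained by adding the horizontal variable to the (monotone-up-to-$Kt$) super-derivative, fill in the jumps linearly, and check that the resulting map from a circle of the appropriate length onto $\mathcal{PG}(c+u'_c)$ is a bi-Lipschitz homeomorphism. Concretely: because $t\mapsto p_-(t)-Kt$ is non-increasing, the map $\Psi: t\mapsto 2Kt - p_-(t)$ (say) is strictly increasing with jumps exactly where $p_-<p_+$; we complete $\mathcal{PG}(c+u'_c)$'s preimage by inserting, at each jump, the vertical segment it parametrizes, and reparametrize this enlarged domain by arclength. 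The inverse map sends a point $(t,c+p)$ to its arclength coordinate; both directions are Lipschitz with constants depending only on $K$. Since the construction is $\Z$-equivariant (the lift satisfies $\tilde u_c(t+1)=\tilde u_c(t)+c$, so $p_\pm(t+1)=p_\pm(t)$), it descends to a bi-Lipschitz homeomorphism from $\T$ onto $\mathcal{PG}(c+u'_c)$, which is therefore a Lipschitz one-dimensional compact manifold. It is essential because its projection to the $\T$-factor is surjective (each $\partial^+u_c(t)\neq\emptyset$) and it is the graph of a function off a countable set, so it cannot be null-homotopic.

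The main obstacle I anticipate is purely bookkeeping: handling the countably many vertical segments cleanly, i.e. verifying that gluing in these segments and reparametrizing does not destroy injectivity or the Lipschitz bounds, and that the total length of the inserted segments is finite on a fundamental domain (it is, since it equals $\sum_t (p_+(t)-p_-(t))$, which is bounded by the total variation of the monotone function $p_--Kt$, hence finite). Everything else — upper/lower semi-continuity of $p_\pm$, the inequality $p_+(s)\le p_-(t)+K(t-s)$, essentiality — follows directly from $K$-semi-concavity of $u_c$, which is item \ref{same} of subsection \ref{ssweakk}, and from the description of $\partial u_c$ proved just above. I would therefore present the argument as: (1) recall $\partial u_c(t)=\{t\}\times[p_-(t),p_+(t)]$ and the monotonicity/semi-continuity of $p_\pm$; (2) build the bi-Lipschitz parametrization by arclength on the completed domain; (3) conclude it is a Lipschitz circle, essential by the projection-surjectivity and graph-almost-everywhere properties.
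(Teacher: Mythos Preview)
Your approach is correct and will work, though the labeling of left/right derivatives is swapped: for the concave function $v(t)=\tilde u_c(t)-\tfrac{K}{2}t^2$ one has $v'_-(t)=p_+(t)-Kt$ (left derivative) and $v'_+(t)=p_-(t)-Kt$ (right derivative), not the other way around. This is harmless for the argument. The essentiality step is also slightly loose as written (surjectivity of the projection alone does not rule out a null-homotopic curve), but it follows cleanly once you note that your $\Z$-equivariant bi-Lipschitz parametrization has $t$-coordinate increasing by exactly $1$ over a fundamental domain.

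Your route is genuinely different from the paper's. The paper gives a one-line proof: it cites \cite{Arna2} for the fact that for every $K$-semi-concave $u:\T\to\R$ there exists $\tau>0$ such that $\varphi_{-\tau}\big(\mathcal{PG}(c+u')\big)$ is the graph of a Lipschitz function, where $(\varphi_t)$ is the flow of the pendulum; since $\varphi_{-\tau}$ is a diffeomorphism isotopic to the identity, the pseudograph is itself a Lipschitz essential circle. Conceptually the two arguments are cousins---to first order the pendulum flow acts as a shear $(\theta,r)\mapsto(\theta+\tau r,r)$, which is exactly the kind of transformation that turns the ``filled graph of a monotone multifunction'' into an honest Lipschitz graph---but the presentations are quite different. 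Your direct construction via convex analysis is more elementary and self-contained (no external citation needed) at the cost of the bookkeeping you anticipate; the paper's version is much shorter but leans on \cite{Arna2}.
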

\begin{proof}
It is proved in  \cite{Arna2}, that for every $c\in \R$ and every $K$-semi-concave function $u:\T\rightarrow \R$, there exists $\tau>0$ such that $\varphi_{-\tau}\big(\mathcal{PG}(c+u')\big)$ is the graph of a Lipschitz function, where $(\varphi_t)$ is the flow of the pendulum. This gives the wanted result.
\end{proof}

 \subsection{Proof of Proposition \ref{hausdorff}}\label{AppB3}
 Let us now prove the following proposition\footnote{{The statement holds in arbitrary dimension and follows from the same result for concave functions. We present here a simple proof relying on the $1$-dimensional setting.}}. 
 
 \begin{propos}
Let $(f_n)_{n\in \mathbb N} $ be a sequence of equi-semi-concave functions from $\T$ to $\R$ that converges (uniformly) to a function $f$ (that is hence also semi-concave).

Then $\big(\mathcal{PG}(f'_n) \big)$ converges to $\mathcal{PG}(f')$ for the Hausdorff distance.
\end{propos}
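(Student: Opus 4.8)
The plan is to reduce everything to the one-dimensional fact that, for $K$-semi-concave functions, the full pseudograph coincides with the (closed) subbundle of super-derivatives, together with a monotonicity property. First I would normalize: by the equivalent description of $\partial f$ proved just above (the proposition identifying $\partial f(x)$ with $\{x\}\times\partial^+f(x)$), it suffices to show that $\bigcup_{x\in\T}\{x\}\times\partial^+f_n(x)$ converges to $\bigcup_{x\in\T}\{x\}\times\partial^+f(x)$ for the Hausdorff distance. Since all the $f_n$ (and $f$) are $K$-semi-concave for a common $K$, the map $x\mapsto f_n(x)-\frac{K}{2}x^2$ is concave, hence the super-derivative $\partial^+f_n(x)$ is exactly the ordinary superdifferential of a concave function (shifted by $Kx$), and all these are uniformly bounded by $K$. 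So all the $\mathcal{PG}(f_n')$ lie in the fixed compact set $\T\times[-K,K]$, and Hausdorff convergence amounts to two one-sided statements.

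The first half (no point of $\mathcal{PG}(f')$ is far from $\mathcal{PG}(f_n')$ for large $n$): given $(x,p)$ with $p\in\partial^+f(x)$, I would use that $p$ being a super-derivative means $f(y)\le f(x)+p(y-x)+\frac K2(y-x)^2$ for all $y$. Replacing $f$ by $f_n$ introduces an error bounded by $2\|f_n-f\|_\infty$, so $p$ is an "approximate super-derivative" of $f_n$; a short convexity/compactness argument (or: consider the concave function $g_n=f_n-\frac K2x^2$, whose superdifferential map is upper semi-continuous and whose values are nonempty, and note that the approximate superdifferential at $x$ shrinks to $\partial^+g_n(x)$ as the error goes to $0$, uniformly in $x$ by the uniform bound $K$) produces a genuine super-derivative $p_n\in\partial^+f_n(x)$ with $|p_n-p|\to 0$ uniformly. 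The second half (no point of $\mathcal{PG}(f_n')$ is far from $\mathcal{PG}(f')$): this is the standard closedness of the graph of the superdifferential under uniform convergence — if $(x_n,p_n)$ with $p_n\in\partial^+f_n(x_n)$ and $(x_n,p_n)\to(x,p)$, then passing to the limit in $f_n(y)\le f_n(x_n)+p_n(y-x_n)+\frac K2(y-x_n)^2$ gives $p\in\partial^+f(x)$; since $\T\times[-K,K]$ is compact this yields the required one-sided bound.

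Combining the two one-sided estimates gives $d_H\big(\mathcal{PG}(f_n'),\mathcal{PG}(f')\big)\to 0$. The only slightly delicate point is the \emph{uniformity in $x$} in the first half: I would make this precise by contradiction, extracting $x_n$ and $p_n\in\partial^+f(x_n)$ that stay at distance $\ge\varepsilon$ from $\partial^+f_n(x_n)$, passing to a convergent subsequence $x_n\to x_\infty$, and using semi-concavity plus upper semi-continuity of $x\mapsto\partial^+f(x)$ and of $x\mapsto\partial^+f_n(x)$ to reach a contradiction with $f_n\to f$ uniformly. I expect this uniformity argument to be the main (minor) obstacle; everything else is bookkeeping with the defining inequality of semi-concavity.
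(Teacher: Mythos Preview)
Your ``second half'' (closedness: if $(x_n,p_n)\in\mathcal{PG}(f_n')$ converges to $(x,p)$ then $(x,p)\in\mathcal{PG}(f')$) is correct and is exactly how the paper handles that direction.

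The gap is in your ``first half''. You claim that for $(x,p)\in\mathcal{PG}(f')$ one can find $p_n\in\partial^+f_n(x)$ with $|p_n-p|\to 0$, because the $\varepsilon$-approximate superdifferential of $f_n$ at $x$ shrinks to $\partial^+f_n(x)$ uniformly in $x$. This is false: at a corner of $f$ the set $\partial^+f_n(x)$ \emph{at the same base point} need not approach $\partial^+f(x)$ at all. Locally, take $f(y)=-|y|$ and $f_n(y)=-|y-1/n|$ (both concave, so $K=0$ works). Then $\|f_n-f\|_\infty=1/n$, $\partial^+f(0)=[-1,1]$, but $\partial^+f_n(0)=\{1\}$ for every $n$; the point $(0,-1)\in\mathcal{PG}(f')$ is at distance $2$ from $\partial^+f_n(0)$ for all $n$. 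Of course $(1/n,-1)\in\mathcal{PG}(f_n')$ does approximate $(0,-1)$, but your scheme forbids moving the base point. Your fallback contradiction argument (extract $x_n\to x_\infty$ and invoke upper semi-continuity of $x\mapsto\partial^+f(x)$ and $x\mapsto\partial^+f_n(x)$) does not rescue this: in the example $x_n\equiv 0$, $x_\infty=0$, and upper semi-continuity says nothing about the relation between $\partial^+f_n(0)$ and $\partial^+f(0)$.

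The paper avoids this altogether by a short topological argument specific to dimension one. Having established (as you did) that any Hausdorff limit of $\mathcal{PG}(f_n')$ is contained in $\mathcal{PG}(f')$, it observes that each $\mathcal{PG}(f_n')$ is an \emph{essential circle} in $\A$ (this is Lemma~\ref{Lfullman}, proved by flowing the pseudograph a small time under the pendulum to make it a genuine Lipschitz graph). If some ball $B\big((x,p),r\big)$ around a point of $\mathcal{PG}(f')$ were missed by $\mathcal{PG}(f_n')$ along a subsequence, then for large $n$ the circle $\mathcal{PG}(f_n')$ would lie in a small neighbourhood of the arc $\mathcal{PG}(f')\setminus B\big((x,p),r\big)$, hence could not separate $\A$ into two unbounded components---contradiction. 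So the missing idea in your proof is either to allow the base point to move (and then argue analytically, e.g.\ via monotonicity of $x\mapsto f'(x)-Kx$), or to use this separation argument.
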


\begin{proof}
Let us prove that the lim sup of the $\mathcal{PG}(f'_n) $ is in $\mathcal{PG}(f')$. Up to a subsequence, we consider $(x_n, p_n)\in \mathcal{PG}(f'_n) $ that converges to some $(x,p)$, and we want to prove that $(x,p)\in \mathcal{PG}(f')$. We have
$$\forall n, \forall y\in \R, \quad f_n(y)-f_n(x_n)-p_n(y-x_n)\leq \frac{K}{2}(y-x_n)^2.$$
Taking the limit, we deduce that $(x,p)\in \mathcal{PG}(f')$.

Let us now assume that $\big(\mathcal{PG}(f'_n) \big)$ doesn't converge to $\mathcal{PG}(f')$. There exists a point $(x, p)\in \mathcal{PG}(f')$,  $r>0$ and $N\geq 1$ such that, up to a subsequence, 
$$\forall n\geq N, \quad\mathcal{PG}(f'_n)\cap B\big((x, p), r\big)=\varnothing.$$
Hence, for $n$ large enough, $\mathcal{PG}(f'_n)$ is contained in a small neighbourhood of a simple arc (and not loop).
This implies that for $n$ large enough, $\mathcal{PG}(f'_n)$ doesn't separate the annulus into two unbounded connected components, a contradiction.

\end{proof}

 \section{Green bundles}\label{ssGreenb} Here we recall the theory of Green bundles. More details or proofs can be found in \cite{Arna3}.
We fix a lift $F$ of a conservative twist map $f$.  
\begin{notas}\label{Nota}{\rm
\begin{enumerate}
\item[$\bullet$] $V(x)=\{ 0\}\times\R\subset T_x\R^2$ and for $k\not= 0$, we have $G_k(x)=DF^k(F^{-k}x)V(f^{-k}x)$;
\item[$\bullet$] the slope of $G_k$ (when defined) is denoted by $s_k$: $$G_k(x)=\{ (\delta\theta, s_k(x)\delta\theta); \ \ \delta\theta\in\R\};$$
\item[$\bullet$] if $\gamma$ is a real Lipschitz function defined on $\T$ or $\R$, then 
$$\gamma'_+(x)=\limsup_{\substack{y,z\rightarrow x\\ y\not=z}}\frac{\gamma(y)-\gamma(z)}{y-z}\quad{\rm and}\quad \gamma'_-(t)=\liminf_{\substack{y,z\rightarrow x\\ y\not=z}}\frac{\gamma(y)-\gamma(z)}{y-z}.$$
\end{enumerate}}
\end{notas}
Then  
\begin{enumerate}
\item if the orbit of $x\in\R^2$ is minimizing,  we have 
$$\forall n\geq 1,\quad s_{-n}(x)<s_{-n-1}(x)<s_{n+1}(x)<s_n(x);$$
\item in this case,   the two {\em Green bundles} at $x$ are $G_+(x), G_-(x)\subset T_x(\R^2)$ with slopes $s_-$, $s_+$ where $\displaystyle{s_+(x)=\lim_{n\rightarrow +\infty}s_n(x)}$ and $\displaystyle{ s_-(x)=\lim_{n\rightarrow +\infty}s_{-n}(x)}$;
\item the two Green bundles  are invariant under $Df$: $Df(G_\pm)=G_\pm\circ f$;
\item we have $s_+\geq s_-$;
\item the map $s_-$ is lower semi-continuous and the map  $s_+$ is upper semi-continuous;
\item hence $\{ G_-=G_+\}$ is a  $G_\delta$ subset of the set of points whose orbit is minimizing (this last set  is a closed set) and   $s_-=s_+$ is continuous at every point of this set.
\end{enumerate}

Let us focus on the case of an invariant curve that is the graph of $\gamma$. Then we have
\begin{propos}\label{PGreensand}
Assume that the graph of $\gamma\in C^0(\T, \R)$ is invariant by $F$. Then the orbit of any point contained in the graph of $\gamma$ is minimizing and we have
$$\forall \theta\in\T,\quad s_-\big(\theta, \gamma(\theta)\big)\leq\gamma'_-(\theta)\leq \gamma'_+(\theta)\leq s_+\big(\theta, \gamma(\theta)\big).$$
\end{propos}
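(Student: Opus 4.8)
Assume that the graph of $\gamma\in C^0(\T,\R)$ is invariant by $F$. Then the orbit of any point in the graph of $\gamma$ is minimizing, and for all $\theta\in\T$, $s_-(\theta,\gamma(\theta))\le\gamma'_-(\theta)\le\gamma'_+(\theta)\le s_+(\theta,\gamma(\theta))$.

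Let me sketch a proof.

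The plan is to prove the two assertions in turn, the first being essentially a reminder and the second the substance of the statement. Throughout, $\Gamma$ denotes the graph of $\gamma$ (or of its lift $\tilde\gamma$), and $g:\theta\mapsto\pi_1\circ F\big(\theta,\tilde\gamma(\theta)\big)$ the projected restricted dynamics.

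\textbf{The orbit of a point of $\Gamma$ is minimizing.} By Birkhoff's theorem (point (8) of the list in subsection \ref{ssweakk}), $\gamma$ is Lipschitz and $\Gamma$ is a compact well-ordered set, i.e. an Aubry--Mather set; equivalently $g$ is an orientation-preserving circle homeomorphism and the $\theta$-configuration of every orbit contained in $\Gamma$ is a Birkhoff (monotone) configuration. It is a classical fact of Aubry--Mather theory that a Birkhoff configuration of a twist map is minimizing (see \cite{Ban} or \cite{Gol1}), which I would simply quote. Hence every orbit in $\Gamma$ is minimizing, and in particular the negative and positive Green bundles $G_\pm$, together with the slopes $s_\pm$, are defined at every point of $\Gamma$, and all the facts recalled above apply along $\Gamma$.

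\textbf{The inequalities at differentiability points.} Let $D\subset\T$ be the set of $\theta$ at which $\gamma$ is differentiable; since $\gamma$ is Lipschitz, $D$ has full Lebesgue measure. I claim that $D$ is $g$-invariant and that, for $\theta\in D$, the line $\R\cdot\big(1,\gamma'(\theta)\big)$ is $Df$-invariant along the orbit of $x=(\theta,\tilde\gamma(\theta))$. Indeed, at $x$ the curve $\Gamma$ has the (non-vertical, since $\gamma$ is Lipschitz) tangent line $T_x\Gamma=\R\cdot\big(1,\gamma'(\theta)\big)$: the parametrization $t\mapsto(t,\tilde\gamma(t))$ is differentiable at $\theta$ with non-zero velocity. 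As $F$ is a $C^1$ diffeomorphism with $F(\Gamma)=\Gamma$, the curve $t\mapsto F(t,\tilde\gamma(t))$ is differentiable at $\theta$ with velocity $DF(x)\big(1,\gamma'(\theta)\big)\neq 0$; this velocity is again non-vertical, for otherwise the Lipschitz graph $\gamma$ would have unbounded difference quotients near $\pi_1\big(F(x)\big)$ (here one uses that $g$ is a homeomorphism to compare with nearby points). Therefore $\gamma$ is differentiable at $\pi_1\big(F(x)\big)$ with tangent line $DF(x)\,T_x\Gamma$; iterating in both time directions proves the claim. Since $\R\cdot\big(1,\gamma'(\theta)\big)$ is thus a $Df$-invariant line bundle along a minimizing orbit, the characterization of the Green bundles as the extreme invariant subbundles (see \cite{Arna3}) gives $s_-(x)\le\gamma'(\theta)\le s_+(x)$ for every $\theta\in D$.

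\textbf{Passing to an arbitrary point.} Fix $\theta_0\in\T$. Since $\gamma$ is Lipschitz (hence absolutely continuous), a standard computation with difference quotients gives $\gamma'_+(\theta_0)=\limsup_{\theta\to\theta_0,\ \theta\in D}\gamma'(\theta)$ and $\gamma'_-(\theta_0)=\liminf_{\theta\to\theta_0,\ \theta\in D}\gamma'(\theta)$. Combining with the previous step, the continuity of $\theta\mapsto\big(\theta,\tilde\gamma(\theta)\big)$, and the upper (resp. lower) semicontinuity of $s_+$ (resp. $s_-$) from point (5) above, we get
\[
\gamma'_+(\theta_0)=\limsup_{\substack{\theta\to\theta_0\\ \theta\in D}}\gamma'(\theta)\ \le\ \limsup_{\theta\to\theta_0}s_+\big(\theta,\tilde\gamma(\theta)\big)\ \le\ s_+\big(\theta_0,\tilde\gamma(\theta_0)\big),
\]
and symmetrically $\gamma'_-(\theta_0)\ge s_-\big(\theta_0,\tilde\gamma(\theta_0)\big)$. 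Together with the trivial inequality $\gamma'_-(\theta_0)\le\gamma'_+(\theta_0)$, this yields the asserted chain. The step requiring the most care is the second one: establishing that $\R\cdot\big(1,\gamma'(\theta)\big)$ is a genuine $Df$-invariant bundle along the whole orbit (the propagation-of-differentiability argument, which uses that $\gamma$ is Lipschitz and $g$ a homeomorphism) and then invoking the sandwiching of invariant directions between the two Green bundles; the rest is semicontinuity bookkeeping.
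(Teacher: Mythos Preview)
Your overall plan (prove the inequality at differentiability points of $\gamma$, then extend by the semicontinuity of $s_\pm$) is sound, and the first and third steps are fine. The gap is in the second step.

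You argue that since $(1,\gamma'(\theta))$ spans a $Df$-invariant line along the orbit, ``the characterization of the Green bundles as the extreme invariant subbundles'' forces $\gamma'(\theta)\in[s_-(x),s_+(x)]$. But that characterization is false as stated: along a minimizing orbit a $Df$-invariant line need not have slope between $s_-$ and $s_+$. For the integrable twist $F(\theta,r)=(\theta+r,r)$ one has $s_-=s_+=0$ everywhere, yet the line through $(1,m_0)$ with $m_0=\tfrac{7}{10}$ iterates under $DF^k=\bigl(\begin{smallmatrix}1&k\\0&1\end{smallmatrix}\bigr)$ to an invariant line bundle with slopes $m_k=\tfrac{7}{10+7k}$, all finite and uniformly bounded, while $m_0\notin[s_-,s_+]$.

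What makes the tangent to $\Gamma$ special is not mere invariance but the \emph{sign} of the scaling: differentiating $F^k\big(\theta,\gamma(\theta)\big)=\big(g^k(\theta),\gamma(g^k(\theta))\big)$ gives $DF^k(x)(1,\gamma'(\theta))=(g^k)'(\theta)\cdot\big(1,\gamma'(g^k\theta)\big)$ with $(g^k)'(\theta)>0$, because $g$ is a bi\-Lipschitz orientation-preserving circle homeomorphism. (In the counterexample above, $DF^{-2}(1,\tfrac{7}{10})=(-\tfrac{2}{5},\tfrac{7}{10})$ has negative first component.) This positivity is exactly the right hypothesis: writing $DF^k(x)=\bigl(\begin{smallmatrix}A_k&B_k\\ C_k&D_k\end{smallmatrix}\bigr)$ with $B_k>0$ along a minimizing orbit, one checks that $\pi_1\big(DF^k(x)(1,m)\big)>0$ for all $k\in\Z$ if and only if $m\in[s_-(x),s_+(x)]$; indeed for $k\ge1$ the condition reads $m>-A_k/B_k=s_{-k}(x)$, and for $k\le-1$ it reads $m<s_{|k|}(x)$. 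With this repair your argument goes through.

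The paper itself does not prove the proposition; Appendix~\ref{ssGreenb} refers to \cite{Arna3} for proofs. The argument in that reference is essentially the one above, or its geometric variant: $\Gamma$ meets the $C^1$ curve $F^k\big(V(F^{-k}x)\big)$ only at $x$, and since $F$ preserves the two components of $\A\setminus\Gamma$, that curve (whose tangent slope at $x$ is $s_k(x)$) lies above $\Gamma$ on one side of $x$ and below on the other, which bounds the difference quotients of $\gamma$ directly and yields $\gamma'_+(\theta)\le s_k(x)$ without first passing through differentiability points.
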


\begin{propos}\label{Pdyncrit} {\bf (Dynamical criterion)} Assume that $x$ has its orbit that is minimizing and that is contained in some strip $\R\times[-K,K]$ (for example $x$ is in some invariant graph) and that $v\in T_x\R^2\backslash\{ 0\}$. Then
\begin{enumerate}
\item[$\bullet$] if $\displaystyle{\liminf_{n\rightarrow +\infty} |D(\pi\circ F^n)(x)v|<+\infty}$, then $v\in G_-(x)$;
\item[$\bullet$] if $\displaystyle{\liminf_{n\rightarrow +\infty} |D(\pi\circ F^{-n})(x)v|<+\infty}$, then $v\in G_+(x)$.
\end{enumerate}
\end{propos}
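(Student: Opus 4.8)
The plan is to prove the contrapositive of the first bullet: assuming $v\notin G_-(x)$, I will show $|D(\pi\circ F^n)(x)v|\to+\infty$. Since the orbit of $x$ is minimizing, by the monotonicity of the slopes recalled for the Green bundles all the $s_{-n}(x)$ are finite, so $G_{-n}(x)$ is transverse to the vertical $V(x)$ and we may write $v=v_n^-+w_n$ with $v_n^-\in G_{-n}(x)$ and $w_n\in V(x)$. From $G_{-n}(x)=DF^{-n}(F^nx)V(F^nx)$ we get $DF^n(x)G_{-n}(x)=V(F^nx)$, so $DF^n(x)v_n^-$ is vertical and hence
$$D(\pi\circ F^n)(x)v=D(\pi\circ F^n)(x)w_n=\mu_n\,b_n,\qquad b_n:=D(\pi\circ F^n)(x)(0,1),$$
where $\mu_n$ is the second coordinate of $w_n=v-v_n^-$. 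Since $s_{-n}(x)\to s_-(x)\in\R$ and $v\notin G_-(x)$, the scalars $\mu_n$ converge to a nonzero limit. Thus it suffices to prove $|b_n|\to+\infty$ (which is itself the criterion applied to the vector $(0,1)\in V(x)\setminus G_-(x)$).

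I will get this from an exact formula for $b_n^2$. First, $b_n\ne0$: otherwise $DF^n(x)(0,1)$ would span $G_n(F^nx)=DF^n(x)V(x)$ vertically, forcing $s_n(F^nx)=\infty$, contrary to the finiteness of the Green slopes along the minimizing orbit. Writing $DF^n(x)(0,1)=(b_n,\,s_n(F^nx)b_n)$ and $DF^n(x)(1,s_{-n}(x))=(0,e_n)$, the relation $\det DF^n(x)=1$ forces $e_n=-1/b_n$. Now decompose $(1,s_{-(n+1)}(x))=(1,s_{-n}(x))+(0,\Delta_n)$ with $\Delta_n:=s_{-(n+1)}(x)-s_{-n}(x)>0$, apply $DF^n(x)$ and then $DF(F^nx)$ — whose top row is $(-\beta_n s_{-1}(F^nx),\,\beta_n)$ with $\beta_n:=\tfrac{\partial F_1}{\partial r}(F^nx)>0$, because $DF(F^nx)(1,s_{-1}(F^nx))$ lies in $V(F^{n+1}x)$ — and use that $DF^{n+1}(x)(1,s_{-(n+1)}(x))$ is vertical. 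Setting the first coordinate of the image equal to $0$ and substituting $e_n=-1/b_n$ gives, after simplification,
$$b_n^{2}=\frac{1}{\bigl(s_{-(n+1)}(x)-s_{-n}(x)\bigr)\bigl(s_n(F^nx)-s_{-1}(F^nx)\bigr)}.$$
(Both factors in the denominator are positive by the monotonicity of the $s_k$; one checks the identity on $F(\theta,r)=(\theta+r,r)$, where $b_n=n$, $s_{-n}(x)=-1/n$, $s_n(F^nx)=1/n$, $s_{-1}\equiv-1$.)

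To conclude I use the hypothesis that the orbit lies in a strip $\R\times[-K,K]$. Its projection to $\A$ has compact closure $\ck\subset\T\times[-K,K]$, and $s_1,s_{-1}$ being continuous are bounded on $\ck$; since $s_n(y)\le s_1(y)$ for $n\ge1$ at every point $y$ with minimizing orbit, we get $s_n(F^nx)-s_{-1}(F^nx)\le\sup_{\ck}(s_1-s_{-1})<\infty$ uniformly in $n$. On the other hand $s_-(x)$ is finite, so $\sum_{n\ge1}\bigl(s_{-(n+1)}(x)-s_{-n}(x)\bigr)=s_-(x)-s_{-1}(x)<\infty$ and the increments tend to $0$. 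Hence the denominator above tends to $0$, $b_n^{2}\to+\infty$, and therefore $|D(\pi\circ F^n)(x)v|=|\mu_n|\,|b_n|\to+\infty$. This proves the contrapositive of the first bullet; the second bullet is entirely symmetric, exchanging $n$ with $-n$, $G_{-n}$ with $G_n$, and $s_-$ with $s_+$ (using $DF^{-n}(x)G_n(x)=V(F^{-n}x)$ and $\sum_{n\ge1}\bigl(s_n(x)-s_{n+1}(x)\bigr)=s_1(x)-s_+(x)<\infty$).

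The delicate step is the derivation of the displayed identity for $b_n^{2}$: one must keep careful track of the point of the orbit at which each copy of $DF$, each vertical $V$, and each Green bundle $G_{\pm k}$ is evaluated, and use only the two structural facts that $\det DF=1$ and that $DF(y)$ sends $G_{-1}(y)$ into $V(Fy)$ (equivalently $s_{-1}(y)=-(DF(y))_{11}/(DF(y))_{12}$). Once this identity is in hand the rest of the argument is the bookkeeping above together with the elementary remark that a bounded monotone sequence has summable increments.
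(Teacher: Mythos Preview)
The paper does not actually prove this proposition: Appendix~\ref{ssGreenb} merely recalls the statement and refers to \cite{Arna3} for details. So there is no ``paper's own proof'' to compare against, and your proposal should be judged on its own merits.

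Your argument is correct. The decomposition $v=v_n^-+w_n$ along $G_{-n}(x)\oplus V(x)$ cleanly reduces the question to the growth of $b_n=D(\pi\circ F^n)(x)(0,1)$, and the identity
\[
b_n^{2}=\frac{1}{\bigl(s_{-(n+1)}(x)-s_{-n}(x)\bigr)\bigl(s_n(F^nx)-s_{-1}(F^nx)\bigr)}
\]
is derived correctly: I checked your computation (the key inputs are $e_n=-1/b_n$ from $\det DF^n=1$, and the top row of $DF(F^nx)$ coming from $DF(y)G_{-1}(y)=V(Fy)$), and your sanity check on the integrable map $F(\theta,r)=(\theta+r,r)$ confirms it. The conclusion then follows because the first factor in the denominator is the general term of the convergent telescoping series $\sum\Delta_n=s_-(x)-s_{-1}(x)$, hence tends to zero, while the second factor is bounded above by $\sup_{\ck}(s_1-s_{-1})<\infty$ on the compact closure $\ck\subset\T\times[-K,K]$ of the projected orbit (here you use that $s_1,s_{-1}$ are everywhere defined and continuous, and that $s_n\le s_1$ along minimizing orbits).

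This is essentially the classical approach (exploiting symplecticity and the monotone convergence $s_{-n}\uparrow s_-$), packaged neatly through the closed formula for $b_n^2$. One cosmetic remark: you might say explicitly that $s_n(F^nx)-s_{-1}(F^nx)>0$ (not just bounded above) so that the formula makes sense; this is immediate from the ordering $s_{-k}<s_j$ for $k,j\ge1$ along minimizing orbits, which you already invoke.
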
 
In particular, if the Dynamics restricted to some invariant graph is totally periodic, then along this graph we have $G_-=G_+$ and the graph is $C^1$. The $C^1$ property can also  be proved  by using the implicit functions theorem.

\section{Sketch of the proof of point \ref{K.A.M.ext} page \pageref{K.A.M.ext}}\label{appendix-3}

We wish to explain why if $u:\cm\big(\rho(c)\big)\rightarrow \R$ is dominated, then there exists only one extension $U$ of $u$ to $\T$ that is a weak K.A.M. solution for $\widehat T^c$ that is given by  
$$\forall x\in \T, \quad U(x) = \inf_{y\in  \cm\textrm{$\big($}\rho(c)\textrm{$\big)$}} u(y) + S^c(y,x).$$

\begin{itemize}
\item It is a general fact that if $y\in \cm\textrm{$\big($}\rho(c)\textrm{$\big)$}$ the function $x\mapsto S^c(y,x)$ is a weak K.A.M solution that vanishes at $x=y$ (see \cite[Definition 2.1 and Proposition 2.8]{Za1} recalling that the function $S^c$ corresponds to the Ma\~n\' e potential $\varphi$ in the reference and that our Mather set $\cm\textrm{$\big($}\rho(c)\textrm{$\big)$}$ is included in the Aubry set). As the set of weak K.A.M. is invariant by addition of constants and an infimum of weak K.A.M. solutions is a weak K.A.M. solution (\cite[Lemma 2.33]{Za1}) it follows that $U$ is a weak K.A.M. solution.
\item To prove that $U=u$ on $\cm\textrm{$\big($}\rho(c)\textrm{$\big)$}$ just notice that as $u$ is dominated, if $x\in \cm\big(\rho(c)\big)$,
$$\forall y \in \cm\big(\rho(c)\big),\quad u(y)+S^c(y,x) \geq u(x) = u(x) +S^c(x,x).$$
\item It remains to prove that $U$ is unique. This follows from the fact that if two weak K.A.M. solutions $U_1$ and $U_2$  coincide on $\cm\big(\rho(c)\big)$ they are equal.

Let $x_0\in \T$. One constructs inductively a sequence $(x_n)_{n\leq 0}$ such that 
$$\forall n< 0, \quad U_1(x_0)=U_1(x_n)+\sum_{k=n}^{-1}S^c(x_k,x_{k+1}).$$
As $U_2$ is a weak K.A.M. (hence dominated) one also has 
$$\forall n< 0, \quad U_2(x_0)\leq U_2(x_n)+\sum_{k=n}^{-1}S^c(x_k,x_{k+1}).$$
Hence $U_2(x_0)-U_1(x_0) \leq U_2(x_n) - U_1(x_n)$. To conclude, one proves, using a Krylov-Bogoliubov type argument that there exists a subsequence $(x_{\varphi(n)})$ that converges to a point $x\in \cm\big(\rho(c)\big)$, hence proving that $U_2(x_0)-U_1(x_0) \leq 0$. Then the result follows by a symmetrical argument.
\end{itemize}

\bibliographystyle{amsplain}

\begin{thebibliography}{10}

\bibitem {ALD} S.~Aubry \& P.~Y.~Le Daeron. {\em The discrete Frenkel-Kontorova model and its extensions. I. Exact results for the ground-states}, Phys. D 8 (1983), no. 3, 381--422. 

\bibitem{Arna4} M.-C.~Arnaud.   Convergence of the semi-group of Lax-Oleinik: a geometric point of view, Nonlinearity 18 (2005) 1835--1840. 

\bibitem{Arna1}  M.-C.~ Arnaud.Three results on the regularity of the curves that are invariant by an exact symplectic twist map, Publ. Math. Inst. Hautes Etudes Sci. 109, 1-17(2009)

\bibitem{Arna2} M.-C.~Arnaud. Pseudographs and Lax-Oleinik semi-group: a geometric and dynamical  interpretation Nonlinearity 24 (2011) 71--78.

\bibitem{Arna3} M.-C.~Arnaud,  Hyperbolicity for conservative twist maps of the 2-dimensional annulus, note of a course given in Salto, Publ. Mat. Urug. 16 (2016), 1--39.

\bibitem{ArnaBer} M.-C.~ Arnaud \& P. Berger. The non-hyperbolicity of irrational invariant curves for twist maps and all that follows,    Revista Matem\' atica Iberoamericana  number 32.4 (2016) pp. 1295--1310 

\bibitem{ArnaXue} M.-C.~ Arnaud \& J.~Xue.  A $C^1$ Arnol'd-Liouville theorem.hal-01422530

\bibitem{Ban}  V.~Bangert,  Mather sets for twist maps and geodesics on tori. Dynamics reported, Vol. 1, 1--56, Dynam. Report. Ser. Dynam. Systems Appl., 1, Wiley, Chichester, 1988.

\bibitem{Be1}  P.~Bernard,  The Lax-Oleinik semi-group: a Hamiltonian point of view. Proc. Roy. Soc. Edinburgh Sect. A 142 (2012), no. 6, 1131--1177

\bibitem{Be2}  P.~Bernard, The dynamics of pseudographs in convex Hamiltonian systems. J. Amer. Math. Soc. 21 (2008), no. 3, 615--669.

\bibitem{Be3} P.~Bernard, Connecting orbits of time dependent Lagrangian systems. (English, French summary) 
Ann. Inst. Fourier (Grenoble) 52 (2002), no. 5, 1533Ð1568. 


\bibitem{Bir1} G.~D.~Birkhoff, Surface transformations and their dynamical application,
{\em Acta Math.} {\bf 43} (1920) 1-119.


\bibitem{cansin}  P.~Cannarsa \& C.~Sinestrari, Semi-concave functions, Hamilton-Jacobi equations, and optimal control,  Progress in Nonlinear Differential Equations and their Applications, 58. Birkh\" auser Boston, Inc., Boston, MA, 2004. xiv+304 pp.
\bibitem{ConIt1} G.~ Contreras \& R.~Iturriaga, Minimizers of autonomous Lagrangians. 22o Col\'oquio Brasileiro de Matem\'atica. [22nd Brazilian Mathematics Colloquium] Instituto de Matem\' atica Pura e Aplicada (IMPA), Rio de Janeiro, 1999. 148 pp.


\bibitem{CISM} G~Contreras, R~Iturriaga \& H. Sanchez-Morgado, Weak solutions of the Hamilton Jacobi
equation for Time Periodic Lagrangians. Preprint. arXiv:1207.0287.

\bibitem{DFIZ1} A.~Davini, A.~Fathi, R.~Iturriaga \& M.~Zavidovique, Convergence of the solutions of the discounted equation,    Invent. Math. 206 (2016), no. 1, 29--55.

\bibitem{DFIZ2} A.~Davini, A.~Fathi, R.~Iturriaga \& M.~Zavidovique, Convergence of the solutions of the discounted equation: the discrete case,    Math. Z. 284 (2016), no. 3-4, 1021--1034

\bibitem{Eva}  L.~C.~Evans,  Weak K.A.M. theory and partial differential equations. Calculus of variations and nonlinear partial differential equations, 123--154, Lecture Notes in Math., 1927, Springer, Berlin, 2008.

\bibitem{Fa1} A.~Fathi,  Une interpr\'etation plus topologique de la d\'emonstration
du th\'eor\`eme de Birkhoff, appendice au ch.1 de \cite{He1}, 39-46.

\bibitem{Fa2}  A.~Fathi,  Th\'eor\`eme K.A.M. faible et th\'eorie de Mather sur les syst\`emes lagrangiens. (French) [A weak K.A.M. theorem and Mather's theory of Lagrangian systems] C. R. Acad. Sci. Paris S\'er. I Math. 324 (1997), no. 9, 1043--1046.

\bibitem{Fa3}  A.~Fathi Weak K.A.M. theorem in Lagrangian Dynamics, preprint. 

\bibitem{ForMat}  G.~Forni \& J.N.~Mather, Action minimizing orbits in Hamiltonian systems. Transition to chaos in classical and quantum mechanics (Montecatini Terme, 1991), 92--186, Lecture Notes in Math., 1589, Springer, Berlin, 1994.

\bibitem{GT1}  E.~Garibaldi \& P.~Thieullen,  Minimizing orbits in the discrete Aubry-Mather model. Nonlinearity 24 (2011), no. 2, 563--611.

 \bibitem{Gol1}  C.~Gol\'e,  Symplectic twist maps, Global variational techniques. Advanced Series in Nonlinear Dynamics, 18. World Scientific Publishing Co., Inc., River Edge, NJ, 2001. xviii+305 pp.


\bibitem{He1}M.~Herman, {\em Sur les courbes invariantes par les
diff\'eomorphismes de l'anneau}, Vol. 1, Asterisque {\bf 103-104} (1983).

\bibitem{Her2}  M.~R.~Herman,   {\it Sur la conjugaison diff\'erentiable des diff\'eomorphismes du cercle \`a des rotations}. (French) Inst. Hautes \'Etudes Sci. Publ. Math. No. 49 (1979), 5--233. 

\bibitem{KatOrn}  Y.~Katznelson \&  D.S.~Ornstein,  Twist maps and Aubry-Mather sets. Lipa's legacy (New York, 1995), 343--357, Contemp. Math., 211, Amer. Math. Soc., Providence, RI, 1997.

\bibitem{KFT} A.~Kolmogorov, S.~Fomine \& V.~M.~Tihomirov, 
El\'ements de la th\'eorie des fonctions et de l'analyse fonctionnelle. (French)
Avec un compl\'ement sur les alg\`ebres de Banach, par V. M. Tikhomirov. Traduit du russe par Michel Dragnev. \'Editions Mir, Moscow, 1974. 536 pp. 

\bibitem{LYY}  Z.~Liang, J.~Yan \& Y.~Yi,   Viscous stability of quasi-periodic tori. (English summary)
Ergodic Theory Dynam. Systems 34 (2014), no. 1, 185--210. 

\bibitem{Man1}  R.~Ma\~n\'e,   Ergodic theory and differentiable dynamics. Translated from the Portuguese by Silvio Levy. Ergebnisse der Mathematik und ihrer Grenzgebiete (, 8. Springer-Verlag, Berlin, 1987. xii+317 pp.

\bibitem {Mat1} J.N.~Mather.
{\em Existence of quasiperiodic orbits for twist homeomorphisms of the annulus}, 
Topology 21 (1982), no. 4, 457--467. 


\bibitem{Mat2}  J.N.~Mather,  Action minimizing invariant measures for positive definite Lagrangian systems. Math. Z. 207 (1991), no. 2, 169--207.

\bibitem{Mos}  J.~Moser,  Monotone twist mappings and the calculus of variations. Ergodic Theory Dynam. Systems 6 (1986), no. 3, 401--413.

\bibitem{Rudin} W.~Rudin, Principles of Mathematical Analysis. Third Edition. McGraw-Hill, Inc. (1976).

\bibitem{SuThi}  X.~Su \& P.~Thieullen, Convergence of discrete Aubry-Mather model in the continuous limit, preprint 2015, arXiv:1510.00214
 
\bibitem{Za} M.~Zavidovique, Existence of $C^{1,1}$ critical subsolutions in discrete weak KAM theory. J. Mod. Dyn. 4 (2010), no. 4, 693Ð714.

\bibitem{Za1} M.~Zavidovique, Strict sub-solutions and Ma\~ n\' e potential in discrete weak KAM theory. Comment. Math. Helv. 87 (2012), no. 1, 1Ð39. 
\end{thebibliography}

\end{document}